\newtheorem{theorem}{Theorem}[section]
\newtheorem{lemma}[theorem]{Lemma}
\newtheorem{proposition}[theorem]{Proposition}
\newtheorem{definition}[theorem]{Definition}
\theoremstyle{remark}
\newtheorem{remark}[theorem]{\it \bf{Remark}\/}
\numberwithin{equation}{section}
\def\section{\@startsection{section}{1}%
  \z@{1.5\linespacing\@plus\linespacing}{.5\linespacing}%
  {\normalfont\bfseries\large\centering}}
\newcommand{\be}{\begin{equation}}
\newcommand{\ee}{\end{equation}}
\newcommand{\bea}{\begin{eqnarray}}
\newcommand{\eea}{\end{eqnarray}}
\newcommand{\bee}{\begin{eqnarray*}}
\newcommand{\eee}{\end{eqnarray*}}
\def\pa{\partial}
\def\RR{\mathbb{R}}
\def\Pt{\tilde{P}}
\def\fref#1{{\rm (\ref{#1})}}
\def\G{{\Gamma}}
\def\supess{\mathop{\operator@font Sup\,ess}}
\def\bt{\tilde{b}}
\def\RR{\mathbb{R}}
\def\e{\varepsilon}
\def\fref#1{{\rm (\ref{#1})}}
\def\R2+{\RR ^2_+}
\def\lsl{\frac{\lambda_s}{\lambda}}
\def\pa{\partial}
\def\lim{\mathop{\rm lim}}
\def\sup{\mathop{\rm sup}}
\def\l{\lambda}
\def\bt{\tilde{b}}
\def\log{{\rm log}}
\def\et{\tilde{\e}}
\def\lsl{\frac{\lambda_s}{\lambda}}
\def\Psih{\hat{\Psi}}
\def\deg{{\rm deg}}
\def\qbt{\tilde{Q}_{b,a}}
\def\Vt{\widetilde{V}}
\def\St{\tilde{S}}
\def\dkp{\delta_{k_+}}
\def\dk{\dkp}
\def\dkm{\delta_{k_-}}
\def\pa{\partial}
\def\et{\tilde{\e}}
\def\pa{\partial}
\def\Psit{\tilde{\Psi}}
\def\Mod{{\rm Mod}}
\def\St{\tilde{S}}
\def\Phit{\tilde{\Phi}}
\def\matchal{\mathcal}
\def\L{\mathcal L}
\def\Lt{\widetilde{\L}}
\def\dkm{\delta_{k_-}}
\def\Psih{\hat{\Psi}}
\def\Id{\rm{Id}}
\def\zetat{\tilde{\zeta}}
\def\Modt{\widetilde{\Mod}}
\def\at{\tilde{a}}
\def\dep{\delta_p}
\title[]{Type II blow up for the energy supercritical NLS}
\author[F. Merle]{Frank Merle}
\address{IHES and Universit\'e de Cergy Pontoise, France}
\email{frank.merle@math.u-cergy.fr}
\author[P. Rapha\"el]{Pierre Rapha\"el}
\address{Laboratoire J.A. Dieudonn\'e, Universit\'e de Nice Sophia Antipolis\\
 et Institut Universitaire de France}
\email{praphael@unice.fr}
\author[I. Rodnianski]{Igor Rodnianski}
\address{Department of Mathematics, MIT, USA\newline
\mbox{\,\,\,\,\,\,\,}Department of Mathematics, Princeton University, USA}
\email{irod@math.mit.edu}
\begin{document}
\maketitle

\begin{abstract}
We consider the energy super critical nonlinear Schr\"odinger equation 
$$i\pa_tu+\Delta u+u|u|^{p-1}=0$$ in large dimensions $d\geq 11$ with spherically symmetric data. For all $p>p(d)$ large enough, 
in particular in the super critical regime $$s_c=\frac d2-\frac{2}{p-1}>1,$$ 
we construct a family of $\mathcal C^{\infty}$ finite time blow up solutions which become singular via concentration of a universal profile 
$$u(t,x)\sim \frac{1}{\l(t)^{\frac 2{p-1}}}Q\left(\frac{r}{\l(t)}\right)e^{i\gamma(t)}$$ with the so called type II quantized blow up rates: $$\l(t)\sim c_u(T-t)^{\frac \ell\alpha}, \ \ \ell\in \Bbb N^*, \ \ 2\ell>\alpha=\alpha(d,p).$$ The essential feature of these solutions is that all norms below scaling remain bounded $$\limsup_{t\uparrow T}\|\nabla^su(t)\|_{L^2}<+\infty\ \ \mbox{for}\ \ 0\leq s<s_c.$$ Our analysis fully revisits the construction of type II blow up solutions for the corresponding heat equation in \cite{Velas}, \cite{Mizo1}, which was 
done using maximum principle techniques following \cite{MaM2}. Instead we develop a robust energy method, in continuation of the works in the energy critical case \cite{RaphRod}, \cite{MRR}, \cite{RSc1}, \cite{RSc2} and the $L^2$ critical case \cite{MMaR1}. This shades a new light on the essential role played by the solitary wave and its tail in the type II blow up mechanism, and the universality of the corresponding singularity formation in {\it both} energy critical and super critical regimes.
\end{abstract}


\section{Introduction}



\subsection{The NLS problem}


In this paper we study the focusing nonlinear Schr\"odinger equation:
\be
\label{nls}
(NLS)\ \ \left\{\begin{array}{ll}i\pa_tu+\Delta u+u|u|^{p-1}, \\u_{|t=0}=u_0\end{array}\right.\ \ (t,x)\in \Bbb R^+\times \Bbb R^d, \ \ u(t,x)\in \Bbb C.
\ee
This canonical dissipative model conserves the total energy and mass:
\bea
\label{bvjeeboe}
&&E(u(t))=\frac 12\int |\nabla u|^2-\frac{1}{p+1}\int |u|^{p+1}=E(u_0),\\
\label{conmass}
&&\int |u(t)|^2=\int|u_0|^2.
\eea
The scaling symmetry $u_\l(t,x)=\l^{\frac2{p-1}}u(\l^2 t,\l x)$ for $\l>0$ is an isometry of the homogeneous Sobolev critical space $$\|u_\l(t,\cdot)\|_{\dot{H}^{s_c}}=\|u(\l^2 t,\cdot)\|_{\dot{H}^{s_c}} \ \ \mbox{for}\ \ s_c=\frac d 2-\frac {2}{p-1}.$$ We focus on the {\it energy critical and super critical models}: $$s_c\geq 1\ \ \mbox{i.e.}\ \ p\geq 2^*-1=\frac{d+2}{d-2}, \ \ d\ge 3.$$ 
These problems are locally well posed in $H^{s_c}$ and if the nonlinearity is analytic $$p=2q+1, \ \ q\in \Bbb N^*,$$ then the flow propagates Sobolev regularity and there holds the blow up criterion: $$T<+\infty\ \  \mbox{implies}\ \ \lim_{t\uparrow T}\|u(t)\|_{H^s}=+\infty \ \ \mbox{for}\ \ s>s_c.$$ 


\subsection{Type I and type II blow up for the heat equation}


Singularity formation is better understood for the scalar nonlinear heat equation 
\be
\label{NLH}
(NLH)\ \ \left\{\begin{array}{ll}\pa_tu=\Delta u+u^p, \\u_{t=0}=u_0\end{array}\right.\ \ (t,x)\in \Bbb R^+\times \Bbb R^d
\ee
in dimension $d\geq 3$, in particular in the radial setting where maximum principle techniques apply.  
In particular, one can construct time-dependent Lyapunov functionals, based on counting the number of spatial intersections between two solutions. Let us very briefly recall some of the main known facts on singularity formation for \fref{NLH} in the energy critical and super critical range $$p> 2^*-1, \ \ s_c> 1.$$ The basic object at the heart of the analysis is the self-similar profile. Let us look for solutions to \fref{NLH} of the explicit form 
\be
\label{develeotpemt}
u(t,x)=\frac{1}{\l(t)^{\frac 2{p-1}}}Q_b\left(\frac{r}{\l(t)}\right)
\ee 
where $\l(t)$ is given by the exact {\it self similar-scaling}: 
\be
\label{loiaionoie}
\l(t)=\sqrt{b(T-t)}, \ \ b=1.
\ee 
$Q_b$ is then a solution elliptic stationary self-similar equation: 
\be
\label{eqseflsim}
\Delta Q_b-b\Lambda Q_b+Q_b^p=, \ \ \Lambda=\frac{2}{p-1}+y\cdot\nabla, \ \ b=1.
\ee
Spherically symmetric solutions of  \fref{eqseflsim} are completely classified. There are two fundamental objects: the {\it regular at the origin} constant self-similar solution 
\be
\label{regularselfismil}
Q_1\equiv \kappa_p, \ \ \kappa_p=\left(\frac{2}{p-1}\right)^{\frac1{p-1}},
\ee
and the {\it singular at the origin homogeneous self-similar solution}:
\be
\label{selfsimilarsolutions}
R(r)=\frac{c_{\infty}}{r^{\frac{2}{p-1}}}, \ \ c_\infty=\left[\frac{2}{p-1}\left(d-2-\frac 2{p-1}\right)\right]^{\frac 2{p-1}}.
\ee
\noindent\underline{Type I blow up}: The regular constant self-similar solution \fref{regularselfismil} generates a {\it stable} blow up dynamics of so called type I with universal blow up rate given by: 
\be
\label{typeone}
\|u(t)\|_{L^{\infty}}\sim \frac{1}{(T-t)^{\frac 1{p-1}}},
\ee
consistent with \fref{develeotpemt}, \fref{loiaionoie}. The existence and stability of this object can be proved using spectral techniques and energy methods, \cite{GK1}, \cite{GK2}, \cite{GK3}, \cite{MZDuke}, \cite{BK}. In fact. this blow up regime exists for all $p$ and is not specific to the energy supercritical range. A related analysis has been recently successfully propagated to the case of the wave equation, \cite{Donn}.  

 In the regime $2^*-1<p<p_{JL}$ 
 there exists another class of regular solutions, decaying at $\infty$, to the self-similar equation \fref{eqseflsim}  which give rise to the type I unstable blow up\footnote{this 
 corresponds to a threshold regime between global solutions and the stable type I blow up dynamics.}, \cite{Lepin}, \cite{MaM1}. Here, 
 $p_{JL}$ if the Joseph-Lundgren exponent given by
 \be
\label{exponentpjl}
p>p_{JL}=\left\{\begin{array}{ll} +\infty\ \ \mbox{for}\ \ d\leq 10,\\
1+\frac{4}{d-4-2\sqrt{d-1}}\ \ \mbox{for}\ \ d\geq 11.
\end{array}\right.
\ee

\noindent\underline{Type II blow up}: In the 1992 unpublished manuscript by Herrero and Velasquez, announced in \cite{Velas}, 
proposed a different type of blow up mechanism for $p>p_{JL}$, based on a threshold structure of the spectrum of the linearized operator, close to \fref{selfsimilarsolutions}, 
\be
\label{deflinearized}
H_R=-\Delta +\Lambda -\frac{pc_{\infty}^{p-1}}{r^2}
\ee 
The spectrum of $H_R$ turns out to be completely explicit in suitable weighted spaces. The authors describe a singularity formation in which 
\be
\label{estlinfty}
\|u(t)\|_{L^{\infty}}\sim \frac{1}{(T-t)^{\frac{2\alpha\ell}{p-1}}}, \ \ \ell\in \Bbb N^*, \ \ 2\alpha\ell>1
\ee
where $\alpha$ is the phenomenological number \fref{defalpha}. The blow up bubble corresponds, in self-similar renormalized variables,
\be
\label{selfsim}
u(t,x)=\frac{1}{\l(t)^{\frac 2{p-1}}}v(s,z), \ \ z=\frac{r}{\l(t)}, \ \ \l(t)=\sqrt{T-t},\ \ \frac{ds}{dt}=\frac{1}{\l^2(t)},
\ee 
to a profile generated by the singular self-similar solution $R$:
\be
\label{laedignorderexapnsion}
v(s,z)=R(z)+e^{-\l_j s}\psi_j(z)+\rm{lot}
\ee 
where $\l_j$ is the $j$-th, $j=j(\ell)$, strictly positive eigenvalue with eigenvector $\psi_j$ of the linearized operator $H_R$. 
The decomposition \fref{laedignorderexapnsion} is singular at the origin and, in particular, does not readily
 imply the $L^{\infty}$ control \fref{estlinfty}. It is merely designed to capture the behavior of the solution tail, 
while the leading order of the solution near the origin is given by a renormalized smooth radial solitary wave $Q(r)$ solving 
$$\Delta Q+Q^p=0, \ \ Q(0)=1.$$ 

The situation was clarified in the series of works by Matano and Merle \cite{MaM1,MaM2} through the proof of two fundamental theorems in the radial setting:
\begin{itemize}
\item For $2^*-1<p<p_{JL}$, only type I \fref{typeone} occurs, with both stable and threshold regimes.
\item For $p>p_{JL}$, type II occurs as a threshold dynamics between type I and global existence. This requires in particular $d\geq 11$, and yields an indirect proof of the existence of type II blow up solutions.
\end{itemize}
We emphasize that an essential tool in the analysis in \cite{MaM1,MaM2} was a construction of a Lyapunov functional based on {\it the precise counting of intersections of a solution with the singular self-similar solution $R$}. 
This feature strongly anchors the analysis to the radial setting and to the use of tools reliant on the maximum principle.\\
Following that, using the maximum principle tools developed in \cite{MaM1,MaM2},  Mizoguchi, in \cite{Mizo1, Mizo2}, has been able  to rigorously implement the formal construction of \cite{Velas} to prove both the existence of solutions with blow up speed \fref{estlinfty} and to give a complete classification of radial type II blow up solutions\footnote{in a suitable class.}. The difficulty here is that the decomposition \fref{laedignorderexapnsion} is fundamentally singular both at infinity, where all terms have infinite energy, and at the origin, where both $R$ and $\psi_j$ are singular\footnote{without an obvious cancellation.}. The whole analysis consists in deriving \fref{laedignorderexapnsion}, first in some weak local $L^2$ sense, and then propagating this weak control to the $L^{\infty}$ topology in a self-similar window 
\be
\label{selfsimialrwindow}
\frac{1}{A(t)}<z<A(t), \ \ \lim_{t\to T}A(t)=+\infty.
\ee
The maximum principle tools developed in \cite{MaM1,MaM2} are once again essential in this analysis and not at all amenable 
to an extension of these results to a problem like NLS, or even the non-radial heat equation.


\subsection{Critical blow up problems}

 
The past ten years has seen remarkable progress on the question of singularity formation for {\it critical problems}, where the scaling symmetry meets a conservation law. For \fref{NLH}, this corresponds to the case $p=2^*-1$. Interestingly enough, even maximum principle techniques were not able to address this case, and despite some formal predictions \cite{FHV}, the rigorous derivation of type II blow up solutions has remained open until very recently.\\
 A new intuition based on Liouville classification theorem and a new set of {\it energy type techniques} have led to the pioneering blow up results on the mass critical (gKdV) \cite{MarM1}, \cite{M1}, \cite{MarM2}, to the new classification results of blow up dynamics near the ground state for the mass critical NLS \cite{MR1}, \cite{MR3}, \cite{MR4}, and more recently to a complete classification of the flow near the ground state for the (gKdV) \cite{MMaR1}, \cite{MMaR2}, \cite{MMaR3}. Energy critical models have also been a source of important progress in connection with the two dimensional critical geometric equations: the wave maps, the Schr\"odinger maps and the parabolic harmonic heat flow, \cite{RS}, \cite{KST}, \cite{NT2}, \cite{RaphRod}, \cite{MRR}, \cite{RSc1}, \cite{RSc2}. New fundamental tools have been developed for the construction of energy critical blow up 
 solutions, in settings where even an existence of singular dynamics had been mostly unknown, and for the analysis of their stability/finite codimensional instability. A {\it continuum} of blow up rates were constructed in \cite{KST} for the wave map problem, and in \cite{MMaR1} for gKdV, while for the parabolic heat flow, a {\it discrete} sequence of blow up regimes was rigorously obtained in \cite{RSc2}, in agreement with  the formal predictions in \cite{heatflow}.  In the setting of the nonlinear heat equation \fref{NLH}, these techniques have led to the first construction of type II blow up solutions in the energy critical case $p=3,d=4$, \cite{Schweyer}.\\

In all these works, the blow up profile is not given by a stationary self-similar solution to \fref{eqseflsim}, but rather by a soliton, i.e. a {\it smooth stationary or time periodic solution} to the original PDE, for example for the (NLS) equation: 
\be
\label{soliton}
u(t,x)=Q(x)e^{it}, \ \ \Delta Q+Q^p=0.
\ee
The blow up solution then corresponds to a decomposition $$u(t,x)=\frac{1}{\l(t)^{\frac 2{p-1}}}v(s,y)e^{i\gamma(t)}, \ \ y=\frac{x}{\l(t)}, \ \ \frac{ds}{dt}=\frac1{\l^2(t)},$$ with 
\be
\label{decompov}
v(s,y)=Q(y)+\e(s,y), \ \ |\e|\ll 1.
\ee
The blow up rate $\l(t)$ is {\it never} given by the self-similar speed \fref{selfsim}, but by its suitable deformations. The {\it ground state which is a smooth stationary solution}, as opposed to the singular self-similar solution \fref{selfsimilarsolutions}, turns out, after renormalization,  to be the universal attractor of the flow in a suitable topology:
\be
\label{cobervegce}
\lim_{t\uparrow T}\|\nabla^s\e(t)\|_{L^2}=0 \ \ \mbox{for} \ \ s>s_c.
\ee
A robust general strategy for the construction of blow up solutions in the critical regimes emerged from the works \cite{RaphRod}, \cite{MRR}, \cite{RSc1}, \cite{RSc2}, \cite{RSc3}, \cite{MMaR1} and relies on a two step procedure:
\begin{itemize}
 \item Construction of a suitable approximate blow up profile through iterated resolutions of elliptic equations. The "tail computation" allows one to derive formally the blow up speed from the behavior of the tail of a profile at infinity. An essential algebraic fact for the analysis is the asymptotic behavior 
 \be
 \label{asymptoticq}
 Q(r)\sim \frac{1}{r^{c(d)}}
 \ee
 The parameter $c(d)$ drives the derivation of the blow up law (and the possibility of a blow up with $Q$ profile).
 \item Implementation of an energy method to control the full flow via the derivation of "Lyapunov" functionals involving {\it super critical Sobolev norms} adapted to the linearized flow near the ground state, {\it which do not rely on spectral estimates} and may therefore be easily adapted to  various settings\footnote{for example, nonlocal non self-adjoint operators as in \cite{RSc3}, or quasilinear problems in \cite{MRR}.}.
 \end{itemize}
 

\subsection{Super critical numerology}


Let us now come back to the super critical problem $s_c>1$ and discuss some essential algebraic facts. 
The problem $$\Delta Q+Q^p=0$$ 
admits a one parameter family of smooth 
spherically symmetric solitary waves solutions with the asymptotic behavior
\be
\label{extensionrq}
Q(r)\sim R(r)=\frac{c_{\infty}}{r^{\frac{2}{p-1}}}\ \ \mbox{as}\ \ r\to +\infty,
\ee
 with $c_{\infty}$ given by \fref{selfsimilarsolutions}. A well known characterization of the Joseph-Lundgren exponent \fref{exponentpjl} is given 
 through the positivity of the linearized operator closed to $Q$, see for example \cite{KaS}. Indeed, let $$L_+=-\Delta -pQ^{p-1},$$ then:
 \begin{itemize}
 \item for $2^*-1<p<p_{JL}$, $L_+$ has a non positive eigenvalue with well localized eigenvector;
 \item for $p>p_{JL}$, $L_+$ is strictly lower bounded by the Hardy potential 
 \be
 \label{positivityhh}
 L_+>-\Delta-\frac{(d-2)^2}{4r^2}>0.
 \ee 
 \end{itemize}
  The proof of \fref{positivityhh} relies on a Sturm-Liouville oscillation argument and is related to the asymptotic expansion 
  \be
  \label{expansionq}
  Q(r)=\frac{c_{\infty}}{r^{\frac{2}{p-1}}}+\frac{c_1}{r^{\gamma}}+o\left(\frac1{r^{\gamma}}\right), \ \ c_1\neq 0,
  \ee 
  where 
 \be
\label{defgamma}
\left\{\begin{array}{ll}\gamma=\frac12(d-2-\sqrt{{\rm Discr}})>0, \ \ {\rm Discr}=(d-2)^2-4pc_{\infty}^{p-1}>0\\
p>p_{JL}\ \  \mbox{iff}\ \ {\rm Discr}>0.
\end{array}\right.
\ee
We introduce the phenomenological number 
\be
\label{defalpha}
\alpha=\gamma-\frac{2}{p-1}, \ \ \alpha>2\ \ \mbox{for}\  p>p_{JL},
\ee
 see  Appendix \ref{numero}.

\subsection{Statement of the result}


Our main claim in this paper is that the asymptotics \fref{expansionq} for $p>p_{JL}$, replaces the expansion \fref{asymptoticq}
in the critical case, are {\it perfectly suitable} for the implementation of the strategy for a construction of a blow bubble solution with profile $Q$. 
The resulting blow up mechanism is type II energy super critical:

\begin{theorem}[Type II blow up for the super critical NLS equation]
\label{thmmain}
Let $d\geq 11$. Let $\alpha$ be given by \fref{defalpha} and assume:
\be
\label{plarge} \left\{\begin{array}{llll}\ p=2q+1,\ \ q\in \Bbb N^*,\\
 p> p_{JL},\\
{\rm Discr}>4\end{array}\right.
\ee 
and
\be
\label{conditionalpha}
\frac \alpha 2\notin\Bbb N, \ \ \frac 12+\frac 12\left(\frac d2-\gamma\right)\notin \Bbb N, \ \ \frac 12+\frac 12\left(\frac d2-\frac 2{p-1}\right)\notin \Bbb N.
\ee
Fix an integer 
\be
\label{codnionrionfeell}
 \ell\in \Bbb N^* \ \ \mbox{with}\ \ \ell>\frac \alpha 2,
 \ee
 and an arbitrary large Sobolev exponent $$s^+\in \Bbb N, \ \ s_+\geq s(\ell)\to +\infty\ \ \mbox{as}\ \ \ell\to +\infty.$$ Then there exists a radially symmetric initial data $u_0(r)\in H^{s_+}(\Bbb R^d,\Bbb C)$ such that the corresponding solution to \fref{nls} blows up in finite time $0<T<+\infty$ via concentration of the soliton profile:
\be
\label{concnenergy}
u(t,r)=\frac{1}{\l(t)^{\frac 2{p-1}}}(Q+\e)\left(\frac{r}{\l(t)}\right)e^{i\gamma(t)}
\ee
with:\\

\noindent{\em (i) Blow up speed}: 
\be
\label{Pexciitedlaw}
\l(t)=c(u_0)(1+o_{t\uparrow T}(1))(T-t)^{\frac{\ell}{\alpha}}, \ \ c(u_0)>0;
\ee
\noindent{\em (ii) Stabilization of the phase}: 
\be
\label{convergncephase}
\gamma(t)\to \gamma(T)\in \Bbb R\ \ \mbox{as}\ \ t\to T;
\ee
\noindent{\em (iii) Asymptotic stability above scaling}: 
\be 
\label{concnenergybis}
\lim_{t\uparrow T}\|\nabla^s\e(t,\cdot)\|_{L^2}=0  \ \ \mbox{for all}\ \ s_c<s\leq s_+;
\ee
\noindent{\em (iv) Boundedness below scaling}:
\be
\label{boundedbelow}
\limsup_{t\uparrow T}\|u(t)\|_{H^s}<+\infty  \ \ \mbox{for all}\ \ 0\leq s<s_c;
\ee
\noindent{\em (v) Behavior of the critical norm}:
\be
\label{beahviorcriitcalnrom}
\|u(t)\|_{\dot{H}^{s_c}}=\left[c_\infty\sqrt{\frac{\ell}{\alpha}}+o_{t\uparrow T}(1)\right]\sqrt{|\log(T-t)|}.
\ee
\end{theorem}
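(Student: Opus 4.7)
The plan is to follow the general two-step strategy outlined in the introduction, namely construct a high-order approximate blow up profile whose tail behavior dictates the law \fref{Pexciitedlaw}, and then close a bootstrap via high Sobolev energy estimates. Concretely, I would seek a decomposition
\be
\label{planansatz}
u(t,r)=\frac{1}{\l(t)^{\frac 2{p-1}}}\big(Q_{b(t)}+\e\big)\!\left(\frac{r}{\l(t)}\right)e^{i\gamma(t)},
\ee
with modulation parameters $(\l,\gamma,b)$, $b=(b_1,\dots,b_\ell)\in\RR^\ell$, and construct $Q_b$ by a formal expansion
$$Q_b=Q+\sum_{i=1}^{\ell}b_1^{i}T_i+\sum_{\substack{|k|\leq L\\ k\neq (i,0,\dots,0)}}b^{k}S_k+\chi_{B_0}(\cdot)\cdot(\text{outer corrections}),$$
where the profiles $T_i,S_k$ are produced by iteratively solving inhomogeneous elliptic problems of the form $L_+ T_{i+1}=-\Lambda T_i+F_i$, with $L_+=-\Delta-pQ^{p-1}$, and where $B_0\sim b_1^{-1/2}$ is a matching radius of the same order as the natural self-similar cutoff.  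The positivity \fref{positivityhh} ensured by $p>p_{JL}$, together with the precise two-term expansion \fref{expansionq}, lets one invert $L_+$ in a weighted space and track the growth of the $T_i$'s at infinity. The key algebraic miracle, driven by \fref{defalpha}, is that each $T_i$ grows like $r^{2i-\alpha}$ modulo lower order so that the tail computation formally produces the system
$$-\frac{\l_s}{\l}=b_1,\qquad (b_i)_s+(2i-\alpha)b_1 b_i=b_{i+1},\ \ 1\leq i\leq\ell-1,\qquad b_\ell\ \text{to be controlled},$$
whose explicit solution is $b_1(s)\sim\frac{\ell}{\alpha}\frac{1}{s}$ and, after inverting $ds/dt=\l^{-2}$, yields $\l(t)\sim(T-t)^{\ell/\alpha}$, i.e. \fref{Pexciitedlaw}.

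Next I would derive the modulation equations for $(\l,\gamma,b)$ by imposing $\ell+1$ orthogonality conditions on $\e$, chosen with respect to directions dual to the $T_i$'s and to $Q$; these directions are well localized and the non-degeneracy of the associated Gram matrix relies precisely on conditions \fref{conditionalpha}, which avoid resonances with the explicit spectrum of $L_+$ in weighted spaces. This reduces the dynamics to a finite dimensional ODE system for $b$ plus a PDE for $\e$. A linear perturbation analysis around the explicit solution of the $b$-system shows that the linearized flow has exactly one stable direction and $\ell-1$ unstable directions, corresponding to the fact that the prescribed blow up rate is codimension $\ell-1$ in the class of blow up regimes.

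The heart of the proof is the energy method controlling the remainder $\e$. I would introduce two families of norms: a localized low-Sobolev control $\int|\e|^2\,e^{-|y|}$, and high-order adapted norms $\mathcal E_{2k}=\langle \L^k\e,\L^k\e\rangle$ built on powers of the linearized Hamiltonian $\L$ near $Q$, for $k$ up to some $k(\ell)\to\infty$. The main technical claim is a monotonicity formula of the form
$$\frac{d}{ds}\left\{\frac{\mathcal E_{2k}}{\l^{4k-2s_c}}\right\}\leq\frac{C b_1}{\l^{4k-2s_c}}\big[\mathcal E_{2k}+b_1^{2(\ell+1-k/\cdots)}\big],$$
where the second term comes from controlling the error generated by $Q_b$ not being an exact solution. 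The algebraic identities underlying these estimates require careful commutator computations with $L_+$, exploiting the fact that $[\L,\Lambda]$ and $L_+$ commuted with $\Delta^j$ produce manageable errors thanks to the two-term expansion \fref{expansionq}.  Weighted coercivity of $\L^k$ in the supercritical regime, which follows from the spectral structure of $L_+$ and subcoercive Hardy/Rellich type inequalities, then upgrades these bounds to the $H^{s_+}$-topology control needed for \fref{concnenergybis}.

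Closing the argument is a topological/Brouwer-type step: I prescribe initial data depending on $\ell-1$ free parameters in the unstable directions of $b$, run the bootstrap, and argue by contradiction that at least one choice of parameters remains forever in the trapping region. The exit condition is formulated in terms of the unstable components of $b$ leaving a small ball, and non-degeneracy of the exit map at the boundary is checked via the explicit linearization, giving a continuous map from a ball to its sphere with non-trivial degree, a contradiction. The bounds \fref{boundedbelow} and the logarithmic growth \fref{beahviorcriitcalnrom} then follow from integrating the pointwise estimate $\|u(t)\|_{\dot H^{s_c}}^2\sim \|Q\|_{\dot H^{s_c}}^2+O(1)$ and noting that $\int_0^T \l^{-2}\,dt$ produces, after using $\l\sim(T-t)^{\ell/\alpha}$ and passing through $s$, the prefactor $c_\infty\sqrt{\ell/\alpha}$. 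I expect the main obstacle to be the high-order energy estimate: in the supercritical regime the profile $Q$ decays only like $r^{-2/(p-1)}$, so all weighted integrals involving $Q^{p-1}\sim r^{-2}$ and its derivatives are borderline and require systematic use of sharp Hardy inequalities together with the gain provided by \fref{positivityhh}; this is precisely where conditions \fref{plarge}–\fref{conditionalpha} and the choice $d\geq 11$ enter to make all coercivity estimates strict.
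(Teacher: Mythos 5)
Your proposal captures the global two-step strategy (high-order approximate profile, then high Sobolev energy bootstrap plus Brouwer), but it is built on a scalar ansatz that is correct for the heat equation and misses the essential structural difficulty specific to NLS. For NLS the linearization at the soliton $Q e^{i\gamma}$ is the \emph{matrix} Schr\"odinger operator $\Lt=\left(\begin{smallmatrix}0 & L_-\\ -L_+ & 0\end{smallmatrix}\right)$, whose kernel contains both $\Lambda Q$ (scaling) and $Q$ (phase), and these two null directions have \emph{different} tail decays: $\Lambda Q\sim r^{-\gamma}$ whereas $Q\sim r^{-2/(p-1)}$ with $\gamma=\alpha+2/(p-1)$. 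Iterating $\Lt^{-1}$ therefore produces two distinct families of growing tails $\Phi_{k,+}\sim r^{2k-\gamma}$ and $\Phi_{k,-}\sim r^{2k-2/(p-1)}$, and the approximate profile must carry \emph{two} arrays of modulation parameters $b=(b_1,\dots,b_{L_+})$ and $a=(a_1,\dots,a_{L_-})$, with $a_k$ obeying its own coupled ODE $(a_k)_s+2kb_1a_k=a_{k+1}$. Your ansatz omits the $a$-family entirely. The consequence is two-fold. First, your count of unstable modes is wrong: the linearized finite-dimensional system has $\ell-1$ unstable directions in the $b$-modes \emph{and} $k_\ell\geq 1$ additional unstable directions in the $a$-modes (with $k_\ell$ determined by $\ell-\alpha/2=k_\ell+\delta_\ell$), so the trapped manifold is codimension $\ell+k_\ell-1$ and a Brouwer argument in an $(\ell-1)$-dimensional ball cannot trap the $a$-instabilities. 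Second, the growing tails of the $\Phi_{k,-}$-profiles are slower than those of the $\Phi_{k,+}$, so the error estimates and the $\dot H^{s_+}$-coercivity all depend on a separate exponent batch $(k_-,\delta_{k_-})$ which your scheme never sees; the hypothesis $\frac12+\frac12(\frac d2-\frac{2}{p-1})\notin\Bbb N$ is there precisely to make those weighted Hardy estimates non-degenerate, and you cannot verify it if you never introduce the $-$-sector.

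There is a second genuine gap in your low-regularity control. You propose a localized $L^2$ norm $\int|\e|^2 e^{-|y|}$ as the complementary small Sobolev norm. That choice works for $L^2$-critical or energy-critical problems where the nonlinear estimates can be made quasi-local, but here it is inadequate: with $Q\sim r^{-2/(p-1)}$, the nonlinear term $N(\e)$ is not controllable by a localized norm combined with $\mathcal E_{s_+}$. The proof needs a genuine \emph{above-scaling but sub-half-dimension} Sobolev control $\|\nabla^\sigma\e\|_{L^2}$ with $s_c<\sigma<d/2$, together with the observation that $\dot H^\sigma\cap\dot H^{s_+}$ is an algebra, and the key is that such $\sigma$ exists precisely because $s_c<d/2$ in the supercritical regime. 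Without this, the nonlinear closure fails. Finally, note that in the high-Sobolev step the pure energy identity is leaky: there are terms sharply localized near the soliton core that the $\mathcal E_{s_+}$–Lyapunov functional alone cannot absorb; one must add a Morawetz/virial correction exploiting the coercivity $L_->L_+>0$, which your sketch does not include. (As a minor point, your value $b_1\sim\frac{\ell}{\alpha s}$ is off: the explicit solution has $b_1^e=\frac{\ell}{(2\ell-\alpha)s}$, and it is the conversion $ds/dt=\lambda^{-2}$ that produces the final exponent $\ell/\alpha$ in $\lambda(t)\sim (T-t)^{\ell/\alpha}$.)
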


\noindent {\it Comments on Theorem \ref{thmmain}}\\

\noindent{\it 1. On the assumptions on $p$}. The assumption \fref{conditionalpha} is generic but technical and avoids the appearance of logarithmic losses in the sequence of weighted Hardy inequalities which we will use to close our energy estimates. Unlike the situation in the critical case \cite{RaphRod}, \cite{MRR}, we claim that these logarithms are irrelevant in our setting and in this sense the assumption \fref{conditionalpha} could be removed. Regarding the assumption \fref{plarge}, 
${\rm Discr}>4$ is automatic for $d\geq 13$ and $p\geq 3$, or for p large enough in dimensions $d=11,12$. This assumption is relevant only for the asymptotic 
development of the solitary wave \fref{develpopemtn}, and allows for a simple decoupling of the remainder terms. We however claim that it is not essential and we could treat the case $\rm Discr >0$ along similar lines. Finally, the assumption $p=2q+1$ makes the nonlinearity analytic, and in particular allows us to estimate the solution in any homogeneous Sobolev norm $\dot{H}^s$. Given $\ell$ as in the statement of Theorem \ref{thmmain}, we need to control $\dot{H}^{s(\ell)}$ norm of the solution with $$\lim_{\ell\to _\infty} s(\ell)=+\infty.$$ 
Hence,  a $\mathcal C^{\infty}$ regularity of the nonlinearity is required for a statement which holds true for all $\ell$ large enough. 
However, for a given $\ell$, a blow up solution satisfying \fref{Pexciitedlaw} can be constructed for any $p\geq p(\ell)$ large enough using the techniques 
of this paper. Yet, as presented, our analysis does not cover non smooth nonlinearities near the $p_{JL}$ exponent.\\

\noindent{\it 2. On the role of the topology}. We stress that the structure of the blow up solution \fref{concnenergy}, \fref{concnenergybis} is {\it exactly} the same as 
the one obtained in the energy critical case \fref{cobervegce}, see in particular \cite{RaphRod}, \cite{MRR}, \cite{RSc1}. This is quite unexpected and reveals the essential role payed by the topology in which the deformation of the ground state is measured. 

For example, the structure of $Q$ and a theorem from \cite{Planchonetal} ensures that $e^{-itH_Q}$ enjoys standard Strichartz estimates, and hence we expect that $Q$ is stable and in fact asymptotically stable as a solution to \fref{nls} with respect to  {\it well localized} perturbations. 

This was proved using sup-sub solutions for the nonlinear heat equation in \cite{chinese}. A related  phenomenon is the global existence proof by Bejenaru, Tataru \cite{BT} for the energy critical Schr\"odinger map in the vicinity of the ground state harmonic map. However,  since $Q$ has infinite energy from \fref{expansionq}, if the perturbation is well localized then this kind of flow corresponds to {\it infinite energy} solutions. We should also mention here a very recent result of Krieger, Schlag \cite{KSc} on 
a global existence of certain solutions to a supercritical septic wave equation in dimension three, arising from the data with an {\it infinite} scale invariant norm.

On the contrary, the full initial data of Theorem \ref{thmmain} can be taken to be even {\it compactly supported} (and, of course, smooth). 
This means  that the initial perturbation $\e$ to $Q$ must possess 
a far away tail to cancel the slow decay of $Q$ at infinity, and hence ceases to belong to standard spaces in which decay is usually measured. These considerations
necessitate  the need to work with homogeneous high Sobolev norms for which $Q$ has a finite contribution and for which the decomposition \fref{concnenergy} makes complete sense. Let us also note  another unexpected feature: the subcritical conservation laws play essentially {\it no role}  in our analysis. In fact, the whole analysis takes place in the super critical algebra $\dot{H}^{\sigma}\cap\dot{H}^{s_+}$ with $$s_c<\sigma<\frac d2\ll s_+$$ and whether the full solution is or is not of finite energy or mass is irrelevant in the blow up regime under consideration.\\

\noindent{\it 3. On the role of the decay of the ground state}. The {\it tail computation}, initiated in the critical case, allows one to compute explicitly the expected rates of type II blow up directly from the asymptotic expansion of the ground state at spatial infinity, see the strategy of the proof below. It is therefore essential to recall that if $$
 Q(r)\sim \frac{1}{r^{c(d,p)}},\ \ p\geq 2^*-1,$$ then the mapping $$p\to c(d,p)\ \ \mbox{is discontinuous at}\ \ p=2^*-1$$ 
For the heat equation this explains why type II blow up holds in the critical case $p=2^*-1$, \cite{RSc1}, \cite{Schweyer}, ceases to exist for $2^*-1<p<p_{JL}$, \cite{MaM1}, and then exists again for $p>p_{JL}.$\\
 
 \noindent{\it 4. On the manifold construction}. The statement of Theorem \ref{thmmain} can be made more precise. Let $\ell\in \Bbb N^*$ satisfying \fref{codnionrionfeell}, $s_+\gg 1$, then our initial data is of the form 
 \be
 \label{vbjebbeibebe}
 u_0=Q_{b(0),a(0)}+\e_0
 \ee 
 where $Q_{b,a}$ is a deformation of a ground state $Q$ and
 $$a=(a_1,\dots,a_{L_-}), \ \ b=(b_1,\dots,b_{L_+}), \ \ s_+\sim 2L_+\sim 2L_-$$ correspond to possible unstable directions  of the flow in the $\dot{H^{s_+}}$ topology 
 in a suitable neighborhood of $Q$. 
 Fix a low Sobolev exponent $$s_c<\sigma<\frac d2,$$ we will show that for all $\e_0\in \dot{H}^{\sigma}\cap H^{s_+}$ small enough in this topology, for all $(b_{\ell+1}(0),\dots,b_{L_+}(0))\times (a_{k_\ell+1}(0),\dots,a_{L_-}(0)$ small enough, there exists a choice of unstable directions $$(b_2(0),\dots,b_{\ell}(0))\times (a_1(0),\dots,a_{k_\ell}(0))$$ such that the solution arising from initial data \fref{vbjebbeibebe} satisfies the conclusions of Theorem \ref{thmmain}. Here, $k_\ell$ is given by \fref{defkell}. 
 This implies that our blow up solutions are constructed for a  codimension $\ell-1+k_\ell>0$ manifold of initial data.  
 Let us insist that our class of initial data includes in particular {\it compactly supported $C^{\infty}$ initial data}. 
 As is now standard in the field, this manifold is constructed as a $\matchal C^0$ manifold using a soft Brouwer type fixed point argument. 
 This provides a precise count of the number of directions of instability in this type II blow up regime. Constructing a local Lipschitz manifold would require proving an appropriate local uniqueness statement. The recent analysis \cite{SzefKrieger} clearly suggests that once the existence is shown, by a Brouwer type argument, and  with a 
 strong decay on the solution -- as is the case in the setting of Theorem \ref{thmmain} -- then local uniqueness can be obtained by rerunning the machinery on the difference of two solutions, see also \cite{RSz}, \cite{MMaR1}.\\
 
  \noindent{\it 5. On quantization of blow up rates}. The quantization of blow up rates \fref{Pexciitedlaw} is the same as the one obtained in the case of the heat equation through a complete classification theorem in \cite{Mizo2}, see also \cite{RSc2}. In dispersive settings, a continuum of blow up rates can be constructed, \cite{KST}, but they correspond to solutions propagating from non-regular data and are therefore {\it never} $H^{\infty}$. See \cite{MMaR3} for the study of related phenomena. We expect that the quantized rates \fref{Pexciitedlaw} are the building blocks to classify type II blow up of smooth solutions near the ground state for \fref{nls}.\\
  
 \noindent{\it 6. Comparison with the heat equation}. Observe that \fref{concnenergy}, \fref{Pexciitedlaw}, \fref{concnenergybis} imply the rate of blow up
 $$\|u(t)\|_{L^{\infty}}\sim \frac{1}{\l^(t)^{\frac 2{p-1}}}\sim \frac{1}{(T-t)^{\frac{2\alpha \ell}{p-1}}}$$ which, according to \fref{estlinfty}, is the same as 
 for the nonlinear heat equation. Let us however stress that  the decomposition \fref{concnenergy} centered on the solitary wave looks very different from the decomposition \fref{laedignorderexapnsion} centered on the singular self-similar solution. In fact, we claim that the sharp description of the blow up behind \fref{concnenergy} {\it implies a quantized version} of the decomposition \fref{laedignorderexapnsion} in self-similar variables, see the Strategy of the proof below. In other words, our analysis covers, 
 with {\it one set of estimates} relying only on energy methods, both the self-similar zone and the zone near the singular point. This is a substantial clarification of the analysis of type II blow up.\\
 
 \noindent{\it 7. Other super critical blow up for NLS}.
  In the setting of the energy super critical NLS equation, the sole other example of a blow up phenomenon that we are aware of is the construction of {\it standing ring} blow up solutions for the focusing quintic model $p=5$ in all dimensions $d\geq 2$, \cite{Raphduke}, \cite{Raphring}. These solutions emerge from smooth well localized radial data and concentrate on the sphere $r=1$. The behavior of Sobolev norms is very different, in particular for these ring solutions $$\lim_{t\uparrow T}\|u(t)\|_{\dot{H}^s}=+\infty\ \ \mbox{for all}\ \ s>0,$$ which implies that these blow up solutions are very much connected to the mass conservation law. Theorem \ref{thmmain} gives the first result of type II blow up for the energy super critical NLS which, following \cite{MaM1}, \cite{MaM2}, should be understood as a singular regime where according to \fref{boundedbelow}, {\it all norms below scaling remain bounded}.\\
  
 Our approach can be extended to the heat and wave equations, and the radial assumption can be removed. The case of the wave equation will be treated in \cite{collot}.\\

\noindent{\bf Notations}: We collect the main algebraic notations and facts which are used throughout the paper.\\
\noindent\underline{Super critical numerolgy}: Given $d\geq 11$, $p>p_{JL},$ we let:
$$\gamma=\frac12(d-2-\sqrt{{\rm Discr}})>0, \ \ {\rm Discr}=(d-2)^2-4pc_{\infty}^{p-1}>0$$
and 
$$\alpha=\gamma-\frac{2}{p-1}>2,$$
see Appendix \ref{numero}.  We define\footnote{where we recall the definition of the entire part: $E(x)\leq x<E(x)+1$, $E(x)\in \Bbb Z$.}:
\bea
\label{defkzero}
&&\left\{\begin{array}{ll}k_+={\rm E}\left[\frac 12+\frac 12\left(\frac d2-\gamma\right)\right]\geq 1,\\
 \frac 12+\frac 12\left(\frac d2-\gamma\right)=k_++\delta_{k_+}, \ \ 0\leq \delta_{k_+}<1.\end{array}\right.\\
&&
\label{defkone}
\left\{\begin{array}{ll}k_-={\rm E}\left[\frac 12+\frac 12\left(\frac d2-\frac 2{p-1}\right)\right]>1,\\
\frac12+\frac 12\left(\frac d2-\frac 2{p-1}\right)=k_-+\delta_{k_-}, \ \ 0\leq \delta_{k_-}<1.\end{array}\right.
\eea
so that from \fref{conditionalpha}:
$$0<\delta_\pm<1.$$ 
We let \be
\label{defdeltazero}
\delta_p=\max\{\delta_+,\delta_-\}, \ \ 0<\delta_p<1,
\ee
and
\be
\label{defdektak}
\Delta k=k_--k_+\geq 1
\ee
from \fref{defalpha}. We will use the relations 
\be
\label{dgammkrealtion}
\left\{\begin{array}{ll}d-2\gamma-4k_+=4\dk-2\\
d-\frac{4}{p-1}-4k_-=4\dkm-2,\\
\frac{\alpha}{2}-\Delta k=\dkm-\dk.
\end{array}\right.
\ee
We let \be
\label{defkell}
\ell-\frac\alpha 2=k_{\ell}+\delta_{\ell},\ \ k_{\ell}\in \Bbb N, \ \ 0<\delta_{\ell}<1 
\ee
from \fref{conditionalpha}.\\
\noindent\underline{Notations for the analysis}: 
Given a large integer $L_+\gg1$, we let: 
\be
\label{deflmoins}
L_-=L_+-\Delta k
\ee
and define the Sobolev exponent:
\be
\label{calculimportant}
s_+=2k_++2L_++1.
\ee
We define the generator $\Lambda$ of a scaling symmetry  
$$\Lambda u=\frac2{p-1}u+y\cdot\nabla u.$$
Given $b_1>0$, we define:
\be
\label{defbnotbone}
B_0=\frac{1}{\sqrt{b_1}}, \ \ B_1=B_0^{1+\eta}
\ee
where 
\be
\label{defetal}
\eta=\frac{\eta_0}{L_+}, \ \ 0<\eta_0 \ll1.
\ee
We denote:
\bee
&&\mathcal B_d(\Bbb R)=\{x=(x_1,\dots,x_d)\in \Bbb R^d, \ \ \sum_{i=1}^dx_i^2\leq R^2\},\\
&&\mathcal S_d(\Bbb R)=\{x=(x_1,\dots,x_d)\in \Bbb R^d, \ \ \sum_{i=1}^dx_i^2= R^2\}.
\eee
We let the matrix 
\be
\label{defj}
J=\left(\begin{array}{ll}0&-1\\1&0\end{array}\right), \ \ J^2=-\Id=-\left(\begin{array}{ll}1&0\\0&1\end{array}\right).
\ee
For real vectors: $$u=\left|\begin{array}{ll}u_1\\ u_2\end{array}\right., \ \ v=\left|\begin{array}{ll}v_1\\v_2\end{array}\right., \ \ (u,v)=u_1v_1+u_2v_2$$ and for complex valued functions: $$(f,g)=\Re\left(\int_{\Bbb R^d} f\overline{g}\right).$$
The nonlinearity  $$f(u)=u|u|^{p-1}.$$ We define the sequence of iterated derivatives $$D^ku=\left|\begin{array}{ll} \Delta^m u\ \ \mbox{for}\ \ k=2m\\ \pa_y\Delta^m u\ \ \mbox{for}\ \ k=2m+1.\end{array}\right.$$
We let $\chi$ be a smooth radially symmetric cut-off function 
\be
\label{sahepchi}
\chi(x)=\left\{\begin{array}{ll} 1\ \ \mbox{for}\  |x|\leq 1\\ 0\ \ \mbox{for}\ \ |x|\geq 2.
\end{array}\right.
\ee
\noindent\underline{Linearized operator}. Given $\e\in \Bbb C$, we identify 
\be
\label{representatione}
\e=\left|\begin{array}{ll}\Re(\e)\\\Im(\e)\end{array}\right..
\ee 
Near $Q$ the linearization of \fref{nls} generates a linear operator $mathal L$, given in complex variables by 
$$\mathcal L\e=-\Delta\e-\frac{p+1}{2}Q^{p-1}\e-\frac{p-1}{2}Q^{p-1}\overline{\e}$$ or, equivalently, in terms of \fref{representatione}: $$\mathcal L=\left(\begin{array}{ll} L_+&0\\  0 & L_-\end{array}\right)$$ where $$L_+=-\Delta-pQ^{p-1}, \ \ L_-=-\Delta-Q^{p-1}.$$  We let the potentials
\be
\label{wlpusmoinus}
W_+=pQ^{p-1}, \ \ W_-=Q^{p-1},
\ee
and introduce the matrix  operator  \be
\label{defltilde}
\Lt=-J\L=\left(\begin{array}{ll} 0&L_-\\-L_+&0\end{array}
\right),
\ee
adapted to the linearized flow of \fref{nls}  near $Q$
$$i\pa_s\e=\L\e\ \ \mbox{i.e.}\ \ \pa_s\e=\Lt\e.$$  Observe that 
\be
\label{adjointltilde}
\Lt^*=\left(\begin{array}{ll} 0&-L_+\\L_-&0\end{array}\right)=J\Lt J, \ \ (J\Lt)^*=J\Lt.
\ee


\subsection{Strategy of the proof}


We now give a brief description of the proof of Theorem \ref{thmmain}. We keep the notations and the strategy close to the ones of the critical case, see in particular \cite{RSc2},
with the intent to show the deep unity of the analysis. In what follows, we pick $$\ell\in \Bbb N^*, \ \ \ell>\frac{\alpha}{2}$$ associated with the blow up speed 
\fref{Pexciitedlaw}, and another integer 
$$L_+\gg \ell, \ \ L_-=L_+-\Delta k,$$ related to the regularity of the solution and the construction of suitable Lyapunov functionals.\\

\noindent \underline{\emph{(i) Renormalized flow and iterated resonances}}. Let us look for a modulated solution $u(t,r)$ of \fref{NLH} in the modulated form: 
\be
\label{renofineo}
u(t,r)=v(s,y)e^{i\gamma}, \ \ y=\frac{r}{\lambda(t)},\ \ \frac{ds}{dt}=\frac{1}{\lambda^2(t)}
\ee which leads to the renormalized flow: 
\be
\label{eqselfsimilar}
\pa_sv-i\Delta v+b_1\Lambda v+ia_1v-iv|v|^{p-1}=0, \ \ b_1=-\lsl, \ \ a_1=\gamma_s.
\ee
Assuming that the leading par of the solution is given by the ground state profile\footnote{this is a theorem for type II blow up in the radial case, \cite{MaM1}.}, the remaining linear part of the flow is governed by the matrix Schr\"odinger operator $$
\Lt=\left(\begin{array}{ll} 0&L_-\\-L_+&0\end{array}
\right).
$$ The scaling and phase invariances of the problem induces an explicit resonance\footnote{This is not an eigenvalue since neither $Q$ nor $\Lambda Q$ decay 
sufficiently fast at infinity. In particular, $\Lambda Q\not\in L^2$.}: $$\Lt\left|\begin{array}{ll} \Lambda Q\\Q\end{array}\right.=0.$$ Each component behaves differently at infinity: $$Q\sim \frac{c_{\infty}}{y^{\frac2{p-1}}}$$ and there holds the {\it fundamental} cancellation of the tail at infinity:
\be
\label{slowedecay}
 \Lambda Q\sim \frac{c}{y^{\gamma}}\ \ \mbox{as}\ \ y\to \infty\ \ \mbox{with}\ \ \gamma=\alpha+\frac2{p-1}>2+\frac{2}{p-1}.
 \ee
 We already see here the appearance of the condition $p>p_{JL}$: for $2^*-1<p<p_{JL}$, the asymptotic \fref{slowedecay} is {\it false} and would instead include oscillations\footnote{a simple way of seeing this is to remark that $\gamma$ given by \fref{defgamma} is complex valued.}, see for example \cite{chinese}.\\ 
 We may now compute the kernel of the powers of $\Lt$ through the iterative scheme 
 \be
 \label{deft}
 \Lt \Phi_{k+1,+}=\Phi_{k,+}, \ \ \Phi_{0,+}=\left|\begin{array}{ll} \Lambda Q\\0\end{array}\right., \ \ \Lt \Phi_{k+1,-}=\Phi_{k,-}, \ \ \Phi_{0,-}=\left|\begin{array}{ll} 0\\Q\end{array}\right.
 \ee 
which display a non trivial tail at infinity:
 \be
 \label{tailtk}
 J^k\Phi_{k,+}\sim \left|\begin{array}{ll} c_{k,+}y^{2k-\gamma}\\0\end{array}\right.,  \ \  J^k\Phi_{k,-}\sim \left|\begin{array}{ll} 0\\c_{k,-}y^{2k-\frac{2}{p-1}}\end{array}\right. \ \mbox{for}\ \ y\gg 1.
 \ee 
 Note in passing that the positivity of $L_+$ is equivalent to $$\Lambda Q>0$$ and implies with $L_-Q=0$ the factorization 
 \be
 \label{cbejibjeibeibiebe}
 L_\pm=A_\pm^*A_\pm,\ \ A_+=-\pa_y+\pa_y(\log \Lambda Q), \ \ A_-=-\pa_y+\pa_y(\log Q)
 \ee which simplifies the resolution of $\Lt u=f$ in the radial sector.\\
 
\noindent \underline{\emph{(ii) Tail dynamics}}. 
 We now implement the approach developed in \cite{RaphRod}, \cite{MRR}, \cite{RSc2} and claim that $(\Phi_{k,\pm})_{k\geq 1}$ correspond to unstable directions which can be excited in a universal way to produce the type II blow up solutions. To see this, let us look for a slowly modulated solution to \fref{eqselfsimilar} of the form $v(s,y)=Q_{b(s),a(s)}(y)$ with 
 \be
 \label{defqbintroab}
 b=(b_1,\dots,b_{L_+}),\ \ a=(a_1,\dots,a_{L_-})
 \ee
 \be
 \label{defqbintro}
  \ \ Q_{b,a}=Q(y)+\sum_{k=1}^{L_+}b_k\Phi_{k,+}(y)+\sum_{k=1}^{L_-}a_k\Phi_{k,-}(y)+\sum_{k=2}^{L_\pm+2}S_{k,\pm}(y,a,b)
 \ee
 where we expect the a priori bounds 
 \be
 \label{bjvbbvebebebi}
 b_k\sim b_1^k, \ \ |a_k|\leq b_1^{k+\frac{\alpha}{2}},
 \ee and 
 the improved decay estimates $$|S_{k,+}(y)|\lesssim b_1^ky^{2(k-1)-\gamma}, \ \ |S_{k,-}(y)|\lesssim b_1^{k+\frac\alpha 2}y^{2(k-1)-\frac{2}{p-1}},$$ so that $S_k$ is in some sense homogeneous of degree $k$ in $b_1$ but decays better than $\Phi_k$. The key point is that {\it this improved decay is possible for a specific regime of the  universal dynamical system driving the modes $(b_k)_{1\leq i\leq L_+}\times (a_k)_{1\leq k\leq L_-}$}: this is the tail computation. In particular, the improved decay \fref{defqbintroab} for the $a_k$ parameters is in agreement with the worst decay \fref{tailtk} of $\Phi_{k,-}$, and we bootstrap a regime where the influence of the $a$ terms -i.e. the phase- is of lower order.\\
 Let us now illustrate the tail dynamics. We inject the decomposition \fref{defqbintro} into \fref{eqselfsimilar} and choose the law, i.e. ODE, for $((a_k)_s,(b_k)_s)$ which cancels the leading order term at infinity:\\
  
 \noindent \underline{$O(b_1)$}. We cannot adjust the law of $b_1$ for the first term and obtain from \fref{eqselfsimilar} the equation $$b_1\left(\Lt \Phi_{1,+}-\left|\begin{array}{ll}\Lambda Q\\0\end{array}\right.\right)=0, \ \ \Phi_{1,+}\sim \left|\begin{array}{ll} 0\\ \frac{c_{1,+}}{y^{\gamma-2}}\end{array}\right.\ \ \mbox{as}\ \ y\to +\infty.$$
  \underline{$O(a_1)$}. We similarly cannot adjust the law of $a_1$ for the first term and obtain from \fref{eqselfsimilar} the equation $$a_1\left(\Lt \Phi_{1,-}-\left|\begin{array}{ll}0\\Q\end{array}\right.\right)=0, \ \ \Phi_{1,-}\sim \left|\begin{array}{ll} \frac{c_{1,-}}{y^{\frac{2}{p-1}-2}}\\0\end{array}\right.\ \ \mbox{as}\ \ y\to +\infty.$$
 
 \noindent \underline{$O(b_1^2,b_2)$}. We consider the imaginary part and obtain $$(b_1)_s\Phi_{1,+}+b^2_1\Lambda  \Phi_{1,+}-b_2\Lt \Phi_{2,+}-\Lt S_{2,+}=b_1^2NL_1(\Phi_{1,+},Q)+{\rm lot}$$  where $NL_1(T_1,Q)$ corresponds to nonlinear interaction terms, while the lower order terms come from neglecting some
 additional contributions which arise after the use of the a priori bounds \fref{bjvbbvebebebi}. When considering the far away tail \fref{tailtk}, we have for $y$ large, $$\Lambda \Phi_{1,+}\sim \left(\frac2{p-1}-(\gamma-2)\right)\Phi_{1,+}=\left(2-\alpha\right) \Phi_{1,+}, \ \ \Lt \Phi_{2,+}=\Phi_{1,+}$$ and thus $$(b_1)_s\Phi_{1,+}+b^2_1\Lambda  \Phi_{1,+}-b_2\Lt \Phi_{2,+}\sim ((b_1)_s+\left(2-\alpha\right)b_1^2-b_2)\Phi_{1,+},$$ and hence the leading order growth for $y$ large is cancelled by the choice 
 $$ (b_1)_s+\left(2-\alpha\right)b_1^2-b_2=0.
 $$
  We then solve for $$\Lt S_{2,+}=b^2_1(\Lambda\Phi_{1,+}-(2-\alpha)\Phi_{1,+})-NL(\Phi_{1,+},Q)$$ and check that the far away tail is improved: $$|S_{2,+}|\ll b_1^2 y^{2-\gamma}\ \ \mbox{for}\ \ y\gg1.$$
  
  \noindent \underline{$O(b_1a_1,a_2)$}. We now consider the real part  and obtain to leading order $$(a_1)_s\Phi_{1,-}+a_1b_1\Lambda  \Phi_{1,-}-a_2\Lt \Phi_{2,-}-\Lt S_{2,-}=a_1b_1NL_1(\Phi_{1,+},Q)+{\rm lot}.$$  When considering the far away tail \fref{tailtk}, we have for $y$ large, $$\Lambda \Phi_{1,-}\sim \left[\frac2{p-1}-\left(\frac{2}{p-1}-2\right)\right]\Phi_{1,-}=2\Phi_{1,-}, \ \ \Lt \Phi_{2,-}=\Phi_{1,-}$$ and thus $$(a_1)_s\Phi_{1,-}+b_1a_1\Lambda  \Phi_{1,-}-a_2\Lt \Phi_{2,-}\sim ((a_1)_s+2b_1a_1-a_2)\Phi_{1,-},$$ and hence the leading order growth for $y$ large is cancelled by the choice 
 $$
 (a_1)_s+2b_1a_1-a_2=0.
 $$
  We then solve for $$\Lt S_{2,-}=a_1b_1(\Lambda\Phi_{1,-}-2\Phi_{1,-})-NL(\Phi_{1,-},Q)$$ and check that the far away tail is improved: $$|S_{2,-}|\ll a_1b_1 y^{-\frac{2}{p-1}}\ \ \mbox{for}\ \ y\gg1.$$
  
  \noindent \underline{$O(b_1^{k+1},b_{k+1})$}. At the $k$-th iteration, we obtain an elliptic equation of the form: $$(b_k)_s\Phi_{k,+}+b_1b_k\Lambda  \Phi_{k,+}-b_{k+1}\Lt \Phi_{k,+}-\Lt S_{k+1,+}=b_1^{k+1}NL_k(\Phi_{1,+},\dots, \Phi_{k,+},Q)+\ \ \mbox{lot}.$$ We have from \fref{tailtk} for tails: $$\Lambda \Phi_{k,+}\sim (2k-\alpha)\Phi_{k,+}$$ and therefore:
  $$(b_{k})_s\Phi_{k,+}+b_1b_{k}\Lambda \Phi_{k,+}-b_{k+1}\Lt\Phi_{k+1}\sim ((b_k)_s+(2k-\alpha)b_1b_k-b_{k+1})\Phi_{k,+}.$$ The cancellation of the leading order growth occurs for $$
  (b_{k})_s+(2k-\alpha)b_1b_{k}-b_{k+1}=0.$$ We then solve for the remaining $S_{k+1,+}$ term and check that $S_{k+1,+}\lesssim b_1^{k+1}y^{2k-\gamma}$ for y large.\\
  
    \noindent \underline{$O(b_1a_k,a_{k+1})$}. We obtain along similar lines: $$(a_k)_s\Phi_{k,-}+b_1a_k\Lambda  \Phi_{k,-}-a_{k+1}\Lt \Phi_{k,-}-\Lt S_{k+1,-}=b_1^{k}a_1NL_k(\Phi_{1,-},\dots, \Phi_{k,-},Q)+\ \ \mbox{lot}.$$ We have from \fref{tailtk} for tails: $$\Lambda \Phi_{k,-}\sim 2k\Phi_{k,-}$$ and therefore:
  $$(a_{k})_s\Phi_{k,-}+b_1a_{k}\Lambda \Phi_{k,-}-a_{k+1}\Lt\Phi_{k+1}\sim ((a_k)_s+2kb_1a_k-a_{k+1})\Phi_{k,-}.$$ The cancellation of the leading order growth occurs for $$
  (a_{k})_s+2kb_1a_{k}-a_{k+1}=0.$$ We then solve for the remaining $S_{k+1,-}$ term and check that $S_{k+1,-}\lesssim b_1^{k+1}y^{2k-\frac{2}{p-1}}$ for y large. Note that we neglected here further nonlinear terms in $a$ since $a$ will turn out to be lower order in the regime\footnote{for example $|a_1b_1|\sim b_1^{2+\frac{\alpha}{2}}$ but $a_1^2\lesssim b_1^{2+\alpha}$.} \fref{bjvbbvebebebi}.\\

 \noindent \underline{\emph{(iii) The universal system of ODE's}}. The above approach leads to the universal system of ODE's which we stop after the $(L_+)$-th iterate:
 \be
\label{systdynfundintro}
\left\{\begin{array}{llll}(b_k)_s+\left(2k-\alpha\right)b_1b_k-b_{k+1}=0,  \ \ 1\leq k\leq L_+, \ \ b_{L_++1}\equiv 0,\\
(a_k)_s+2kb_1a_k-a_{k+1}=0,  \ \ 1\leq k\leq L_-, \ \ a_{L_-+1}\equiv 0,\\
-\lsl=b_1,\ \ \gamma_s=a_1,\\
\frac{ds}{dt}=\frac1{\l^2}.
\end{array}\right.
\ee
Unlike the critical case, there is no further logarithmic correction to take into account. The system \fref{systdynfundintro} can be solved in a closed form, and a set of explicit solutions is given by 
\be
\label{covbeibveibvieb}
\left\{\begin{array}{ll} b_j^{e}(s)=\frac{c_j}{s^j} \ \ 1\leq j\leq L_+\\ a_j^e(s)=0, \ \ 1\leq j\leq L_-
\end{array}\right.,  \ \ s>s_0>0,
\ee
 where $$\left\{\begin{array}{lll}c_1=\frac{\ell}{2\ell-\alpha},\\
c_{j+1}=-\frac{\alpha(\ell-j)}{2l-\alpha}c_j, \ \ 1\leq j\leq \ell-1,\\
c_j=0, \ \ j\geq \ell+1
\end{array}\right.,\ \ \ell\in \Bbb N^*, \ \ \ell>\frac{\alpha}{2}.
$$
In the original time variable $t$, this produces $\lambda(t)$ vanishing in finite (blow up) time $T$ with: $$\l(t)\sim (T-t)^{\frac \ell\alpha}.$$ Moreover, the linearized flow of \fref{systdynfundintro} near this solution is explicit and displays $\ell-1$ unstable directions in $b$ and $k_\ell$ unstable directions in $a$, see Lemma \ref{lemmalinear} and Lemma \ref{lemmaak}. Note that $\ell>\frac{\alpha}{2}>1$ and hence type II is always unstable\footnote{On the contrary, the energy critical case treated in \cite{RSc1}, \cite{RSc2} formally corresponds to $\alpha=1$, and hence $\ell=1$ is admissible and generates a {\it stable} type II regime.}.\\

\noindent \underline{\emph{(iv). Decomposition of the flow and modulation equations}}. Let then the approximate solution $Q_{b,a}$ be given by \fref{defqbintro} which by construction generates an approximate solution to the renomalized flow \fref{eqselfsimilar}:
$$\Psi = \pa_sQ_{b,a} -i\Delta Q_{b,a}+ b_1\Lambda Q_{b,a}+ia_1Q_{b,a}-Q_{b,a}|Q_{b,a}|^{p-1}=\rm{Mod}(t)+O(b_1^{2L_++2})$$ where the modulation equation term is
roughly of the form:
$$\rm{Mod}(t) = \sum_{k=1}^{L_+}\left[(b_k)_s+(2k-\alpha)b_1b_k-b_{k+1}\right]\Phi_{k,+}+\sum_{k=1}^{L_-}\left[(a_k)_s+2kb_1a_k-a_{k+1}\right]\Phi_{k,-}.$$ We localize $Q_{b,a}$ in the zone $y\leq B_1$ to avoid the irrelevant growing tails for $y\gg \frac{1}{\sqrt{b_1}}$. We then pick initial data of the form $$u_0(y)=Q_{b,a}(y)+\e_0(y), \ \ \|\e_0(y)\|\ll 1$$ in some suitable sense and with $(b(0),a(0))$ chosen to be close to the date for the exact solution \fref{covbeibveibvieb}. By a standard modulation argument, we dynamically introduce a modulated decomposition of the flow 
\bea
\label{decomntoro}
\nonumber u(t,r)&=&(Q_{b(t),a(t)}+\e)\left(t,\frac{r}{\lambda(t)}\right)e^{i\gamma(t)}\\
&=&\left[(Q_{b(t),a(t)})\left(t,\frac{r}{\lambda(t)}\right)+w(t,r)\right]e^{i\gamma(t)}
\eea
where the $L_++L_-+2$ modulation parameters $(b(t),\l(t),a(t),\gamma(t))$ are chosen in order to manufacture the orthogonality conditions: 
\be
\label{vnekoenono}
(\e(t),\Lt^k\Phi_{M,+})=0, \ \ 0\leq k\leq L_+, \ \ (\e(t),\Lt^k\Phi_{M,-})=0, \ \ 0\leq k\leq L_-.
\ee
Here $\Phi_{M,\pm}(y)$ are some fixed directions depending on a large constant $M$, generating an approximation of the kernel of the powers of $\Lt$, see section \ref{sectionsetup}. This orthogonal decomposition, which for each fixed time t directly follows from the implicit function theorem, now allows us to compute the modulation equations governing the parameters $(b(t),\l(t),a(t),\gamma(t))$. The $Q_{b,a}$ construction produces the expected modulation equations\footnote{see Lemma \ref{modulationequations}.}:
\bea
\label{cnbecbnoenoe}
\nonumber &&\left|\lsl+b_1\right|+|\gamma_s-a_1|+\sum_{i=1}^{L_+}\left|(b_i)_s+(2i-\alpha)b_1b_i-b_{i+1}\right|+\sum_{i=1}^{L_-}\left|(a_i)_s+2ib_1a_i-a_{i+1}\right|\\
&\lesssim & \|\e\|_{loc}+b_1^{L_++\frac32}
\eea where $\|\e\|_{loc}$ measures a {\it spatially localized} norm of the radiation $\e$.\\

 \noindent \underline{\emph{(v). The mixed energy/Morawetz estimate.}} According to \fref{cnbecbnoenoe}, we need to show now that local norms of $\e$ are under control and do not perturb the dynamical system \fref{systdynfundintro}. This is achieved via a high order mixed energy/Morawetz type estimates which in particular provide control
 of the high order Sobolev norms adapted to the linear flow and based on the powers of the linear operator $\Lt$. In turn, the orthogonality conditions \fref{vnekoenono} are sharp enough to ensure the Hardy type coercivity of the {\it iterated} matrix operator:
 $$\mathcal E_{s_+}=(J\Lt\Lt^{k_+L_+}\e,\Lt^{k_++L_+}\e)\gtrsim  \int|\nabla^{s_+}\e|^2+\int \frac{|\e|^2}{1+y^{2s_+}}$$ where $s_+$ is given by \fref{calculimportant}. Here the factorization \fref{cbejibjeibeibiebe} will help simplify the argument. As stated above we can dynamically control this norm thanks to an energy estimate {\it seen on the linearized equation in original variables}, i.e., by working with $w$ in \fref{decomntoro} and not $\e$. This strategy was initiated in \cite{RS}, \cite{RaphRod}, \cite{MRR}, \cite{RSc2}. The outcome is an estimate of the form 
 \be
 \label{vnknvornror}
 \frac{d}{ds}\left\{\frac{\matchal E_{s_+}+b_1\matchal M}{\l^{2(s_+-s_c)}}\right\}\lesssim \frac{b_1^{2L_++1+\delta(d,p)}}{\l^{2(s_+-s_c)}}, \ \ \delta(d,p)>0
 \ee where the right hand side is controlled by the size of the error in the construction of the approximate blow up profile. Here $\mathcal M$ corresponds to an additional Morawetz type term needed to control $L^2$ terms sharply localized on the soliton core. A remarkable algebraic fact is that the corresponding virial type quadratic form is coercive thanks to the fact that $L_->L_+>0$ in $\dot{H}^1$, see \fref{positivityh}. Hence the estimate \fref{vnknvornror} belongs to the class of mixed energy/Morawetz estimates 
 introduced in \cite{RaphRod}, which have been particularly efficient in blow up settings, see in particular \cite{MMaR1}, and which completely avoids the use of spectral tools. We integrate \fref{vnknvornror} in time using the smallness $$b_1|\matchal M|\leq \frac 1{10}\matchal E_{s_+}$$ to estimate in the regime $b_1\sim b_1^e$ given by \fref{covbeibveibvieb}: 
 \be
\label{lossylog}
\int|\nabla^{s_+}\e|^2+\int \frac{|\e|^2}{1+y^{2s_+}}\lesssim \mathcal E_{s_+}\lesssim b_1^{2L_++\delta(d,p)}, \ \ \delta(d,p)>0,
\ee
which is good enough to control local norms in $\e$ and close the modulation equations \fref{cnbecbnoenoe}.\\

 \noindent \underline{\emph{(vi). Control of the nonlinear term and low Sobolev norms.}}
The control of high Sobolev norms alone is however {\it not enough} to control the {\it nonlinear term} and we need a low Sobolev estimate. The bounds 
following from the conservation laws would be too weak at this point, and we will need the fundamental observation that $$s_c=\frac d2-\frac2{p-1}<\frac d2\ll s_+,$$ while 
$\dot{H}^{\frac d2}$ almost embeds into $L^{\infty}$, and hence the space $$\dot{H^{\sigma}}\cap \dot{H}^{s_+},\ \ s_c<\sigma<\frac d2<s_+$$ is an algebra. To close the nonlinear term, it therefore suffices to close an estimate for the low Sobolev norm $\|\nabla^{\sigma}\e\|^2_{L^2}$ for some $s_c<\sigma<\frac d2.$ Let us insist that it is essential that this norm is {\it above scaling}, any norm of $\e$ below scaling  blows up. We then exhibit an energetic Lyapunov functional with the dynamical estimate: 
$$\frac{d}{ds}\left\{\frac{\|\nabla^{\sigma}\e\|_{L^2}^2}{\l^{2(\sigma-s_c)}}\right\}\lesssim \frac{b_1}{\l^{2(\sigma-s_c)}}\left[b_1^{\delta(d,p)}\|\nabla^{\sigma}\e\|_{L^2}^2+b_1^{\sigma-s_c+\delta(d,p)}\right]$$
which upon integration in time yields a bound 
$$ \|\nabla^{\sigma}\e\|_{L^2}^2\lesssim b_1^{\sigma-s_c+\delta(d,p)}, \ \ \delta(d,p)>0$$
which is enough to control of the nonlinear term.\\

\noindent \underline{\emph{(vii). Construction of the $\matchal C^0$ manifold.}}
The above scheme designs a bootstrap regime which traps blow up solutions with speed \fref{Pexciitedlaw}. According to Lemma \ref{lemmalinear}, Lemma \ref{lemmaak}, such a regime displays $k_\ell+\ell-1>0$ unstable modes and one therefore needs to build the associated stable manifold. We do this in a classical way using a Brouwer fixed point type argument as in \cite{CMM}, and the proof of Theorem \ref{thmmain} follows.\\

 \noindent \underline{\emph{(viii). Relation with the decomposition \fref{laedignorderexapnsion}.}} Let us conclude this introduction by making a link between the above construction and the decomposition of previously known type II blow up solutions for the heat equation \fref{laedignorderexapnsion}. For this, let us consider the two changes of variables: 
 \be
 \label{decomputfioef}
 u(t,x)=\frac{1}{\l^{\frac 2{p-1}}}v(s,y)e^{i\gamma(t)}=\frac{1}{\mu^{\frac 2{p-1}}}V(\tau,z)e^{i\gamma(t)}
 \ee
 with$$\left\{\begin{array}{ll} y=\frac{x}{\l(t)}, \ \ \frac{ds}{dt}=\frac{1}{\l^2}, \ \ \l(t)=(T-t)^{\frac{\ell}{\alpha}}\\  z=\frac{x}{\mu(t)}, \ \ \frac{d\tau}{dt}=\frac{1}{\mu^2}, \ \ \mu=\sqrt{T-t}\end{array}\right.$$ where the second decomposition corresponds to the self-similar variables \fref{laedignorderexapnsion} in the approach of Herrero-Velasquez:
 \be
 \label{nebvneovnoneo}
 V(\tau,z)=R(z)+e^{-\l_j \tau}\psi_j(z)+\rm{lot}
 \ee where $\l_j$ is the $j$-th, $j=j(\ell)$, strictly positive eigenvalue with eigenvector $\psi_j$ of the linearized operator $H_R$: $$H_R=-\Delta -i\Lambda -\frac{pc_{\infty}^{p-1}}{r^2}.$$ We now show how our construction and estimates for the renormalized $v$ imply the decomposition \fref{nebvneovnoneo} 
 in the far field in renormalized variables.\\
 We compute $$b_1\sim -\l\l_t\sim (T-t)^{\frac{2\ell}{\alpha}-1}$$ and thus $$z=\frac{\l}{\mu}y=(T-t)^{\frac{\ell}{\alpha}-\frac 12}z\sim \sqrt{b_1}y.$$ We now estimate the leading order term in the decomposition \fref{defqbintro} in the zone $$z\geq 1\ \ \mbox{i.e.}\ \ y\geq B_0=\frac{1}{\sqrt{b_1}}$$  by neglecting:
 \begin{itemize}
 \item  the $a$ terms which are lower order, see \fref{controlunstable}, \fref{improvebounderrorsak};
 \item  the $S$ terms which decay better and hence are lower order for $z\geq 1$;
 \item the $b_k$ terms for $k\geq \ell+1$ which are the stable modes and also turn out to be lower order, see \fref{improvebounderrors}.
 \end{itemize}
Using $$b_k\sim b_k^e\sim \frac{1}{s^k}\sim b_1^k$$ 
this gives the far away development: $$Q_{b,a}\sim Q+\sum_{k=1}^{\ell}b_k\Phi_{k,+}(y)+{\rm lot}= R+ \sum_{k=1}^{\ell}c_kb_1^ki^ky^{2k-\gamma}+{\rm lot}=R(y)+b_1^{\frac\gamma2}\sum_{k=1}^{\ell}c_ki^kz^{2k-\gamma}+{\rm lot},$$  and hence using \fref{decomputfioef} and the fact that $R$ is homogeneous: $$V(\tau,z)=\left(\frac{\mu}{\l}\right)^{\frac 2{p-1}}\left[R(y)+b_1^{\frac\gamma2}\sum_{k=1}^{\ell}c_ki^kz^k+{\rm lot}\right](z)=R(z)+b_1^{\frac\gamma 2}\left(\frac{\mu}{\l}\right)^{\frac 2{p-1}}\left[\sum_{k=1}^{\ell}c_ki^kz^{2k-\gamma}\right]+{\rm lot}.$$ We now compute $$b_1^{\frac\gamma2}\left(\frac{\mu}{\l}\right)^{\frac 2{p-1}}\sim\frac{(T-t)^{\frac\gamma 2\left[\frac{2\ell}{\alpha}-1\right]}}{(T-t)^{\frac1{p-1}\left[\frac{2\ell}{\alpha}-1\right]}}= e^{-\l_{\ell}\tau}, \ \ \l_\ell=\ell-\frac{\alpha}{2},$$ and obtain 
the leading order decomposition in the {\it far away} zone: $$V(\tau,z)=R(z)+e^{-\l_\ell \tau} \psi_\ell(z)+{\rm lot}$$ with $$\psi_\ell(z)=\sum_{k=1}^{\ell}c_ki^kz^{2k-\gamma}, \ \ \l_\ell=\ell-\frac{\alpha}{2}.$$ Now a simple computation, see Appendix \ref{sectionapprnedix}, reveals that $(\l_\ell,\psi_{\ell})$ is an eignevalue-eigenvector
pair for the linearized operator close to the singular self similar solution $R$. The exact same computation can be done for the heat equation, and the conclusion is the following: the {\it singular} decomposition \fref{laedignorderexapnsion} in self similar variables is exactly the long range expansion $y\geq \frac{1}{\sqrt{b_1}}$ of the {\it regular} decomposition \fref{decomntoro} in the regime \fref{Pexciitedlaw}.\\

This paper is organized as follows. In section \ref{linearrr}, we collect the main linear properties on the linearized matrix operator $\Lt$ and its iterates. In section \ref{sectiontwo}, we construct the approximate self-similar solutions $Q_{b,a}$ and obtain sharp estimates on the error term $\Psi$. We also exhibit an explicit solution to the dynamical system \fref{systdynfundintro} and show that it possesses $(\ell+k_\ell-1)$ directions of instability. In section \ref{sectionboot}, we set up the bootstrap argument, Proposition \ref{bootstrap}. In section \ref{sectionmonoton}, we construct the main Lyapunov functionals which rely on a mixed energy/Morawetz computation. In section \ref{sectionfour} we close the bootstrap bounds and build the $\mathcal C^0$ manifold of data satisfying the conclusions of Theorem \ref{thmmain}.\\

\noindent {\bf Acknowledgments}.  Part of this work was completed while P.R. was visiting the MIT, Boston, and the Institut du Non Lineaire, Nice, and he would like to thank both institutions for their kind hospitality. P.R and F.M were supported by the senior ERC grant BLOWDISOL. I.R. was supported in part by the NSF grant DMS-1001500.



\section{The linearized Hamiltonian and its iterates}
\label{linearrr}

We collect in this section the main properties of the linearized Hamiltonian close to $Q$, which are at the heart of both the construction of the approximate blow up profile and the derivation of coercivity properties required for the high Sobolev energy estimates.


\subsection{The matrix operator}


By a standard argument, all smooth radially symmetric solutions to 
\be\label{eqaoltions}
\Delta \phi+\phi^p=0
\ee are dilates of a given normalized ground state profile $$\phi(r)=\l^{\frac 2{p-1}}Q(\l r), \ \ \left\{\begin{array}{ll} \Delta Q+Q^p=0\\Q(0)=1\end{array}\right. .$$ 
Let us recall the following Lemma which follows directly from the results in \cite{chinese}, \cite{KaS}:

\begin{lemma}[Structure of the ground state and positivity of $L_\pm$]
\label{lemmasoliton}
Let $p>p_{JL}$, then:\\
{\em (i) Development of the solitary wave profile for $y\geq 1$}: there holds 
\be
\label{develpopemtn}
\forall k\geq 0, \ \ \pa_y^kQ=\pa_y^k\left[R+\frac{a_1}{y^{\gamma}}\right]+O\left(\frac{1}{y^{\gamma+g +k}}\right),\  \ a_1\neq 0,\ \ g>2
\ee
where $R$ is given by \fref{selfsimilarsolutions}.\\
{\em (ii) Degeneracy}: 
\be
\label{estkeydegen}
\Lambda Q=\frac{c}{y^{\gamma}}+O\left(\frac{1}{y^{\gamma+g}}\right) \ \ \mbox{as}\ \ y\to +\infty, \ \ c\neq 0.
\ee
{\em (iii) Positivity of $L_\pm$}: 
\be
\label{positivityh}
L_->L_+> -\Delta +\frac{1}{|y|^2}\left[c_p-\frac{(d-2)^2}{4}\right]>0
\ee
 for some $c_p>0$.\\
 {\em (iv) Positivity of $\Lambda Q$}: 
 \be
 \label{envnoevne}
 \Lambda Q>0.
 \ee
\end{lemma}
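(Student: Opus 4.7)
The plan is to handle the four claims in the order (i), (ii), (iv), (iii), each building on the previous ones and drawing on the ODE/oscillation results of \cite{chinese, KaS} whenever needed. For (i), I would proceed by standard asymptotic analysis on $\Delta Q + Q^p = 0$ at infinity. Since $Q$ is known to decay like $R(y) = c_\infty y^{-2/(p-1)}$, write $Q = R + \phi$ and linearize around $R$: the leading part of the equation for $\phi$ reads
\[ \Delta \phi + \frac{pc_\infty^{p-1}}{y^2}\phi = \mathrm{NL}(\phi), \]
whose homogeneous solutions in radial Euler form are $y^{-\gamma_\pm}$ with $\gamma_\pm = \frac{1}{2}[(d-2)\pm\sqrt{\mathrm{Discr}}]$. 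Matching to the decay of $Q$ selects the slowly decaying root $\gamma_- = \gamma$, so $\phi = a_1 y^{-\gamma} + \psi$ with a generically nonvanishing integration constant $a_1 \neq 0$. The next-order source comes from $\binom{p}{2}R^{p-2}\phi^2 \sim y^{-2\gamma - 2(p-2)/(p-1)}$ and from the linear $\psi$-forcing by $R$; using $\alpha = \gamma - 2/(p-1) > 2$ for $p > p_{JL}$, one obtains $\psi = O(y^{-\gamma-g})$ for some $g > 2$. Differentiating the ODE and iterating produces the pointwise bounds on $\partial_y^k Q$.

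For (ii), a short direct computation suffices. The homogeneity of $R$ at the scaling-critical exponent yields $\Lambda R \equiv 0$, since $\Lambda y^{-2/(p-1)} = \bigl(\tfrac{2}{p-1} - \tfrac{2}{p-1}\bigr) y^{-2/(p-1)} = 0$. Applying $\Lambda$ to the expansion from (i) and using $\Lambda y^{-\gamma} = \bigl(\tfrac{2}{p-1}-\gamma\bigr) y^{-\gamma} = -\alpha y^{-\gamma}$ gives
\[ \Lambda Q = -a_1 \alpha \, y^{-\gamma} + O(y^{-\gamma-g}), \qquad c := -a_1\alpha \neq 0, \]
which is the claim.

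For (iv), scaling invariance of $\Delta Q + Q^p = 0$ gives $L_+ \Lambda Q = 0$, while $\Lambda Q(0) = \tfrac{2}{p-1}Q(0) > 0$ and $\Lambda Q$ has a definite sign at infinity by (ii). I would then argue by a shooting/Sturm oscillation argument: if $\Lambda Q$ vanished at some $y_0 \in (0,\infty)$, it would be a sign-changing zero mode of $L_+$ with matching decay at infinity, forcing $L_+$ to have at least two negative directions, which is excluded for the ground-state linearization in the regime $p > p_{JL}$. This is the standard characterization recalled in \cite{chinese, KaS}, and I would quote it rather than reprove it.

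Finally, for (iii), the equality $L_- - L_+ = (p-1)Q^{p-1} > 0$ is immediate, so the work is in bounding $L_+$ from below. The factorization $L_+ = A_+^* A_+$ recalled in \fref{cbejibjeibeibiebe}, legitimated by (iv), gives $L_+ \ge 0$; the quantitative improvement over the critical Hardy potential hinges on the strict inequality $p c_\infty^{p-1} < (d-2)^2/4$, equivalent to $\mathrm{Discr} > 0$, i.e.\ $p > p_{JL}$. Combining the far-field expansion $p Q^{p-1} \sim p c_\infty^{p-1}/y^2$ furnished by (i) with boundedness of $Q^{p-1}$ near the origin, a Sturm--Liouville oscillation comparison as in \cite{chinese, KaS} upgrades nonnegativity to the uniform lower bound $L_+ > -\Delta + \frac{1}{|y|^2}\bigl(c_p - \frac{(d-2)^2}{4}\bigr)$ for a suitable $c_p > 0$. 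I expect this last step, namely extracting a uniform gap with respect to the full critical Hardy operator, to be the main obstacle, but it is precisely the place where the $p > p_{JL}$ threshold enters and is the content of the cited positivity results.
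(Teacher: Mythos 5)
The computations in (ii) and the general scaffolding for (iii) and (iv) are fine, but there is a genuine gap in (i): you simply assert that $a_1$ is a ``generically nonvanishing integration constant.'' That is not an argument. The ground state $Q$ is the \emph{specific} solution of $\Delta Q+Q^p=0$, $Q(0)=1$, $Q'(0)=0$, so $a_1$ is a fixed number determined by this boundary value problem and could in principle vanish. Nothing in your asymptotic matching rules it out, and $\Lambda Q>0$ does not help either: if $a_1=0$, then $\Lambda Q\sim y^{-\gamma-\alpha}+y^{-\gamma_2}$ is still positive and decaying, so (iv) is perfectly consistent with $a_1=0$. The nonvanishing of $a_1$ is in fact the most delicate part of the lemma, and the paper proves it by a contradiction argument that uses (iii): if $a_1=0$ then $\Lambda Q$ decays fast enough that $\int|\nabla\Lambda Q|^2+\int\frac{|\Lambda Q|^2}{y^2}<\infty$; since $L_+\Lambda Q=0$ by scaling invariance, a cutoff computation $\int L_+(\chi_R\Lambda Q)\cdot(\chi_R\Lambda Q)\lesssim R^{-\eta}$ combined with the \emph{strict} Hardy lower bound for $L_+$ from (iii) forces $\int\frac{(\chi_R\Lambda Q)^2}{y^2}\lesssim R^{-\eta}$, which contradicts $\Lambda Q\not\equiv 0$. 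Your ordering (i)$\to$(ii)$\to$(iv)$\to$(iii) prevents you from invoking (iii) at the point where you need it; the paper does (iii) first (citing \cite{KaS}), then imports the expansion from \cite[Thm.\,2.5]{chinese} with error $O(y^{-\gamma-\alpha}+y^{-\gamma_2})$ and $g=\min\{\alpha,\sqrt{\rm Discr}\}>2$ (the ${\rm Discr}>4$ hypothesis of \fref{plarge} is used precisely here), and only then closes $a_1\neq 0$ via the positivity of $L_+$. You must either reorder to make (iii) available, or supply an independent argument for $a_1\neq 0$; at present the claim is unproved.

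A secondary remark: your error analysis for (i) only tracks the nonlinear source of order $y^{-2\gamma-2(p-2)/(p-1)}$ and the assertion $\alpha>2$, but the second fundamental solution $y^{-\gamma_2}$ of the linearized Euler equation also contributes an error term; without the hypothesis ${\rm Discr}>4$ (hence $\gamma_2-\gamma>2$) you cannot conclude $g>2$. This is not fatal to your strategy but it should be stated.
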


\begin{proof}[Proof of Lemma \ref{lemmasoliton}] The positivity \fref{positivityh} for $p>p_{JL}$ and the associated pointwise lower bound follows from a non trivial Sturm-Liouville oscillation argument, see \cite{KaS}. Now from \cite{chinese}, Thm 2.5, there holds the asymptotic expansion for $p>p_{JL}$ and $y\gg1$: 
\be
\label{ceonvnovneo}
Q(r)=\frac{c_{\infty}}{y^{\frac 2{p-1}}}+\frac{a_1}{y^{\gamma}}+O\left(\frac{1}{y^{\gamma+\alpha}}+\frac{1}{y^{\gamma_2}}\right)
\ee 
where $$\gamma_2=\frac{d-2+\sqrt{{\rm Discr}}}2.$$ We recall that $\alpha>2$ and from \fref{plarge}: $$\gamma_2-\gamma=\sqrt{\rm Discr}>2$$ and thus
\be
\label{developppemtnzero}
Q=R+\frac{a_1}{y^{\gamma}}+O\left(\frac{1}{y^{\gamma+g}}\right)\ \ g=\min\{\alpha,\sqrt{\rm Discr}\}>2.
\ee
The fact that the development \fref{developppemtnzero}  propagates to higher derivatives is now a simple consequence of the $Q$ equation \fref{eqaoltions}, this is left to the reader, and \fref{estkeydegen} follows. We finally claim that $a_1\neq 0$. Indeed, otherwise from \fref{ceonvnovneo}: $$\Lambda Q=O\left(\frac{1}{y^{\gamma+\alpha}}+\frac{1}{y^{\gamma_2}}\right),$$ and then the bounds 
\bee
&&d-3-2\gamma_2=-1-\sqrt{\rm Discr}<-1\\
&&d-3-2\gamma-2\alpha=-1+\sqrt{\rm Discr}-2\alpha=-1+\frac{4}{p-1}-(d-2)<-1
\eee
imply
\be
\label{cbeboebbvoeboevb}
\int |\nabla \Lambda Q|^2+\int \frac{|\Lambda Q|^2}{y^2}\lesssim \int (1+y^{d-1-2-2\gamma-2\alpha}+y^{d-1-2-2\gamma_2})dy<+\infty.
\ee
By scaling invariance, $$L_+\Lambda Q=0.$$ Fix a sufficiently large$R$ and let $\chi_R(y)$ be a smooth cut-off function, equal to one for $0\le y\le R$. We have 
$$
L_+ (\chi_R \Lambda Q)\lesssim \left (\frac {|\nabla \Lambda Q|}y + \frac {|\Lambda Q|}{y^2}\right) {\bf 1}_{y\ge R},
$$
which, combined with \eqref{cbeboebbvoeboevb}, implies
$$
\int L_+ (\chi_R \Lambda Q)\cdot (\chi_R \Lambda Q)\lesssim \frac 1{R^\eta}
$$
for some strictly positive $\eta$. On the other hand,  by strict positivity \fref{positivityh} of $L_+$,
$$
\int L_+ (\chi_R \Lambda Q)\cdot (\chi_R \Lambda Q)\ge c \int \frac {(\chi_R \Lambda Q)^2}{y^2}\ge C
$$
for some positive constant $C$ {\it independent} of $R$, which follows since $\Lambda Q$ does not vanish identically on any open set. Contradiction.

 \end{proof}

\subsection{Factorization of $L_\pm$} 


The positivity \fref{positivityh} implies\footnote{see \cite{MRR} for a similar structure.} the factorization of $L_\pm$.

\begin{lemma}[Factorization of $L_\pm$]
\label{facotrh}
Let 
\be
\label{defpotetnial}
V_+=\pa_y(\log (\Lambda Q)), \ \ V_-=\pa_y(\log Q)
\ee 
and the first order operators $$A_\pm u=-\pa_yu+V_\pm u, \ \ A_\pm^*u=\frac{1}{y^{d-1}}\pa_y(y^{d-1}u)+V_\pm u,$$ then $$L_\pm=A_\pm^*A_\pm.$$ \end{lemma}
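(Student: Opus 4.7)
The plan is a direct algebraic verification based on the fact that a second order radial operator $L = -\Delta - W$ factorizes as $A^*A$ whenever it admits a strictly positive radial zero mode. The two required zero modes are standard: $L_-Q = 0$ is just the equation $\Delta Q + Q^p = 0$ rewritten, while $L_+\Lambda Q = 0$ follows from the scaling symmetry of this equation (differentiating $\l^{\frac{2}{p-1}}Q(\l y)$ in $\l$ at $\l=1$). The positivity $Q>0$ and $\Lambda Q>0$ -- the latter being \fref{envnoevne} -- is what makes $V_\pm = \partial_y(\log\phi_\pm)$ nonsingular on $(0,\infty)$, with $\phi_+ = \Lambda Q$ and $\phi_- = Q$.

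The computation itself is a one-line Riccati identity. First I would record that the formal $L^2(y^{d-1}dy)$-adjoint of $A = -\partial_y + V$ is indeed $A^*v = \frac{1}{y^{d-1}}\partial_y(y^{d-1}v) + Vv = \partial_yv + \frac{d-1}{y}v + Vv$, which is the definition given in the statement. Then, expanding,
\begin{equation*}
A^*A u = A^*(-\partial_y u + Vu) = -\partial_y^2 u - \frac{d-1}{y}\partial_y u + \Bigl(V' + \frac{d-1}{y}V + V^2\Bigr)u = -\Delta u + \frac{\Delta\phi}{\phi}u,
\end{equation*}
where in the last equality I use $V = \phi'/\phi$ to get $V' + V^2 = \phi''/\phi$, and $V' + \frac{d-1}{y}V + V^2 = \Delta\phi/\phi$.

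Now I specialize. For $\phi = \phi_- = Q$, one has $\Delta Q/Q = -Q^{p-1} = -W_-$, so $A_-^*A_- = -\Delta - W_- = L_-$. For $\phi = \phi_+ = \Lambda Q$, the identity $L_+\Lambda Q = 0$ reads $\Delta(\Lambda Q) = -pQ^{p-1}\Lambda Q$, hence $\Delta(\Lambda Q)/\Lambda Q = -pQ^{p-1} = -W_+$, giving $A_+^*A_+ = -\Delta - W_+ = L_+$. This yields the claim.

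There is essentially no obstacle here: once the zero modes and their positivity are available from Lemma \ref{lemmasoliton}, the factorization is a direct Riccati computation. The only point that requires a brief justification is $L_+\Lambda Q = 0$, which can either be stated as a consequence of differentiating the scaling family of solitons at $\l=1$, or checked by applying $\Lambda$ to $\Delta Q + Q^p = 0$ and using $[\Delta,\Lambda] = 2\Delta$ together with $\Lambda(Q^p) = pQ^{p-1}(\Lambda Q) - \frac{2p}{p-1}Q^p + \frac{2}{p-1}pQ^p$ (the constants cancel in precisely the way required).
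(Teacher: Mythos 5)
Your proof is correct. The paper itself does not spell out an argument for Lemma~\ref{facotrh} --- it states the factorization as a consequence of the positivity \fref{positivityh} and refers to \cite{MRR} for ``a similar structure'' --- so there is nothing in the paper to compare against, and your Riccati/ground-state computation $A^*Au = -\Delta u + (\Delta\phi/\phi)\,u$ applied to $\phi_-=Q$ and $\phi_+=\Lambda Q$ is precisely the standard Darboux factorization one would write down. The two zero-mode facts you invoke ($L_-Q=0$ from the $Q$-equation and $L_+\Lambda Q=0$ from scaling) and the positivity $Q,\Lambda Q>0$ from Lemma~\ref{lemmasoliton} are exactly the ingredients needed to make $V_\pm=\partial_y\log\phi_\pm$ well defined and the computation valid. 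One cosmetic slip in your closing parenthetical: expanding $\Lambda(Q^p)$ directly gives $\frac{2}{p-1}Q^p + y\cdot\nabla(Q^p) = pQ^{p-1}\Lambda Q - 2Q^p$ (the term from the definition of $\Lambda$ is $\frac{2}{p-1}Q^p$, not $\frac{2p}{p-1}Q^p$); combined with $[\Delta,\Lambda]=2\Delta$ one gets $0=\Lambda(\Delta Q+Q^p) = L_+\Lambda Q\cdot(-1)\cdot(\text{sign}) - 2(\Delta Q+Q^p) = \Delta\Lambda Q + pQ^{p-1}\Lambda Q$ as you claim, so the conclusion is unaffected.
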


\begin{remark} The adjoint operators $A_{\pm}^*$ are defined with respect to the Lebesgue measure $$\int_{y>0}(Au)vy^{d-1}dy=\int_{y>0}u(A^*v)y^{d-1}dy.$$ 
\end{remark}

We collect the following estimate on $V_\pm$ which follow from \fref{develpopemtn}:
\bea
 \label{behavoir}
&&V_+=\frac{\pa_y(\Lambda Q)}{\Lambda Q}=\left\{\begin{array}{ll} O(1)\ \ \mbox{as}\ \ y\to 0\\ -\frac{\gamma}y+O\left(\frac1{y^3}\right)\ \ \mbox{as}\ \ y\to +\infty\end{array}\right.,\\
 \label{behavoirvminus}
&&V_-=\frac{\pa_y Q}{ Q}=\left\{\begin{array}{ll} O(1)\ \ \mbox{as}\ \ y\to 0\\ -\frac{2}{(p-1)y}+O\left(\frac1{y^3}\right)\ \ \mbox{as}\ \ y\to +\infty\end{array}\right.,\\
\label{behaviorvzero}
&&Q^{p-1}=\left\{\begin{array}{ll} O(1)\ \ \mbox{as}\ \ y\to 0\\ \frac{c_\infty^{p-1}}{y^2}+O\left(\frac1{y^4}\right)\ \ \mbox{as}\ \ y\to +\infty.\end{array}\right.
\eea
We also estimate from \fref{develpopemtn} with the notations \fref{wlpusmoinus}: for $y\geq 1$,
\be
\label{behaviorvzeroW}
\pa^j_yW_\pm=O\left(\frac{1}{1+y^{2+j}}\right), \ \ j\geq 0.
\ee


\subsection{Inverting $L_+$}


We rewrite 
\be
\label{rewriteau}
A_+u=-\Lambda Q\pa_y\left(\frac{u}{\Lambda Q}\right), \ \ A_+^*u=\frac{1}{y^{d-1}\Lambda Q}\pa_y(y^{d-1}\Lambda Q)
\ee and hence the kernels of $A,A^*$ are explicit: 
\be
\label{efinitei}
\left\{\begin{array}{ll}
A_+u=0\ \ \mbox{on}\ \  \ \ \mbox{iff}\ \  u\in \mbox{Span}(\Lambda Q),\\
A_+^*u=0\ \ \mbox{on}\ \ \ \ \mbox{iff}\ \ u\in \mbox{Span}\left(\frac{1}{y^{d-1}\Lambda Q}\right).
\end{array}\right.
\ee
Hence
\be
\label{kernelh}
L_+u=0\ \ \mbox{on} \ \  \ \mbox{iff}\ \ u\in \mbox{Span}(\Lambda Q,\Gamma)
\ee 
with 
\be
\label{Gammaplus} \Gamma_+(y)=\Lambda Q\int_1^y\frac{dx}{x^{d-1}(\Lambda Q(x))^2}
\ee
which satisfies the Wronskian relation 
\be
\label{wronskiangamma}
\Gamma_+'(\Lambda Q)-\Gamma_+(\Lambda Q)'=\frac{1}{y^{d-1}}.
\ee
We observe the behavior 
\be
\label{behviorgmmaorigin}
\Gamma_+\sim \frac{c}{y^{d-2}}\ \ \mbox{as}\ \ y\to 0, \ \ c\neq 0.
\ee 
Moreover, from \fref{estkeydegen}: $$  \int_1^{+\infty}\frac{dx}{x^{d-1}(\Lambda Q(x))^2}\lesssim \int_1^{+\infty}\frac{dx}{x^{d-1-2\gamma}}<+\infty$$ where we used from \fref{defgamma} $d-1-2\gamma=1+\sqrt{{\rm Discr}}>1$. This implies: $$\Gamma_+\sim \frac{c}{y^{\gamma}}\ \ \mbox{as}\ \ y\to +\infty.$$
The explicit knowledge of the Green's functions allows us to introduce the formal inverse
\be
\label{definvesr}
L_+^{-1}f=-\G_+(y)\int_0^y f\Lambda Qx^{d-1}dx +\Lambda Q(y)\int_{0}^y f\G_+ x^{d-1}dx.
\ee 
The factorization of $L_+$ allows to us to compute $L_+^{-1}$ in an elementary two step process\footnote{this will avoid tracking cancellations in the formula \fref{definvesr} induced by the Wronskian relation \fref{wronskiangamma} when estimating the growth of $L_+^{-1} f$.}:

\begin{lemma}[Inversion of $L_+$]
\label{lemmainversion}
Let f be a $\matchal C^{\infty}$ radially symmetric function and $u=L_+^{-1}f$ be given by \fref{definvesr}, then 
\be
\label{invrsionauf}
A_+u=\frac{1}{y^{d-1}\Lambda Q}\int_0^yf\Lambda Qx^{d-1}dx, \ \ u=-\Lambda Q\int_0^y\frac{A_+u}{\Lambda Q}dx.
\ee
\end{lemma}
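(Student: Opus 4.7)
The plan is to verify both identities by direct computation, exploiting the factorized first-order representations \eqref{rewriteau} of $A_+$ and the Wronskian relation \eqref{wronskiangamma} for $\Gamma_+$. Once one reads $L_+^{-1}=(A_+)^{-1}(A_+^*)^{-1}$, the two displayed formulas are exactly the two steps of this inversion, and the only real content is checking that the boundary constants produced by the integrations vanish, which is forced by the choice of lower endpoint $0$ in \eqref{definvesr}.

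For the first identity, I would start from \eqref{definvesr} and compute $A_+ u = -\Lambda Q\,\partial_y(u/\Lambda Q)$. Writing
$$\frac{u}{\Lambda Q} \;=\; -\frac{\Gamma_+}{\Lambda Q}\int_0^y f\,\Lambda Q\, x^{d-1}\,dx \;+\; \int_0^y f\,\Gamma_+\, x^{d-1}\,dx,$$
differentiation produces two Leibniz boundary terms that are each equal to $f\Gamma_+ y^{d-1}$ and hence cancel, leaving
$$\partial_y\!\left(\frac{u}{\Lambda Q}\right) \;=\; -\,\partial_y\!\left(\frac{\Gamma_+}{\Lambda Q}\right)\int_0^y f\,\Lambda Q\, x^{d-1}\,dx.$$
The Wronskian identity \eqref{wronskiangamma} gives $\partial_y(\Gamma_+/\Lambda Q) = 1/[y^{d-1}(\Lambda Q)^2]$, so multiplying by $-\Lambda Q$ produces exactly $A_+u = \frac{1}{y^{d-1}\Lambda Q}\int_0^y f\Lambda Q\, x^{d-1}\,dx$.

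For the second identity, I would simply integrate the first-order ODE $\partial_y(u/\Lambda Q) = -(A_+u)/\Lambda Q$ from $0$ to $y$. The only thing to check is that $u/\Lambda Q$ vanishes at the origin, which I would obtain from \eqref{behviorgmmaorigin}: near $0$ one has $\Gamma_+ \sim c\, y^{-(d-2)}$ while the two integrals in \eqref{definvesr} behave as $O(y^d)$ and $O(y^2)$ respectively, thanks to the smoothness of $f$ and $\Lambda Q$ together with the factor $x^{d-1}$. It follows that $u(y) = O(y^2)$ near the origin and $u(0)/\Lambda Q(0) = 0$, whence integration of the first-order equation yields the second stated formula.

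There is no genuine obstacle in this lemma; it is a routine bookkeeping of the factorization $L_+ = A_+^* A_+$ proved in Lemma \ref{facotrh}, combined with the explicit Green's function \eqref{definvesr}. The only mild subtlety is the cancellation of the Leibniz boundary terms in the first step (which is automatic once the Wronskian is invoked) and the regularity check at $y=0$ (which relies on \eqref{behviorgmmaorigin}). The virtue of the lemma is practical: the two-step formula will give sharper pointwise bounds on $L_+^{-1}f$ downstream than an attempt to estimate the Green's function representation \eqref{definvesr} directly, because the internal cancellations produced by the Wronskian are already built into the first equation of \eqref{invrsionauf}.
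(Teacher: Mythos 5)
Your proof is correct and follows essentially the same route as the paper's: both rely on the Wronskian identity \eqref{wronskiangamma} to simplify the result of applying $A_+$ to \eqref{definvesr}, and both use the $O(y^2)$ bound on $u$ near the origin (coming from \eqref{behviorgmmaorigin}) to kill the integration constant. The paper phrases the computation by first evaluating $A_+\Gamma_+=-1/(y^{d-1}\Lambda Q)$ and invoking $A_+(\Lambda Q)=0$, whereas you work with $\partial_y(u/\Lambda Q)$ directly, but this is the same calculation repackaged.
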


\begin{proof}[Proof of Lemma \ref{lemmainversion}]
We compute from \fref{wronskiangamma} $$A_+\Gamma_+=-\Gamma'_++\frac{(\Lambda Q)'}{\Lambda Q}\Gamma_+=-\frac{1}{y^{d-1}\Lambda Q}.$$ We therefore apply $A_+$ to \fref{definvesr} and compute using the cancellation $A_+(\Lambda Q)=0$: 
\be
\label{invrsionaufbis}
A_+u=\frac{1}{y^{d-1}\Lambda Q}\int_0^yf\Lambda Qx^{d-1}dx.
\ee 
Hence from \fref{rewriteau}: $$u=-\Lambda Q\int_0^y\frac{A_+u}{\Lambda Q}dx+c_u\Lambda Q.$$ We now estimate at the origin using the formula \fref{invrsionaufbis}, \fref{definvesr} and the behavior \fref{behviorgmmaorigin}: $$|A_+u|\lesssim y, \ \ |u|\lesssim y^2, \ \ \Lambda Q\sim c\neq 0$$ and thus $c_u=0.$ 
\end{proof}


\subsection{Inverting $L_-$}


We rewrite 
\be
\label{rewriteaumoins}
A_-u=-Q\pa_y\left(\frac{u}{Q}\right), \ \ A_-^*u=\frac{1}{y^{d-1}Q}\pa_y(y^{d-1}Qu)
\ee and hence the kernels of $A_-,A_-^*$ are explicit: 
\be
\label{efiniteimoins}
\left\{\begin{array}{ll} A_-u=0\ \ \mbox{on}\ \ \ \ \mbox{iff}\ \  u\in \mbox{Span}( Q)\\
A_-^*u=0\ \ \mbox{on}\ \ \ \ \ \mbox{iff}\ \ u\in \mbox{Span}\left(\frac{1}{y^{d-1} Q}\right).
\end{array}\right.
\ee
Hence 
\be
\label{kernelhmoins}
L_-u=0\ \ \mbox{on} \ \ \ \  \mbox{iff}\ \ u\in \mbox{Span}(Q,\Gamma_-)
\ee 
with 
\be
\label{Gammamoins} \Gamma_-(y)= Q\int_1^y\frac{dx}{x^{d-1}(Q(x))^2}
\ee
which satisfies the Wronskian relation 
\be
\label{wronskiangammamoins}
\Gamma'_- Q-\Gamma_- Q'=\frac{1}{y^{d-1}}.
\ee
We observe the behavior 
\be
\label{behviorgmmaoriginmoins}
\Gamma_-\sim \frac{c}{y^{d-2}}\ \ \mbox{as}\ \ y\to 0.
\ee 
Moreover, from \fref{estkeydegen}: $$  \int_1^{+\infty}\frac{dx}{x^{d-1}Q(x)^2}\lesssim \int_1^{+\infty}\frac{dx}{x^{d-1-\frac 4{p-1}}}<+\infty$$ where we used from \fref{defgamma} $d-1-\frac 4{p-1}>d-1-2\gamma>1$. This implies: $$\Gamma_-\sim \frac{c}{y^{\frac{2}{p-1}}}\ \ \mbox{as}\ \ y\to +\infty.$$
The explicit knowledge of the Green's functions allows us to introduce the formal inverse
$$(A_-^*)^{-1}f=\frac{1}{y^{d-1} Q}\int_0^yf Qx^{d-1}dx$$ and 
\be
\label{formalinversemoins}
L_-^{-1}f=\left\{\begin{array}{ll} Q\int_y^{+\infty}\frac{(A_-^*)^{-1}f}{Q}dx\ \ \mbox{if}\ \ \int_0^{+\infty}\left|\frac{(A_-^*)^{-1}f}{Q}\right|dx<+\infty,\\
-Q\int_0^y\frac{(A_-^*)^{-1}f}{Q}dx\ \ \mbox{otherwise}.
\end{array}
\right.
\ee

\begin{lemma}[Inversion of $L_-$]
\label{lemmainversionmoins}
Let f be a $\matchal C^{\infty}$ radially symmetric function and $u=L_-^{-1}f$ be given by \fref{formalinversemoins}, then 
\be
\label{invrsionaufminus}
L_-u=f, \ \ A_-u=\frac{1}{y^{d-1} Q}\int_0^yf Qx^{d-1}dx=(A_-^*)^{-1}f.
\ee
\end{lemma}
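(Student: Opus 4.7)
The plan is to mirror the proof of Lemma \ref{lemmainversion}, exploiting the factorization $L_- = A_-^*A_-$ and the explicit kernel description \fref{kernelhmoins}. The strategy splits naturally into three steps: first, an identity for $A_-\Gamma_-$ extracted from the Wronskian; second, a direct verification that $A_- u = (A_-^*)^{-1} f$ for the candidate $u$ defined in \fref{formalinversemoins}; third, an application of $A_-^*$ to recover $L_- u = f$.

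First I would establish the analogue of the identity $A_+\Gamma_+ = -1/(y^{d-1}\Lambda Q)$, namely
\[
A_-\Gamma_- \;=\; -\Gamma_-' + \frac{Q'}{Q}\,\Gamma_- \;=\; -\frac{1}{y^{d-1}Q},
\]
using $V_- = \partial_y\log Q$ and the Wronskian \fref{wronskiangammamoins}. Next, rewriting $A_- u = -Q\,\partial_y(u/Q)$ via \fref{rewriteaumoins}, I would apply $A_-$ directly to each branch of \fref{formalinversemoins}. In the convergent branch, one has $\partial_y(u/Q) = -(A_-^*)^{-1}f/Q$, with the minus sign coming from differentiation through the lower limit of the integral from $y$ to $+\infty$, so that $A_- u = (A_-^*)^{-1}f$; the other branch produces the same identity via an identical computation with a compensating sign change. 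This is precisely the second equality in \fref{invrsionaufminus}, and applying $A_-^*$ then yields $L_- u = A_-^*(A_-^*)^{-1}f = f$ by the very definition of $(A_-^*)^{-1}$.

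The one point requiring a short check is well-definedness of both cases. In the first branch the integrability hypothesis in \fref{formalinversemoins} makes the integral finite and guarantees that $u$ decays at infinity; in the second branch I would check integrability at $y=0$ using the formula $(A_-^*)^{-1}f = (y^{d-1}Q)^{-1}\int_0^y fQ x^{d-1}dx$, which vanishes linearly at the origin while $Q\sim 1$ there. I do not anticipate a genuine obstacle here: in contrast to the $L_+$ argument, there is no kernel ambiguity to eliminate, since \fref{formalinversemoins} already prescribes a canonical representative of $L_-^{-1}f$ modulo $\mathrm{Span}(Q,\Gamma_-)$, so the verification reduces to the differentiation above.
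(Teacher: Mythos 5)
Your argument is correct and essentially identical to the paper's: both apply $A_- u = -Q\,\partial_y(u/Q)$ from \fref{rewriteaumoins} directly to both branches of \fref{formalinversemoins} to get $A_- u = (A_-^*)^{-1}f$, and then conclude $L_-u = A_-^*A_-u = f$. The first step of your plan (establishing $A_-\Gamma_- = -1/(y^{d-1}Q)$) is not actually used anywhere in your own argument and is not needed, since \fref{formalinversemoins}, unlike \fref{definvesr}, does not reference $\Gamma_-$; otherwise the proof matches the paper's.
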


\begin{proof}[Proof of Lemma \ref{lemmainversionmoins}]  From \fref{rewriteaumoins}, \fref{formalinversemoins}: 
\bee
&&A_-u=-Q\pa_y\left(\frac u Q\right)=(A_-^*)^{-1}f=\frac{1}{y^{d-1} Q}\int_0^yf Qx^{d-1}dx\\
&& L_-u=A_-^*A_-u=\frac{1}{y^{d-1}Q}\pa_y\left(y^{d-1}QA_-u\right)=f
\eee
and \fref{invrsionauf} is proved.
\end{proof}

The definitions \fref{defltilde}, \fref{definvesr}, \fref{formalinversemoins} lead to the formal inverse of $\Lt$:
\be
\label{inversionltilde}
\Lt^{-1}=\left(\begin{array}{ll} 0&-(L_+)^{-1}\\(L_-)^{-1}&0\end{array}\right).
\ee


\subsection{Admissible functions}


We define a class of admissible functions which display a suitable behavior at infinity:
 
\begin{definition}[Admissible functions]
\label{defadmissible}
{\em 1. Scalar functions}: We say a radially symmetric $f\in \matchal C^{\infty}(\Bbb R^d, \Bbb R)$ is admissible of degree $(j,\pm)\in \Bbb R\times\{-,+\} $ if $f$ and its derivatives admit the bounds: for $y\ge 1$,
\be
\label{taylorexpansionoriginveoneonkv}
\forall k\geq 0, \ \ \left|\pa_y^k f(y) \right|\lesssim \left\{\begin{array}{ll}y^{2j-\gamma-k}\ \ \mbox{for}\ \ (j,+)\\
y^{2p-\frac{2}{j-1}-k}\ \ \mbox{for}\ \ (j,-)
\end{array}\right.
\ee
{\em 2. Vector valued functions}: We say a radially symmetric $\matchal C^{\infty}(\Bbb R^d,\Bbb R^2)$  complex valued function is admissible of degree $(p_1,p_2)\in \Bbb R\times \Bbb R $  if $f$ and its derivatives admit a bound: for $y\ge 1$,
\be
\label{taylorexpansionoriginbis}
\forall k\geq 0, \ \ \left|\pa_y^k\Re f(y) \right|\lesssim y^{2p_1-\gamma-k}, \ \ \left|\pa_y^k\Im f(y) \right|\lesssim y^{2p_2-\frac 2{p-1}-k}.
\ee
\end{definition}

$\Lt$ naturally acts on the class of admissible functions in the following way:

\begin{lemma}[Action of $\Lt,\Lt^{-1}$ on admissible functions]
\label{lemmapropinverse}
Let $f$ be an admissible function of degree $(p_1,p_2)\in \Bbb N^2$, then:\\
\noindent (i) $\Lambda f$ is admissible of degree $(p_1,p_2)$.\\
\noindent (ii) $J\Lt f$ is admissible of degree $(p_1-1,p_2-1)$.\\
\noindent (iii) $\Lt^{-1}(Jf)$ is admissible of degree $(p_1+1,p_2+1)$.\\
\noindent (iv) $J\Lt^{-1}f$ is admissible of degree $(p_1+1,p_2+1)$.
\end{lemma}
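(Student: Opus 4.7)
The class of admissible functions is defined by pointwise size/derivative bounds in the region $y\ge 1$; smoothness at the origin is automatic for all four operations since $\Lambda$, $\tilde{\mathcal L}$ and $\tilde{\mathcal L}^{-1}$ preserve $\mathcal C^\infty(\mathbb R^d,\mathbb R^2)$. The plan is therefore to verify only the large-$y$ decay in each component, one derivative at a time, using the explicit asymptotics from Lemma \ref{lemmasoliton} and \eqref{behavoir}--\eqref{behaviorvzeroW}, together with the two-step inversion formulas of Lemmas \ref{lemmainversion} and \ref{lemmainversionmoins}.

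For (i), write $\Lambda=\tfrac{2}{p-1}\mathrm{Id}+y\partial_y$ and observe that on a component of decay $y^{2p_1-\gamma}$ the term $y\partial_y$ preserves the exponent since $\partial_y$ lowers it by one. Commuting $\Lambda$ with $\partial_y^k$ gives $\partial_y^k\Lambda f=\Lambda\partial_y^k f+k\partial_y^k f$, so the bounds \eqref{taylorexpansionoriginbis} propagate to all orders. For (ii) I would first use $J^2=-\mathrm{Id}$ and $\tilde{\mathcal L}=-J\mathcal L$ to reduce to $J\tilde{\mathcal L}f=\mathcal L f=(L_+f_1,L_-f_2)$, and then note that $L_\pm=-\Delta-W_\pm$ with $|\partial_y^jW_\pm|\lesssim y^{-2-j}$ from \eqref{behaviorvzeroW}. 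Both $\Delta$ and multiplication by $W_\pm$ shift the decay by $y^{-2}$, which is precisely a decrement of $1$ in the component index $p_i$, proving admissibility of degree $(p_1-1,p_2-1)$ together with its derivatives.

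For (iii) compute
\[
\tilde{\mathcal L}^{-1}(Jf)=\begin{pmatrix}-L_+^{-1}f_1\\ -L_-^{-1}f_2\end{pmatrix},\qquad J\tilde{\mathcal L}^{-1}f=\begin{pmatrix}-L_-^{-1}f_1\\ -L_+^{-1}f_2\end{pmatrix},
\]
so (iii) and (iv) reduce to showing that $L_+^{-1}$ raises a $(+)$ admissible component by one unit and a $(-)$ admissible component by one unit (and the same for $L_-^{-1}$ in the cross situation required by (iv)). I would run the two-step procedure of Lemma \ref{lemmainversion}: if $|f_1|\lesssim y^{2p_1-\gamma}$ then, using $\Lambda Q\sim cy^{-\gamma}$,
\[
\Bigl|\int_0^y f_1\,\Lambda Q\,x^{d-1}\,dx\Bigr|\lesssim y^{2p_1-2\gamma+d},\qquad |A_+ u|\lesssim y^{2p_1-\gamma+1},
\]
and dividing by $\Lambda Q$ and integrating once more yields $|u|\lesssim y^{2(p_1+1)-\gamma}$. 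The analogous computation with $Q\sim cy^{-2/(p-1)}$ and the formula of Lemma \ref{lemmainversionmoins} (choosing the branch $-Q\int_0^y$ whenever $(A_-^*)^{-1}f/Q$ is non-integrable at infinity, which is the generic situation here) gives the required $y^{2(p_2+1)-2/(p-1)}$ decay. The cross cases needed for (iv) are identical in structure: $L_-^{-1}$ applied to $y^{2p_1-\gamma}$ and $L_+^{-1}$ applied to $y^{2p_2-2/(p-1)}$ produce $y^{2(p_1+1)-\gamma}$ and $y^{2(p_2+1)-2/(p-1)}$ respectively, because all that enters the bookkeeping is the power of $y$ in the inner and outer weights. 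Derivative bounds follow from the identity $\partial_y u=V_\pm u-A_\pm u$ (from \eqref{cbejibjeibeibiebe}), combined with \eqref{behavoir}--\eqref{behavoirvminus} and $L_\pm u=f$ iterated inductively.

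The main obstacle is bookkeeping at the borderline values of the exponents in the intermediate integrals, where one would pick up a stray $\log y$; these must be ruled out so that the $\lesssim y^{\text{exponent}}$ bounds hold without a logarithmic loss. The assumption \eqref{conditionalpha} (together with $\alpha>2$, $\gamma>2/(p-1)$, and the condition that the degrees $p_i$ arising in the constructions of this paper are integers) keeps us uniformly away from the critical exponents $2p_1-2\gamma+d=0$ and $2p_2-4/(p-1)+d=0$, so the polynomial bounds are clean; at the origin one simply uses $\Lambda Q(0),\,Q(0)\neq 0$ and smoothness of $f$ to see that all integrands extend continuously and no singularity arises there.
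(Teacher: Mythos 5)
Your reductions and two-step inversion strategy coincide with the paper's proof: for (ii) you correctly reduce $J\Lt f=\mathcal L f=\mathrm{diag}(L_+,L_-)f$, for (iii)--(iv) you compute the same diagonal inverses, and the two-step procedure of Lemma~\ref{lemmainversion} is exactly how the paper estimates $A_\pm u$ and then $u$.

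There is, however, one localized but meaningful imprecision: you attribute the need for the genericity assumption~\eqref{conditionalpha} to the exponents $2p_1-2\gamma+d$ and $2p_2-\tfrac{4}{p-1}+d$ in the inner integrals. Those are in fact never borderline, since $d-2\gamma=2+\sqrt{\mathrm{Discr}}>0$ and $d-\tfrac{4}{p-1}>d-2\gamma>0$, so those integrals always diverge at infinity and no logarithm can appear there regardless of~\eqref{conditionalpha}. The genuine borderline occurs only in case (iv), in the \emph{outer} integral when $L_-^{-1}$ is applied to the $(+)$-type component: there one meets $\int_0^y x^{\,2p_1+1-\gamma+\frac{2}{p-1}}dx=\int_0^y x^{\,2p_1+1-\alpha}dx$, which is logarithmic precisely when $2p_1+2=\alpha$. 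It is here, and only here, that the hypothesis $\frac\alpha 2\notin\Bbb N$ is invoked (see~\eqref{cniofhjwpjwp} in the paper), and moreover the integral may be either convergent or divergent at infinity so that both branches of Lemma~\ref{lemmainversionmoins} really occur and must be checked separately. Your proposal treats the branch choice as generic and credits the assumption to integrals that need it for no reason, so the one step where the argument depends on an algebraic hypothesis is the one step your bookkeeping does not actually justify. The rest of the proposal, including the commutator identity $\partial_y^k\Lambda f=\Lambda\partial_y^k f+k\partial_y^k f$ for (i) and the explicit power counting for $A_+u,\,u$, is correct and matches the paper.
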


\begin{proof}[Proof of Lemma \ref{lemmapropinverse}]{\it Proof of (i)}. This is a direct consequence of \fref{taylorexpansionoriginbis}.\\
\noindent {\it Proof of (ii)}.  Let $f$ be admissible of degree $(p_1,p_2)$. Then $\Lt f$ is a smooth radially symmetric function. For $y\geq 2$, using \fref{defltilde}, 
the decay \fref{behaviorvzeroW} and a simple application of the Leibniz rule imply: for $y\geq 1$,
$$|\pa_y^k\Re(\Lt f)|=|\pa_y^k(L_-\Im f)|\lesssim y^{2p_2-\frac 2{p-1}-2-k},\ \ |\pa_y^k\Im(\Lt f)|=|\pa_y^k(L_+\Re f)|\lesssim y^{2p_1-\gamma-2-k},$$
and (ii) follows.\\
\noindent{\it Proof of (iii)}. We compute from \fref{inversionltilde}:
$$\Lt^{-1}J=\left(\begin{array}{ll} -(L_+)^{-1}&0\\0&(-L_-)^{-1}\end{array}\right).$$Let then $(p_1,p_2)\in \Bbb N^2$, f be admissible of degree $(p_1,p_2)$  and let us show that $u=\Lt^{-1}Jf$ is admissible of degree $(p_1+1,p_2+1)$. Near the origin, $u$ is bounded from \fref{definvesr}, \fref{formalinversemoins}, and hence from $\Lt u =Jf$, $u$ is a smooth radially symmetric function by standard elliptic regularity estimates. Moreover:
$$\Re u=-(L_+)^{-1}\Re f, \ \ \Im u=-(L_-)^{-1}\Im f.$$
\noindent {\it Inversion of $L_+$}: For $y\geq 1$, we use the lower bound from \fref{defgamma} $$d-2-2\gamma=\sqrt{\rm Discr}>0$$ to estimate from \fref{invrsionauf}:
\bea
\label{oneivneonvoe}
\nonumber A_+\Re u&=&-\frac{1}{y^{d-1}\Lambda Q}\int_0^y(\Re f)\Lambda Qx^{d-1}dx=O\left( \frac{1}{y^{d-1-\gamma}}\int_0^y x^{2p_1-2\gamma+d-1}dx\right)\\
&=& O(y^{2p_1+1-\gamma}),
\eea
$$
\Re u=  -\Lambda Q\int_0^y\frac{A_+\Re u}{\Lambda Q}dx=O\left(y^{-\gamma}\int_0^yx^{2p_1+1-\gamma+\gamma}dx\right)=O(y^{2p_1+2-\gamma}).
$$
We conclude from \fref{oneivneonvoe}, \fref{behavoir} $$|\pa_y\Re u|\lesssim y^{2p_1+1-\gamma},\ \ |\pa_y^2\Re u|\lesssim y^{2p_1-\gamma},$$ and then the bound $$|\pa_y^k  \Re u|\lesssim y^{2(p_1+1)-\gamma-k}, \ \ k\geq 0, \ \ y\ge 1$$ easily follows by induction by taking radial derivatives of the relation $L_+(\Re u)=-\Re f.$\\
\noindent {\it Inversion of $L_-$}: Using $$d-2-\frac{4}{p-1}>d-2-2\gamma>0,$$ we estimate from \fref{invrsionaufminus}:
\bea
\label{ceonvebeonoevnoe}
\nonumber A_-\Im u&=&(A^*_-)^{-1}f=-\frac{1}{y^{d-1} Q}\int_0^y(\Im f) Qx^{d-1}dx\\
&=& O\left( \frac{1}{y^{d-1-\frac 2{p-1}}}\int_0^y x^{2p_2-\frac{4}{p-1}+d-1}dx\right)= O(y^{2p_2+1-\frac{2}{p-1}}).
\eea
We now distinguish cases. If $\int_0^{+\infty}\left|\frac{(A_-^*)^{-1}\Im f}{Q}\right|dx<+\infty,$ then from \fref{formalinversemoins}:
$$|\Im u|=\left|Q\int_y^{+\infty}\frac{(A_-^*)^{-1}\Im f}{Q}dx\right|\lesssim y^{-\frac 2{p-1}}\lesssim y^{2(p_2+1)-\frac 2{p-1}},$$ and otherwise from $p_2\geq 0$ and \fref{ceonvebeonoevnoe}:
$$|\Im u|\lesssim \left|Q\int_0^{y}\frac{(A_-^*)^{-1}\Im f}{Q}dx\right|\lesssim y^{-\frac 2{p-1}}\int_0^y x^{2p_2+1}dx\lesssim y^{2(p_2+1)-\frac{2}{p-1}}.$$ 
This implies from \fref{ceonvebeonoevnoe}, \fref{behavoirvminus}:
 $$|\pa_y\Im u|\lesssim y^{2p_2+1-\frac2{p-1}},\ \ |\pa_y^2\Im u|\lesssim y^{2p_2-\frac{2}{p-1}},$$  and then again a simple induction argument by differentiation of the relation $L_-\Im u=-\Im f$ ensures the bound: 
 $$|\pa_y^k  \Im u|\lesssim y^{2(p_2+1)-\frac{2}{p-1}-k}, \ \ k\geq 0, \ \ y\ge 1.$$
  
\noindent{\it Proof of (iv)}. We compute from \fref{inversionltilde}:
$$J\Lt^{-1}=\left(\begin{array}{ll} -(L_-)^{-1}&0\\0&(-L_+)^{-1}\end{array}\right).$$ Let then $(p_1,p_2)\in\Bbb N^2$, f admissible of degree $(p_1,p_2)$  and let us show that $u=J\Lt^{-1}f$ is admissible of degree $(p_2+1,p_1+1)$. From  \fref{definvesr}, \fref{formalinversemoins}, u is radially symmetric and bounded near the origin, and hence from $\Lt u =Jf$, $u$ is a smooth radially symmetric function by standard elliptic regularity estimates. Moreover:
$$\Re u=-(L_-)^{-1}\Re f, \ \ \Im u=-(L_+)^{-1}\Im f.$$
\noindent {\it Inversion of $L_+$}: For $y\geq 1$, we use the lower bound from \fref{defgamma} 
\be
\label{cneonceonoe}
d-2-\frac2{p-1}-\gamma>d-2-2\gamma>0
\ee to estimate from \fref{invrsionauf}:
\bee
\nonumber A_+\Im u&=&-\frac{1}{y^{d-1}\Lambda Q}\int_0^y(\Im f)\Lambda Qx^{d-1}dx=O\left( \frac{1}{y^{d-1-\gamma}}\int_0^y x^{2p_2-\frac{2}{p-1}-\gamma+d-1}dx\right)\\
&=& O(y^{2p_2+1-\frac 2{p-1}}),
\eee
and then using $\gamma>\frac 2{p-1}$ again:
$$
\Im u=  -\Lambda Q\int_0^y\frac{A_+\Im u}{\Lambda Q}dx=O\left(y^{-\gamma}\int_0^yx^{2p_2+1-\frac{2}{p-1}+\gamma}dx\right)=O(y^{2p_2+2-\frac2{p-1}})
$$
and we easily conclude as above: $$|\pa_y^k  \Im u|\lesssim y^{2(p_2+1)-\frac{2}{p-1}-k}, \ \ k\geq 0, \ \ y\ge 1.$$ 
\noindent {\it Inversion of $L_-$}: Using \fref{cneonceonoe}, we estimate from \fref{invrsionaufminus}:
\bea
\label{cnjvnkevnneone}
\nonumber A_-\Re u&=&-\frac{1}{y^{d-1} Q}\int_0^y(\Re f) Qx^{d-1}dx=O\left( \frac{1}{y^{d-1-\frac 2{p-1}}}\int_0^y x^{2p_1-\gamma-\frac{2}{p-1}+d-1}dx\right)\\
&=& O(y^{2p_1+1-\gamma}).
\eea
We now distinguish cases. If $2p_1+1-\gamma+\frac2{p-1}<-1$,
then from \fref{invrsionaufminus}: $$\int_0^{+\infty}\left|\frac{(A_-^*)^{-1}\Re f}{Q}\right|dx=\int_0^{+\infty}\left|\frac{A_-u}{Q}dx\right|\lesssim \int_0^{+\infty} (1+x^{2p_1+1-\gamma+\frac2{p-1}})dx<+\infty$$ and thus from \fref{formalinversemoins}:
\bee
|\Re u|&=&\left|Q\int_y^{+\infty}\frac{(A_-^*)^{-1}\Re f}{Q}dx\right|\lesssim y^{-\frac 2{p-1}}\int_y^{+\infty} x^{2p_1+1-\gamma+\frac2{p-1}}dx\\
& \lesssim & y^{2p_1+2-\gamma}.
\eee
Otherwise, $2p_1+1-\gamma+\frac2{p-1}\geq -1$, but then using $\frac\alpha 2\notin \Bbb N$ from \fref{conditionalpha}: 
\be
\label{cniofhjwpjwp}
2p_1+1-\gamma+\frac2{p-1}=2p_1+1-\alpha> -1.
\ee 
Then either $\int_0^{+\infty}\left|\frac{(A_-^*)^{-1}\Re f}{Q}\right|dx<+\infty$ in which case:
$$|\Re u|=\left|Q\int_y^{+\infty}\frac{(A_-^*)^{-1}\Re f}{Q}dx\right|\lesssim y^{-\frac 2{p-1}}\lesssim y^{2(p_1+1)-\gamma}$$  where we used \fref{cniofhjwpjwp} in the last step, or otherwise from \fref{invrsionaufminus}, \fref{cnjvnkevnneone}: 
$$|\Re u|\lesssim \left|Q\int_0^{y}\frac{(A_-^*)^{-1}\Re f}{Q}dx\right|\lesssim y^{-\frac 2{p-1}}\int_0^y x^{2p_1+1-\gamma+\frac 2{p-1}}dx\lesssim y^{2(p_1+1)-\gamma}.$$ 
We then easily conclude as above: 
$$|\pa_y^k  \Re u|\lesssim y^{2(p_1+1)-\gamma-k}, \ \ k\geq 0, \ \ y\ge 1.$$ 
\end{proof}


\subsection{Generators of the kernel of $\Lt^i$}


Let us give an explicit example of admissible functions which will be essential for the analysis. 

\begin{lemma}[Generators of the kernel of $\Lt^i$]
\label{lemmaradiation}
(i) Let 
\be
\label{deftk}
\Phi_i=\Lt^{-i}\left|\begin{array}{ll} \Lambda Q\\ Q\end{array}\right.,\ \ i\geq 0
\ee
then $J^i\Phi_i$ is admissible of degree $(i,i)$.\\
(ii) Let the sequence 
\be
\label{defthetai}
\Psi_i=\Lambda \Phi_i-J^{-i}D_iJ^i\Phi_i, \ \ i\geq 1,\ \ D_i=\left(\begin{array}{ll}2i-\alpha&0\\0&2i\end{array}\right),
\ee
then $J^i\Psi_i$ is admissible of degree $(i-1,i-1)$.
\end{lemma}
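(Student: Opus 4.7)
The plan is to prove both parts by a joint induction on $i$: (i) is a direct application of Lemma \ref{lemmapropinverse}, while (ii) requires a strengthening that tracks leading-order asymptotic behavior at infinity. For (i), the base case $i=0$ follows from Lemma \ref{lemmasoliton}: $\Phi_0=(\Lambda Q, Q)^T$ is admissible of degree $(0,0)$ by the expansions \fref{develpopemtn}--\fref{estkeydegen}, propagated to all radial derivatives via $\Delta Q+Q^p=0$. For the inductive step, set $g:=J^i\Phi_i$; using $J^{-i}=(-1)^iJ^i$ and $\Phi_{i+1}=\Lt^{-1}\Phi_i$, a short parity-by-parity computation yields
$$J^{i+1}\Phi_{i+1}=\begin{cases} J\Lt^{-1}g, & i\text{ even},\\ \Lt^{-1}(Jg), & i\text{ odd},\end{cases}$$
each of which is admissible of degree $(i+1,i+1)$ by Lemma \ref{lemmapropinverse}, parts (iv) and (iii) respectively.

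For (ii), the idea is that $D_i$ is constructed precisely to encode the eigenvalues of $\Lambda$ acting on the leading power-law tails of $\Phi_i$, so that $\Lambda\Phi_i - J^{-i}D_iJ^i\Phi_i$ cancels to one more order. Since $J^i$ is a constant matrix and so commutes with the scalar $\Lambda$, and $J^iJ^{-i}=I$, writing $g=J^i\Phi_i=(g_R,g_I)^T$ one obtains
$$J^i\Psi_i=\Lambda g-D_ig,$$
and a direct computation using $\alpha=\gamma-\tfrac{2}{p-1}$ gives
$$\Re(J^i\Psi_i)=\bigl(y\pa_y-(2i-\gamma)\bigr)g_R,\qquad \Im(J^i\Psi_i)=\bigl(y\pa_y-(2i-\tfrac{2}{p-1})\bigr)g_I.$$
Since $y\pa_y-a$ annihilates the power $y^a$, provided $g_R$ (resp.\ $g_I$) has a leading term $c\,y^{2i-\gamma}$ (resp.\ $c'\,y^{2i-2/(p-1)}$) at infinity, this contribution is killed, producing the advertised one-degree gain.

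To implement this rigorously I would strengthen the induction in (i) to carry, for $y\ge 1$, an asymptotic expansion
$$g_R(y)=c_i^R\,y^{2i-\gamma}+r_i^R(y),\qquad g_I(y)=c_i^I\,y^{2i-\frac{2}{p-1}}+r_i^I(y),$$
together with derivative bounds $|\pa_y^kr_i^R|\lesssim y^{2i-\gamma-g_*-k}$ and $|\pa_y^kr_i^I|\lesssim y^{2i-\frac{2}{p-1}-g_*-k}$ for a fixed gap $g_*>2$ independent of $i$; the base case holds with $g_*=\min(\alpha,\sqrt{\rm Discr})>2$ by Lemma \ref{lemmasoliton}(i)--(ii). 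The inductive step refines the proof of Lemma \ref{lemmapropinverse}: substituting the expansion into the explicit inversion formulas \fref{invrsionauf} and \fref{invrsionaufminus} and integrating $y^{2i-\gamma}\Lambda Q\,y^{d-1}$ (and its imaginary analogue against $Q\,y^{d-1}$) produces a pure power to leading order plus a remainder inheriting the same gap $g_*$, the genericity hypothesis \fref{conditionalpha} being used exactly to rule out resonant integration exponents producing logarithmic corrections. The main obstacle is this bookkeeping through arbitrarily many iterations of $\Lt^{-1}$: one must check that the leading coefficient is well-defined at each stage, that no integration exponent collapses at $-1$, and that the gap $g_*>2$ is preserved uniformly in $i$. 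Once this is granted, admissibility of $J^i\Psi_i$ at degree $(i-1,i-1)$ follows at once from $|\pa_y^k\Re(J^i\Psi_i)|\lesssim y^{2i-\gamma-g_*-k}\le y^{2(i-1)-\gamma-k}$ and its imaginary counterpart.
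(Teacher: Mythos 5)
Your proposal follows essentially the same route as the paper's own proof: both parts rest on the parity-by-parity application of Lemma \ref{lemmapropinverse} for (i), and on a strengthened induction hypothesis tracking the leading asymptotic term and a quantified remainder for (ii), together with the observation that $(\Lambda-D_i)$ annihilates the leading power. The only cosmetic difference is that the paper uses a remainder gap of exactly $2$ in its refined induction statement (its equation \fref{decopmohii}) rather than your $g_*=\min(\alpha,\sqrt{\rm Discr})>2$; gap $2$ already suffices for admissibility of degree $(i-1,i-1)$ and slightly simplifies the non-resonance bookkeeping, but both choices work and rely on the same genericity condition \fref{conditionalpha}.
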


\begin{remark} Equivalently, let the directions
\be
\label{defphiplus}
\Phi_{i,+}=\Lt^{-i}\Phi_{0,+},  \ \ \Phi_{0,+}=\left|\begin{array}{ll} \Lambda Q\\ 0\end{array}\right.,\ \ i\geq 0
\ee
\be
\label{defphiminus}
\Phi_{i,-}=\Lt^{-i}\Phi_{0,-},  \ \ \Phi_{0,-}=\left|\begin{array}{ll}0 \\ Q\end{array}\right.,\ \ i\geq 0.
\ee
A simple computation ensures $$J^{-i}D_iJ^i=\left\{\begin{array}{ll} D_i\ \ \mbox{for}\ \ i=2k\\ \left(\begin{array}{ll} 2i&0\\0&2i-\alpha\end{array}\right)\ \ \mbox{for}\ \ i=2k+1\end{array}\right.,$$  and thus $$\Psi_i=\Psi_{i,+}+\Psi_{i,-}$$ with:
\bea
\label{defpsiplus}&&\Psi_{i,+}=\Lambda \Phi_{i,+}-J^{-i}D_iJ^i\Phi_{i,+}= \Lambda\Phi_{i,+}-(2i-\alpha)\Phi_{i,+}\\
\label{defpsiminus}&&\Psi_{i,-}=\Lambda \Phi_{i,-}-J^{-i}D_iJ^i\Phi_{i,-}=\Lambda\Phi_{i,-}-2i\Phi_{i,-}.
\eea
and $J^i\Psi_{i,+}$ is real valued of degree $(i-1,+)$, and $J^i\Psi_{i,-}$ is imaginary of degree $(i-1,-)$.
\end{remark}

\begin{proof}[Proof of Lemma \ref{lemmaradiation}] {\em Proof of (i)}. $\Phi_0$ is admissible of degree $(0,0)$ from \fref{develpopemtn}. We now proceed by induction, assume the claim for $i$ and prove for $i+1$. By definition, $\Phi_{i+1}=\Lt^{-1} \Phi_{i}$. For $i=2k$, we have by induction: $$J^{i}\Phi_i=J^{2k}\Phi_{2k}=(-1)^{k}\Phi_{2k}$$ is admissible of degree $(2k,2k)$ and hence from Lemma \ref{lemmapropinverse} (iv), $$J^{i+1}\Phi_{i+1}=(-1)^kJ\Lt^{-1} \Phi_{i}$$ is admissible of degree $(i+1,i+1)$.  For $i=2k+1$, we have by induction: $$J^{i}\Phi_i=J^{2k+1}\Phi_{2k+1}=(-1)^{k}J\Phi_{2k+1}$$ is admissible of degree $(2k+1,2k+1)$ and hence from Lemma \ref{lemmapropinverse} (iii), $$J^{i+1}\Phi_{i+1}=(-1)^{k+1}\Lt^{-1} \Phi_{2k+1}=(-1)^{k}\Lt^{-1}(J J\Phi_{2k+1})$$ is admissible of degree $(i+1,i+1)$.\\
{\em Proof of (ii)}. We claim a more precise control of $J^i\Phi_i$ for $y\geq 1$: 
\be
\label{decopmohii}
\forall k\geq0, \ \ \forall i\geq 1, \ \ \left|\pa_y^k\left(J^i\Phi_i-\left|\begin{array}{ll}c_{1,i}y^{2i-\gamma}\\c_{2,i}y^{2i-\frac{2}{p-1}}\end{array}\right.\right)\right|\lesssim \left|\begin{array}{ll}c_{1,i}y^{2(i-1)-\gamma-k}\\c_{2,i}y^{2(i-1)-\frac{2}{p-1}-k}\end{array}\right..
\ee
Assume \fref{decopmohii}, then $\Psi_i$ is radially symmetric and satisfies the bound from \fref{defthetai}: for $y\geq 1$,
\bee
J^i\Psi_i&=&(\Lambda-D_i) J\Phi_i=(\Lambda-D_i)\left|\begin{array}{ll} c_{1,i}y^{2i-\gamma}\\ c_{2,i} y^{2i-\frac{2}{p-1}}\end{array}\right.+O\left(\left|\begin{array}{ll}c_{1,i}y^{2(i-1)-\gamma}\\c_{2,i}y^{2(i-1)}\end{array}\right.\right)\\
& = & O\left(\left|\begin{array}{ll}c_{1,i}y^{2(i-1)-\gamma}\\c_{2,i}y^{2(i-1)}\end{array}\right.\right).
\eee
The control of higher derivatives follows similarly, and hence $J^i\Psi_i$ is admissible of degree $(i-1,i-1)$. We now prove \fref{decopmohii} by induction on $i\geq 1$.\\
\underline{$i=1$}: From \fref{develpopemtn}, there holds for $y\geq 1$: $$\Phi_0=\left|\begin{array}{ll} \Lambda Q\\Q\end{array}\right.=\left|\begin{array}{ll} \frac{c_{1,0}}{y^{\gamma}}+O\left(\frac{1}{y^{\gamma+g}}\right),\ \ g=\min\{\alpha,\sqrt{\rm Discr}\}>2\\ \frac{c_{2,0}}{y^{\frac2{p-1}}}+O\left(\frac{1}{y^{\gamma}}\right).\end{array}\right.$$ We then invert $$\Lt\Phi_1= \left|\begin{array}{ll} L_-\Im \Phi_1\\-L_+\Re\Phi_1\end{array}\right.=\left|\begin{array}{ll} \Lambda Q\\ Q\end{array}\right.$$ From \fref{invrsionaufminus}:
\bee
A_-\Im\Phi_1& =& \frac{1}{y^{d-1}Q}\int_0^y \Lambda Q x^{d-1}Qdx\\
& = & \frac{1}{y^{d-1}Q}\left[O(1)+\int_1^y \left[\frac{c}{x^{\gamma}}+O\left(\frac{1}{x^{\gamma+g}}\right)\right]\left[\frac{c}{x^{\frac{2}{p-1}}}+O\left(\frac{1}{x^{\gamma}}\right)\right]x^{d-1}dx\right]\\
& = &\frac{c}{y^{d-1-\frac{2}{p-1}}\left[1+O\left(\frac{1}{y^{\gamma}}\right)\right]}\left[O(1)+\int_1^yc x^{d-1-\gamma-\frac{2}{p-1}}\left[1+O\left(\frac{1}{x^{g}}\right)\right]dx\right].
\eee
We now use the lower bounds: 
\bee
&&d-1-\gamma-\frac{2}{p-1}-\alpha=d-1-2\gamma=1+\sqrt{\rm Discr}>-1\\
&&d-1-\gamma-\frac{2}{p-1}-\sqrt{\rm Discr}\geq d-1-2\gamma-\sqrt{\rm Discr}=1>-1
\eee
to conclude:
\bee
A_-\Im\Phi_1& =& \frac{1}{y^{d-1-\frac{2}{p-1}}\left[1+O\left(\frac{1}{y^{\gamma}}\right)\right]}cy^{d-1-\gamma-\frac{2}{p-1}+1}\left[1+O\left(\frac{1}{y^{g}}\right)\right]\\
& = & cy^{1-\gamma}\left[1+O\left(\frac{1}{y^{g}}\right)\right]
\eee

This implies using $\alpha>2$: $$\int_0^{+\infty}\frac{|A_-\Im\Phi_1|}{Q}dx\lesssim 1+\int_1^{+\infty}\frac{dx}{x^{\gamma-1-\frac{2}{p-1}}}<+\infty$$ 
and hence from \fref{formalinversemoins}:
\bee
\Im \Phi_1& = & Q\int_y^{+\infty}\frac{A_-\Im\Phi_1}{Q}dx= \frac{c}{y^{\frac{2}{p-1}}\left[1+O\left(\frac{1}{y^{\gamma}}\right)\right]}\int_y^{+\infty}\frac{1}{x^{\gamma-1-\frac{2}{p-1}}}\left[1+O\left(\frac{1}{x^{g}}\right)\right]dx\\
& = & \frac{c}{x^{\gamma-2}}\left[1+O\left(\frac{1}{x^{g}}\right)\right]= \frac{c}{x^{\gamma-2}}+O\left(\frac{1}{x^{\gamma}}\right)
\eee
from our assumption $g>2$.
Similarily, using \fref{invrsionauf} and since the integral term is the same:
\bee
A_+\Re\Phi_1& =& \frac{-1}{y^{d-1}\Lambda Q}\int_0^y Q x^{d-1}\Lambda Qdx=cy^{1-\frac2{p-1}}\left[1+O\left(\frac{1}{y^{g}}\right)\right]
\eee
and hence from \fref{invrsionauf}:
\bee
\Re\Phi_1& =&- \Lambda Q\int_0^{y}\frac{A_+\Re\Phi_1}{\Lambda Q}dx\\
&=& \frac{c}{y^{\gamma}}\left[1+O\left(\frac{1}{y^{g}}\right)\right]\left[O(1)+\int_1^y x^{1-\frac2{p-1}+\gamma}\left[1+O\left(\frac{1}{y^{g}}\right)\right]dx\right]\\
& = & cy^{2-\frac2{p-1}}\left[1+O\left(\frac{1}{y^{g}}\right)\right]=cy^{2-\frac2{p-1}}+O\left(y^{-\frac{2}{p-1}}\right)
\eee
where we used $$1+\gamma-\frac{2}{p-1}-g=1+\alpha-g\geq 1>-1$$ 
and $g>2$.
The bound \fref{decopmohii} for $i=1$ now easily follows by differentation in space.

\underline{$i\to i+1$} We invert $$\Lt\Phi_{i+1}= \left|\begin{array}{ll} L_-\Im \Phi_{i+1}\\-L_+\Re\Phi_{i+1}\end{array}\right.=\left|\begin{array}{ll} \Re\Phi_i\\ \Im\Phi_i \end{array}\right.$$ 
\underline{case $i=2k-1$, $k\geq 2$}. By induction, $J^{i}\Phi_i=(-1)^kJ\Phi_i$ satisfies \fref{decopmohii}. Hence: $$\left|\begin{array}{ll} L_-\Im \Phi_{i+1}\\-L_+\Re\Phi_{i+1}\end{array}\right.=\left|\begin{array}{ll} c_{2,i}y^{2i-\frac{2}{p-1}}+O\left(y^{2i-2-\frac{2}{p-1}}\right)\\ c_{1,i}y^{2i-\gamma}+O\left(y^{2i-2-\gamma}\right) \end{array}\right.$$
From \fref{invrsionaufminus} and using $d-\frac 4{p-1}> d-2\gamma> 2$:
\bee
A_-\Im\Phi_{i+1}& =& \frac{1}{y^{d-1}Q}\int_0^y \Re\Phi_i x^{d-1}Qdx\\
& = & \frac{1}{y^{d-1}Q}\left[O(1)+\int_1^y c\frac{x^{2i-\frac2{p-1}}}{x^{\frac 2{p-1}}}\left[1+O\left(\frac{1}{x^2}\right)\right]\left[1+O\left(\frac{1}{x^{\alpha}}\right)\right]x^{d-1}dx\right]\\
& = &\frac{c}{y^{d-1-\frac{2}{p-1}}\left[1+O\left(\frac{1}{y^{\gamma}}\right)\right]}\left[O(1)+\int_1^yc x^{2i+d-1-\frac{4}{p-1}}\left[1+O\left(\frac{1}{x^2}\right)\right]dx\right]\\
& = & \frac{1}{y^{d-1-\frac{2}{p-1}}\left[1+O\left(\frac{1}{y^{\gamma}}\right)\right]}cy^{2i+d-\frac{4}{p-1}}\left[1+O\left(\frac{1}{y^2}\right)\right]\\
& = & cy^{2i+1-\frac{2}{p-1}}\left[1+O\left(\frac{1}{y^{2}}\right)\right].
\eee
Since $2i+1>1$, $\int _0^{+\infty}\left|\frac{A_-\Im\Phi_1}{Q}\right|dy=+\infty$ 
and\footnote{one easily checks by induction starting from \fref{develpopemtn} with $a_1\neq 0$ that the leading order terms in \fref{decopmohii} do not vanish i.e. $c_{1,i},c_{2,i}\neq 0$.}  thus: 
\bee
\Im \Phi_{i+1}&=&-Q\int_0^y\frac{A_-\Im\Phi_{i+1}}{Q}dy=\frac{1}{y^{\frac 2{p-1}}\left[1+O\left(\frac{1}{y^{\alpha}}\right)\right]}\left[O(1)+\int_1^{y}cx^{2i+1}\left[1+O\left(\frac{1}{x^{2}}\right)\right]dx\right]\\
&=&  cy^{2i+2-\frac{2}{p-1}}\left[1+O\left(\frac{1}{y^{2}}\right)\right].
\eee
Similarily, from \fref{invrsionauf}:
\bee
A_+\Re\Phi_{i+1}& =& \frac{1}{y^{d-1}\Lambda Q}\int_0^y \Re\Phi_i x^{d-1}\Lambda Qdx\\
& = & \frac{1}{y^{d-1}\Lambda Q}\left[O(1)+\int_1^y c\frac{x^{2i-\gamma}}{x^{\gamma}}\left[1+O\left(\frac{1}{x^2}\right)\right]x^{d-1}dx\right]\\
& = &\frac{c}{y^{d-1-\gamma}\left[1+O\left(\frac{1}{y^{2}}\right)\right]}\left[O(1)+\int_1^yc x^{2i+d-1-2\gamma}\left[1+O\left(\frac{1}{x^2}\right)\right]dx\right]\\
& = & \frac{1}{y^{d-1-\gamma}\left[1+O\left(\frac{1}{y^{2}}\right)\right]}cy^{2i+d-2\gamma}\left[1+O\left(\frac{1}{y^2}\right)\right]\\
& = & cy^{2i+1-\gamma}\left[1+O\left(\frac{1}{y^{2}}\right)\right],
\eee
and thus: 
\bee
\Re \Phi_{i+1}&=&-\Lambda Q\int_0^y\frac{A_+\Re\Phi_{i+1}}{\Lambda Q}dy=\frac{1}{y^{\gamma}\left[1+O\left(\frac{1}{y^{2}}\right)\right]}\left[O(1)+\int_1^{y}cx^{2i+1}\left[1+O\left(\frac{1}{x^{2}}\right)\right]dx\right]\\
&=&  cy^{2i+2-\gamma}\left[1+O\left(\frac{1}{y^{2}}\right)\right].
\eee
The bound \fref{decopmohii} for $i+1$ now easily follows by differentiation in $y$.\\
\underline{case $i=2k$, $k\geq 1$}. By induction, $J^{i}\Phi_i=(-1)^k\Phi_i$ satisfies \fref{decopmohii}. Hence: $$\left|\begin{array}{ll} L_-\Im \Phi_{i+1}\\-L_+\Re\Phi_{i+1}\end{array}\right.=\left|\begin{array}{ll}  c_{1,i}y^{2i-\gamma}+O\left(y^{2i-2-\gamma}\right)\\c_{2,i}y^{2i-\frac{2}{p-1}}+O\left(y^{2i-2-\frac{2}{p-1}}\right) \end{array}\right.$$
From \fref{invrsionaufminus}and using $d-\gamma-\frac{2}{p-1}>d-2\gamma>2$:
\bee
A_-\Im\Phi_{i+1}& =& \frac{1}{y^{d-1}Q}\int_0^y \Re\Phi_i x^{d-1}Qdx\\
& = & \frac{1}{y^{d-1}Q}\left[O(1)+\int_1^y c\frac{x^{2i-\gamma}}{x^{\frac 2{p-1}}}\left[1+O\left(\frac{1}{x^2}\right)\right]x^{d-1}dx\right]\\
& = &\frac{c}{y^{d-1-\frac{2}{p-1}}\left[1+O\left(\frac{1}{y^{\gamma}}\right)\right]}\left[O(1)+\int_1^yc x^{2i+d-1-\gamma-\frac{2}{p-1}}\left[1+O\left(\frac{1}{x^2}\right)\right]dx\right]\\
& = & \frac{1}{y^{d-1-\frac{2}{p-1}}\left[1+O\left(\frac{1}{y^{\gamma}}\right)\right]}cy^{2i+d-\gamma-\frac{2}{p-1}}\left[1+O\left(\frac{1}{y^2}\right)\right]\\
& = & cy^{2i+1-\gamma}\left[1+O\left(\frac{1}{y^{2}}\right)\right].
\eee
If $2i+1-\gamma+\frac{2}{p-1}<-1$, then $\int_0^{+\infty}\frac{|A_-\Im\Phi_{i+1}|}{Q}dy<+\infty$ and thus:
\bee
\Im\Phi_{i+1}&=&Q\int_y^{+\infty}\frac{A_-\Im\Phi_{i+1}}{Q}dx=Q\int_y^{+\infty}cx^{2i+1-\gamma+\frac{2}{p-1}}\left[1+O\left(\frac{1}{x^{2}}\right)\right]dx\\
&=&cy^{2i+2-\gamma+\frac{2}{p-1}-\frac{2}{p-1}}\left[1+O\left(\frac 1{y^2}\right)\right]=y^{2i+2-\gamma}\left[1+O\left(\frac 1{y^2}\right)\right].
\eee
If $2i+1-\gamma+\frac{2}{p-1}\geq -1$, then $2i+1-\gamma+\frac{2}{p-1}=2i+1-\alpha>-1$ from \fref{conditionalpha}.
 Hence $\int_0^{+\infty}\frac{|A_-\Im\Phi_{i+1}|}{Q}dy=+\infty$ and:
\bee
\Im\Phi_{i+1}&=& -Q\int_0^{y}\frac{A_-\Im\Phi_{i+1}}{Q}dx=-Q\int_0^ycx^{2i+1-\gamma+\frac{2}{p-1}}\left[1+O\left(\frac{1}{x^{2}}\right)\right]dx\\
&=&cy^{2i+2-\gamma+\frac{2}{p-1}-\frac{2}{p-1}}\left[1+O\left(\frac 1{y^2}\right)\right]=y^{2i+2-\gamma}\left[1+O\left(\frac 1{y^2}\right)\right].
\eee
Similarily:
\bee
A_+\Re\Phi_{i+1}& =& \frac{1}{y^{d-1}\Lambda Q}\int_0^y \Re\Phi_i x^{d-1}\Lambda Qdx\\
& = & \frac{1}{y^{d-1}\Lambda Q}\left[O(1)+\int_1^y c\frac{x^{2i-\frac{2}{p-1}}}{x^{\gamma}}\left[1+O\left(\frac{1}{x^2}\right)\right]x^{d-1}dx\right]\\
& = &\frac{c}{y^{d-1-\gamma}\left[1+O\left(\frac{1}{y^{\alpha}}\right)\right]}\left[O(1)+\int_1^yc x^{2i+d-1-\gamma-\frac{2}{p-1}}\left[1+O\left(\frac{1}{x^2}\right)\right]dx\right]\\
& = & \frac{1}{y^{d-1-\gamma}\left[1+O\left(\frac{1}{y^{\gamma}}\right)\right]}cy^{2i+d-\frac{2}{p-1}-\gamma}\left[1+O\left(\frac{1}{y^2}\right)\right]\\
& = & cy^{2i+1-\frac{2}{p-1}}\left[1+O\left(\frac{1}{y^{2}}\right)\right],
\eee
and thus: 
\bee
\Re \Phi_{i+1}&=&-\Lambda Q\int_0^y\frac{A_+\Re\Phi_{i+1}}{\Lambda Q}dy\\
&=& \frac{1}{y^{\gamma}\left[1+O\left(\frac{1}{y^{2}}\right)\right]}\left[O(1)+\int_1^{y}cx^{2i+1+\gamma-\frac{2}{p-1}}\left[1+O\left(\frac{1}{x^{2}}\right)\right]dx\right]\\
&=&  cy^{2i+2-\frac{2}{p-1}}\left[1+O\left(\frac{1}{y^{2}}\right)\right].
\eee
The bound \fref{decopmohii} for $i+1$ now easily follows by differentiation in $y$.
\end{proof}


\section{Construction of the approximate profile}
\label{sectiontwo}


This section is devoted to the construction of the approximate  blow up profile $Q_{b,a}$ and the study of the associated dynamical system for 
the parameters $b=(b_1,\dots,b_{L_+})$ and $a=(a_1,\dots.a_{L_-})$.


\subsection{Slowly modulated blow up profiles and growing tails}


We introduce a simple notion of homogeneous admissible function.

\begin{definition}[Homogeneous functions]
\label{defadmissiblehomog}
Given parameters $b=(b_m)_{1\leq k\leq L_+}$, $a=(a_n)_{1\leq n\leq L_-}$, we say a function $S(b,a,y)$ is homogeneous of degree $(p_1,p_2,j,\pm)\in \Bbb N\times \Bbb N\times \Bbb N$ if it is a finite linear combination of monomials 
$$\left[\Pi_{k=1}^{L_+}b^{m_k}_{k}\Pi_{\ell=1}^{L_-}a^{n_\ell}_{\ell}\right]f_{\pm}$$ with $$\sum^{L_+}_{m=1}km_k=p_1,\ \ \sum^{L_-}_{k=1}kn_k=p_2, \ \ (m_k,n_k)\in \Bbb N^2
$$
with $f_{\pm}$ homogeneous of degree $(j,\pm)$ in the sense of Definition \ref{defadmissible}. We set $\rm{deg}(S):=(p_1,p_2,j,
\pm)$. 
\end{definition}

We are now in position to construct a slowly modulated blow up profile as a deformation of the solitary wave.

\begin{proposition}[Construction of the approximate profile]
\label{consprofapproch}
Let $L_+$ a large  integer
\be
\label{defL}
L_+\gg\frac{\alpha}{2}=\frac 12(\gamma-\frac 2{p-1}),
\ee
and $L_-$ be given by \fref{deflmoins}. Let $M>0$ be a large enough universal constant, then there exists a small enough universal constant $b^*(M,L_+)>0$ such that the following holds true. Let two $\mathcal C^1$ maps $$b=(b_j)_{1\leq j\leq L_+}:[s_0,s_1]\mapsto (-b^*,b^*)^{L_+} , \ \ a=(a_j)_{1\leq j\leq L_-}:[s_0,s_1]\mapsto (-b^*,b^*)^{L_-} $$ with a priori bounds on $[s_0,s_1]$: 
\be
\label{aprioirbound}
\left\{\begin{array}{ll} 0<b_1<b^*, \ \ |b_j|\lesssim b_1^j, \ \ 1\leq j\leq L_+\\
 |a_j|\leq b_1^{j+\alpha}\ \ \mbox{for}\ \ 1\leq j\leq L_-.
 \end{array}\right.
\ee
Then there exist homogeneous profiles $$\left\{\begin{array}{l}S_{j,\pm}=S_{j,\pm}(b,a,y), \ \ 2\leq j\leq L_\pm+2\\ S_{1,\pm}=0
\end{array}\right .$$ such that
\be
\label{decompqb}
Q_{b(s),a(s)}(y)= Q(y) + \zeta_{b(s),a(s)}(y)
\ee
with
\be
\label{defalphab}
\zeta_{b,a}(y)=\sum_{j=1}^{L_+}b_j\Phi_{j,+}(y)+\sum_{j=1}^{L_-}a_j\Phi_{j,-}(y)+ \sum_{j=2}^{L_\pm+2}S_{j,\pm}(b,a,y),
\ee
with $\Phi_{j,\pm}$ defined in \eqref{defpsiplus}, \eqref{defpsiminus},
generates an approximate solution to the renormalized flow, see \eqref{eqselfsimilar}:
\be
\label{deferreur}
\pa_sQ_{b,a}-J(\Delta Q_{b,a}+f(Q_{b,a}))+b_1 \Lambda Q_{b,a}+Ja_1Q_{b,a}=\Psi +Mod(t)
\ee
with the following properties:\\
\noindent{\em (i) Modulation equations}:
\bea
\label{defmodtun}
&& Mod(t)=\\
\nonumber&&\sum_{j=1}^{L_+}\left[(b_j)_s+(2j-\alpha)b_1b_j-b_{j+1}\right]\left[\Phi_{j,+}+\sum_{m=j+1}^{L_++2}\frac{\partial S_{m,+}}{\partial b_j}+\sum_{m=j+1}^{L_-+2}\frac{\pa S_{m,-}}{\pa b_j}\right]\\
\nonumber& + &  \sum_{j=1}^{L_-}\left[(a_j)_s+2jb_1a_j-a_{j+1}\right]\left[\Phi_{j,-}+\sum_{m=j+1}^{L_++2}\frac{\pa S_{m,+}}{\pa a_j}+\sum_{m=j+1}^{L_-+2}\frac{\partial S_{m,-}}{\partial a_j}\right]
\eea
where we used the convention $$ \left\{\begin{array}{ll}b_j=0\ \ \mbox{for} \ \ j\geq L_++1\\ a_j=0\ \ \mbox{for}\ \ j\geq L_-+1\end{array}\right.\ \ \mbox{and}\  \ \left\{\begin{array}{ll}S_{1,+}=S_{1,-}=0\\ S_{j,-}=0\ \ \mbox{for}\ \ j\geq L_-+3\end{array}\right..$$
\noindent {\em (ii) Estimate on the profile}: $S_{j,\pm}$ is a finite\footnote{the total number of terms is bounded by $C(p,L_+)<+\infty$.}linear combination of terms $S_{j,\pm}^{(1)}$,$S_{j,\pm}^{(2)}$  
with 
\bea
\label{degsiplus}
&&\left\{\begin{array}{ll}\deg S_{j,+}^{(1)}=(k_1,k_2,j-1,+), \ \ k_1+k_2=j,\\
\deg S_{j,+}^{(2)}=(k_1,k_2,j,+), \ \ k_1+k_2=j,\ \ k_2\geq 1.
\end{array}\right.\\
\label{degsiminus}
&&\left\{\begin{array}{ll}\deg S_{j,-}^{(1)}=(k_1,k_2,j-1,-), \ \ k_1+k_2=j,\ \ k_2\geq 1\\
\deg S_{j,-}^{(2)}=(k_1,k_2,j,-), \ \ k_1+k_2=j,\ \ k_2\geq 2.
\end{array}\right.
\eea
and
\be
\label{degnenarxs}
\left\{\begin{array}{ll}
\frac{\partial S^{(k)}_{j,\pm}}{\partial b_m}=0, \ \ 2\leq j\leq m\leq L_\pm, \ \ 1\leq k\leq 2\\
\frac{\partial S^{(k)}_{j,\pm}}{\partial a_m}=0,\ \ 2\leq j\leq m\leq L_\pm,  \ \ 1\leq k\leq 2
 \end{array}\right .,\\
\ee

\noindent{\em (iii) Estimate on the error $\Psi$}: let $B_1$ be given by \fref{defbnotbone}, then $\forall 0\leq j_+\leq L_+$, there holds a global weighted bound: 
\bea
\label{controleh4erreur}
&&\int_{y\leq 2B_1}(1+y^2)|\Lt^* J\Lt^{k_++j_+}\Psi|^2+\int_{y\leq 2B_1} \frac{|\Psi|^2}{1+y^{4(k_++j_++2)}}\\
\nonumber & \lesssim &b_1^{2j_++4+2(1-\delta_{k_+})-C_{L_+}\eta}.
\eea
{\bf CHECKER LA DEUXIEME NORME QUI
N'EST PAS DEMONTREE}\\
and the improved local control:
\be
\label{fluxcomputationone}
\forall B\geq 1, \ \ \int_{y\leq 2B}(1+y^2)|\Lt^* J\Lt^{k_++j_+}\Psi|^2 \lesssim B^Cb_1^{2L_++6}.
\ee
\end{proposition}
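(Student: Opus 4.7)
The plan is to construct $Q_{b,a}$ by an iterative tail computation: plug the ansatz \fref{decompqb}--\fref{defalphab} into the renormalized equation \fref{deferreur} and solve order by order in the homogeneity degree in $(b,a)$, at each step choosing the ODE satisfied by $(b_j,a_j)$ to cancel the piece whose spatial tail is too large for $\Lt^{-1}$ to handle, and defining the next $S_{k,\pm}$ by inverting $\Lt$ against the well-behaved remainder.

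First I would plug the ansatz into the LHS of \fref{deferreur}. By the chain rule, $\pa_sQ_{b,a}$ produces exactly the bracketed factors appearing in \fref{defmodtun} times $(b_j)_s$ and $(a_j)_s$. The term $-J(\Delta Q_{b,a}+f(Q_{b,a}))$ expands as $\Lt\zeta_{b,a}+N(\zeta_{b,a})$, where $N$ collects the Taylor remainder of the nonlinearity beyond the linearized term; using $\Lt\Phi_{j,\pm}=\Phi_{j-1,\pm}$ from \fref{defphiplus}--\fref{defphiminus} this contributes $\sum_{j\geq 1}b_j\Phi_{j-1,+}+\sum_{j\geq 1}a_j\Phi_{j-1,-}+\Lt(\sum_k S_{k,\pm})+N(\zeta_{b,a})$. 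Finally, $b_1\Lambda Q_{b,a}+Ja_1Q_{b,a}$ brings in the scaling/phase pieces $\sum_j b_1b_j\Lambda\Phi_{j,+}+\sum_j b_1a_j\Lambda\Phi_{j,-}$, etc. The tail cancellation of Lemma \ref{lemmaradiation}, namely $\Lambda\Phi_{j,+}=(2j-\alpha)\Phi_{j,+}+\Psi_{j,+}$ and $\Lambda\Phi_{j,-}=2j\Phi_{j,-}+\Psi_{j,-}$ with $J^j\Psi_{j,\pm}$ admissible of degree strictly better than $J^j\Phi_{j,\pm}$, then isolates the leading linear contribution attached to each $(b_j,a_j)$ as precisely the coefficient of $\Phi_{j,\pm}$ in \fref{defmodtun}. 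Declaring these modulation equations to hold cancels those terms and leaves a source whose radial growth is improved by two powers of $y$.

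The second step is to define $S_{k,\pm}$ inductively for $k=2,\dots,L_\pm+2$ by inverting $\Lt$ against the remainder collected at the previous step. At each order the source is a finite linear combination of homogeneous admissible profiles in the sense of Definition \ref{defadmissiblehomog}: some coming from the linear cascade residues $\Psi_{j,\pm}$, the rest from expanding the nonlinear piece $N$ against the previously constructed $\Phi_{m,\pm}$ and $S_{m,\pm}$. Applying Lemma \ref{lemmapropinverse}(iii)--(iv), the inversion $\Lt^{-1}$ raises the admissibility degree by one, producing scalar profiles whose degrees exactly match \fref{degsiplus}--\fref{degsiminus}: the $S^{(1)}$ pieces capture the linear cascade residues while the $S^{(2)}$ pieces capture the genuine nonlinear contributions (hence the constraints $k_2\geq 1$, $k_2\geq 2$). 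The vanishing relations \fref{degnenarxs} are automatic because the inductive construction ensures that $S_{j,\pm}$ depends only on the lower-index modes $(b_1,\dots,b_{j-1})$ and $(a_1,\dots,a_{j-1})$.

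Finally, I would truncate after the $(L_\pm+2)$-th iteration and package everything left over into $\Psi$. By construction $\Psi$ is a finite sum of terms each of total $(b,a)$-homogeneity at least $L_++2$, multiplied by admissible scalar profiles. The global bound \fref{controleh4erreur} then follows by repeated use of Lemma \ref{lemmapropinverse}(ii) to estimate $\Lt^*J\Lt^{k_++j_+}\Psi$ pointwise, integration of the resulting polynomial weight against $1+y^2$ up to $y\leq 2B_1=2b_1^{-(1+\eta)/2}$, the a priori bounds \fref{aprioirbound} on $(b_j,a_j)$, and the arithmetic identities \fref{dgammkrealtion} that convert tail exponents into the announced powers of $b_1$; the small loss $B_1^{C_{L_+}\eta}$ is exactly what one pays by integrating up to a $b_1$-power rather than to a constant. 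The local bound \fref{fluxcomputationone} is a cruder version in which the $b_1^{-\eta}$ integration is replaced by an explicit polynomial loss $B^C$. The main obstacle is the precise bookkeeping of tail exponents across the two coupled cascades for $b$ and $a$, which behave asymmetrically because $\Phi_{k,+}$ and $\Phi_{k,-}$ decay at the different rates $y^{2k-\gamma}$ and $y^{2k-\frac{2}{p-1}}$; the non-integrality condition \fref{conditionalpha} is what guarantees that every $\Lt^{-1}$ inversion remains clean (no logarithmic resonance) and hence that the degree assignment \fref{degsiplus}--\fref{degsiminus} propagates consistently through all $L_\pm+2$ iterations.
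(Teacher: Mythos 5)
Your proposal is correct and reflects the same strategy the paper follows: plug the ansatz into the renormalized equation, isolate the leading tail via the identities $\Lambda\Phi_{j,\pm}=(2j-\alpha\ \text{or}\ 2j)\Phi_{j,\pm}+\Psi_{j,\pm}$, impose the cascade ODEs to kill it, invert $\Lt$ against the admissible remainder to define $S_{j,\pm}$ by induction (tracking degrees via Lemma \ref{lemmapropinverse}), and truncate into $\Psi$, which is then estimated by integration of the resulting tail powers up to $y\lesssim B_1$ using \fref{dgammkrealtion}. The only minor slip is the sign: $-J(\Delta Q_{b,a}+f(Q_{b,a}))$ produces $-\Lt\zeta_{b,a}$ (not $+\Lt\zeta_{b,a}$), which is what makes the $-b_{j+1}$ appear with the announced sign in \fref{defmodtun}; otherwise the plan is sound and matches steps 1 through 4 of the paper's proof.
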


{\bf tout reprendre avec le nouvel alpha}\\

\begin{proof}[Proof of Proposition \ref{consprofapproch}] To ease the notation, we denote $\zeta=\zeta_{a,b}$. We compute from \fref{decompqb}, \fref{deferreur}:
\bea
\label{computationerrorone}
&&\pa_sQ_{b,a}-J(\Delta Q_{b,a}+f(Q_{b,a}))+b_1 \Lambda Q_{b,a}+Ja_1Q_{b,a}\\
\nonumber& = &  \pa_s\zeta -\Lt\zeta+b_1\Lambda\zeta+Ja_1\zeta-J\left[f(Q+\zeta)-f(Q)-f'(Q)\zeta\right]+b_1\Lambda Q+Ja_1Q.
\eea

\noindent {\bf step 1} Computation of the linear term. We compute the linear term from \fref{defalphab} using $\Lt\Phi_{i,\pm}=\Phi_{i-1,\pm}$ for $i\geq 1$:
\bee
A_1& = & \pa_s\zeta -\Lt\zeta+b_1\Lambda\zeta+Ja_1\zeta+b_1\Lambda Q+Ja_1Q\\
& = & \sum_{j=1}^{L_+}(b_j)_s\Phi_{j,+}+b_1b_j\Lambda \Phi_{j,+}+Ja_1b_j\Phi_{j,+}-b_j\Lt\Phi_{j,+}\\
& + & \sum_{j=1}^{L_-}(a_j)_s\Phi_{j,-}+b_1a_j\Lambda \Phi_{j,-}+Ja_1a_j\Phi_{j,-}-a_j\Lt\Phi_{j,-}\\
& + & \sum_{j=2}^{L_\pm+2}\pa_s S_{j,\pm}+b_1\Lambda S_{j,\pm}+Ja_1S_{j,\pm}-\Lt S_{j,\pm}\\
& + & b_1\Lambda Q+Ja_1Q\\
& = & b_1(\Lambda Q-\Phi_{0,+})+a_1(JQ-\Phi_{0,-})\\
& + & \sum_{j=1}^{L_+}[(b_j)_s+(2j-\alpha)b_1b_j-b_{j+1}]\Phi_{j,+} +  \sum_{j=1}^{L_-}[(a_j)_s+2jb_1a_j-a_{j+1}]\Phi_{j,-}\\
& + & \sum_{j=1}^{L_+}\left[b_1b_j\Psi_{j,+}+a_1b_jJ\Phi_{j,+}\right]+\sum_{j=1}^{L_-}\left[b_1a_j\Psi_{j,-}+a_1a_jJ\Phi_{j,-}\right]\\
& + & \sum_{j=2}^{L_\pm+2}\pa_s S_{j,\pm}+b_1\Lambda S_{j,\pm}+Ja_1S_{j,\pm}-\Lt S_{j,\pm}
\eee
where we recall the convention $b_{L_++1}=a_{L_-+1}=0$. We now treat the time dependence using the anticipated approximate modulation equation:
\bee
\pa_sS_{j,\pm}& = & \sum_{m=1}^{L_+}(b_m)_s\frac{\partial S_{j,\pm}}{\partial b_m}+\sum_{m=1}^{L_-}(a_m)_s\frac{\partial S_{j,\pm}}{\partial a_m}\\
& = & \sum_{m=1}^{L_+}((b_m)_s+(2m-\alpha)b_1b_m-b_{m+1})\frac{\partial S_{j,\pm}}{\partial b_m}-\sum_{m=1}^{L_+}((2m-\alpha)b_1b_m-b_{m+1})\frac{\partial S_{j,\pm}}{\partial b_m}\\
& + & \sum_{m=1}^{L_-}((a_m)_s+2mb_1a_m-a_{m+1})\frac{\partial S_{j,\pm}}{\partial a_m}-\sum_{m=1}^{L_-}(2mb_1a_m-a_{m+1})\frac{\partial S_{j,\pm}}{\partial a_m}
\eee 
and thus:
\bea
\label{copmutationaone}
\nonumber A_1 & = & \sum_{j=1}^{L_+}[(b_j)_s+(2j-\alpha)b_1b_j-b_{j+1}]\left[\Phi_{j,+}+\sum_{m=j+1}^{L_++2}\frac{\pa S_{m,+}}{\pa b_j}+\sum_{m=j+1}^{L_-+2}\frac{\pa S_{m,-}}{\pa b_j}\right]\\
\nonumber &+& \sum_{j=1}^{L_-}[(a_j)_s+2jb_1a_j-a_{j+1}]\left[\Phi_{j,-}+\sum_{m=j+1}^{L_++2}\frac{\pa S_{m,+}}{\pa a_j}+\sum_{m=j+1}^{L_-+2}\frac{\pa S_{m,-}}{\pa a_j}\right]\\
& + &  \sum_{j=1}^{L_++1}\left[E_{j+1,+}-\Lt S_{j+1,+}\right]+  \sum_{j=1}^{L_-+1}\left[E_{j+1,-}-\Lt S_{j+1,-}\right]\\
\nonumber & + & (b_1\Lambda +a_1J)\Lambda S_{L_++2,+}\\
\nonumber &-& \sum_{m=1}^{L_+}[(2m-\alpha)b_1b_m-b_{m+1}]\frac{\pa S_{L_++2,+}}{\pa b_m}-\sum_{m=1}^{L_-}[2mb_1a_m-a_{m+1}]\frac{\pa S_{L_++2,+}}{\pa a_m}\\
\nonumber & +& (b_1\Lambda +a_1J)S_{L_-+2,-}\\
\nonumber & - & \sum_{m=1}^{L_+}[(2m-\alpha)b_1b_m-b_{m+1}]\frac{\pa S_{L_-+2,-}}{\pa b_m}-\sum_{m=1}^{L_-}(2mb_1a_m-a_{m+1})\frac{\pa S_{L_-+2,-}}{\pa a_m}
\eea
with for $1\leq j\leq L_++1$:
\bea
\label{defEiplusone}
E_{j+1,+}&=&b_1b_j\Psi_{j,+}+b_1\Lambda S_{j,+}+Ja_1S_{j,+}+Ja_1b_j\Phi_{j,+}+\\
\nonumber & - & \sum_{m=1}^{j-1}\left\{[(2m-\alpha)b_1b_m-b_{m+1}]\frac{\pa S_{j,+}}{\pa b_m}+(2mb_1a_m-a_{m+1})\frac{\pa S_{j,+}}{\pa a_m}\right\}
\eea
and for $1\leq j\leq L_-+1$:
\bea
\label{defEiminusone}
E_{j+1,-}&=&b_1a_j\Psi_{j,-}+J\left[a_1a_j\Phi_{j,-}+a_1S_{j,-}\right]+b_1\Lambda S_{j,-}\\
\nonumber & - & \sum_{m=1}^{j-1}\left\{[(2m-\alpha)b_1b_m-b_{m+1}]\frac{\pa S_{j,-}}{\pa b_m}+(2mb_1a_m-a_{m+1})\frac{\pa S_{j,-}}{\pa a_m}\right\}
\eea
This immediately yields by induction on \fref{degsiplus}, \fref{degsiminus} using Lemma \ref{lemmapropinverse} and Lemma \ref{lemmaradiation} that $E_{j+1,\pm}$ is a finite linear combination of terms $E_{j+1,\pm}^{(1)},E_{j+1,\pm}^{(2)}$ with \bea
\label{degsipluse}
&&\left\{\begin{array}{ll}\deg E_{j+1,+}^{(1)}=(k_1,k_2,j-1,+), \ \ k_1+k_2=j+1,\\
\deg E_{j+1,+}^{(2)}=(k_1,k_2,j,+), \ \ k_1+k_2=j+1,\ \ k_2\geq 1.
\end{array}\right.\\
\label{degsiminuse}
&&\left\{\begin{array}{ll}\deg E_{j+1,-}^{(1)}=(k_1,k_2,j-1,-), \ \ k_1+k_2=j+1\ \ k_2\geq 1\\
\deg E_{j+1,-}^{(2)}=(k_1,k_2,j,-), \ \ k_1+k_2=j+1,\ \ k_2\geq 2.
\end{array}\right.
\eea

\noindent{\bf step 2} Expansion of the nonlinear term. We claim a decomposition
\be
\label{defriplus}
f(Q+\zeta)-f(Q)-f'(Q)\zeta=\sum_{j=2}^{L_++2}R_{j,+}+\sum_{j=2}^{L_-+2}\left(R^{(1)}_{j,-}+R_{j,-}^{(2)}\right)+\mathcal R_1
\ee
where $R_{j,+}$ is a linear combination of terms of degree $$\deg R_{j,+}=(k_1,k_2,j-2,+), \ \ k_1+k_2=j$$ $R_{j,-}^{(1)}$ is a linear combination of terms of degree $$\deg R_{j,-}^{(1)} =(k_1,k_2,j-2,-), \ \ k_1+k_2=j, \ \ k_2\geq 1$$ and $R_{j,-}^{(2)}$ is a linear combination of terms of degree $$\deg R_{j,-}^{(1)} =(k_1,k_2,j-1,-), \ \ k_1+k_2=j, \ \ k_2\geq 2.$$ Moreover, the remainder has a decomposition 
\be
\label{decopmrone}
\mathcal R_1=\mathcal R_{1,+}+\matchal R_{1,-}^{(1)}+\mathcal R_{1,-}^{(2)}
\ee 
where $\mathcal R_{1,+}$ is a linear combination of terms of degree $$\deg \mathcal R_{1,+}=(k_1,k_2,j-2,+), \ \ k_1+k_2\geq L_++3$$ $\mathcal R_{1,-}^{(1)}$ is a linear combination of terms of degree $$\deg \mathcal R_{1,-}^{(1)} =(k_1,k_2,j-2,-), \ \ k_1+k_2\geq L_-+3, \ \ k_2\geq 1$$ and $\mathcal R_{1,-}^{(2)}$ is a linear combination of terms of degree $$\deg\mathcal R_{1,-}^{(1)} =(k_1,k_2,j-1,-), \ \ k_1+k_2\geq L_-+3, \ \ k_2\geq 2.$$ 
{\it Proof of \fref{defriplus}, \fref{decopmrone}}: We expand the nonlinear term using that $p=2q+1$. Let the set $$\mathcal J=\{0\leq j_1\leq q+1, \ \ 0\leq j_2\leq q, \ \  j_1+j_2\geq 2\},$$ then
$$
f(Q+\zeta)-f(Q)-f'(Q)\zeta=(Q+\zeta)^{q+1}(Q+\overline{\zeta})^q=\sum_{j\in \mathcal J} c_{j_1,j_2}Q^{p-(j_1+j_2)}\zeta^{j_1}\overline{\zeta}^{j_2}.$$
Let $(j_1,j_2)\in \matchal J$ and $j=j_1+j_2$, then each monomial in the above decomposition is by construction of $\zeta$ a linear combination of monomials
 \bee
 &&M_\Gamma=\\
 &&Q^{p-j}\Pi_{k=1}^{L_+}(b_k\Phi_{k,+})^{\gamma_{1,k}}\Pi_{k=2}^{L_++2}(S_{k,+}^{(1)})^{\gamma_{2,k}}(S_{k,+}^{(2)})^{\gamma_{3,k}}\Pi_{k=1}^{L_-}(a_k\Phi_{k,-})^{\gamma_{4,k}}\Pi_{k=2}^{L_-+2}(S_{k,-}^{(1)})^{\gamma_{5,k}}(S_{k,-}^{(2)})^{\gamma_{6,k}}.
 \eee
 We note 
 \bee
 &&|J|_1=\sum_{k=1}^{L_+}\gamma_{1,k}+\sum_{k=2}^{L_++2}(\gamma_{2,k}+\gamma_{3,k})+\sum_{k=1}^{L_-}\gamma_{4,k}+\sum_{k=2}^{L_++2}(\gamma_{5,k}+\gamma_{6,k})\\
 &&  |J|_2=\sum_{k=1}^{L_+}k\gamma_{1,k}+\sum_{k=2}^{L_++2}k(\gamma_{2,k}+\gamma_{3,k})+\sum_{k=1}^{L_-}k\gamma_{4,k}+\sum_{k=2}^{L_++2}k(\gamma_{5,k}+\gamma_{6,k}),
 \eee
 and observe the constraint $$|J|_1=j\geq 2$$. Each monomial is a polynomial in $(b,a)$ with
 $$\deg M_\Gamma =(k_1,k_2,S,\pm), \ \ k_1+k_2=|J|_2\geq |J|_1$$ for some degree $S$ which we now compute in various regimes of parameters:\\
 \noindent\underline{case $\gamma_{4,k}=\gamma_{5,k}=\gamma_{6,k}=0$}: in this case, using $|J_1|=j$, the rate $S$ of the asymptotic decay in $y$ is given by
 \bee
S&=&-\frac{2(p-j)}{p-1}+\sum_{k=1}^{L_+}(2k-\gamma)\gamma_{1,k}+\sum_{k=1}^{L_++2}(2(k-1)-\gamma)\gamma_{2,k}+(2k-\gamma)\gamma_{3,k}\\
& \leq & -2+2\frac{j-1}{p-1}+2|J|_2-\gamma|J_1|=2(|J|_2-2)+2+(j-1)\frac{2}{p-1}-(j-1)\gamma-\gamma\\
& = & 2(|J|_2-2)-\gamma-\left\{(j-1)\left[\gamma-\frac{2}{p-1}\right]-2\right\}\leq  2(|J|_2-2)-\gamma
 \eee
 from $$j\geq 2, \ \ \gamma-\frac{2}{p-1}>2.$$ We estimate higher order derivatives similarly and hence:
\be
\label{degreemgamma}
\deg M_\Gamma=(k_1,k_2,|J|_2-2,+), \ \ k_1+k_2=|J|_2.
\ee
\noindent\underline{case $(\gamma_{4,k},\gamma_{5,k},\gamma_{6,k})\neq (0,\dots,0)$}: in this case, we use $y^{-\gamma}\leq y^{-\frac 2{p-1}-2}$ for $y\geq 1$ to estimate:
 \bee
 S&\leq &-\frac{2(p-j)}{p-1}\\
 & + & \sum_{k=1}^{L_+}(2k-\frac2{p-1}-2)\gamma_{1,k}+\sum_{k=2}^{L_++2}(2(k-1)-\frac2{p-1}-2)\gamma_{2,k}+(2k-\frac2{p-1}-2)\gamma_{3,k}\\
 & + & \sum_{k=1}^{L_-}(2k-\frac2{p-1})\gamma_{4,k}+\sum_{k=2}^{L_-+2}(2(k-1)-\frac2{p-1})\gamma_{5,k}+(2k-\frac2{p-1})\gamma_{6,k}\\
& \leq & -2+2\frac{j-1}{p-1}+2|J|_2-\frac2{p-1}|J_1|-2\sum_k\left[\gamma_{1,k}+\gamma_{2,k}+\gamma_{3,k}+\gamma_{5,k}\right]\\&\leq & 2\left(|J|_2-1-\sum_k\left[\gamma_{1,k}+\gamma_{2,k}+\gamma_{3,k}+\gamma_{5,k}\right]\right)-\frac2{p-1}.
 \eee
If one of the $\gamma_{1,k},\gamma_{2,k},\gamma_{3,k},\gamma_{5,k}$ is non zero, then 
$$
S\leq 2(|J|_2-2), \ \ \deg M_\Gamma=(k_1,k_2, |J_2|-2,-), \ \ k_1+k_2=|J|_2, \ \ k_2\geq 1.
$$
Otherwise, $\gamma_{1,k}=\gamma_{2,k}=\gamma_{3,k}=\gamma_{5,k}=0$ and hence $$|J|_1=\gamma_{4,k}+\gamma_{6,k}\geq 2$$ implies  $$\deg M_\Gamma=(k_1,k_2,|J|_2-1,-), \ \ k_1+k_2=|J|_2, \ \ k_2\geq 2.$$
We now sort all the above polynomials in terms of $|J|_2\geq 2$ and obtain \fref{defriplus}.\\

\noindent{\bf step 3} Choice of $S_{j,\pm}$. We compute from the definition \fref{deferreur} of $\Psi$ and the modulation equation \fref{defmodtun}, the linear computation \fref{copmutationaone} and the expansion of the nonlinear term \fref{defriplus}:
\bee
\Psi & = &  \sum_{j=1}^{L_++1}\left[E_{j+1,+}+JR_{j+1,+}-\Lt S_{j+1,+}\right]\\
&+&  \sum_{j=1}^{L_-+1}\left[E_{j+1,-}+JR_{j+1,-}^{(1)}+JR_{j+1,-}^{(2)}-\Lt S_{j+1,-}\right]\\
\nonumber & + & (b_1\Lambda +a_1J)\Lambda S_{L_++2,+}\\
\nonumber &-& \sum_{m=1}^{L_+}[(2m-\alpha)b_1b_m-b_{m+1}]\frac{\pa S_{L_++2,+}}{\pa b_m}-\sum_{m=1}^{L_-}[2mb_1a_m-a_{m+1}]\frac{\pa S_{L_++2,+}}{\pa a_j}\\
\nonumber & +& (b_1\Lambda +a_1J)S_{L_-+2,-}\\
\nonumber & - & \sum_{m=1}^{L_+}[(2m-\alpha)b_1b_m-b_{m+1}]\frac{\pa S_{L_-+2,-}}{\pa b_m}-\sum_{m=1}^{L_-}(2mb_1a_m-a_{m+1})\frac{\pa S_{L_-+2,-}}{\pa a_j}\\
& + & J\mathcal R_1. 
\eee
We therefore solve $$\Lt S_{j+1,+}=E_{j+1,+}+JR_{j+1,+}, \ \  \Lt S_{j+1,-}=E_{j+1,-}+JR_{j+1,-}^{(1)}+JR_{j+1,-}^{(2)}$$ and conclude from \fref{degsipluse}, \fref{degsiminuse}, the properties of the decomposition \fref{defriplus} and the inversion Lemma \ref{lemmapropinverse}, that $S_{j+1,\pm}$ satisfies \fref{degsiplus}, \fref{degsiminus}, \fref{degnenarxs} at the order $j+1$.\\

\noindent{\bf step 4} Estimating the error. It remains to estimate the error:
\bea
\label{formulapsib}
\Psi& = &   (b_1\Lambda +a_1J)\Lambda S_{L_++2,+} +(b_1\Lambda +a_1J)S_{L_-+2,-}\\
\nonumber &-& \sum_{m=1}^{L_+}[(2m-\alpha)b_1b_m-b_{m+1}]\frac{\pa S_{L_\pm+2,\pm}}{\pa b_m}-\sum_{m=1}^{L_-}[2mb_1a_m-a_{m+1}]\frac{\pa S_{L_\pm+2,\pm}}{\pa a_j}\\
\nonumber & + & J\mathcal R_1.
\eea
Let $$k_++j_+=k_-+j_-, \ \ 0\leq j_\pm\leq L_\pm.$$ We start by estimating $S_{L_\pm+2}$ terms and split the contribution according to \fref{degsiplus}, \fref{degsiminus}.\\
\noindent\underline{$S_{2L+2,+}^{(1)}$ terms}. A term 
\bee
\sum_{+}^{(1)}&=&(b_1\Lambda +a_1J)\Lambda S_{L_++2,+}^{(1)} \\
\nonumber &-& \sum_{m=1}^{L_+}[(2m-\alpha)b_1b_m-b_{m+1}]\frac{\pa S^{(1)}_{L_++2,+}}{\pa b_m}-\sum_{m=1}^{L_-}[2mb_1a_m-a_{m+1}]\frac{\pa S^{(1)}_{L_++2,+}}{\pa a_j}
\eee
is of degree $$(k_1,k_2,L_++1,+), \ \ k_1+k_2=L_++3.$$
We recall from \fref{dgammkrealtion} the relation $d-2\gamma-4k_+=4\delta_{k_+}-2$ 
and use the definition \fref{defbnotbone} of $B_1$ 
to estimate:
\bee
&&\int_{y\leq B_1}(1+y^2)|\Lt J\Lt^{k_++j_+}\sum_{+}^{(1)}|^2\lesssim b_1^{2L_++6}\int_{y\leq B_1}y^2|y^{2(L_++1)-\gamma-2(k_++j_++1)}|^2y^{d-1}dy\\
& \lesssim & b_1^{2L_++6}\int_{y\leq B_1} y^{4(L_+-j_+)+d-2\gamma-4k_++1}dy=b_1^{2L_++6}\int_{y\leq B_1} y^{4(L_+-j_++\delta_{k_+})-1}dy\\
& \lesssim & b_1^{(2L_++6)-2(L_+-j_++\delta_{k_+})-C_{L_+}\eta}=b_1^{2j_++4+2(1-\delta_{k_+})-C_{L_+}\eta}
\eee
where we recall $$\eta=\eta(L_+), \ \ 0<\eta\ll 1.$$
\noindent\underline{$S_{2L+2,+}^{(2)}$ terms}. A term 
\bee
\sum_{+}^{(2)}&=&(b_1\Lambda +a_1J)\Lambda S_{L_++2,+}^{(2)} \\
\nonumber &-& \sum_{m=1}^{L_+}[(2m-\alpha)b_1b_m-b_{m+1}]\frac{\pa S^{(2)}_{L_++2,+}}{\pa b_m}-\sum_{m=1}^{L_-}[2mb_1a_m-a_{m+1}]\frac{\pa S^{(2)}_{L_++2,+}}{\pa a_j}
\eee
is of degree $$(k_1,k_2,L_++2,+), \ \ k_1+k_2=L_++3, \ \ k_2\geq 1.$$
We then estimate as above using the gain \fref{aprioirbound} from $k_2\geq 1$:
\bee
&&\int_{y\leq B_1}(1+y^2)|\Lt J\Lt^{k_++j_+}\sum_{+}^{(2)}|^2\lesssim b_1^{2L_++6+\alpha}\int_{y\leq B_1} y^{4(L_+-j_++\delta_{k_+})+3}dy\\
& \lesssim &b_1^{2j_++2+\alpha+2(1-\delta_{k_+})-C_{L_+}\eta}\leq b_1^{2j_++4+2(1-\delta_{k_+})-C_{L_+}\eta}
\eee
from $\alpha>2$.\\
\noindent\underline{$S_{2L+2,-}^{(1)}$ terms}. A term 
\bee
\sum_{-}^{(1)}&=&(b_1\Lambda +a_1J)\Lambda S_{L_-+2,-}^{(1)} \\
\nonumber &-& \sum_{m=1}^{L_+}[(2m-\alpha)b_1b_m-b_{m+1}]\frac{\pa S^{(1)}_{L_-+2,+}}{\pa b_m}-\sum_{m=1}^{L_-}[2mb_1a_m-a_{m+1}]\frac{\pa S^{(1)}_{L_-+2,+}}{\pa a_j}
\eee
is of degree $$(k_1,k_2,L_-+1,-), \ \ k_1+k_2=L_-+3, \ \ k_2\geq 1$$
We define $$k_++j_+=k_-+j_-, \ \ -\Delta k\leq j_-\leq L_-.$$ We then use from \fref{dgammkrealtion} the relation $
d-\frac{4}{p-1}-4k_-=4\delta_{k_-}-2$
and the definition \fref{defbnotbone} of $B_1$ 
to estimate:
\bee
&&\int_{y\leq 2B_1}(1+y^2)|\Lt J\Lt^{k_-+j_-}\sum_{-}^{(1)}|^2\lesssim b_1^{2L_-+6+\alpha}\int_{y\leq B_1}y^2|y^{2(L_-+1)-\frac{2}{p-1}-2(k_-+j_-+1)}|^2y^{d-1}dy\\
& \lesssim & b_1^{2L_-+6+2\Delta k}\int_{y\leq B_1} y^{4(L_--j_-)+d-\frac{4}{p-1}-4k_-+1}dy=b_1^{2L_-+6+\alpha}\int_{y\leq B_1} y^{4(L_--j_-+\delta_{k_-})-1}dy\\
& \lesssim & b_1^{2L_-+6-2(L_--j_-+\delta_{k_-})+\alpha-C_{L_+}\eta}=b_1^{2j_++4+2(1-\delta_{k_-})+\alpha-2\Delta k-C_{L_+}\eta}\\
& = & b_1^{2j_++4+2(1-\dk)-C_{L_+}\eta}
\eee
where we used \fref{dgammkrealtion} in the last step.\\

\noindent\underline{$S_{2L+2,-}^{(2)}$ terms}. A term 
\bee
\sum_{-}^{(2)}&=&(b_1\Lambda +a_1J)\Lambda S_{L_-+2,-}^{(2)} \\
\nonumber &-& \sum_{m=1}^{L_+}[(2m-\alpha)b_1b_m-b_{m+1}]\frac{\pa S^{(2)}_{L_-+2,+}}{\pa b_m}-\sum_{m=1}^{L_-}[2mb_1a_m-a_{m+1}]\frac{\pa S^{(2)}_{L_-+2,+}}{\pa a_j}
\eee
is of degree $$(k_1,k_2,L_-+2,-), \ \ k_1+k_2=L_-+3, \ \ k_2\geq 2,$$ and we therefore estimate as above:
\bee
&&\int_{y\leq B_1}(1+y^2)|\Lt J\Lt^{k_-+j_-}\sum_{-}^{(2)}|^2\\
&\lesssim & b_1^{2L_-+6+2\alpha}\int_{y\leq B_1}y^2|y^{2(L_-+2)-\frac{2}{p-1}-2(k_-+j_-+1)}|^2y^{d-1}dy\\
& \lesssim &b_1^{2j_++2+2(1-\delta_{k_-})+\alpha+\alpha-2\Delta k-C_{L_+}\eta}\lesssim  b_1^{2j_++4+2(1-\dk)-C_{L_+}\eta}
\eee
from $\alpha>2$.\\
The $\mathcal R_1$ term is estimated exactly along the same lines, using the properties of the decomposition \fref{decopmrone}. Moreover since the above estimate does not use any cancellation induced by $\Lt$, the control of $\int_{y\leq 2B_1} \frac{|\Psi|^2}{1+y^{4(k_++j_++2)}}$ can be obtained along the exact same lines as above. 
This concludes the proof of \fref{controleh4erreur}.\\ 
The global bound \fref{fluxcomputationone} is a direct consequence of the homogeneity in $(a,b)$ of the terms in \fref{formulapsib}. 
This concludes the proof of Proposition \ref{consprofapproch}.
\end{proof}

We now proceed to a brute force space localization of the profile $Q_{b,a}$. This is done  to avoid the growth of tails, which becomes irrelevant for $y\gtrsim B_1\gg B_0$. However {\it we do not localize $Q$}, as this would produce uncontrollable error terms. These considerations 
force us to work with norms above scaling, which are finite when evaluated on $Q$. 
 
\begin{proposition}[Localization]
\label{consprofapprochloc}
Let the assumptions of Proposition \ref{consprofapproch} hold true. Assume in addition the a priori bound 
\be
\label{aprioribound}
|(b_1)_s|\lesssim b_1^{2}
\ee
Define the localized profile 
\be
\label{decompqbt}
\tilde{Q}_{b(s),a(s)}(y)= Q + \tilde{\zeta}(y), \ \ \zetat=\chi_{B_1}\zeta,
\ee
i.e.,
\bea
\label{defalphabtilde}
&&\zetat =\sum_{j=1}^{L_+}b_j\Phit_{j,+}+\sum_{j=1}^{L_-}a_j\Phit_{j,-}+ \sum_{j=2}^{L_\pm+2}\St_{j,\pm}\\
\nonumber  \ \ &&\mbox{with}\ \ \Phit_{j,\pm}=\chi_{B_1}\Phi_{j,\pm}, \ \ \St_{j,\pm}=\chi_{B_1}S_{j,\pm}.
\eea
Then
 \be
\label{deferreutilder}
\pa_s\qbt-J[\Delta \qbt+f(\qbt)]+b_1 \Lambda \qbt+Ja_1\qbt=\Psit+ \chi_{B_1}\Mod
\ee
where $\Psit$ satisfies the bounds:\\
{\em (i) Large Sobolev bound}: let $j_++k_+=j_-+k_-$, then for $0\leq j_-\leq L_--1$:
\be
\label{controleh4erreurlocone}
\int (1+y^2)|\Lt ^*J\Lt^{k_++j_+}\Psit |^2+\int \frac{|\Psit|^2}{1+y^{4(k_++j_+)+2}} \lesssim b_1^{2j_++2+2(1-\dk)-C_{L_+}\eta}
\ee
and
\be
\label{controleh4erreurloc}
\int (1+y^2)|\Lt^* J\Lt^{k_++L_+}\Psit |^2+\int \frac{|\Psit|^2}{1+y^{4(k_++j_+)+2}} \lesssim b_1^{2L_++2+2(1-\dk)+2\eta(1-\delta_p)}
\ee
with $\dep$ given by \fref{defdeltazero}.

{\em (ii) Very local bound}: $\forall B\leq \frac{B_1}{2}$, $\forall 0\le j_+\leq L_+$,
\be
\label{fluxcomputationonelocbis}
\int_{y\leq 2B}(1+y^2)|\Lt^* J\Lt^{k_++j_+}\Psi|^2 \lesssim B^Cb_1^{2L_++6}
\ee
{\em (iii) Refined local bound near $B_0$}: $\forall 0\le j_+\leq L_+$,
\bea
\label{nkonenoenve}
&&\int_{y\leq 2B_0}(1+y^2)|\Lt^* J\Lt^{k_++j_+}\Psit|^2+\int_{y\leq 2B_0} \frac{|\Psit|^2}{1+y^{4(k_++j_+)+2}}\\
 \nonumber &\lesssim &b_1^{2j_++4+2(1-\dk)-C_{L_+}\eta}.
\eea
{\em (iii) Small Sobolev bound}: let a universal constant $$\sigma>s_c, \ \ |\sigma-s_c|\ll1,$$ then:
\be
\label{smallsobolevbound}
\|\nabla^{\sigma}\Psit\|^2_{L^2}\leq b_1^{\sigma-s_c+2+\nu_1}
\ee
for some universal constant $\nu_1(d,p)>0$.

\end{proposition}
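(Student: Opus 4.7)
The plan is to express $\Psit$ explicitly in terms of the interior error $\Psi$ of Proposition \ref{consprofapproch} plus localization corrections supported in the transition shell $\{B_1\le y\le 2B_1\}$, then estimate each contribution. Subtracting $\chi_{B_1}$ times \fref{deferreur} from \fref{deferreutilder}, using $\zetat=\chi_{B_1}\zeta$, and exploiting $-J\Delta Q=Jf(Q)$, a direct algebraic computation yields
\begin{equation}
\label{formulapsitildeprop}
\Psit = \chi_{B_1}\Psi + (\pa_s\chi_{B_1})\zeta - J\bigl[2\na\chi_{B_1}\cdot\na\zeta + (\Delta\chi_{B_1})\zeta\bigr] + b_1(y\cdot\na\chi_{B_1})\zeta + (1-\chi_{B_1})\bigl[b_1\Lambda Q+Ja_1Q\bigr] - JN,
\end{equation}
with $N=f(Q+\chi_{B_1}\zeta)-f(Q)-\chi_{B_1}[f(Q+\zeta)-f(Q)]$. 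The key structural point is that every correction beyond $\chi_{B_1}\Psi$ is supported either in the shell (the three commutator terms, the time-derivative term, and $N$, which vanishes both where $\chi_{B_1}=1$ and where $\chi_{B_1}=0$) or in $\{y\ge B_1\}$ with explicit polynomial decay from Lemma \ref{lemmasoliton}.

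The local bounds \fref{fluxcomputationonelocbis} and \fref{nkonenoenve} are immediate: on $\{y\le 2B\}$ with $B\le B_1/2$, and on $\{y\le 2B_0\}$ with $B_0\ll B_1$, the cutoff satisfies $\chi_{B_1}\equiv 1$, so every correction in \fref{formulapsitildeprop} vanishes and $\Psit\equiv\Psi$ there. The estimates reduce to \fref{fluxcomputationone} and \fref{controleh4erreur}.

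For the global Sobolev bounds \fref{controleh4erreurlocone}--\fref{controleh4erreurloc}, the $\chi_{B_1}\Psi$ contribution is controlled by \fref{controleh4erreur} with size $b_1^{2j_++4+2(1-\dk)-C_{L_+}\eta}$, strictly smaller than the target. Each shell correction of the schematic form $\pa^a\chi_{B_1}\cdot\pa^b\zeta$ satisfies the pointwise bound $\lesssim B_1^{-a-b}|\zeta|_{y\sim B_1}\mathbf{1}_{B_1\le y\le 2B_1}$ by combining $|\pa^a\chi_{B_1}|\lesssim B_1^{-a}$ with the admissibility bound $|\pa^b\zeta|\lesssim y^{-b}|\zeta|$ from Lemma \ref{lemmaradiation} and Proposition \ref{consprofapproch}. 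Applying $\Lt^*J\Lt^{k_++j_+}$, a differential operator of order $2(k_++j_++1)$ acting at spatial scale $B_1$, adds a factor $B_1^{-2(k_++j_++1)}$; squaring, weighting by $(1+y^2)\sim B_1^2$, and integrating against $y^{d-1}\,dy$ over a shell of volume $\sim B_1^d$ yields a quantity of size $B_1^{d+2-4(k_++j_++1)-4}|\zeta|^2_{y\sim B_1}$. The dominant pointwise size comes from $b_1\Phi_{1,+}$, giving $|\zeta|_{y\sim B_1}\lesssim b_1 B_1^{2-\gamma}$; using the identity $d-2\gamma-4k_+=4\dk-2$ from \fref{dgammkrealtion} together with $B_1=b_1^{-(1+\eta)/2}$, all exponents align to yield the target $b_1^{2j_++2+2(1-\dk)+O(\eta)}$. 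The time-derivative contribution is of the same order by the a priori bound \fref{aprioribound}, which gives $|\pa_s\chi_{B_1}|\lesssim b_1$ in the shell; the tail produces $\int_{B_1}^{+\infty}y^{d-1-2\gamma-4(k_++j_++1)-4}\,dy\sim B_1^{4\dk-4j_+-4}$ by \fref{estkeydegen}, giving a strictly better exponent; and $|N|\lesssim Q^{p-1}|\zeta|$ in the shell since $|\zeta|_{y\sim B_1}\ll Q(B_1)$ for $b_1\ll 1$, contributing at the same order as the other shell terms. The refinement $+2\eta(1-\dep)$ in \fref{controleh4erreurloc} for $j_+=L_+$ arises from a sharper saturation analysis of the top-cascade profiles $S_{L_\pm+2,\pm}$, exploiting the strict inequality $\dep<1$ from \fref{conditionalpha}.

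The small Sobolev bound \fref{smallsobolevbound} is treated differently, since $\Psit\notin L^2$ in general (the tail $\Lambda Q$ is not square-integrable because $\gamma<d/2$). We estimate $\|\na^\sigma\Psit\|_{L^2}^2$ by splitting according to \fref{formulapsitildeprop}: for the tail $(1-\chi_{B_1})[b_1\Lambda Q+Ja_1Q]$, we use the explicit pointwise decay $|\Lambda Q|\lesssim y^{-\gamma}$ from \fref{estkeydegen} to compute $\|\na^\sigma[(1-\chi_{B_1})\Lambda Q]\|_{L^2}^2\sim B_1^{d-2(\gamma+\sigma)}$, which since $\sigma>s_c$ and $d/2-s_c-\gamma=-\alpha<0$ yields $b_1^{\sigma-s_c+2+\alpha+O(\eta)}$; for the compactly supported pieces $\chi_{B_1}\Psi$, the shell corrections, and $N$, we interpolate between the $L^2$ bound available on the support $\{y\le 2B_1\}$ (obtained from the weighted estimate in \fref{controleh4erreurlocone} at $j_+=0$ multiplied by $B_1^{4k_++2}$) and the top-order Sobolev estimate \fref{controleh4erreurloc}, with $\sigma$ chosen slightly above $s_c$ in the interval $(s_c,d/2)$; the strict slack $\alpha>2$ yields a universal constant $\nu_1>0$. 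The main obstacle is the careful bookkeeping of $\eta$-losses in the shell estimate for the top-order case $j_+=L_+$: producing the positive gain $\eta(1-\dep)$ in \fref{controleh4erreurloc} demands a precise analysis of which admissibility bounds saturate for the top-cascade profiles $S_{L_\pm+2,\pm}$, making full use of the three non-integrality assumptions in \fref{conditionalpha}.
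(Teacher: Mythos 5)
Your algebraic decomposition \eqref{formulapsitildeprop} is correct and matches the paper's formula \fref{defpsihat}, and your treatment of the very local bounds (observing that $\Psit\equiv\Psi$ on $\{y\le 2B\}\subset\{y\le B_1\}$) is exactly the paper's argument. However, the claimed accounting of the shell estimates contains two errors that you would not be able to close as written.

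First, in the shell $\{B_1\le y\le 2B_1\}$ the dominant contribution to $|\zeta|$ is \emph{not} $b_1\Phi_{1,+}$. Each term $b_j\Phi_{j,+}$ is of size $b_1^jB_1^{2j-\gamma}=(b_1B_1^2)^jB_1^{-\gamma}$, and since $b_1B_1^2=b_1^{-\eta}>1$ this is \emph{increasing} in $j$; the dominant term is $j=L_+$, i.e.\ $b_1^{L_+}B_1^{2L_+-\gamma}$, which is larger than $b_1B_1^{2-\gamma}$ by the factor $(b_1B_1^2)^{L_+-1}=b_1^{-(L_+-1)\eta}$. Your pointwise bound $|\zeta|_{y\sim B_1}\lesssim b_1B_1^{2-\gamma}$ therefore undercounts the shell errors, and once corrected, the sign of the resulting $O(\eta)$ exponent is exactly what separates a loss from a gain.

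Second, and more fundamentally, your attribution of the critical gain $+2\eta(1-\dep)$ in \fref{controleh4erreurloc} to ``a sharper saturation analysis of the top-cascade profiles $S_{L_\pm+2,\pm}$'' using ``the three non-integrality assumptions in \fref{conditionalpha}'' is misdirected. In the paper's computation, the $\zeta^{(1)}_\pm$ (i.e.\ the $S_{j,\pm}$) shell contributions come out at the strictly better order $b_1^{2j_++4-C_{L_+}\eta}$ for all $j_+\le L_+$ and never saturate. The gain at $j_+=L_+$ comes from the $\zeta^{(0)}_\pm$ shell contributions, i.e.\ the linear pieces $b_j\Phi_{j,+}$ and $a_j\Phi_{j,-}$. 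After using the exponent arithmetic $d-2\gamma-4k_+=4\dk-2$ (resp.\ $d-\frac{4}{p-1}-4k_-=4\dkm-2$), the shell integrals give $b_1^{2j_++2+2(1+\eta)(1-\delta)}\sum_j(b_1B_1^2)^{2(j-j_+)}$ with $\delta\in\{\dk,\dkm\}$; for $j_+\le L_+-1$ the sum contains terms with $j>j_+$ yielding $(b_1B_1^2)^{\text{positive}}=b_1^{-\eta\cdot\text{positive}}$, a loss, whereas for $j_+=L_+$ each exponent $2(j-L_+)\le 0$, so the sum is $\lesssim1$ and the prefactor $b_1^{2(1+\eta)(1-\delta)}$ delivers the gain $2\eta(1-\delta)$; taking the worse constant gives $2\eta(1-\dep)$. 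The conditions $0<\dk,\dkm<1$ (equivalently the first two non-integrality hypotheses in \fref{conditionalpha}) are what make this gain strictly positive; the third is irrelevant at this step. Without this observation, the proof of \fref{controleh4erreurloc} does not close.
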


\begin{remark} Observe the loss in \fref{controleh4erreurloc} with respect to \fref{controleh4erreur}. This is a an unavoidable consequence of the localization of the profile, which generates the worst case bound in \fref{controleh4erreurloc}.
\end{remark}

\begin{remark} We can take $$\nu=\frac{\alpha-2}{2}>0$$ in \fref{smallsobolevbound}.
\end{remark}

\begin{proof}[Proof of Proposition \ref{consprofapprochloc}]

{\bf step 1} Algebraic identity. 
We compute from localization:
\bee
&&\pa_s\qbt-J[\Delta \qbt+f(\qbt)]+b_1 \Lambda \qbt+Ja_1\qbt\\
& = & \chi_{B_1}\left[\pa_s\zeta-J(\Delta \zeta+f(Q_{b,a})-f(Q))+b_1 \Lambda \zeta+Ja_1\zeta\right]+b_1\Lambda Q+Ja_1Q\\
& + & \zeta\left[\pa_s\chi_{B_1}+b_1y\chi_{B_1}'-J\Delta\chi_{B_1}\right]-2J\nabla \zeta\cdot\nabla \chi_{B_1}\\
& = & \chi_{B_1}\left[\Psi+\Mod\right]+(1-\chi_{B_1})(b_1\Lambda Q+a_1JQ)-J\left[f(\qbt)-f(Q)-\chi_{B_1}(f(Q_{b,a})-f(Q))\right]\\
&+&  \zeta\left[\pa_s\chi_{B_1}+b_1y\chi_{B_1}'-J\Delta\chi_{B_1}\right]-2J\zeta' \chi'_{B_1}
\eee 
or equivalently according to \fref{deferreutilder}:
$$\Psit=\chi_{B_1}\Psi+\Psih$$ with 
\bea
\label{defpsihat}
\nonumber \Psih&=&(1-\chi_{B_1})(b_1\Lambda Q+a_1JQ)-J\left[f(\qbt)-f(Q)-\chi_{B_1}(f(Q_{b,a})-f(Q))\right]\\
&+&  \zeta\left[\pa_s\chi_{B_1}+b_1y\chi_{B_1}'-J\Delta\chi_{B_1}\right]-2J \zeta'\chi'_{B_1}.
\eea
{\bf step 2} Estimating integer derivatives. The bound \fref{controleh4erreurloc} for $\chi_{B_1}\Psi$ follows verbatim the proof of \fref{controleh4erreur}, \fref{fluxcomputationone} which, in fact, yield a stronger estimate for $0<\eta<\eta(L_+)$ small enough. We therefore left to estimate the $\Psih$ terms. Note that all terms in \fref{defpsihat} are localized in $B_1\leq y\leq 2B_1$ except the first one\footnote{which is in fact the leading order term.} for which $\mbox{Supp}\left\{(1-\chi_{B_1})(b_1\Lambda Q+a_1JQ)\right\}\subset\{y\geq B_1\}.$ Hence \fref{fluxcomputationonelocbis}, \fref{nkonenoenve} follow directly from \fref{fluxcomputationone}, \fref{controleh4erreur}. In order to treat the far away localized remaining error, we split:
\be
\label{defhatpsibis}
\Psit=\Psit_++\Psit_-, \ \ \Psit_-=a_1(1-\chi_{B_1})JQ,
\ee
and we claim the bounds:
\bea
\label{boundpsitildeplus}
\nonumber &&\int (1+y^2)|\Lt J\Lt^{k_++j_+}\Psit_+ |^2+\int \frac{|\Psit_+|^2}{1+y^{4(k_++j_+)+2}}\\
&\lesssim &\left\{\begin{array}{ll}  b_1^{2j_++2+2(1-\dk)-C_{L_+}\eta}\ \ \mbox{for}\ \ 0\leq j_+\leq L_+-1\\ b_1^{2L_++2+2(1-\dk)+2\eta(1-\delta_p)}\ \ \mbox{for}\ \ j_+=L_+
\end{array}\right.,
\eea
and
\bea
\label{boundpsitildeminus}
&&\nonumber \int (1+y^2)|\Lt J\Lt^{k_-+j_-}\Psit_- |^2+\int \frac{|\Psit_-|^2}{1+y^{4(k_++j_+)+2}}\\
&\lesssim& \left\{\begin{array}{ll}  b_1^{2j_++2+2(1-\dk)-C_{L_+}\eta}\ \ \mbox{for}\ \ 0\leq j_-\leq L_--1\\ b_1^{2L_++2+2(1-\dk)+2\eta(1-\delta_p)}\ \ \mbox{for}\ \ j_-=L_-.
\end{array}\right.,
\eea

\noindent{\it Proof of \fref{boundpsitildeminus}}. Let $j_+\geq 0$. We first observe from \fref{aprioribound} the bound: 
\be
\label{estimatecut}
|\pa_s\chi_{B_1}|\lesssim \frac{|(b_1)_s|}{b_1}|y\chi'_{B_1}|\lesssim b_1{\bf 1}_{B_1\leq y\leq 2B_1}.
\ee
Let now $j_+\geq 0$. We estimate 
$$\forall k\geq 0, \ \ \left|\frac{d^k}{dy^k}\left[(1-\chi_{B_1})\Lambda Q\right]\right|\lesssim \frac{1}{y^{\gamma+k}}{\bf 1}_{y\geq B_1}$$ from which, using \fref{dgammkrealtion} and the definition \fref{defbnotbone} of $B_1$:
  \bee
 && \int(1+y^2)\left|\Lt J\Lt^{k_++j_+}\left(b_1 (1-\chi_{B_1})\Lambda Q\right)\right|^2\lesssim b_1^2\int_{y\geq B_1}\frac{y^{d-1}dy}{y^{4(k_++j_++1)+2\gamma-2}}\\
  & \lesssim & \frac{b_1^2}{B_1^{4j_++4(1-\dk)}}\lesssim b_1^{2j_++2+2(1-\dk)(1+\eta)}.
  \eee

We now split: $$\zeta=\zeta_\pm^{(0)}+\zeta_\pm^{(1)}, \ \ \zeta_+^{(0)}=\sum_{j=1}^{L_+}b_j\Phi_{j,+}, \ \ \zeta^{(0)}_-=\sum_{j=1}^{L_-}a_j\Phi_{j,-}, \ \ \zeta^{(1)}_\pm=\sum_{j=2}^{L_{\pm}+2}S_{j,\pm}.$$
From Lemma \ref{lemmaradiation}: for all $B_1\leq y\leq 2B_1$,
 \be
 \label{estaklphaba}
 \left|\frac{\pa^k}{\pa y^k}\zeta_+^{(0)}\right|\lesssim \sum_{j=1}^{L_+}b_1^jy^{2j-\gamma-k}
 \ee
from which, using \fref{dgammkrealtion}: for all $0\leq j_+\leq L_+$: 
 \bee
 \nonumber &&\int(1+y^2)\left|\Lt J\Lt^{k_++j_+}\left((\pa_s\chi_{B_1})\zeta_+^{(0)}-2J\pa_y\chi_{B_1}\pa_y\zeta_+^{(0)}-J\zeta_+^{(0)}\Delta \chi_{B_1}+b_1\zeta_+^{(0)}y\chi'_{B_1}\right)\right|^2\\
  \nonumber&\lesssim & \sum_{j=1}^{L_+}b_1^2b_1^{2j}\int_{B_1\leq y\leq 2B_1}y^2\left|y^{2j-\gamma-2(k_++j_++1)}\right|^2y^{d-1}dy\\
\nonumber  &\lesssim & b_1^2\sum_{j=1}^{L_+} b_1^{2j}B_1^{4(j-j_+)-4(1-\dk)}\lesssim b_1^{2j_++2}\sum_{j=1}^{L_+} (b_1B^2_1)^{2(j-j_+)-2(1-\dk)}\\
& \lesssim &\left\{\begin{array}{ll}  b_1^{2j_++2+2(1-\dk)-C_{L_+}\eta}\ \ \mbox{for}\ \ 0\leq j_+\leq L_+-1\\ b_1^{2L_++2+2(1-\dk)(1+\eta)}\ \ \mbox{for}\ \ j_+=L_+
\end{array}\right.,
 \eee
 Similarily, using \fref{dgammkrealtion} and the a priori bound \fref{aprioirbound}:
  \bee
 \nonumber &&\int(1+y^2)\left|\Lt J\Lt^{k_++j_+}\left((\pa_s\chi_{B_1})\zeta_-^{(0)}-2J\pa_y\chi_{B_1}\pa_y\zeta_-^{(0)}-J\zeta_-^{(0)}\Delta \chi_{B_1}+b_1\zeta_-^{(0)}y\chi'_{B_1}\right)\right|^2\\
  \nonumber&\lesssim & \sum_{j=1}^{L_-}b_1^{2}b_1^{2j+\alpha}\int_{B_1\leq y\leq 2B_1}y^2\left|y^{2j-\frac{2}{p-1}-2(k_-+j_-+1)}\right|^2y^{d-1}dy\\
\nonumber  &\lesssim & \sum_{j=1}^{L_-} b_1^{2j+2+\alpha}B_1^{4(j-j_-)-4(1-\delta_{k_-})}\lesssim b_1^{2+2j_++2(1-\dk)}\sum_{j=1}^{L_-} (b_1B^2_1)^{2(j-j_-)-2(1-\delta_{k_-})}\\
& \lesssim &\left\{\begin{array}{ll}  b_1^{2j_++2+2(1-\dk)-C_{L+}\eta}\ \ \mbox{for}\ \ 0\leq j_+\leq L_+-1\\ b_1^{2L_++2+2(1-\dk)+2\eta(1-\dkm)}\ \ \mbox{for}\ \ j_+=L_+.
\end{array}\right.
 \eee
We now derive from \fref{degsiplus} the bound:
$$\left|\pa_y^k S_{j,+}\right|\lesssim b_1^jy^{2(j-1)-\gamma-k}+b_1^{j+\frac\alpha 2}y^{2j-\gamma-k}$$ from which
 \bee
 \nonumber &&\int(1+y^2)\left|\Lt J\Lt^{k_++j_+}\left((\pa_s\chi_{B_1})\zeta_+^{(1)}-2J\pa_y\chi_{B_1}\pa_y\zeta_+^{(1)}-J\zeta_+^{(1)}\Delta \chi_{B_1}+b_1\zeta_+^{(1)}y\chi'_{B_1}\right)\right|^2\\
  \nonumber&\lesssim & \sum_{j=2}^{L_++2}b_1^2b_1^{2j}\int_{B_1\leq y\leq 2B_1}\left\{y^2\left|y^{2(j-1)-\gamma-2(k_++j_++1)}\right|^2y^{d-1}+b_1^{\alpha}y^2\left|y^{2j-\gamma-2(k_++j_++1)}\right|^2y^{d-1}dy\right\}\\
\nonumber  &\lesssim & b_1^2\sum_{j=2}^{L_++2} \left\{b_1^{2j}B_1^{4(j-j_+-1)-4(1-\dk)}+ b_1^{2j+\alpha}B_1^{4(j-j_+)-4(1-\dk)}\right\}\\
&\lesssim &b_1^{2j_++4}\sum_{j=2}^{L_++2} (b_1B^2_1)^{2(j-j_+-1)-2(1-\dk)}+b_1^{2j_++2+\alpha}\sum_{j=2}^{L_++2} (b_1B^2_1)^{2(j-j_+)-2(1-\dk)}\\
& \lesssim &b_1^{2j_++4-C_{L_+}\eta}\leq b_1^{2L_++2+2(1-\dk)(1+\eta)}
 \eee
 for $0<\eta\ll1 $ small enough, thanks to the conditions $\alpha> 2$ and $0<\dk<1.$ We next estimate from \fref{degsiminus}:
 $$\left|\pa_y^k S_{j,-}\right|\lesssim b_1^{j+\frac{\alpha}{2}}y^{2(j-1)-\frac{2}{p-1}-k}+b_1^{j+\alpha}y^{2j-\frac{2}{p-1}-k}$$
 and obtain the bound:
 \bee
 \nonumber &&\int(1+y^2)\left|\Lt J\Lt^{k_++j_+}\left((\pa_s\chi_{B_1})\zeta_-^{(1)}-2J\pa_y\chi_{B_1}\pa_y\zeta_-^{(1)}-J\zeta_-^{(0)}\Delta \chi_{B_1}+b_1\zeta_-^{(1)}y\chi'_{B_1}\right)\right|^2\\
  \nonumber&\lesssim & \sum_{j=2}^{L_-+2}b_1^{2+\alpha}b_1^{2j}\int_{B_1\leq y\leq 2B_1}y^2\left\{\left|y^{2(j-1)-\frac{2}{p-1}-2(k_-+j_-+1)}\right|^2+b_1^{\alpha}\left|y^{2j-\frac{2}{p-1}-2(k_-+j_-+1)}\right|^2\right\}y^{d-1}dy\\
\nonumber  &\lesssim & b_1^{2+\alpha}\sum_{j=2}^{L_-+2} \left\{b_1^{2j}B_1^{4(j-j_--1)-4(1-\delta_{k_-})}+b_1^{2j+\alpha}B_1^{4(j-j_-)-4(1-\delta_{k_-})}\right\}\\
&\lesssim &b_1^{2j_-+\alpha+4}\sum_{j=2}^{L_-+2} (b_1B^2_1)^{2(j-j_--1)-2(1-\delta_{k_-})}+b_1^{2j_-+2\alpha+2}\sum_{j=2}^{L_-+2} (b_1B^2_1)^{2(j-j_-)-2(1-\delta_{k_-})}\\
& \lesssim &b_1^{2j_++4+\alpha-2\Delta k-C_{L_+}\eta}=b_1^{2j_++2(1-\dk)+2\dkm-C_{L_+}\eta}\lesssim b_1^{2j_++2(1+\eta)(1-\dk)}
 \eee
 for $\eta<\eta(L_+)$ small enough.\\

To estimate the nonlinear term, we first observe:
 $$
 \left|f(\qbt)-f(Q)-\chi_{B_1}(f(Q_b)-f(Q))\right| \lesssim  {\bf 1}_{B_1\leq y\leq 2B_1}[Q^{p-1}+|\zeta|^{p-1}]|\zeta|.
 $$ 
 We then estimate for $y\sim B_1$: $$Q^{p-1}\lesssim \frac{1}{y^2}\lesssim b_1^{1+\eta}$$ and observe the rough bound:
 \bee
 |\zeta|&\lesssim &\sum_{j=1}^{L_++2}b_1^jy^{2j-\gamma}+\sum_{j=1}^{L_-+2}b_1^{j+\frac\alpha 2}y^{2j-\frac{2}{p-1}}\lesssim\frac{b_1^{-C_{L_+}\eta}}{y^{\gamma}}
 \eee
 from which using $\gamma>2$, $p-1>1$: for $y\sim B_1$,
 $$|\zeta|^{p-1}\lesssim b_1^{\frac{\gamma(p-1)}{2}-C_{L_+}\eta}\leq b_1$$
 for $\eta$ small enough. Similar estimates also hold  for derivatives. The bound 
 \bee
&& \int (1+y^2)\left|\Lt J\Lt^{k_++j_+}\left (f(\qbt)-f(Q)-\chi_{B_1}(f(Q_b)-f(Q))\right)\right|^2\\
& \lesssim &\left\{\begin{array}{ll}  b_1^{2j_++2+2(1-\dk)-C_{L+}\eta}\ \ \mbox{for}\ \ 0\leq j_+\leq L_+-1\\ b_1^{2L_++2+2(1-\dk)+2\eta(1-\delta_p)}\ \ \mbox{for}\ \ j_+=L_+.
\end{array}\right.
 \eee
now easily follows. Note that this argument does not not use any cancellation induced by $\Lt$.
This concludes the proof of \fref{boundpsitildeplus}.\\
\noindent{\it Proof of \fref{boundpsitildeminus}}. We now assume the stronger condition $j_-\geq 0$
 to estimate the last non localized term. Using \fref{dgammkrealtion},
 \bee
 && \int(1+y^2)\left|\Lt J\Lt^{k_++j_+}\left(a_1(1-\chi_{B_1})Q\right)\right|^2++\int \frac{|\Psit_-|^2}{1+y^{4(k_++j_+)+2}}\\
 &\lesssim&  b_1^{2+\alpha}\int_{y\geq B_1}\frac{y^{d-1}dy}{y^{4(k_-+j_-+1)+\frac{4}{p-1}-2}}\lesssim  b_1^{2+\alpha}\int_{y\geq B_1}\frac{dy}{y^{1+4(j_-+1-\delta_{k_-})}}\\
 &\lesssim& b_1^{2+\alpha+2j_-+2(1-\dkm)}\left(\frac{B_0}{B_1}\right)^{4j_-+4(1-\delta_{k_-})}\lesssim  b_1^{2+2j_++2(1-\dk)+2\eta(1-\dkm)},
  \eee
and \fref{boundpsitildeminus} is proved.\\

\noindent{\bf step 3} Control of fractional derivatives. Let now $s_c<\sigma<\frac d2$.  Arguing as in the proof of \fref{boundpsitildeplus}, we estimate:
$$\ \ \int\left|\nabla^{2k_++2j_++1}(\chi_{B_1}\Psi+\Psit_+)\right|^2\lesssim \left\{\begin{array}{ll}  b_1^{2j_++2+2(1-\dk)-C_{L_+}\eta}\ \ \mbox{for}\ \ 0\leq j_+\leq L_+-1\\ b_1^{2L_++2+2(1-\dk)+2\eta(1-\delta_p)}\ \ \mbox{for}\ \ j_+=L_+
\end{array}\right.,$$
Now from \fref{dgammkrealtion} and $\alpha>2$:
\be
\label{reltiondeuxkpluspluuns}
2k_++1=s_c-2\left[\frac \alpha 2-(1-\dk)\right]<s_c<\sigma.
\ee 
We interpolate using the notation \fref{calculimportant}:
\be
\label{ejbebvebiev}
\sigma=z(2k_++1)+(1-z)s_+, \ \ 1-z=\frac{\sigma-2k_+-1}{2L_+}
\ee so that:
\bee
\|\nabla^{\sigma}(\chi_{B_1}\Psi+\Psit_+)\|_{L^2}^2& \lesssim &b_1^{(2+2(1-\dk)-C_{L_+}\eta)z+(2L_++2(1-\dk)+2\eta(1-\delta_p))(1-z)}
\eee
We then compute using \fref{reltiondeuxkpluspluuns}, \fref{ejbebvebiev}:
\bee
&&(2+2(1-\dk)-C_{L_+}\eta)z+(2L_++2(1-\dk)+2\eta(1-\delta_p))(1-z)=\\
&&2+2(1-\dk)-C_{L_+}\eta+\sigma-(2k_++1)+O\left(\frac{1}{L_+}\right)\\
&&=\sigma-s_c+\alpha-C_{L_+}\eta+O\left(\frac{1}{L_+}\right)
\eee
and obtain the bound from $\alpha>2$ for $L_+$ large enough and $\eta<\eta(L_+)$ small enough:
$$\|\nabla^{\sigma}(\chi_{B_1}\Psi+\Psit_+)\|_{L^2}^2\lesssim b_1^{2+\sigma-s_c+\nu(d,p)}.$$

 For the $\Psit_-$ term, we use the expansion $$\pa_y^kQ=\frac{c}{y^{\frac{2}{p-1}+k}}+O\left(\frac 1{y^{\gamma+k}}\right), \ \ k\geq 0$$ and standard commutator estimates to bound $$\|\nabla^{\sigma}Q\|_{L^2(y\geq B_1)}^2\lesssim \frac{1}{B_1^{2(\sigma-s_c)}}$$
from which using \fref{aprioirbound}: $$\|\nabla^{\sigma}\Psit_-\|_{L^2}^2\lesssim \frac{b_1^{2+\alpha}}{B_1^{2(\sigma-s_c)}}\lesssim b_1^{2+\sigma-s_c+\alpha}.$$ This concludes the proof of \fref{smallsobolevbound} and of Proposition \ref{consprofapprochloc}.
\end{proof} 
  

\subsection{Study of the dynamical system for $b=(b_1,\dots,b_{L_+})$ and $a=(a_1,\dots,a_{L_-})$}


The construction of the $Q_{b,a}$ profile together with the yet described orthogonality relations will generate a 
finite dimensional dynamical system for $b=(b_1,\dots,b_{L_+})$ and $a=(a_1,\dots,a_{L_-})$. At a formal level 
this system is obtained by setting to zero the inhomogeneous $Mod(t)$ term \fref{defmodtun} of the renormalized 
flow. 
\be
\label{systdynfund}
\left\{\begin{array}{ll} (b_j)_s+\left(2j-\alpha\right)b_1b_j-b_{j+1}=0,  \ \ 1\leq j\leq L_+, \ \ b_{L_++1}\equiv 0,\\
 (a_j)_s+2jb_1a_j-a_{j+1}=0, \ \ 1\leq j\leq L_-, \ \ a_{L_-+1}\equiv 0.
 \end{array}\right.
 \ee 
In this section we show that \fref{systdynfund} admits a family of explicit solutions indexed by $\ell\in \Bbb N^*$, $\ell>\frac \alpha 2$. 
This family has a special property that its linearized flow is explicit as well and provides a direct description of its stable and unstable 
manifolds.

\begin{lemma}[Solution to the $a,b$  system]
\label{lemmaexplicitsol}
Let $$\frac{\alpha}{2}<\ell\ll L_+ , \ \ \ell\in \Bbb N^*$$ and the sequence
\be
\label{defalphai}
\left\{\begin{array}{lll}c_1=\frac{\ell}{2\ell-\alpha},\\
c_{j+1}=-\frac{\alpha(\ell-j)}{2l-\alpha}c_j, \ \ 1\leq j\leq \ell-1,\\
c_j=0, \ \ j\geq \ell+1
\end{array}\right.
\ee
then with the explicit choice 
\be
\label{approzimatesolution}
\left\{\begin{array}{ll}
b_j^{e}(s)=\frac{c_j}{s^j} \ \ 1\leq j\leq L_+\\
a_j^e(s)=0
\end{array}\right., \ \  \ \ s>0
\ee
is a solution to \fref{systdynfund}.
\end{lemma}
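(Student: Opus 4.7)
The statement is a direct verification, so the plan is essentially to substitute the explicit ansatz into \eqref{systdynfund} and check that the resulting algebraic identity reduces to the prescribed recurrence for the $c_j$'s. The only structural observation needed is how the prefactor $c_1 = \ell/(2\ell-\alpha)$ is engineered to produce a truncation at $j = \ell$.

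First I would dispose of the $a$-system. With $a_j^e \equiv 0$ for all $1 \leq j \leq L_-$, each equation
$(a_j)_s + 2j b_1 a_j - a_{j+1} = 0$
reduces to $0 = 0$, using the convention $a_{L_-+1} \equiv 0$. This part is trivial and requires no information on $b_1^e$.

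Next I would substitute $b_j^e(s) = c_j/s^j$ into the $b$-equations. Computing
$(b_j^e)_s = -\frac{j c_j}{s^{j+1}}, \qquad (2j-\alpha) b_1^e b_j^e = \frac{(2j-\alpha)c_1 c_j}{s^{j+1}}, \qquad b_{j+1}^e = \frac{c_{j+1}}{s^{j+1}},$
the equation collapses, after multiplication by $s^{j+1}$, to the scalar recurrence
$c_{j+1} = \bigl[(2j-\alpha) c_1 - j\bigr] c_j.$
Plugging in $c_1 = \ell/(2\ell-\alpha)$, the bracket simplifies:
$(2j-\alpha)\tfrac{\ell}{2\ell-\alpha} - j = \tfrac{(2j-\alpha)\ell - j(2\ell-\alpha)}{2\ell-\alpha} = -\tfrac{\alpha(\ell-j)}{2\ell-\alpha},$
which matches exactly the recursion \eqref{defalphai}. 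So for $1 \leq j \leq \ell-1$ the $b$-equations reduce to the prescribed recursion.

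Finally I would verify the truncation and terminal equation. At $j = \ell$, the factor $-\alpha(\ell-j)/(2\ell-\alpha)$ vanishes, forcing $c_{\ell+1} = 0$, and by iteration $c_j = 0$ for all $j \geq \ell+1$; so the definition is consistent. For $\ell+1 \leq j \leq L_+-1$, both sides of the equation vanish identically. The last equation at $j = L_+$ is
$(b_{L_+}^e)_s + (2L_+ - \alpha) b_1^e b_{L_+}^e - b_{L_++1} = 0,$
which holds automatically since $L_+ \gg \ell$ implies $c_{L_+} = 0$ and $b_{L_++1} \equiv 0$ by convention. No obstacle is expected: the whole argument is a one-page computation, and the main conceptual point -- which will matter later for the linearized analysis in Lemma \ref{lemmalinear} and Lemma \ref{lemmaak} -- is that the choice of $c_1$ is precisely the one that produces a finite, truncated solution of the infinite cascade.
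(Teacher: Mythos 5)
Your verification is correct and is exactly the explicit computation the paper leaves to the reader: substituting the ansatz, reducing to the scalar recurrence $c_{j+1}=[(2j-\alpha)c_1-j]c_j$, and observing that the specific value $c_1=\ell/(2\ell-\alpha)$ forces the factor to vanish at $j=\ell$, hence the truncation.
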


The proof of Lemma \ref{lemmaexplicitsol} is an explicit computation which is left to the reader. We now claim that this solution has  a codimension $(\ell+k_\ell-1)$ stable 
manifold with $k_\ell$ given by \fref{defkell}. We  note that the stability and instability of the $(b,a)$ system is considered in the class of solutions
\begin{align*}
&\sup_s s^j |b_j(s)|\le C_j,\qquad j=1,...,L_+\\
&\sup_s s^{j+\frac \alpha 2} |a_j(s)|\le C_j,\qquad j=1,...,L_-
\end{align*}

We start with the $b$ instabilities:

\begin{lemma}[Linearization of the unstable b-subsystem]
\label{lemmalinear}
{\em 1. Computation of the linearized system}: Let 
\be
\label{defuk}
b_k(s)=b_k^e(s)+\frac{U_k(s)}{s^k},\ \ 1\leq k\leq \ell, 
\ee
and note $U=(U_1,\dots,U_\ell)$.  Then: for $1\leq k\leq \ell-1$,
\be
\label{reformulaion}
(b_k)_s+\left(2k-\alpha\right)b_1b_k-b_{k+1}=  \frac{1}{s^{k+1}}\left[s(U_k)_s-(M_\ell U)_k+O\left(|U|^2\right)\right]
\ee
and
\bea
\label{reformulaionl}
(b_\ell)_s+\left(2\ell-\alpha\right)b_1b_\ell =  \frac{1}{s^{\ell+1}}\left[s(U_\ell)_s-(M_\ell U)_\ell+O\left(|U|^2\right)\right]
\eea

where 
\be
\label{defa}
M_\ell=(a_{i,j})_{1\leq i,j,\leq \ell} \ \ \mbox{with}\ \ \left\{\begin{array}{lllll}
a_{11}=\frac{\alpha(\ell-1)}{2\ell-\alpha}-(2-\alpha)c_1\\
a_{i,i+1}=1, \ \ 1\leq i\leq \ell-1\\
a_{1,i}=-(2i-\alpha)c_{i}, \ \ 2\leq i\leq  \ell\\
a_{i,i}=\frac{\alpha(\ell-i)}{2\ell-\alpha}, \ \ 2\leq i\leq \ell\\
a_{i,j}=0\ \ \mbox{otherwise}
\end{array}\right. .
\ee
{\em 2. Diagonalization of the linearized matrix}: $M_\ell$ is diagonalizable:
\be
\label{specta}
M_\ell=P_\ell^{-1}D_\ell P_\ell, \ \ D_\ell=\mbox{diag}\left\{-1,\frac{2\alpha}{2\ell-\alpha},\frac{3\alpha}{2\ell-\alpha},\dots, \frac{\ell\alpha}{2\ell-\alpha}\right\}.
\ee
\end{lemma}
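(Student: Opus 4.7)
My plan for part 1 is a direct substitution followed by term-matching. Plugging $b_k = c_k/s^k + U_k/s^k$ into the $k$-th equation of \eqref{systdynfund} and expanding $(b_k)_s$, $(2k-\alpha)b_1 b_k$, and $b_{k+1}$ separately, the zero-order terms in $U$ collapse to the $c$-recursion (i.e.\ to the fact that $b^e$ solves \eqref{systdynfund}) and cancel. After factoring out the common $1/s^{k+1}$, the linear terms in $U$ assemble to
$$s(U_k)_s - kU_k + (2k-\alpha)c_1 U_k + (2k-\alpha) c_k U_1 - U_{k+1},$$
and the remainder is the single bilinear term $(2k-\alpha)U_1U_k = O(|U|^2)$. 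Matching against $s(U_k)_s - (M_\ell U)_k$ produces the entries of $M_\ell$; the identity $k - (2k-\alpha)c_1 = \alpha(\ell-k)/(2\ell-\alpha)$, immediate from $c_1 = \ell/(2\ell-\alpha)$, brings the diagonal for $k\ge 2$ to the stated form. The case $k=1$ (where $b_1^2$ doubles the $U_1$ coupling) is handled separately, and for $k=\ell$ the absence of $U_{\ell+1}$ is consistent with $a_{\ell\ell}=0$, itself forced by $(2\ell-\alpha)c_1=\ell$.

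For part 2, I would locate the eigenvalue $-1$ first via a symmetry argument. The system \eqref{systdynfund} is invariant under time translations $s\mapsto s-s_0$, so $b_j(s;s_0) := c_j/(s-s_0)^j$ is a one-parameter family of exact solutions. Differentiating at $s_0=0$ and reading the perturbation off from $b_j - c_j/s^j = U_j/s^j$ gives $U_j(s) = jc_j\,s_0/s$, which satisfies $s(U_j)_s = -U_j$. Hence $v^{(-1)} = (c_1, 2c_2, \dots, \ell c_\ell)$ is an eigenvector of $M_\ell$ with eigenvalue $-1$; a direct verification using $c_{j+1} = -\alpha(\ell-j)/(2\ell-\alpha)\,c_j$ confirms this independently.

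For the remaining eigenvalues I reduce the spectral problem to a scalar recurrence. Writing $M_\ell v = \lambda v$ row by row and normalizing $v_1 = 1$, the first row gives $v_2 = \lambda - a_{11}$, the intermediate rows give
$$v_{i+1} = \Bigl(\lambda - \frac{\alpha(\ell-i)}{2\ell-\alpha}\Bigr) v_i + (2i-\alpha) c_i \qquad (1\le i\le \ell-1),$$
and the last row imposes $\lambda v_\ell = (2\ell-\alpha)c_\ell$, equivalently $v_{\ell+1}=0$ if the same recursion is formally extended to $i=\ell$ using $d_\ell:=\alpha(\ell-\ell)/(2\ell-\alpha)=0$. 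This reduces the spectral problem to one polynomial equation of degree $\ell$ in $\lambda$.

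The main obstacle is verifying that each candidate $\lambda_k := k\alpha/(2\ell-\alpha)$, $2\le k\le \ell$, is a root. For $2\le k\le \ell-1$, $\lambda_k$ coincides with $d_{\ell-k}$, so the recurrence resets at step $i=\ell-k$ and, after using the closed form $c_i = c_1(-1)^{i-1}\prod_{j<i}d_j$, the closure condition $v_{\ell+1}=0$ reduces to the alternating binomial identity $\sum_{n=0}^k(-1)^n \binom{k}{n}[(2\ell-\alpha)-2(k-n)] = 0$, which vanishes since both $\sum_n(-1)^n\binom{k}{n}$ and $\sum_n(-1)^n\,n\binom{k}{n}$ vanish for $k\ge 2$. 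The edge case $\lambda_\ell$ (which is not of the form $d_i$) requires the full telescoped sum $\prod_{i=1}^\ell(\lambda_\ell - d_i) + \sum_{k=1}^\ell (2k-\alpha)c_k \prod_{j>k}(\lambda_\ell - d_j)$ to vanish; with $\lambda_\ell - d_i = i\alpha/(2\ell-\alpha)$ the analogous binomial identity on $\{0,\dots,\ell\}$ applies. Having produced $\ell$ pairwise distinct eigenvalues (distinctness follows from the exclusion $\alpha/2\notin\mathbb{N}$ in \eqref{conditionalpha}), $M_\ell$ is diagonalizable and $P_\ell$ is constructed column-by-column from the eigenvectors just produced.
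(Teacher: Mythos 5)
Your argument is correct. Part~1 follows essentially the paper's own route (direct substitution, term-matching, using the $c$-recursion and the identity $(2k-\alpha)c_1-k=-\tfrac{\alpha(\ell-k)}{2\ell-\alpha}$). You correctly read the couplings as first-\emph{column} entries $a_{i,1}=-(2i-\alpha)c_i$ rather than the printed $a_{1,i}$: the definition \fref{defa} contains a typo, since as written it clashes with $a_{1,2}=1$, and only the first-column reading matches the linearized system $(M_\ell U)_k=-(2k-\alpha)c_kU_1+\tfrac{\alpha(\ell-k)}{2\ell-\alpha}U_k+U_{k+1}$ and the paper's own expansion of $\det(M_\ell-X\,\mathrm{Id})$ along the last row.

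Part~2 is a genuinely different route. The paper expands the characteristic polynomial along the last row and telescopes it into the factorization $(-1)^\ell(X+1)\prod_{k=2}^\ell\bigl(X-\tfrac{k\alpha}{2\ell-\alpha}\bigr)$ by iterating the $c$-recursion, and it never exhibits eigenvectors. You instead construct eigenvectors directly: $-1$ arises from time-translation invariance of the autonomous system, with eigenvector $(jc_j)_{j=1}^\ell$ verified by $(2k-\alpha)c_1+d_k=k$ where $d_k=\tfrac{\alpha(\ell-k)}{2\ell-\alpha}$; and for $\lambda_k=\tfrac{k\alpha}{2\ell-\alpha}$, $2\le k\le\ell$, you solve $v_{i+1}=(\lambda-d_i)v_i+(2i-\alpha)c_i$, $v_1=1$, and impose the closure $v_{\ell+1}=0$. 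I checked that, with $c_i=c_1(-1)^{i-1}\prod_{j<i}d_j$, the closure condition reduces exactly to the alternating binomial identity you state, which does vanish for $k\ge2$, and the edge case $k=\ell$ works the same way on $\{0,\dots,\ell\}$. Your approach is longer but yields explicit eigenvectors (the paper only asserts diagonalizability from simplicity of roots) and a conceptual source for the $-1$ eigenvalue, both of which the determinantal computation obscures. Two minor slips: the displayed last-row closure should read $\lambda v_\ell=-(2\ell-\alpha)c_\ell$ (a sign; the equivalent $v_{\ell+1}=0$ form you actually use downstream is correct), and distinctness of the $\ell$ eigenvalues is automatic from $\alpha>0$ and $2\ell-\alpha>0$, so the appeal to $\alpha/2\notin\Bbb N$ there is unnecessary.
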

\begin{remark}
Positive eigenvalues of the matrix $M_\ell$ correspond to $(\ell-1)$ unstable directions of both the truncated and the full system for $b$. On the other
hand, the negative eigenvalue direction together with the submanifold of solutions of the form $(0,...,0,b_{\ell+1},...,b_{L_+})$ generate the stable manifold.
Solutions of the form $(0,...,0,b_{\ell+1},...,b_{L_+})$ automatically obey the linear system
$$
(b_j)_s+(2j-\alpha) b_1^e b_j - b_{j+1}=0,\qquad j=\ell+1,...,L_+,\qquad b_{L_++1}=0.
$$
Its stability in the class of solutions with uniform bounds on $s^j |b_j(s)|$ is ensured by the positivity of 
$$
(2j-\alpha) c_1-j=\frac {2j-\alpha}{2\ell-\alpha} \ell - j=\alpha\frac  {j-\ell}{2\ell-\alpha}>0 
$$
for $j>\ell$.
\end{remark}
\begin{proof}[Proof of Lemma \ref{lemmalinear}]
{\bf step 1} Linearization. A simple computation from \fref{approzimatesolution} gives for $1\leq k\leq \ell-1$:
\bee
&&(b_k)_s+\left(2k-\alpha\right)b_1b_k-b_{k+1}\\
& = & \frac{1}{s^{k+1}}\left[s(U_k)_s-kU_k+(2k-\alpha)c_1U_k+(2k-\alpha) c_k U_1-U_{k+1}+O(U_1U_k)\right],
\eee
and the relation $$(2k-\alpha)c_1-k=-\frac{\alpha(\ell-k)}{2\ell-\alpha}$$ implies
\bee
&&(b_k)_s+\left(2k-1+\frac{2}{\log s}\right)b_1b_k-b_{k+1}\\
& = &   \frac{1}{s^{k+1}}\left[s(U_k)_s+(2k-\alpha)c_kU_1-\frac{\alpha(\ell-k)}{2l-\alpha}U_k-U_{k+1}+O\left(|U|^2\right)\right].
\eee
For $k=\ell$,
\bee
&&(b_\ell)_s+\left(2\ell-\alpha\right)b_1b_\ell-b_{\ell+1}\\
& = & \frac{1}{s^{\ell+1}}\left[s(U_\ell)_s-\ell U_\ell+(2\ell-\alpha)c_1U_\ell+(2\ell-\alpha) c_\ell U_1+O(U_1U_\ell)\right]\\
& = & \frac{1}{s^{\ell+1}}\left[s(U_\ell)_s+(2\ell-\alpha) c_\ell U_1+O(|U|^2)\right]
\eee
thanks to $$-\ell+(2\ell-\alpha)c_1=0.$$
These two relations are equivalent to \fref{reformulaion}, \fref{reformulaionl}, \fref{defa}.\\

\noindent{\bf step 2} Diagonalization. We compute the characteristic polynomial. The cases $\ell=2,3$ are done by direct inspection. Let us assume $\ell\geq 4$ and 
compute $$P_\ell(X)=\det(M_\ell-X\mbox{Id})$$ by expanding in the last row. This yields:
\bee
& & P_\ell(X) =  (-1)^{\ell+1}(-1)(2\ell-\alpha)c_\ell+(-X)\bigg\{(-1)^{\ell}(-1)(2(\ell-1)-\alpha)c_{\ell-1}\\
& + & \left(\frac{\alpha}{2\ell-\alpha}-X\right)\left[(-1)^{\ell-1}(-1)(2(\ell-2)-\alpha)c_{\ell-2}+\left(\frac{2\alpha}{2\ell-\alpha}-X\right)\left[\dots...\right]\right]\bigg\}.
\eee
We use the recurrence relation \fref{defalphai} to compute explicitly:
\bee
&&(-1)^{\ell+1}(-1)(2\ell-\alpha)c_l\\
& + & (-X)\bigg\{(-1)^{\ell}(-1)(2(\ell-1)-\alpha)c_{\ell-1}+ \left(\frac{\alpha}{2\ell-\alpha}-X\right)\left[(-1)^{\ell-1}(-1)(2(\ell-2)-\alpha)c_{\ell-2}\right]\bigg\}\\
& = & (-1)^\ell\bigg\{(2(\ell-1)-\alpha)c_{\ell-1}\left(X-\frac{\alpha}{2(\ell-1)-\alpha}\right)+(2(\ell-2)-\alpha)c_{\ell-2}\left(X-\frac{\alpha}{2\ell-\alpha}\right)X\bigg\}.
\eee
We now compute from \fref{defalphai} for $1\leq k\leq l-2$:
\bea
\label{recurelation}
\nonumber &&(2(\ell-k)-\alpha))c_{\ell-k}\left(X-\frac{\alpha}{2(\ell-k)-\alpha)}\right)+(2(\ell-k-1)-\alpha))c_{\ell-(k+1)}X\left(X-\frac\alpha{2\ell-\alpha}\right)\\
\nonumber & = & (2(\ell-k-1)-\alpha))c_{\ell-(k+1)}\left[X\left(X-\frac{\alpha}{2\ell-\alpha}\right)\right.\\
\nonumber & - & \left. \frac{2(\ell-k)-\alpha}{2(\ell-k-1)-\alpha}\frac{\alpha(k+1)}{2\ell-\alpha}\left(X-\frac{\alpha}{2(\ell-k)-\alpha)}\right)\right]\\
& = &  (2(\ell-k-1)-\alpha))c_{\ell-(k+1)}\left(X-\frac{\alpha(k+1)}{2\ell-\alpha}\right)\left(X-\frac{\alpha}{2(\ell-k-1)-\alpha)}\right).
\eea
We therefore obtain inductively:
\bee
&&P_\ell(X)\\
&&= (-1)^\ell\bigg\{(2\ell-1-\alpha)c_{\ell-1}\left(X-\frac{\alpha}{2(\ell-1)-\alpha}\right)+(2(\ell-2)-\alpha)c_{\ell-2}\left(X-\frac{\alpha}{2\ell-\alpha}\right)X\bigg\}\\
& + & (-X)\left(\frac{\alpha}{2\ell-\alpha}-X\right)\left(\frac{2\alpha}{2\ell-\alpha}-X\right)\left[(-1)^{\ell-2}(-1)(2(\ell-3)-\alpha)c_{\ell-3}+\left(\frac{3\alpha}{2\ell-\alpha}-X\right)[\dots]\right]\\
&= & (-1)^\ell\left(X-\frac{2\alpha}{2\ell-\alpha}\right)\bigg\{(2(\ell-2)-\alpha)c_{\ell-2}\left(X-\frac{\alpha}{2\ell-2)-\alpha}\right)\\
& + & (2(\ell-3)-\alpha)c_{\ell-3}X\left(X-\frac{\alpha}{2\ell-\alpha}\right)\bigg\}\\
& + & (-X)\left(\frac{\alpha}{2\ell-\alpha}-X\right)\left(\frac{2\alpha}{2\ell-\alpha}-X\right)\left(\frac{3\alpha}{2\ell-\alpha}-X\right)[(-1)^{\ell-3}(-1)(2(\ell-4)-\alpha)c_{\ell-4}\dots]\\
& = & (-1)^\ell\left(X-\frac{2\alpha}{2\ell-\alpha}\right)\dots \left(X-\frac{(\ell-2)\alpha}{2\ell-\alpha}\right)\\
&& \times\bigg\{(4-\alpha)c_2\left(X-\frac\alpha{4-\alpha}\right)+X\left(X-\frac{\alpha}{2\ell-\alpha}\right)\left((2-\alpha)c_1+X-\frac{\alpha(\ell-1)}{2\ell-\alpha}\right)\bigg\}.
\eee
We use \fref{recurelation} with $k=l-2$ to compute the last polynomial:
\bee
&& (4-\alpha)c_2\left(X-\frac\alpha{4-\alpha}\right)+X\left(X-\frac{\alpha}{2\ell-\alpha}\right)\left((2-\alpha)c_1+X-\frac{\alpha(\ell-1)}{2\ell-\alpha}\right)\\
& =& \bigg\{(4-\alpha)c_2\left(X-\frac \alpha{4-\alpha}\right)+(2-\alpha)c_1X\left(X-\frac{\alpha}{2\ell-\alpha}\right)\bigg\}\\
&+&X\left(X-\frac{\alpha}{2\ell-\alpha}\right)\left(X-\frac{\alpha(\ell-1)}{2\ell-\alpha}\right)\\
& = & (2-\alpha)c_1\left(X-\frac{\alpha(\ell-1)}{2\ell-\alpha}\right)\left(X-\frac{\alpha}{2-\alpha}\right)+X\left(X-\frac{\alpha}{2\ell-\alpha}\right)\left(X-\frac{\alpha(\ell-1)}{2\ell-\alpha}\right)\\
& = &\left(X-\frac{\alpha(\ell-1)}{2\ell-\alpha}\right)\left[\frac{(2-\alpha)\ell}{2\ell-\alpha}\left(X-\frac{\alpha}{2-\alpha}\right)+X\left(X-\frac{\alpha }{2\ell-\alpha}\right)\right]\\
& = & \left(X-\frac{\alpha(\ell-1)}{2\ell-\alpha}\right)\left(X-\frac{\alpha \ell}{2\ell-\alpha}\right)\left(X+1\right).
\eee
We have therefore computed:
$$P_\ell(x)=(-1)^l\left(X-\frac{2\alpha}{2\ell-\alpha}\right)\dots\left(X-\frac{3\alpha}{2\ell-\alpha}\right)\left(X-\frac{(\ell-1)\alpha}{2l-\alpha}\right)\left(X-\frac{\ell\alpha}{2\ell-\alpha}\right)\left(X+1\right)$$ and \fref{specta} is proved.
\end{proof}

We now compute the $a$ instabilities:

\begin{lemma}[Linearization of the unstable $a$-subsystem] 
\label{lemmaak}
Assume $k_\ell\geq 1$. Let $$A_k=s^{k+\frac \alpha 2}a_k,\ \ \mathcal A=(A_k)_{1\leq k\leq k_\ell},$$ then for $1\leq k\leq k_{\ell}-1$ (if $k_\ell\geq 2$):
\bee
(a_k)_s+\frac{2kc_1}{s}a_k-a_{k+1}=\frac{1}{s^{k+\frac\alpha 2+1}}\left[s(A_k)_s-(\mathcal M_{k_\ell}\mathcal A)_k\right]
\eee
and for $k=k_{\ell}$:
\bee
(a_k)_s+\frac{2kc_1}{s}a_k=\frac{1}{s^{k+\frac\alpha 2+1}}\left[s(A_k)_s-(\mathcal M_{k_\ell}\mathcal A)_k\right]
\eee
with  $$\left\{\begin{array}{lll} (\mathcal M_{k_{\ell}})_{i,i}=-\frac{\alpha}{(2\ell-\alpha)}\left[k-(k_\ell+\delta_{\ell})\right], \ \ 1\leq i\leq k_{\ell}\\
(\mathcal M_{k_{\ell}})_{i,i+1}=1, \ \ 1\leq i\leq k_{\ell}-1\\ ( \mathcal M_{k_{\ell}})_{i,j}=0\ \ \mbox{otherwise}.
\end{array}\right.$$
We can diagonalize the matrix $\matchal M_{k_\ell}$:
\be
\label{Pdiegmle}
\mathcal M_{k_{\ell}}=Q_{\ell}D_{k_{\ell}}Q_{\ell}^{-1}, \ \ D_{k_{\ell}}={\rm Diag}\left( -\frac{\alpha}{(2\ell-\alpha)}\left[k-(k_\ell+\delta_{\ell})\right]\right)_{1\leq k\leq k_{\ell}}.
\ee
\end{lemma}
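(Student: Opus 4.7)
The plan is a direct computation: substitute the ansatz $a_k(s) = s^{-k-\alpha/2} A_k(s)$ into the second equation of \fref{systdynfund}, replace $b_1$ by its explicit value $c_1/s$ from \fref{approzimatesolution}, and read off the matrix $\mathcal M_{k_\ell}$. Diagonalizability will then follow from the upper bidiagonal structure together with distinctness of the diagonal entries.

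Concretely, differentiating the ansatz gives
$$ (a_k)_s = s^{-k-\alpha/2-1}\bigl[s(A_k)_s - \bigl(k+\tfrac{\alpha}{2}\bigr)A_k\bigr], $$
so that, after substituting $b_1 = c_1/s$ and factoring out $s^{-k-\alpha/2-1}$,
$$ (a_k)_s + \frac{2kc_1}{s}a_k - a_{k+1} = \frac{1}{s^{k+\alpha/2+1}}\Bigl[s(A_k)_s - \bigl(k+\tfrac{\alpha}{2} - 2kc_1\bigr)A_k - A_{k+1}\Bigr]. $$
Using $c_1 = \ell/(2\ell-\alpha)$ together with the identity $\ell-\alpha/2 = k_\ell + \delta_\ell$ from \fref{defkell}, a short algebraic manipulation yields
$$ k + \tfrac{\alpha}{2} - 2kc_1 = -\frac{\alpha[k - (k_\ell + \delta_\ell)]}{2\ell-\alpha}, $$
which is precisely the prescribed $(k,k)$-entry of $\mathcal M_{k_\ell}$, while the $-A_{k+1}$ term supplies the superdiagonal entry $(\mathcal M_{k_\ell})_{k,k+1} = 1$. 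For $k = k_\ell$ the term $A_{k_\ell+1}$ is simply discarded, since the modes $a_{k_\ell+1},\dots,a_{L_-}$ are reserved for the stable manifold of the $a$-subsystem (consistent with the convention $a_{L_-+1}\equiv 0$ when $k_\ell = L_-$).

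For the diagonalization \fref{Pdiegmle}, I would note that $\mathcal M_{k_\ell}$ is upper bidiagonal, hence upper triangular, so its spectrum is exactly its diagonal: $\lambda_k = -\alpha[k-(k_\ell+\delta_\ell)]/(2\ell-\alpha)$ for $1\le k\le k_\ell$. These eigenvalues are pairwise distinct because the numbers $k-k_\ell-\delta_\ell$ are the translates of distinct integers by $-\delta_\ell$, and in particular all nonzero since $0 < \delta_\ell < 1$ by \fref{defkell}. An upper triangular real matrix with pairwise distinct diagonal entries is diagonalizable, which produces the desired $\mathcal M_{k_\ell} = Q_\ell D_{k_\ell} Q_\ell^{-1}$. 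Unlike the $b$-system of Lemma \ref{lemmalinear}, no nonlinear remainder $O(|\mathcal A|^2)$ appears here, because the $a$-subsystem is exactly linear in $a$ once $b$ is frozen at its explicit profile $b_1 = c_1/s$, $b_j = 0$ for $j\ge 2$; the only delicate point is the clean bookkeeping of the constants linking $c_1$, $\alpha$, $\ell$, and $(k_\ell,\delta_\ell)$ in the diagonal entry.
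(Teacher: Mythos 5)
Your proposal is correct and is essentially the same as the paper's proof, which simply asserts that the lemma follows from ``an elementary computation based on the value of $c_1$'' and that the diagonalization is obvious; you have merely carried out that computation explicitly, verifying $k+\frac{\alpha}{2}-2kc_1=-\frac{\alpha}{2\ell-\alpha}[k-(k_\ell+\delta_\ell)]$ via $c_1=\ell/(2\ell-\alpha)$ and $k_\ell+\delta_\ell=\ell-\alpha/2$, and noting that the diagonal entries are pairwise distinct because $0<\delta_\ell<1$.
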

\begin{remark}
All $k_\ell$ eigenvalues of the matrix $M_{k_\ell}$ are positive and thus generate unstable directions of the truncated 
(and full) $a$-system. Similar to the analysis of the $b$-system the solutions of the form $(0,...,0,a_{k_\ell+1},....,a_{L_-})$
give rise to the stable directions of the $a$-system. We omit the computation.
\end{remark}
\begin{proof}[Proof of Lemma \ref{lemmaak}] This is an elementary computation based on the value of $c_1$ from \fref{defalphai}. 
Here, the explicit diaganolization of $M_{k_\ell}$  is obvious.
\end{proof}


\section{The trapped regime}
\label{sectionboot}

In this section, we introduce the main dynamical tools at the heart of the proof of Theorem \ref{thmmain}. We start with the description of  the bootstrap regime 
in which the blow up solutions of Theorem \ref{thmmain} will be trapped, based on the splitting of the motion into the finite dimensional part driven by the modulation parameters and the remaining infinite dimensional dispersive dynamics. We then establish the control of the finite dimensional dynamics by the infinite dimensional part. The infinite dimensional part will in turn be controlled through the derivation of a mixed Energy/Morawetz Lyapunov functional in section \ref{sectionmonoton}.

\subsection{Localized generators of the kernel of the iterates of $\Lt$}
\label{sectionsetup}

We start by constructing two directions $\Xi_{M,\pm}$ with the property that their iterates $(\Lt^k\Xi_{M,\pm})_{1\leq k\leq L_\pm}$ are a well localized approximation of the explicit kernel of $\Lt^{k_+L_+}$.\\

\noindent\underline{Construction of $\Xi_{M,+}$.}  First observe from \fref{boundgamma} that since $$d-\gamma-\frac{2}{p-1}>d-2\gamma>0,$$ for any $M\gg1$: 
\be
\label{nondgeenracy}
M^{d-\gamma-\frac 2{p-1}}\lesssim \left|(J\chi_M\Phi_{0,+},\Phi_{0,-})\right|=\int \chi_M\Lambda Q Q\lesssim M^{d-\gamma-\frac 2{p-1}}.
\ee
We then consider the fixed vector:
\be
  \label{defdirectionplus}
  \Xi_{M,+} = \sum_{m=0}^{L_-}c^+_{m,-} (\Lt^*)^m(J\chi_M\Phi_{0,+})+\sum_{m=0}^{L_+}c^+_{m,+} (\Lt^*)^m(J\chi_M\Phi_{0,-})
    \ee
 with the explicit choice:   
  $$c^+_{0,+}=1, \ \ c^+_{0,-}=0$$ and the inductive relation: for $1\leq k\leq L_{+}$,
  $$c^+_{k,+}=-\frac{\sum_{m=0}^{\min\{L_-,k-1\}}c^+_{m,-} (J\chi_M\Phi_{0,+},\Lt^m\Phi_{k,+})+\sum_{m=0}^{k-1}c^+_{m,+} (J\chi_M\Phi_{0,-},\Lt^m\Phi_{k,+})}{(\chi_MJ\Phi_{0,+},\Phi_{0,-})},
 $$
 and for $1\leq k\leq L_-$,
 $$c^+_{k,-}=-\frac{\sum_{m=0}^{k-1}c^+_{m,-} (J\chi_M\Phi_{0,+},\Lt^m\Phi_{k,-})+\sum_{m=0}^{k-1}c^+_{m,+} (J\chi_M\Phi_{0,-},\Lt^m\Phi_{k,-})}{(\chi_MJ\Phi_{0,+},\Phi_{0,-})}.$$ We compute:
 \bea
 \label{nondegeenr}
 \nonumber |(\Xi_{M,+},\Phi_{0,+})|&=&\left|c^+_{0,-}(J\chi_M\Phi_{0,+},\Phi_{0,+})+c^+_{0,+}(J\chi_M\Phi_{0,-},\Phi_{0,+})\right|\\
 &=& \left|(J\chi_M\Phi_{0,+},\Phi_{0,-})\right|\gtrsim M^{d-\gamma-\frac 2{p-1}},
 \eea
 and
 \be
 \label{nondegeenrbis}
  (\Xi_{M,+},\Phi_{0,-})=c^+_{0,-}(J\chi_M\Phi_{0,+},\Phi_{0,-})+c^+_{0,+}(J\chi_M\Phi_{0,-},\Phi_{0,-})=0,
  \ee
 and for $1\leq k\leq L_+$:
 \bee
 &&(\Xi_{M,+},\Phi_{k,+})=\sum_{m=0}^{L_-}c^+_{m,-} (J\chi_M\Phi_{0,+},\Lt^m\Phi_{k,+})+\sum_{m=0}^{L_+}c^+_{m,+} (J\chi_M\Phi_{0,-},\Lt^m\Phi_{k,+})\\
  & = & c_{k,+}^+(J\chi_M\Phi_{0,-},\Phi_{0,+})+\sum_{m=0}^{\min\{L_-,k-1\}}c^+_{m,-} (J\chi_M\Phi_{0,+},\Lt^m\Phi_{k,+})+\sum_{m=0}^{k-1}c^+_{m,+} (J\chi_M\Phi_{0,-},\Lt^m\Phi_{k,+})\\
  & = & 0
  \eee
 and for $1\leq k\leq L_-$:
 \bee
 && (\Xi_{M,+},\Phi_{k,-})=\sum_{m=0}^{L_-}c^+_{m,-} (J\chi_M\Phi_{0,+},\Lt^m\Phi_{k,-})+\sum_{m=0}^{L_+}c^+_{m,+} (J\chi_M\Phi_{0,-},\Lt^m\Phi_{k,-})\\
 & = & c_{k,-}^+(J\chi_M\Phi_{0,+},\Phi_{0,-})+\sum_{m=0}^{k-1}c^+_{m,-} (J\chi_M\Phi_{0,+},\Lt^m\Phi_{k,-})+\sum_{m=0}^{k-1}c^+_{m,+} (J\chi_M\Phi_{0,-},\Lt^m\Phi_{k,-})\\
 & = & 0.
 \eee
 In particular:
 \be
 \label{keyrealtions}
 \left\{\begin{array}{ll}(\Lt^i\Phi_{j,+},\Xi_{M,+})=(J\chi_M\Phi_{0,+},\Phi_{0,-})\delta_{i,j}, \ \ 0\leq i,j\leq L_{+}\\
  (\Lt^i\Phi_{j,-},\Xi_{M,+})=0, \ \ 0\leq j\leq L_{-}, \ \ 0\leq i \leq L_+.
  \end{array}\right.
\ee
We now claim by induction on $k$ the bound 
\be
\label{boundcm}
|c^+_{k,+}|\lesssim M^{2k}, \ \ |c^+_{k,-}|\lesssim M^{2k+\alpha}.
\ee
and indeed\footnote{using $d-2\gamma>0$ so that all integrals diverge.}
\bee
|c^+_{k+1,+}|&\lesssim& \frac{1}{M^{d-\gamma-\frac 2{k-1}}}\left[\sum_{m=0}^{k}M^{2m+\alpha}M^{d-2\gamma+2(k+1-m)}+M^{2m}M^{d-\gamma-\frac 2{m-1}+2(k+1-m)}\right]\\
&\lesssim &M^{2(k+1)},
\eee
\bee
 |c^+_{k+1,-}|&\lesssim &\frac{1}{M^{d-\gamma-\frac 2{p-1}}}\left[\sum_{p=0}^{k}M^{2p+\alpha}M^{d-\gamma-\frac 2{p-1}+2(k+1-p)}+M^{2p}M^{d-\frac 4{p-1}+2(k+1-p)}\right]\\
&\lesssim &M^{2(k+1)+\alpha}.
\eee
Using the cancellation $\Lt^*(J\Phi_{0,\pm})=0$ this yields
 the bound:
\be
\label{ltwoboundxim}
 \int |\Xi_{M,+}|^2\lesssim  \sum_{k=0}^{L_-}M^{4k+2\alpha}M^{d-2\gamma-4k}+\sum_{k=0}^{L_+}M^{4k}M^{d-\frac{4}{p-1}-4k}\lesssim M^{d-\frac 4{p-1}}
\ee
and similarly\footnote{using $d-\frac 4{p-1}-2=d-2\gamma+2\alpha-2>0$.}
\be
\label{ltwoboundximbis}
 \int (1+y^2)|\Lt^*\Xi_{M,+}|^2\lesssim M^{d-\frac 4{p-1}-2}.
 \ee

\noindent\underline{Construction of $\Xi_{M,-}$}. We now consider along the same lines the direction:
\be
  \label{defdirectionminus}
  \Xi_{M,-} = \sum_{m=0}^{L_-}c^-_{m,-} (\Lt^*)^m(J\chi_M\Phi_{0,+})+\sum_{m=0}^{L_+}c^-_{m,+} (\Lt^*)^m(J\chi_M\Phi_{0,-})
    \ee
  with the explicit choice:   
  $$c^-_{0,+}=0, \ \ c^-_{0,-}=1$$ and the induction relations: for $1\leq k\leq L_{+}$,
  $$c^-_{k,+}=-\frac{\sum_{m=0}^{\min\{L_-,k-1\}}c^-_{m,-} (J\chi_M\Phi_{0,+},\Lt^m\Phi_{k,+})+\sum_{m=0}^{k-1}c^-_{m,+} (J\chi_M\Phi_{0,-},\Lt^m\Phi_{k,+})}{(\chi_MJ\Phi_{0,+},\Phi_{0,-})},
 $$
 and for $1\leq k\leq L_-$,
 $$c^-_{k,-}=-\frac{\sum_{m=0}^{k-1}c^-_{m,-} (J\chi_M\Phi_{0,+},\Lt^m\Phi_{k,-})+\sum_{m=0}^{k-1}c^-_{m,+} (J\chi_M\Phi_{0,-},\Lt^m\Phi_{k,-})}{(\chi_MJ\Phi_{0,+},\Phi_{0,-})}$$
so that 
  \be
  \label{nondegeenrtrois}
 (\Xi_{M,-},\Phi_{0,+})=c^-_{0,-}(J\chi_M\Phi_{0,+},\Phi_{0,+})+c^-_{0,+}(J\chi_M\Phi_{0,-},\Phi_{0,+})=0
 \ee
 \bea
 \label{nondegeenrquatre}
 \nonumber \left|(\Xi_{M,-},\Phi_{0,-})\right|&=&\left|c^-_{0,-}(J\chi_M\Phi_{0,+},\Phi_{0,-})+c^-_{0,+}(J\chi_M\Phi_{0,-},\Phi_{0,-})\right|\\
 &=& \left|(J\chi_M\Phi_{0,+},\Phi_{0,-})\right|\gtrsim M^{d-\gamma-\frac 2{p-1}}
 \eea
 and
  \bee
 &&(\Xi_{M,-},\Phi_{k,+})=0\ \ \mbox{for}\ \ 1\leq k\leq L_+\\
 && (\Xi_{M,-},\Phi_{k,-})=0\ \ \mbox{for}\ \ 1\leq k\leq L_-
  \eee
  In particular:
 \be
 \label{keyrealtionsbis}
 \left\{\begin{array}{ll} (\Lt^i\Phi_{j,+},\Xi_{M,-})=0, \ \ 0\leq i,j\leq L_{+}\\
   (\Lt^i\Phi_{j,-},\Xi_{M,-})=(J\chi_M\Phi_{0,+},\Phi_{0,-})\delta_{i,j}, \ \ 0\leq j\leq L_{-}, \ \ 0\leq i \leq L_+.
   \end{array}\right.
\ee
The bounds
\be
\label{ltwoboundximter}
 \int |\Xi_{M,-}|^2\lesssim M^{d-\frac 4{p-1}}, \ \ \int (1+y^2)|\Lt^*\Xi_{M,-}|^2\lesssim M^{d-\frac 4{p-1}-2}.
\ee
now follow verbatim as in the proof of \fref{ltwoboundxim}, \fref{ltwoboundximbis}.


\subsection{Setting up the bootstrap}

We are now in position to describe the set of initial data leading to the blow up scenario of Theorem  \ref{thmmain}.\\
We assume that the initial data $u_0\in H^{\infty}(\Bbb R^d)$. Since the nonlinearity is smooth, there exist a unique solution $u\in \matchal C^0([0,T0,H^s)$ for all $s>0$ with the blow up criterion $$T<+\infty\ \ \mbox{implies}\ \ \lim_{t\uparrow T}\|u(t)\|_{H^s}=\infty\ \ \mbox{for}\ \ s>s_c.$$ We now restrict our class of initial data. We pick $$L_+\gg 1$$ and a Sobolev exponent $\sigma$ with
\be
\label{choicesigma}
\frac1{L_+}\ll \sigma-s_c\ll 1, \ \ s_c<\sigma<\frac d2.
\ee 
and require that initially 
\be
\label{inoinnoenone}
\|u_0-Q\|_{\dot{H^{s}}}\ll1\ \ \mbox{for}\ \ s\in[\sigma,L_+].
\ee

\noindent\underline{Modulation}.  By continuity of the flow, the smallness \fref{inoinnoenone} is propagated on a small time interval $[0,t_1)$. 
On $[0,t_1)$ we then define the unique decomposition:
\bea
\label{decompu}
&&u(t,r)=\frac{1}{\l(t)^{\frac 2{p-1}}}(\tilde{Q}_{b(t),a(t)}+\e)\left(t,\frac{r}{\l(t)}\right)e^{i\gamma(t)},\\
\nonumber && \lambda(t)>0, \ \ \nonumber b=(b_1,\dots,b_{L_+}), \ \ a=(a_1,\dots,a_{L_-})
\eea where the modulation parameters $(a,b,\lambda,\gamma)$ are determined from the requirement that 
$\e(t)$ satisfies the $L_++L_-+2$ orthogonality conditions: 
\be
\label{ortho}
 (\e,(\Lt^*)^{k}\Xi_{M,\pm})=0, \ \ 0\leq k\leq L_\pm.
\ee
The existence of the decomposition \fref{decompu} is a standard consequence of the implicit function theorem and the explicit relations from \fref{defalphab}, \fref{degnenarxs}:
\bee 
&&\left(\frac{\pa}{\pa\l}(\qbt)_\lambda e^{i\gamma},\frac{\pa}{\pa b_1}(\qbt)_\lambda e^{i\gamma}, \dots, \frac{\pa}{\pa b_{L_+}}(\qbt)_\lambda e^{i\gamma},\right.\\
&&\left. \frac{\pa}{\pa\gamma}(\qbt)_\lambda e^{i\gamma}, \frac{\pa}{\pa a_1}(\qbt)_\lambda e^{i\gamma}, \dots, \frac{\pa}{\pa a_{L_-}}(\qbt)_\lambda e^{i\gamma}\right)|_{\l=1,b=0, \gamma=0, a=0}\\
& = & (\Phi_{0,+},\Phi_{1,+}, \dots,\Phi_{L_+,+}\Phi_{0,-},\Phi_{1,-},\dots,\Phi_{L_-,-})
\eee
which, using \fref{nondgeenracy}, \fref{keyrealtions}, \fref{keyrealtionsbis}, imply the non degeneracy of the Jacobian:
 \bee
&&\left|\left(\frac{\pa}{\pa (\l,b_j,a_k)}({\qbt})_\l,(\Lt^*)^i\Phi_M\right)_{1\leq j\leq L_+,1\leq k\leq L_-,0\leq i\leq L_\pm} \right|_{\l=1, b=0,\gamma=0,a=0}\\
&=& (\chi_MJ\Phi_{0,+},\Phi_{0,-})^{L_++L_-+2}\neq 0
\eee 
for $M\geq M^*$ large enough. The decomposition \fref{decompu}, in fact, exists as long as $t<T$ and $\e(t,r)$ remains small in $\dot{H}^s\cap\dot H^{L_+}$.

\noindent\underline{Setting up the bootstrap}. We now set up the bootstrap for the control of the geometrical parameters $(\l,b,\gamma,a)$ and the radiation $\e$. We will measure the regularity of the map through the following coercive norms of $\e$:\\
 \begin{itemize}
 \item High Sobolev norms adapted to the linearized operator: let $s_+$ be given by \fref{calculimportant}, we consider the high order Sobolev norm adapted to $\Lt$:
\be
\label{defnorme}
\mathcal E_{s_+}=(J\Lt\Lt^{k_++L_+}\e,\Lt^{k_++L_+}\e)\geq C(M)\left[ \int |\nabla^{s_+}\e|^2+\int \frac{|\e|^2}{1+y^{2s_+}}\right],
\ee
where the coercivity property follows from Lemma \ref{propcorc} and the choice of orthogonality conditions \fref{ortho}.
\item Low Sobolev norm: let $\sigma$ be chosen in the range \fref{choicesigma}, we will also control $\e$ in the norm:
\be
\label{defenergyspcar}
\int|\nabla^{\sigma}\e|^2.
\ee
\end{itemize}
We now choose our set of initial data in a more restricted way. More precisely, pick a large enough time $s_0\gg1 $ and rewrite the decomposition \fref{decompu}:
\be
\label{neneonneo}
u(t,r)=(\tilde{Q}_{b(s),a(s)}+\e)e^{i\gamma(t)}(s,y)
\ee
where we introduced the renormalized variables:
\be
\label{resacledtime}
y=\frac{r}{\l(t)}, \ \ s(t)=s_0+\int_0^{t}\frac{d\tau}{\l^2(\tau)}.
\ee
The renormalized  time variable $s$ will be shown to range in the interval $[s_0,+\infty)$ with $s=\infty$ corresponding to the blow up time $T$.  
We introduce a decomposition, see \fref{defuk}:
\be
\label{defukbis}
b_k=b_k^e+\frac{U_k}{s^{k}},\ \ 1\leq k\leq \ell
\ee 
and consider the variable 
\be
\label{devjj}
V=P_\ell U
\ee
where $P_\ell$ refers to the diagonalization \fref{specta} of $M_\ell$. Similarily, if $k_{\ell}\geq 1$, we let from \fref{Pdiegmle}: 
\be
\label{defatildegib}
A_k=s^{k+\frac \alpha 2}a_k,\ \ \mathcal A=(A_k)_{1\leq k\leq k_\ell}, \ \ \tilde{\matchal A}=Q_\ell \mathcal A.
\ee 
We recall that $0<\eta\ll1 $ is given by \fref{defetal} and assume initially:
\begin{itemize}
\item Smallness of the initial perturbation for the $b_k$ unstable modes:
\be
\label{estintial}
\left(s_0^{\frac{\eta}{2}(1-\dep)}V_k(s_0)\right)_{2\leq k\leq \ell}\in \mathcal B_{\ell}\left(1\right).
\ee
\item Smallness of the initial perturbation for the $a_k$ unstable modes: if $k_{\ell}\geq 1$,
\be
\label{estintiala}
\left(s_0^{\frac{\eta}{2}(1-\dep)}\tilde{\mathcal A}_k(s_0)\right)_{1\leq k\leq k_\ell}\in \mathcal B_{k_\ell}\left(1\right).
\ee
\item Smallness of the initial perturbation for the stable $b$ modes:
\be
\label{estintialbis}
|V_1(s_0)|<\frac{1}{s^{\frac{\eta}2(1-\dep)}},\quad \forall \ell+1\leq k\leq L_+, \ \  |b_k(s_0)|<b_1(s_0)^{k+\frac{5(2k-\alpha)\ell}{2\ell-\alpha}}.
\ee
\item Smallness of the initial perturbation for the stable $a$ modes:
\be
\label{estintialbisa}
\forall k_\ell+1\leq k\leq L_-, \ \ |a_k(s_0)|<b_1(s_0)^{k+\frac{\alpha}{2}+\frac{5(2k)\ell}{2\ell-\alpha}}.
\ee
\item Smallness of the data in high and low Sobolev norms:
\be
\label{init2energy}
\int | \nabla^{\sigma} \varepsilon (s_0)|^2 + \mathcal E_{s_+}(s_0) < b_1(s_0)^{\frac{10\ell}{2\ell-\alpha}L_+}.
\ee
\item Normalization: up to a fixed rescaling, we may always assume
\be
\label{intilzero}
\l(s_0)=1.
\ee
\end{itemize}

The heart of our analysis is the following:

\begin{proposition}
\label{bootstrap'}
Let $$K=K(d,p,M,L_+,\sigma)\gg 1$$
denote some large enough universal constant, then for any $s_0$ large enough, there exists initial data for the unstable modes $$\left(V_k(s_0)s_0^{\frac{\eta}{2}(1-\dep)}\right)_{2\leq k\leq \ell}\times \left(\tilde{\mathcal A}_k(s_0)s_0^{\frac{\eta}{2}(1-\dep)}\right)_{1\leq k\leq k_\ell}\in \mathcal B_{\ell+k_{\ell}-1}\left(1\right) $$ such that
the corresponding solution satisfies the bounds: $\forall s\geq s_0$,
\begin{itemize}
\item Control of the unstable modes:
\be
\label{controlunstable}
 \left(s^{\frac\eta 2(1-\dep)}V_k(s)\right)_{2\leq k\leq \ell}\times \left(s^{\frac\eta 2(1-\dep)}\tilde{A}_k(s)\right)_{1\leq k\leq k_\ell}\in \mathcal B_{\ell+k_\ell-1}\left(1\right).
\ee
\item  Control of the stable $b_k$ modes:
\be
\label{controlssstable}
|V_1(s)|\leq \frac{10}{s^{\frac{\eta}2(1-\dep)}},\ \ |b_k(s)|\leq \frac{10}{s^k}, \ \ \ell+1\leq k\leq L_+.
\ee
\item Control of the stable $a_k$ modes:
\be
\label{controlsaksstable}
|a_k(s)|\leq \frac{1}{s^{k+\frac \alpha 2}}, \ \ k_\ell+1\leq k\leq L_-.
\ee
\item Control of the radiation in high Sobolev norm:  \be
\label{bootnorm}
\mathcal E_{s_+}(s)\leq K b_1(s)^{2L_++2(1-\dk)+2\eta(1-\delta_p)}.
\ee
\item Control of the radiation in low Sobolev norm: 
\be
\label{bootsmallsigma}
\|\nabla^{\sigma}\e\|_{L^2}^2\leq K b_1(s)^{\frac{2\ell}{2\ell-\alpha}(\sigma-s_c)}.
\ee
\end{itemize} 
\end{proposition}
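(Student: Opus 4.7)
The plan is to run a bootstrap argument combined with a Brouwer-type topological argument on the finite-dimensional unstable modes, in the spirit of \cite{CMM}, \cite{MRR}, \cite{RSc2}, \cite{MMaR1}. Given initial data of the form \fref{estintial}--\fref{intilzero}, parametrized by the $(\ell+k_\ell-1)$-dimensional unstable data $((V_k(s_0))_{2\leq k\leq \ell},\,(\tilde{\matchal A}_k(s_0))_{1\leq k\leq k_\ell})$, I let $s^*$ denote the supremum of times $s\geq s_0$ on which all bounds \fref{controlunstable}--\fref{bootsmallsigma} hold, and aim to show $s^*=+\infty$ for a suitable choice of unstable initial data. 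The first task is to derive the modulation equations: differentiating the orthogonality relations \fref{ortho} against the equation \fref{deferreutilder} for $\qbt+\e$, and using the non-degeneracy \fref{nondgeenracy}, \fref{keyrealtions}, \fref{keyrealtionsbis} of the associated Gram matrix, produces ODEs for $(\lambda,\gamma,b,a)$ in the form \fref{cnbecbnoenoe}, with error terms controlled by localized weighted norms of $\e$, by the approximate profile error $\Psit$ of Proposition \ref{consprofapprochloc}, and by nonlinear remainders.

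Next I would establish the strict improvement of the infinite-dimensional bounds on $[s_0,s^*)$. The mixed energy/Morawetz functional of Section \ref{sectionmonoton}, adapted to $\Lt^{k_++L_+}\e$, is expected to satisfy a dynamical inequality of the form
\begin{equation*}
\frac{d}{ds}\left\{\frac{\matchal E_{s_+}+b_1\matchal M}{\lambda^{2(s_+-s_c)}}\right\}\lesssim \frac{b_1^{2L_++1+2(1-\dk)+2\eta(1-\dep)+\nu}}{\lambda^{2(s_+-s_c)}}
\end{equation*}
for some universal $\nu=\nu(d,p)>0$; combined with the smallness $b_1|\matchal M|\leq \tfrac{1}{10}\matchal E_{s_+}$ and integration against the blow up law $\lambda(s)\sim s^{-\ell/\alpha}$, this yields the strict improvement of \fref{bootnorm} by a factor of $1/2$. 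An analogous Lyapunov functional at fractional regularity $\sigma$ gives the strict improvement of \fref{bootsmallsigma}. Plugging these controls into \fref{cnbecbnoenoe} bounds the radiation-driven errors in the modulation system by a power of $b_1$ strictly larger than the diagonal drift, so that the stable finite-dimensional bounds \fref{controlssstable}, \fref{controlsaksstable} follow by direct ODE integration: for $V_1$ using the negative eigenvalue $-1$ of $M_\ell$ in \fref{specta}, and for $\ell+1\leq k\leq L_+$ and $k_\ell+1\leq k\leq L_-$ using the strictly positive contracting coefficients $\alpha(k-\ell)/(2\ell-\alpha)>0$ and $2kc_1-k-\alpha/2>0$ coming respectively from $k>\ell$ and $k>k_\ell+\delta_\ell$.

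It then remains to eliminate the genuinely unstable modes through a topological argument. In the rescaled variables $W_k(s)=s^{\eta(1-\dep)/2}V_k(s)$, $2\leq k\leq \ell$, and $\tilde W_k(s)=s^{\eta(1-\dep)/2}\tilde{\matchal A}_k(s)$, $1\leq k\leq k_\ell$, Lemma \ref{lemmalinear} and Lemma \ref{lemmaak} together with the diagonalizations \fref{specta}, \fref{Pdiegmle} and the previous improvements give
\begin{equation*}
s\,\partial_s\begin{pmatrix}W\\ \tilde W\end{pmatrix}=\bigl(D_{\rm unst}+\tfrac{\eta(1-\dep)}{2}\,\Id\bigr)\begin{pmatrix}W\\ \tilde W\end{pmatrix}+O(s^{-\delta})
\end{equation*}
where $D_{\rm unst}$ is diagonal with strictly positive entries and $\delta>0$. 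Hence the radial component of $(W,\tilde W)$ is strictly outgoing on $\partial \matchal B_{\ell+k_\ell-1}(1)$ for $s_0$ large, and the exit-time map from the initial unstable data to the first $s$ at which this boundary is reached is continuous where defined and equal to $s_0$ on the boundary. Brouwer's non-retraction theorem then provides at least one choice of unstable initial data for which this boundary is never reached, which combined with the strict improvements above forces $s^*=+\infty$.

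The main obstacle is the sharpness of the energy estimate in the second step: the profile error in \fref{controleh4erreurloc} carries exactly the same power $b_1^{2L_++2(1-\dk)+2\eta(1-\dep)}$ as the bootstrap bound \fref{bootnorm}, so the gain $\nu>0$ in the Lyapunov inequality has to be extracted from the combined effect of the Morawetz correction, the sharp coercivity of the iterate $J\Lt\Lt^{2k_++2L_+}$ under the orthogonality conditions \fref{ortho}, and the algebra property $\dot H^\sigma\cap \dot H^{s_+}\hookrightarrow L^\infty$ used to absorb the nonlinear term via \fref{bootsmallsigma}. No help comes from the subcritical conservation laws since the analysis takes place strictly above scaling, which makes this the critical algebraic point of the whole construction.
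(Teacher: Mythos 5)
Your outline reproduces the paper's overall architecture faithfully: a bootstrap on the norms \fref{bootnorm}--\fref{bootsmallsigma} under the a~priori control of the unstable modes, modulation equations derived from differentiating the orthogonality relations \fref{ortho}, a mixed energy/Morawetz Lyapunov inequality at the level of $\Lt^{k_++L_+}\e$, a low-regularity Lyapunov inequality at $\dot H^\sigma$, ODE integration for the stable components of $(b,a)$ exploiting the sign of the eigenvalues from Lemmas~\ref{lemmalinear} and \ref{lemmaak}, and a Brouwer non-retraction argument on the $(\ell+k_\ell-1)$-dimensional set of unstable data. The exit-time map and the strict outgoing behavior on the boundary sphere are set up exactly as in section~\ref{sectionfour}.

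However, the one part of your plan you single out as ``the critical algebraic point'' is precisely where your mechanism diverges from what the paper actually does, and your version of it would not close. You write the high-Sobolev Lyapunov bound with an extra power $b_1^\nu$, $\nu=\nu(d,p)>0$, on the right-hand side and say that extracting this $\nu$ is the hard step. No such $\nu$ exists here: after the (deliberate) localization of the profile at $y\lesssim B_1=B_0^{1+\eta}$, the error $\Psit$ in \fref{controleh4erreurloc} is tuned so that time-integration of the Lyapunov inequality lands \emph{exactly} on the bootstrap exponent $b_1^{2L_++2(1-\dk)+2\eta(1-\dep)}$, not below it. What actually closes the bootstrap in Proposition~\ref{AEI2} and step~2 of section~\ref{sectionbootstrappouet} is a \emph{constant} gain, not a power gain: the coefficient in front of $\mathcal E_{s_+}$ on the right-hand side of \fref{monoenoiencle} is $O(M^{-c\dk})$ (coming from the coercivity constants being $M$-independent in \fref{kenvenovnevo} and the non-degeneracy \fref{nondgeenracy}), while the local term $\int\frac{1}{1+y^{4g}}[\cdots]$ is absorbed by the Morawetz virial with a $\frac{1}{A^\delta}$ prefactor, and the profile contribution comes with a constant $C(M)$ \emph{independent of the bootstrap constant $K$}. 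Hence plugging the bootstrap bound into the integrated inequality gives $\mathcal E_{s_+}\leq[C(M)+K/M^{c\dk}]\,b_1^{2L_++2(1-\dk)+2\eta(1-\dep)}$, and one closes by choosing $K=K(M)$ large enough after $M$ has been fixed. If you instead try to find a $\nu>0$ gain you will be stuck, because the approximate profile error is by construction at threshold; the hierarchy of constants $M\ll K(M)$ is what makes the argument non-circular, and your plan does not articulate this ordering. The same mechanism (constant gain from the universal constant in \fref{monotnoesigma} being independent of $K$) is what improves \fref{bootsmallsigma}, again without any extra power of $b_1$ in the time derivative.

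One smaller point: in your heuristic for the stable $a_k$ modes you quote the contracting rate as $2kc_1-k-\alpha/2>0$; after simplification this is $\frac{\alpha}{2\ell-\alpha}\left[k-\ell+\frac{\alpha}{2}\right]$, which should be compared with the exact expression $\frac{\alpha}{2\ell-\alpha}\left[k-(k_\ell+\delta_\ell)\right]$ from Lemma~\ref{lemmaak}. The conclusion $>0$ for $k\geq k_\ell+1$ is right, but you would need the precise form to match the boundary terms from \fref{estintialbisa} against \fref{keyestimated} in the integration, as in step~6 of section~\ref{sectionbootstrappouet}.
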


\begin{remark} Note in particular from \fref{defukbis} that the above bounds imply that for $\eta$ small enough
$$b_1(s)\sim\frac{c_1}{s}, \ \ |b_k(s)|\lesssim (b_1(s))^k, \ \ |a_k(s)|\leq(b_1(s))^{k+\alpha}$$
which are consistent with \fref{aprioirbound}.
\end{remark}

The proof of Proposition \ref{bootstrap'} proceeds via bootstrap combined with a standard topological argument \`a la Brouwer. 
Given $(\e(0),V(0))$ as above, we introduce the exit time
 \bea
\label{defexitsstar}
s^*& = & s^*(\e(s_0),V(s_0),\tilde{\mathcal A}(s_0))\\
\nonumber & = & \sup\{s\geq s_0\ \ \mbox{such that} \ \ \fref{controlunstable}, \fref{controlssstable}, \fref{controlsaksstable}, \fref{bootnorm}, \fref{bootsmallsigma}\ \ \mbox{hold on }\ \ [s_0,s]\},
\eea
assume that for any choice of
\be
\label{hypcontr}
\left(V_k(s_0)s_0^{\frac\eta 2(1-\dk)}\right)_{2\leq k\leq \ell}\times \left(\tilde{A}_k(s_0)s_0^{\frac\eta 2(1-\dk)}\right)_{1\leq k\leq k_\ell}\in \mathcal B_{\ell+k_\ell-1}\left(1\right)
\ee
the exit time $s^*<+\infty$.
 and look for a contradiction for $s_0$ large enough. Our main claim is that the a priori control of the unstable modes \fref{controlunstable} is enough to improve the bounds  \fref{controlssstable}, \fref{controlsaksstable}, \fref{bootnorm}, \fref{bootsmallsigma}. The contradiction claim, i.e. existence of the data for $\ell+k_{\ell}-1$ unstable modes
 resulting in the exit time $s_*=\infty$, is then established through a Brouwer type argument.
 
 We formalize the first part of this argument in the following proposition.
 \begin{proposition}[Bootstrap under the a priori control of the unstable modes]
\label{bootstrap}
Under the assumptions of Proposition \ref{bootstrap'} let the solution $(\e(s),a(s),b(s),\lambda(s),\gamma(s))$ obey the bounds \fref{controlunstable}, \fref{controlssstable}, \fref{controlsaksstable}, \fref{bootnorm}, \fref{bootsmallsigma} on a finite interval $[s_0,s^*]$. Then the bounds \fref{controlssstable}, \fref{controlsaksstable}, \fref{bootnorm}, \fref{bootsmallsigma} in fact hold with an improved factor, e.g. $1/2$, on the same interval $[s_0,s^*]$.
 \end{proposition}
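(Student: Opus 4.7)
\medskip

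\noindent\textbf{Proof proposal (plan).} The strategy is the by now classical modulation/energy scheme: turn the orthogonality conditions \eqref{ortho} into modulation identities for $(\lambda,\gamma,b,a)$, then close a coupled system in which the finite dimensional parameters are forced by a small radiation term $\varepsilon$, and $\varepsilon$ is in turn controlled by an energy identity that is driven by the approximation error $\widetilde\Psi$ constructed in Proposition \ref{consprofapprochloc}. First, by differentiating \eqref{ortho} in $s$ and using the renormalized equation \eqref{deferreutilder}, the profile law \eqref{defmodtun} and the nondegeneracy \eqref{nondgeenracy}, \eqref{keyrealtions}, \eqref{keyrealtionsbis}, I would obtain
\[
\Big|\tfrac{\lambda_s}{\lambda}+b_1\Big|+|\gamma_s-a_1|+\sum_{i=1}^{L_+}\big|(b_i)_s+(2i-\alpha)b_1b_i-b_{i+1}\big|+\sum_{i=1}^{L_-}\big|(a_i)_s+2ib_1 a_i-a_{i+1}\big|\lesssim \|\varepsilon\|_{\mathrm{loc},M}+b_1^{L_++3/2},
\]
where $\|\varepsilon\|_{\mathrm{loc},M}$ is a local $L^2$ norm of $\varepsilon$ on $\{y\lesssim M\}$ controlled by $\sqrt{\mathcal E_{s_+}}$ and, crucially, by the coercivity of Lemma to be applied to $\Lt^{k_++L_+}\varepsilon$ together with the extra orthogonalities \eqref{ortho}.

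Second, I would propagate the stable directions by direct integration of the ODE. With $b_j=b_j^e+U_j/s^j$ and $V=P_\ell U$, $\mathcal A=Q_\ell(s^{k+\alpha/2}a_k)$, Lemmas \ref{lemmalinear}--\ref{lemmaak} give the diagonal linearized systems
\[
s(V_k)_s=\mu_k V_k+O\big(|V|^2+s^{k+1}\|\varepsilon\|_{\mathrm{loc},M}+s^{k+1}b_1^{L_++3/2}\big),\qquad s(\widetilde{\mathcal A}_k)_s=\nu_k\widetilde{\mathcal A}_k+\dots,
\]
with $\mu_1=-1$ and $\mu_k>0$, $\nu_k>0$ for the unstable indices; for the stable $b_k$ with $\ell+1\le k\le L_+$ and the stable $a_k$ with $k_\ell+1\le k\le L_-$, the coefficients $(2k-\alpha)c_1-k$ and $2k c_1-(k+\alpha/2)$ are both strictly positive by the computation displayed after Lemma \ref{lemmalinear}. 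Feeding in the bootstrap bound \eqref{bootnorm} through $\|\varepsilon\|_{\mathrm{loc},M}\lesssim b_1^{L_++(1-\delta_{k_+})+\eta(1-\delta_p)}$ and integrating from $s_0$ with initial data \eqref{estintialbis}--\eqref{estintialbisa}, the exponential-in-$\log s$ decay on the stable eigenvalue $\mu_1=-1$ and the super-equilibrium decay of $b_k$, $k>\ell$, $a_k$, $k>k_\ell$, easily improves the constant from $10$ (resp. $1$) to $\tfrac12$ of that value. This is structurally identical to the analogous step in \cite{RSc2}, \cite{MMaR1}; the only novelty is the simultaneous presence of both $b$ and $a$ chains, decoupled at leading order thanks to \eqref{bjvbbvebebebi}.

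Third, the heart of the proof is the improvement of \eqref{bootnorm}. I will invoke the mixed energy/Morawetz Lyapunov inequality built in Section \ref{sectionmonoton}, which should take the schematic form
\begin{equation*}
\frac{d}{ds}\left\{\frac{\mathcal E_{s_+}+b_1\mathcal M}{\lambda^{2(s_+-s_c)}}\right\}\le \frac{C\,b_1^{2L_++2(1-\delta_{k_+})+2\eta(1-\delta_p)+1}}{\lambda^{2(s_+-s_c)}},
\end{equation*}
whose source term is exactly the square of the error bound \eqref{controleh4erreurloc}. The coercivity $|b_1\mathcal M|\le \tfrac{1}{10}\mathcal E_{s_+}$ uses the orthogonalities \eqref{ortho} and the strict positivity $L_->L_+>0$ from \eqref{positivityh}, so the bracket is comparable to $\mathcal E_{s_+}/\lambda^{2(s_+-s_c)}$. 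Integrating with $-\lambda_s/\lambda=b_1\sim c_1/s$, i.e. $\lambda(s)\sim s^{-\ell/\alpha}$, the time-weighted integral on the right-hand side is bounded by $Cb_1^{2L_++2(1-\delta_{k_+})+2\eta(1-\delta_p)}/\lambda^{2(s_+-s_c)}$ because the power of $b_1$ in the integrand beats the loss $2(s_+-s_c)\ell/\alpha$ by a gain of size $1$; the initial datum \eqref{init2energy} is chosen much smaller than this forcing. This yields $\mathcal E_{s_+}\le \tfrac{K}{2}b_1^{2L_++2(1-\delta_{k_+})+2\eta(1-\delta_p)}$, improving \eqref{bootnorm}. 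The principal obstacle here, and the only step which is not bookkeeping, is the derivation and, most of all, the coercivity of this Lyapunov functional in the presence of nonlocal nonlinearities in \eqref{nls} and of the growing tails of $Q_{b,a}$; this is where the factorization $L_\pm=A_\pm^*A_\pm$ of Lemma \ref{facotrh} and the admissibility calculus of Lemma \ref{lemmapropinverse} play the decisive role, and where the choice $s_+=2k_++2L_++1$ and the assumption \eqref{conditionalpha} are used to avoid logarithmic losses in the weighted Hardy inequalities.

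Fourth, to improve \eqref{bootsmallsigma} I would run a parallel but much simpler low-Sobolev energy identity: multiplying the equation for $\varepsilon$ in original variables by $|\nabla|^{2\sigma}$, using that $\dot H^\sigma\cap\dot H^{s_+}$ is an algebra for $s_c<\sigma<d/2\ll s_+$, together with the improved Sobolev bound \eqref{smallsobolevbound} on $\widetilde\Psi$, one gets
\[
\frac{d}{ds}\left\{\frac{\|\nabla^\sigma\varepsilon\|_{L^2}^2}{\lambda^{2(\sigma-s_c)}}\right\}\le \frac{C b_1}{\lambda^{2(\sigma-s_c)}}\Big[b_1^{\nu_1}\|\nabla^\sigma\varepsilon\|_{L^2}^2+b_1^{\sigma-s_c+\nu_1}\Big],
\]
and integration, combined with $\lambda\sim s^{-\ell/\alpha}$ and the initial smallness \eqref{init2energy}, gives $\|\nabla^\sigma\varepsilon\|_{L^2}^2\le \tfrac{K}{2}b_1^{\frac{2\ell}{2\ell-\alpha}(\sigma-s_c)}$ as required. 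The four improvements above together contradict the definition of the exit time $s^*$ on each bound except the unstable-mode condition \eqref{controlunstable}, so $s^*$ can only be reached through the finite-dimensional unstable ODE; a standard topological degree / Brouwer argument on the map $(V_k(s_0),\widetilde{\mathcal A}_k(s_0))_{k\ge 2,k\ge 1}\mapsto (V_k(s^*),\widetilde{\mathcal A}_k(s^*))$, exactly as in \cite{CMM}, closes Proposition \ref{bootstrap'}.
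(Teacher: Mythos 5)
Your high-level plan is the right one (modulation estimates, then closing the finite-dimensional system by integration, then a mixed energy/Morawetz Lyapunov bound for $\mathcal E_{s_+}$, and a weaker estimate at $\dot H^\sigma$), and you correctly identify the coercivity $L_->L_+>0$ and the tail/admissibility calculus as the structural inputs. However, there is a real gap in the step ``propagate the stable directions by direct integration of the ODE'' at the endpoint indices $k=L_+$ and $k=L_-$, and a related gap in the energy estimate.

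The standard modulation estimate obtained from the orthogonality conditions \fref{ortho} has a sharp form at the top index: for $b_{L_+}$ it reads \fref{parameterspresicely}, namely
$|(b_{L_+})_s+(2L_+-\alpha)b_1b_{L_+}|\lesssim M^{-2\delta_{k_+}}\sqrt{\mathcal E_{s_+}}+b_1^{L_++1+(1-\delta_{k_+})+\eta(1-\delta_p)}$,
and similarly for $a_{L_-}$ in \fref{parameterspresicelya}. Plugging in the bootstrap bound \fref{bootnorm}, $\sqrt{\mathcal E_{s_+}}\lesssim b_1^{L_++(1-\delta_{k_+})+\eta(1-\delta_p)}$, so the right-hand side is only $O(b_1^{L_++(1-\delta_{k_+})+\eta(1-\delta_p)})$. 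Integrating against the explosive weight $\lambda^{-(2L_+-\alpha)}$, one finds $|b_{L_+}|\lesssim s^{-L_++\delta_{k_+}-\eta(1-\delta_p)}$, which for generic $\delta_{k_+}\in(0,1)$ is \emph{worse} than the bootstrap threshold $10/s^{L_+}$ and does not close. The same happens for $a_{L_-}$. This is precisely why the paper needs the \emph{improved} modulation equations of Lemma \ref{improvedbl}: one replaces $b_{L_+}$ and $a_{L_-}$ by shifted quantities (adding a projection of $\e$ on a $\chi_{B_0}$-localized direction), after which the residual on the right-hand side gains an extra factor $B_0^{-2\delta_{k_\pm}}\sim b_1^{\delta_{k_\pm}}$ that exactly lifts the loss. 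Your ``direct integration'' fails at $k=L_+,L_-$ without this step.

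A second, related gap is in the Lyapunov monotonicity for $\mathcal E_{s_+}$. In the energy identity for $w_{k_++L_+}$, the contribution of $\widetilde{\Mod}$ contains the endpoint terms $[(b_{L_+})_s+(2L_+-\alpha)b_1b_{L_+}]\chi_{B_1}\Phi_{L_+,+}$ and $[(a_{L_-})_s+2L_-b_1a_{L_-}]\chi_{B_1}\Phi_{L_-,-}$, and the only pointwise control available, \fref{parameterspresicely}--\fref{parameterspresicelya}, is insufficient to absorb these into the right-hand side $b_1^{2L_++2(1-\delta_{k_+})+2\eta(1-\delta_p)+1}$. The paper's resolution (step~4 of Proposition \ref{AEI2}) decomposes $\widetilde{\Mod}=\widehat{\Mod}-\partial_s\xi_+-\partial_s\xi_-$ and \emph{moves} the time-oscillatory pieces $\partial_s\xi_\pm$ into a full time derivative, so that the functional must be corrected to $\mathcal E_{s_+}[1+O(b_1^{\eta(1-\delta_p)})]$ before integration. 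Without this explicit modification you cannot close the mixed energy/Morawetz step. (As a minor point of logic, the Brouwer argument you append belongs to Proposition \ref{bootstrap'}, not to the statement you are asked to prove; the statement here is precisely the improvement of the stable bounds under the a priori control of the unstable modes.)
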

 
 The end of this section is devoted to the derivation of the modulation equations. They follow from  the construction of the directions $\Xi_{M,\pm}$ and the choice of the 
 orthogonality conditions \fref{ortho}. The key monotonicity Lemmas for the control of $\e$ in the $\dot{H}^{\sigma}\times \dot{H}^{s_+}$ topology are then proved in section \ref{sectionmonoton}. The proof of Proposition \ref{bootstrap} is then completed in section \ref{sectionbootstrappouet}. We will make a systematic implicit use of the interpolation bounds of Lemma \ref{lemmainterpolation} following from the coercivity of the $\mathcal E_{s_+}$ energy established in Lemma \ref{propcorc}.


\subsection{Equation for the radiation}


Recall the decomposition of the flow:
  \be
  \label{decmopvemeope}
  u(t,r) = \frac{1}{\l^{\frac 2{p-1}}}(\tilde{Q}_{b(t),a(t)} + \varepsilon) (s,y)e^{i\gamma} =\left[\frac{1}{\l^{\frac 2{p-1}}}(Q + \zeta)(s,y) + w(t,r)\right]e^{i\gamma}.
  \ee
 We use the rescaling formulas
  \bea
  \nonumber u(t,r) =\frac{1}{\l^{\frac{2}{p-1}}}v(s,y)e^{i\gamma}\mbox{,} \ \ \ y = \frac{r}{\lambda(t)} \mbox{,}  \ \ \ \ \ \partial_t u = \frac{1}{\lambda^{2+\frac 2{p-1}}(t)}(\partial_s v - \frac{\lambda_s}{\lambda}\Lambda v+i\gamma_sv)(s,y)e^{i\gamma}
  \eea  
 and \fref{deferreutilder} to derive the equation for $\e$ in renormalized variables:
  \be
  \label{eqepsilon}
  \partial_s \varepsilon  - \frac{\lambda_s}{\lambda} \Lambda \varepsilon - \Lt \varepsilon = F - \Modt-\gamma_sJ\e = \mathcal F
  \ee
with 
\be
\label{defmodtuntilde}
\Modt =  -\left(\lsl+b_1\right)\Lambda \qbt+(\gamma_s-a_1)J\qbt-\chi_{B_1}\Mod
\ee
and  
\be
  \label{defF}
  F = -\Psit_b +L(\varepsilon) - N(\varepsilon)
  \ee
where $L(\varepsilon)$ is the linear part arising from replacing $\qbt$ with $Q$ in the nonlinear term:
  \be
  \label{defLe}
 L(\varepsilon) =  J(f'(Q)-f'(\qbt))\e, \ \ f(u)=u|u|^{p-1},
  \ee
 while the remainder higher order term:
  \be
  \label{defNe}
  N(\varepsilon) = J\left[f(\qbt+ \varepsilon) - f(\qbt)-\e f'(\qbt)\right]. 
  \ee
We also need to write the flow \fref{eqepsilon} in original variables. For this, let the rescaled linearized operator
$$(L_+)_{\l}=-\Delta-\frac{p}{\l^2}Q^{p-1}\left(\frac r{\lambda}\right), \ \ (L_-)_{\l}=-\Delta-\frac{1}{\l^2}Q^{p-1}\left(\frac r{\lambda}\right)$$ and the renormalized matrix  operator  
$$
\Lt_{\l}=\left(\begin{array}{ll} 0&(L_-)_\l\\-(L_+)_{\l}&0\end{array}
\right),
$$
then the renormalized function $$w(t,r)=\frac{1}{\l^{\frac2{p-1}}}\e\left(s,y\right)$$  satisfies
  \be
  \label{eqenwini}
  \partial_t w -\Lt_\lambda w = \frac 1{\lambda^2} \mathcal F_{\lambda},\ \ \mathcal F_{\lambda}(t,r)=\frac{1}{\l^{\frac 2{p-1}}}\mathcal F(s,y).
  \ee
Observe from \fref{approzimatesolution}, \fref{controlssstable} that for $s<s^*$, 
 $$ |b_k|\lesssim b_1^k, \ \ 0<b_1\ll 1, \ \ 1\leq k\leq L_+ 
 $$
 for some universal constant independent of the constant $\eta$ in \fref{controlunstable} in the range $0<\eta\leq 1$,
 and similarly from \fref{controlunstable}, \fref{controlsaksstable}:
  $$ |a_k|\leq b_1^{k+\alpha}, \ \ 1\leq k\leq L_-
 $$
 for $\eta$ in \fref{controlunstable} small enough.
As a consequence the a priori bound \fref{aprioirbound} as well as the conclusions of Proposition \ref{consprofapprochloc} hold 
with constants independent of $\eta$, chosen to be sufficiently small.


    \subsection{Modulation equations}
  

We now derive the modulation equations for $(\l,b,\gamma,a)$ from the orthogonality conditions \fref{ortho}.

  \begin{lemma}[Modulation equations]
\label{modulationequations}
We have the following bounds on the modulation parameters :
\bea
\label{parameters}
\nonumber && \sum_{k=1}^{L_+-1}|(b_k)_s+(2k-\alpha)b_1b_k-b_{k+1}|+\sum_{k=1}^{L_--1} |(a_k)_s+2kb_1a_k-a_{k+1}|\\
&+ &  \left|\frac{\lambda_s}{\lambda} + b_1\right| +\left|\gamma_s-a_1\right|\lesssim b_1^{L_++1+(1-\dk)+\eta(1-\delta_p)},
\eea
the sharp bound for $b_{L_+}$ term:
\be
\label{parameterspresicely}
\left| (b_{L_+})_s + (2L_+-\alpha)b_1b_{L_+} \right| \lesssim \frac{\sqrt{\mathcal E_{s_+}}}{M^{2\dk}}+  b_1^{L_++1+(1-\dk)+\eta(1-\delta_p)}
\ee
 and the lossy bound for $a_{L_-}$ term:
 \be
 \label{parameterspresicelya}
\left| (a_{L_-})_s + 2L_-b_1a_{L_-} \right| \lesssim M^C\sqrt{\mathcal E_{s_+}}+  b_1^{L_++1+(1-\dk)+\eta(1-\delta_p)}.
\ee
for some universal constant $c=c_{d,p,L_+}>0$.
\end{lemma}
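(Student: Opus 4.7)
The argument is a standard modulation computation based on differentiating the orthogonality conditions \fref{ortho} in time and using the equation \fref{eqepsilon} satisfied by $\e$. Since each $\Xi_{M,\pm}$ is fixed and independent of $s$, $\frac{d}{ds}(\e,(\Lt^*)^k\Xi_{M,\pm})=0$ yields, after substituting \fref{eqepsilon},
\begin{equation*}
(\Modt,(\Lt^*)^k\Xi_{M,\pm})+\gamma_s(J\e,(\Lt^*)^k\Xi_{M,\pm})= \lsl(\Lambda\e,(\Lt^*)^k\Xi_{M,\pm})+(\e,(\Lt^*)^{k+1}\Xi_{M,\pm})+(F,(\Lt^*)^k\Xi_{M,\pm})
\end{equation*}
for every $0\le k\le L_\pm$. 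Crucially, the term $(\e,(\Lt^*)^{k+1}\Xi_{M,\pm})$ vanishes for $0\le k\le L_\pm-1$ thanks to \fref{ortho}; it survives only for the top indices $k=L_\pm$, which is exactly the source of the lossy estimates \fref{parameterspresicely}--\fref{parameterspresicelya}.

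The plan is then to identify, on the left hand side, a near diagonal system for the modulation parameters. Inserting the expansion \fref{defmodtuntilde}, using $\Lambda\tilde Q=\Phi_{0,+}+O_{\dot H^\sigma_{\rm loc}}(b_1)$ and $J\tilde Q=\Phi_{0,-}+O_{\dot H^\sigma_{\rm loc}}(b_1)$, and using that on $\mbox{Supp}\,\Xi_{M,\pm}\subset\{y\le 2M\}$ the cut-off $\chi_{B_1}$ is identically one (since $M\ll B_0\ll B_1$), the duality relations \fref{keyrealtions}--\fref{keyrealtionsbis} yield the diagonal leading order contributions
\begin{align*}
(\Modt,\Xi_{M,+})&=-(\chi_M J\Phi_{0,+},\Phi_{0,-})\Bigl(\lsl+b_1\Bigr)+O(\text{lower order}),\\
(\Modt,(\Lt^*)^k\Xi_{M,+})&=-(\chi_M J\Phi_{0,+},\Phi_{0,-})\bigl[(b_k)_s+(2k-\alpha)b_1b_k-b_{k+1}\bigr]+O(\text{lower order}),
\end{align*}
for $1\le k\le L_+$, and the symmetric identities for $(\Lt^*)^k\Xi_{M,-}$ with $(\gamma_s-a_1)$ at $k=0$ and the $a_k$ modulation equation for $k\ge 1$. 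The normalization $(\chi_M J\Phi_{0,+},\Phi_{0,-})\gtrsim M^{d-\gamma-\frac{2}{p-1}}\neq 0$ from \fref{nondgeenracy} lets us invert and solve for each parameter; the lower order corrections come from the $S_{j,\pm}$ tails in $\Lambda\tilde Q$, $J\tilde Q$ and $\chi_{B_1}\Mod$ in \fref{defmodtun}, which pair with the localized $\Xi_{M,\pm}$ to produce harmless $O(b_1)$ multiplicative factors.

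It remains to estimate the three source terms on the right hand side. First, $(F,(\Lt^*)^k\Xi_{M,\pm})$ is controlled by Cauchy--Schwarz against the local weighted bound \fref{nkonenoenve} on $\tilde\Psi$, which yields precisely the $b_1^{L_++1+(1-\dk)+\eta(1-\delta_p)}$ right hand side in \fref{parameters}; the linear $L(\e)$ contribution is handled by integration by parts and coercivity of $\matchal E_{s_+}$, since $f'(Q)-f'(\tilde Q)=O(\zeta\, Q^{p-2})$ is localized, and $N(\e)$ is quadratic in $\e$ hence even smaller using the interpolation bounds from $\matchal E_{s_+}$ and $\|\nabla^\sigma\e\|_{L^2}$. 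Second, $\lsl(\Lambda\e,(\Lt^*)^k\Xi_{M,\pm})$ is integrated by parts to produce $\e$ paired with a smooth rapidly decaying function, giving $b_1\|\e\|_{L^2_{\rm loc}}$ which is absorbed in the same right hand side using $|\lsl|\sim b_1$ and the weighted coercivity. The $\gamma_s(J\e,\cdot)$ term is even better because $\gamma_s=a_1+O(b_1^{L_+})$ and $|a_1|\lesssim b_1^{1+\alpha/2}$ by the bootstrap \fref{controlunstable}. Finally, for $k=L_\pm$ the boundary term $(\e,(\Lt^*)^{L_\pm+1}\Xi_{M,\pm})$ is bounded by Cauchy--Schwarz against the weighted norm of $\e$: the decay in $(\Lt^*)^{L_++1}\Xi_{M,+}$ (gained thanks to the integration by parts on $\Lt^*$, see \fref{ltwoboundximbis}) together with the weights $1/y^{2s_+}$ in the coercivity of $\matchal E_{s_+}$ produces the refined $M^{-2\dk}\sqrt{\matchal E_{s_+}}$ loss in \fref{parameterspresicely}, whereas for $a_{L_-}$ the mismatch $L_-=L_+-\Delta k$ forces the cruder $M^C$ loss in \fref{parameterspresicelya}.

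The main obstacle is the bookkeeping for the boundary term at $k=L_+$: controlling it by $M^{-2\dk}\sqrt{\matchal E_{s_+}}$ requires a careful use of both the weighted coercivity of $\matchal E_{s_+}$ (to extract a favorable power of the weight $y^{-s_+}$) and the exact structure of $\Xi_{M,+}$ together with the cancellation $\Lt^*J\Phi_{0,\pm}=0$ which lowers the polynomial degree by one. All remaining contributions are absorbed by the bootstrap bounds \fref{bootnorm}--\fref{bootsmallsigma} and the smallness of $M^{-1}$ and $b_1$.
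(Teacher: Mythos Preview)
Your plan is essentially the paper's own proof: differentiate the orthogonality conditions, use the duality relations \fref{keyrealtions}--\fref{keyrealtionsbis} to extract a diagonal system for the modulation parameters, bound the error $\Psit$ via the very local estimate (the paper uses \fref{fluxcomputationonelocbis} rather than \fref{nkonenoenve}, but either works here), and collect everything into a rough bound on $D(t)$ which is then reinserted. The only notable deviation is at the top index $k=L_+$: you propose to estimate $(\e,(\Lt^*)^{L_++1}\Xi_{M,+})$ via decay of the iterated $\Xi_{M,+}$, whereas the paper puts all derivatives on $\e$ and estimates $(\Lt^{L_++1}\e,\Xi_{M,+})$ directly, using the coercivity $\int\frac{|\Lt^{L_++1}\e|^2}{1+y^{4k_+-2}}\lesssim\mathcal E_{s_+}$ together with \fref{ltwoboundxim}; this is where the exponent $M^{-2\dk}$ comes out cleanly via \fref{dgammkrealtion}, and your reference to \fref{ltwoboundximbis} (which concerns $\Lt^*\Xi_{M,+}$, not $(\Lt^*)^{L_++1}\Xi_{M,+}$) does not quite give this. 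Also, your handling of the $\gamma_s J\e$ term is slightly circular as stated (you invoke $\gamma_s=a_1+O(b_1^{L_+})$, which is part of what is being proved); the paper simply bounds it by $M^Cb_1 D(t)$ and absorbs it when solving for $D(t)$.
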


\begin{remark} Note that under the bootstrap assumptions the above bounds  imply:
\be
\label{rougboundpope}
|(b_1)_s|\lesssim b_1^2
\ee
and in particular \fref{aprioribound}. 
\end{remark}

\begin{proof}[Proof of Lemma \ref{modulationequations}] This Lemma is a consequence of our choice of orthogonality conditions and the construction of the compactly supported directions $\Xi_{M,\pm}$.\\

\noindent{\bf step 1} Law for $b_{L_+}$. Let 
\bea
 \label{defut}
 D(t) &=& \left|\frac{\lambda_s}{\lambda} + b_1\right|+\left|\gamma_s-a_1\right|\\
 \nonumber & + & \sum_{k=1}^{L_+}|(b_k)_s + (2k-\alpha)b_1b_k-b_{k+1}|+\sum_{k=1}^{L_-}|(a_k)_s + (2k-\alpha)b_1a_k-a_{k+1}| 
 \eea
 We take the inner product of \fref{eqepsilon} with $(\Lt^*)^{L_+}\Xi_{M,+}$ and obtain using the orthogonality \fref{ortho}:
\bea
\label{modequone}
(\widetilde{\mbox{Mod}}(t),(\Lt^*)^{L_+}\Xi_{M,+})& = & -(\Psit_b,(\Lt^*)^{L_+}\Xi_{M,+})+(\Lt\e,(\Lt^*)^{L_+}\Xi_{M,+})\\
\nonumber & + & \left(L(\e)-N(\e)+\lsl\Lambda \e-\gamma_sJ\e,(\Lt^*)^{L_+}\Xi_{M,+}\right).
\eea
We now evaluate all terms in \fref{modequone}.  The lhs is computed using \fref{defmodtun}, \fref{defmodtuntilde}, ${\rm Supp}(\Xi_{m,+})\subset \{y\leq 2M\}$ and the scalar products \fref{keyrealtions}:
\bee
(\widetilde{\mbox{Mod}}(t),(\Lt^*)^{L_+}\Xi_{M,+})& = & \left(-\left(\lsl+b_1\right)\Lambda \qbt+(\gamma_s-a_1)J\qbt-\chi_{B_1}\Mod,(\Lt^*)^{L_+}\Xi_{M,+}\right)\\
& = & ((b_{L_+})_s+(2L_+-\alpha)b_1b_{L_+})(J\chi_M\Phi_{0,+},\Phi_{0,-})+O(M^Cb_1|D(t)|).
\eee
We now turn to the rhs of \fref{modequone}. The error term is estimated from \fref{fluxcomputationonelocbis}:
$$\left|(\Psit_b,(\Lt^*)^{L_+}\Xi_{M,+})\right|\lesssim M^Cb_1^{L_++3}\leq b_1^{L_++1+(1-\dk)+\eta(1-\dep)}.$$
To estimate the linear term, we apply \fref{coerciviteuk} to $\Lt^{L_++1}\e$ and estimate:
$$\mathcal E_{s_+}=(J\Lt\Lt^{k_++L_+}\e,\Lt^{k_++L_+}\e)=(J\Lt\Lt^{k_+-1}\Lt^{L_++1}\e,\Lt^{k_+-1}\Lt^{L_++1}\e) \geq c_0\int \frac{|\Lt^{L_++1}\e|^2}{1+y^{4k_+-2}}$$ for some universal constant $c_0>0$ {\it independent of $M$}, and hence using \fref{ltwoboundxim}:
\bee
\left|(\Lt\e,(\Lt^*)^{L_+}\Xi_{M,+})\right|&\lesssim &\|\Lt^{L_++1}\e\|_{L^2(y\leq 2M)}\|\|\Xi_{M,+}\|_{L^2}\lesssim \sqrt{\mathcal E_{s_+}}M^{2k_+-1}\|\Xi_{M,+}\|_{L^2}\\
& \lesssim & M^{2k_+-1+\frac d2-\frac{2}{p-1}}\sqrt{\mathcal E_{s_+}}.
\eee
We conclude using \fref{nondgeenracy}, \fref{dgammkrealtion}:
\be
\label{kenvenovnevo}
\left|\frac{(\Lt\e,(\Lt^*)^{L_+}\Xi_{M,+})}{(\chi_MJ\Phi_{0,+},\Phi_{0,-})}\right|\lesssim \frac{M^{2k_+-1+\frac d2-\frac{2}{p-1}}}{M^{d-\gamma-\frac 2{p-1}}}\sqrt{\mathcal E_{s_+}}\lesssim \frac{\sqrt{\mathcal E_{s_+}}}{M^{2\delta_+}}.
\ee
The remaining terms are estimated using the Hardy bounds of Appendix \ref{appendixhardy} and the size of the support of $\Xi_{M,+}$:
$$\left|\left(L(\e)-N(\e)+\lsl\Lambda \e-\gamma_sJ\e,(\Lt^*)^{L_+}\Xi_{M,+}\right)\right|\lesssim M^Cb_1(\sqrt{\mathcal E_{s_+}}+|D(t)|).$$ The collection of above bounds yields:
\be
\label{estblplusone}
|(b_{L_+})_s+(2L_+-\alpha)b_1b_{L_+}|\lesssim \frac{\sqrt{\mathcal E_{s_+}}}{M^{2\delta_+}}+ b_1^{L_++1+(1+\eta)(1-\dk)}+M^Cb_1D(t).
\ee

\noindent{\bf step 2} Law for $a_{L_-}$. We follow a similar chain of estimates to compute the modulation equation for $a_{L_-}$.  We take the inner product of \fref{eqepsilon} with $(\Lt^*)^{L_-}\Xi_{M,-}$ and obtain using the orthogonality \fref{ortho}:
\bea
\label{modequonebis}
(\widetilde{\mbox{Mod}}(t),(\Lt^*)^{L_-}\Xi_{M,-})& = & -(\Psit_b,(\Lt^*)^{L_-}\Xi_{M,-})+(\Lt\e,(\Lt^*)^{L_-}\Xi_{M,-})\\
\nonumber & + & \left(L(\e)-N(\e)+\lsl\Lambda \e-\gamma_sJ\e,(\Lt^*)^{L_-}\Xi_{M,-}\right).
\eea
We now evaluate all the terms in \fref{modequonebis}. The lhs is computed using \fref{defmodtun}, \fref{defmodtuntilde}, ${\rm Supp}(\Xi_{M,-})\subset \{y\leq 2M\}$ and the scalar products \fref{keyrealtions}:
\bee
(\widetilde{\mbox{Mod}}(t),(\Lt^*)^{L_-}\Xi_{M,-})& = & \left(-\left(\lsl+b_1\right)\Lambda \qbt+(\gamma_s-a_1)J\qbt-\chi_{B_1}\Mod,(\Lt^*)^{L_+}\Xi_{M,-}\right)\\
& = & ((a_{L_-})_s+2L_-b_1a_{L_-})(J\chi_M\Phi_{0,+},\Phi_{0,-})+O(M^Cb_1D(t)).
\eee
The error term is estimated from \fref{fluxcomputationonelocbis} which implies:
$$\left|(\Psit_b,(\Lt^*)^{L_-}\Xi_{M,-})\right|\lesssim M^Cb_1^{L_++3}\leq b_1^{L_++1+(1-\dk)+\eta(1-\dep)}.$$
The remaining terms are estimated using the Hardy bounds of Appendix \ref{appendixhardy} and the size of the support of $\Xi_{M,-}$:
$$\left|\left(\Lt\e+L(\e)-N(\e)+\lsl\Lambda \e-\gamma_sJ\e,(\Lt^*)^{L_-}\Xi_{M,-}\right)\right|\lesssim M^C\sqrt{\mathcal E_{s_+}}+b_1M^CD(t).$$
 The collection of above bounds yields
\be
\label{estblplustwo}
|(a_{L_-})_s+2L_-b_1a_{L_-})|\lesssim M^C\sqrt{\mathcal E_{s_+}}+ b_1^{L_++1+(1-\dk)+\eta(1-\dep)}+b_1M^CD(t).
\ee

\noindent{\bf step 3} Law for $-\lsl$ and $b_{k}$, $1\leq k\leq L_+-1$. We take the inner product of \fref{eqepsilon} with $(\Lt^*)^{k}\Xi_{M,+}$ and obtain using the orthogonality \fref{ortho}:
\bee
(\widetilde{\mbox{Mod}}(t),(\Lt^*)^{k}\Xi_{M,+})& = & -(\Psit_b,(\Lt^*)^{k}\Xi_{M,+})\\
\nonumber & + & \left(L(\e)-N(\e)+\lsl\Lambda \e-\gamma_sJ\e,(\Lt^*)^{k}\Xi_{M,+}\right)
\eee
where in particular the linear term dropped thanks to \fref{ortho} and $k\leq L_+-1$. We compute from \fref{defmodtun}, \fref{defmodtuntilde}, ${\rm Supp}(\Xi_{M,-})\subset \{y\leq 2M\}$ and the scalar products \fref{keyrealtions}:
$$(\widetilde{\mbox{Mod}}(t),(\Lt^*)^{k}\Xi_{M,+})= ((b_{k})_s+(2k-\alpha)b_1b_k-b_{k+1})(J\chi_M\Phi_{0,+},\Phi_{0,-})+O(M^Cb_1D(t)).$$ The remaining terms are estimated using \fref{fluxcomputationonelocbis}, the Hardy bounds of Appendix \ref{appendixhardy} and the compact support of $\Xi_{M,+}$ giving the bound:
\be
\label{estblplusthree}
|(b_{k})_s+(2k-\alpha)b_1b_{k}-b_{k+1}|\lesssim  b_1^{L_++1+(1-\dk)+\eta(1-\dep)}+M^Cb_1(D(t)+\sqrt{\matchal E_{s_+}}).
\ee
Taking the inner product of \fref{eqepsilon} with $\Xi_{M,+}$ yields similarly: 
\be
\label{estblplusthreebis}
\left|\lsl+b_1\right|\lesssim  b_1^{L_++1+(1-\dk)+\eta(1-\dep)}+M^Cb_1(D(t)+\sqrt{\matchal E_{s_+}}).
\ee

\noindent{\bf step 4} Law for $\gamma_s$, $a_{k}$, $1\leq k\leq L_--1$. We take the inner product of \fref{eqepsilon} with $(\Lt^*)^{k}\Xi_{M,-}$ and obtain using the orthogonality \fref{ortho}:
\bee
(\widetilde{\mbox{Mod}}(t),(\Lt^*)^{k}\Xi_{M,-})& = & -(\Psit_b,(\Lt^*)^{k}\Xi_{M,-})\\
\nonumber & + & \left(L(\e)-N(\e)+\lsl\Lambda \e-\gamma_sJ\e,(\Lt^*)^{k}\Xi_{M,-}\right)
\eee
where again the linear term dropped thanks to \fref{ortho} and $k\leq L_--1$. We compute from \fref{defmodtun}, \fref{defmodtuntilde}, ${\rm Supp}(\Xi_{M,-})\subset \{y\leq 2M\}$ and the scalar products \fref{keyrealtions}:
$$(\widetilde{\mbox{Mod}}(t),(\Lt^*)^{k}\Xi_{M,-})= ((a_{k})_s+2kb_1a_k-a_{k+1})(J\chi_M\Phi_{0,+},\Phi_{0,-})+O(M^Cb_1|D(t)|).$$ The remaining terms are estimated using \fref{fluxcomputationonelocbis}, the Hardy bounds of Appendix \ref{appendixhardy} and the compact support of $\Xi_{M,+}$ resulting in the bound:
\be
\label{estblplusfour}
|(a_{k})_s+2kb_1a_{k}-a_{k+1}|\lesssim  b_1^{L_++1+(1-\dk)+\eta(1-\dep)}+M^Cb_1(D(t)+\sqrt{\matchal E_{s_+}}).
\ee
Taking the inner product of \fref{eqepsilon} with $\Xi_{M,-}$ yields similarly: 
\be
\label{estblplusfourbis}
\left|\gamma_s-a_1\right|\lesssim  b_1^{L_++1+(1-\dk)+\eta(1-\dep)}+M^Cb_1(D(t)+\sqrt{\matchal E_{s_+}}).
\ee
\noindent{\bf step 5} Conclusion. Summing \fref{estblplusone}, \fref{estblplustwo}, \fref{estblplusthree}, \fref{estblplusthreebis} \fref{estblplusfour}, \fref{estblplusfourbis} gives the rough bound: $$|D(t)|\lesssim M^C\sqrt{\mathcal E_{s_+}}+ b_1^{L_++1+(1-\dk)+\eta(1-\dep)}$$ which reinserted into \fref{estblplusone}, \fref{estblplustwo}, \fref{estblplusthree}, \fref{estblplusthreebis} \fref{estblplusfour}, \fref{estblplusfourbis} yields \fref{parameters}, \fref{parameterspresicely}, \fref{parameterspresicelya} for $|b_1|<b_1^*(M)$ small enough.
\end{proof}


    \subsection{Improved modulation equation for $b_{L_+},a_{L_-}$}
  
 
 The modulation equations for $b_{L_+},a_{L_-}$ correspond to the unstable directions {\it linear} in $\e$ due to our choice of orthogonality conditions \fref{ortho}, and the fact that $\Xi_{M,\pm}$ is merely  an {\it approximation} of the kernel of $\Lt^{k_++L_+}$. Indeed  \fref{parameterspresicely}, \fref{bootnorm} would only yield the pointwise bound $$\left| (b_{L_+})_s + (2L_+-\alpha)b_1b_{L_+} \right| \lesssim b_1^{L_++(1-\dk)+\eta(1-\dep)}$$ which is not good enough to close the expected modulation equation $$\left| (b_{L_+})_s + (2L_+-\alpha)b_1b_{L_+} \right| \ll b_1^{L_++1},$$  and similarly for the $a_{L_-}$ modulation equation \fref{parameterspresicelya}.  
 We however claim that the main linear term can be removed modulo a term with a time oscillation:

 \begin{lemma}[Improved modulation equation]
 \label{improvedbl}
Then there holds the improved bounds:
\bea
\label{improvedboundbl}
&&\left|(b_{L_+})_s+(2L_+-\alpha)b_1b_{L_+}+\frac{d}{ds}\left\{\frac{(\Lt^{L_+}\e,\chi_{B_{\mu}}J\Phi_{0,-})}{(\Phi_{0,+},\chi_{B_{\mu}}J\Phi_{0,-})}\right\}\right|\\
\nonumber & \lesssim & \frac{1}{B_{\mu}^{2\dk}}\left[C(M)\sqrt{\mathcal E_{s_+}}+b_1^{L_++(1-\dk)+\eta(1-\dep)}\right],
\eea
\bea
\label{improvedboundal}
&&\left|(a_{L_-})_s+2L_-b_1a_{L_-}+\frac{d}{ds}\left\{\frac{(\Lt^{L_-}\e,\chi_{B_{\mu}}J\Phi_{0,+})}{(\Phi_{0,-},\chi_{B_{\mu}}J\Phi_{0,+})}\right\}\right|\\
\nonumber & \lesssim & \frac{1}{B_{\mu}^{2\delta_{k_-}}}\left[C(M)\sqrt{\mathcal E_{s_+}}+b_1^{L_++(1-\dk)+\eta(1-\dep)}\right].
\eea
\end{lemma}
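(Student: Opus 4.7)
The strategy is to directly compute $\tfrac{d}{ds}[(\Lt^{L_+}\e,\chi_{B_{\mu}}J\Phi_{0,-})]$ from the equation \eqref{eqepsilon} for $\e$ and to recognize that the main linear-in-$\e$ contribution to $(b_{L_+})_s+(2L_+-\alpha)b_1b_{L_+}$ can be absorbed into this time derivative. Set $X(s):=(\Lt^{L_+}\e,\chi_{B_{\mu}}J\Phi_{0,-})$ and $Y(s):=(\Phi_{0,+},\chi_{B_{\mu}}J\Phi_{0,-})$, so that $|Y(s)|\sim B_{\mu}^{d-\gamma-\frac{2}{p-1}}$ by \eqref{develpopemtn}. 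The target identity is $(b_{L_+})_s+(2L_+-\alpha)b_1b_{L_+}+\tfrac{d}{ds}(X/Y)=\text{small error}$.

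Transfer the derivative and substitute $\pa_s\e=\Lt\e+\tfrac{\lambda_s}{\lambda}\Lambda\e-\Modt+F-\gamma_sJ\e$ from \eqref{eqepsilon}, \eqref{defmodtuntilde}, \eqref{defF}. The key algebraic fact is that $\Lt\Phi_{0,-}=0$ together with \eqref{adjointltilde} gives $\Lt^*(J\Phi_{0,-})=0$, so $(\Lt^*)^k[\chi_{B_\mu}J\Phi_{0,-}]$ is generated entirely by commutators of $\Lt^*$ with the cutoff $\chi_{B_\mu}$ and is supported in $\{B_\mu\le y\le 2B_\mu\}$ for every $k\ge 1$. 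The pairing with $\Modt$ produces, by the same computation as in Lemma \ref{modulationequations} but now dualized against $(\Lt^*)^{L_+}[\chi_{B_\mu}J\Phi_{0,-}]$ and using $\Lt^{L_+}\Phi_{L_+,+}=\Phi_{0,+}$ together with $\chi_{B_1}\equiv 1$ on $\mathrm{supp}(\chi_{B_\mu})\subset\{y\le 2B_\mu\}\subset\{y\le B_1\}$, the main term
\begin{equation*}
-\bigl[(b_{L_+})_s+(2L_+-\alpha)b_1b_{L_+}\bigr]\,Y(s)+O\!\left(b_1^{L_++1+(1-\dk)+\eta(1-\dep)}|Y|\right),
\end{equation*}
where the $O(\cdot)$ collects contributions from all other modulation parameters, each controlled via Lemma \ref{modulationequations}.

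The residual contributions are bounded as follows. The $\Lt\e$ term rewrites as $(\e,(\Lt^*)^{L_++1}[\chi_{B_\mu}J\Phi_{0,-}])$; by the kernel property it is supported on $\{y\sim B_\mu\}$, and Cauchy--Schwarz against a weighted norm dominated by the coercivity of $\mathcal E_{s_+}$, combined with the arithmetic identity $d-2\gamma-4k_+=4\dk-2$ from \eqref{dgammkrealtion}, yields $\lesssim C(M)\sqrt{\mathcal E_{s_+}}\,B_\mu^{d-\gamma-\frac{2}{p-1}-2\dk}$, i.e.\ $\sqrt{\mathcal E_{s_+}}/B_\mu^{2\dk}$ after division by $|Y|$. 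The error $F=-\Psit+L(\e)-N(\e)$ is handled by the profile bounds \eqref{controleh4erreurloc}, \eqref{nkonenoenve} together with the weighted Hardy inequalities of the Appendix. The remaining $\tfrac{\lambda_s}{\lambda}\Lambda\e$, $\gamma_sJ\e$ and $\partial_s\chi_{B_\mu}$ terms are suppressed by $|\lambda_s/\lambda|,|\gamma_s|,|(B_\mu)_s/B_\mu|\lesssim b_1$, using Lemma \ref{modulationequations} and \eqref{rougboundpope}. Finally $\tfrac{d}{ds}(1/Y)$ only contributes $|Y_s||X|/|Y|^2\lesssim b_1\sqrt{\mathcal E_{s_+}}/|Y|$, absorbed into the stated bound.

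The proof of \eqref{improvedboundal} is strictly analogous: one replaces $J\Phi_{0,-}$ by $J\Phi_{0,+}$, uses the companion kernel identity $\Lt^*(J\Phi_{0,+})=0$, and exchanges $(L_+,\dk)$ with $(L_-,\dkm)$ throughout. The main technical obstacle is the sharp accounting of the iterated commutators $(\Lt^*)^{L_++1}[\chi_{B_\mu}J\Phi_{0,-}]$ in weighted $L^2$: the $L_+$ derivatives of $\chi_{B_\mu}$ (each costing a power of $B_\mu^{-1}$) must be balanced against the polynomial decay of $\Phi_{0,-}$ and the weight provided by the energy $\mathcal E_{s_+}$, and it is precisely the algebraic identity $d-2\gamma-4k_+=4\dk-2$ of \eqref{dgammkrealtion} that produces the sharp exponent $1/B_\mu^{2\dk}$.
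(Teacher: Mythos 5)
Your proposal follows essentially the same route as the paper: commute \eqref{eqepsilon} with $\Lt^{L_+}$, pair with $\chi_{B_\mu}J\Phi_{0,-}$, exploit $\Lt^*(J\Phi_{0,-})=0$ to localize to $\{y\sim B_\mu\}$, extract the $b_{L_+}$ modulation from the $\Modt$ pairing via $\Lt^{L_+}\Phi_{L_+,+}=\Phi_{0,+}$, control the remaining terms by weighted Cauchy--Schwarz against the $\mathcal E_{s_+}$ coercivity and the relation $d-2\gamma-4k_+=4\dk-2$, and finally trade $Y^{-1}X_s$ for $\tfrac{d}{ds}(X/Y)$ up to a $d/ds(1/Y)$ correction. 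One small inaccuracy: the asserted intermediate bound $|Y_s||X|/|Y|^2\lesssim b_1\sqrt{\mathcal E_{s_+}}/|Y|$ is too strong as stated (the correct estimate is $|X/Y|\lesssim B_\mu^{2(1-\dk)}\sqrt{\mathcal E_{s_+}}$ and $|Y_s/Y|\lesssim b_1$, giving $\lesssim B_\mu^{-2\dk}\sqrt{\mathcal E_{s_+}}$), though the final absorption into the claimed bound goes through.
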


\begin{proof}[Proof of Lemma \ref{improvedbl}]
\noindent{\bf step 1} Proof of \fref{improvedboundbl}. We commute \fref{eqepsilon} with $\Lt^{L_+}$ and take the scalar product with $\chi_{B_0}J\Phi_{0.-}$. This yields:
\bee
&&\frac{d}{ds}\left\{(\Lt^{L_+}\e,\chi_{B_0}J\Phi_{0,-})\right\}-(\Lt^{L_+}\e,J\Phi_{0,-}\pa_s(\chi_{B_0}))\\
& = & (\Lt^{L_++1}\e,J\chi_{B_0}\Phi_{0,-})+\lsl(\Lt^{L_+}\Lambda \e,\chi_{B_0}J\Phi_{0,-})-\gamma_s(\Lt^{L_+}J\e,J\chi_{B_0}\Phi_{0,-})\\
& +& (\Lt^{L_+}(F-\widetilde{\mbox{Mod}}),J\chi_{B_0}\Phi_{0,-}).
\eee
The linear term is estimated by Cauchy-Schwarz using Lemma \ref{propcorc}, \fref{dgammkrealtion} and $\Lt^*(J\Phi_{0,-})=0$:
\bee
\nonumber |(\Lt^{L_++1}\e,J\chi_{B_0}\Phi_{0,-})|&\lesssim &B_0^{1+2k_+}\|\Lt^*(J\chi_{B_0}\Phi_{0,-})\|_{L^2}\left(\int\frac{|\Lt^{L_+}\e|^2}{1+y^{2+4k_+}}\right)^{\frac 12}\\
&\lesssim &C(M)B_0^{1+2k_+}B_0^{\frac d2-\frac{2}{p-1}-2}\sqrt{\mathcal E_{s_+}}\\
& = & C(M)B_0^{d-\gamma-\frac{2}{p-1}-2\dk}\sqrt{\mathcal E_{s_+}}.
\eee
Similarily:
\bee
\nonumber &&\left|\lsl(\Lt^{L_+}\Lambda \e,\chi_{B_0}J\Phi_{0,-})\right|+\left|\gamma_s(\Lt^{L_+}J\e,J\chi_{B_0}\Phi_{0,-})\right|\\
&\lesssim& b_1\left(\int \frac{|\Lambda \e|^2+|\e|^2}{1+y^{4(L_++k_+)+2}}\right)^{\frac 12}\left(\int (1+y^{4(L_++k_+)+2})|(\Lt^*)^{L_+}\chi_{B_0}J\Phi_{0,-}|^2\right)^{\frac 12}\\
& \lesssim & b_1C(M)\sqrt{\mathcal E_{s_+}}B_0^{\frac d2-\frac 2{p-1}+2k_++1}\lesssim C(M)B_0^{d-\gamma-\frac{2}{p-1}-2\dk}\sqrt{\mathcal E_{s_+}}\\
& \lesssim & C(M)B_0^{d-\gamma-\frac{2}{p-1}-2\dk}\sqrt{\mathcal E_{s_+}},
\eee
and
\bee
&&\left|(\Lt^{L_+}\e,J\Phi_{0,-}\pa_s(\chi_{B_0}))\right)|\\
&\lesssim& \left|\frac{(b_1)_s}{b_1}\right|\left(\int \frac{|\Lt^{L_+}\e|^2}{1+y^{4k_++2}}\right)^{\frac 12}\left(\int_{B_0\leq y\leq 2B_0} (1+y^{4k_++2})|y^{-\frac{2}{p-1}}|^2\right)^{\frac 12}\\
& \lesssim & C(M)b_1B_0^{2k_++1+\frac d2-\frac{2}{p-1}}\sqrt{\mathcal E_{s_+}}\lesssim  C(M)B_0^{d-\gamma-\frac{2}{p-1}-2\dk}\sqrt{\mathcal E_{s_+}}\\
&\lesssim &  C(M)B_0^{d-\gamma-\frac{2}{p-1}-2\dk}\sqrt{\mathcal E_{s_+}}.
\eee
We now estimate the $F$ terms. We anticipate the bound \fref{controlnonlineaire} to estimate:
\bee
&&\left|(\Lt^{L_+}N(\e),J\chi_{B_0}\Phi_{0,-})\right|\\
&\lesssim&\left(\int \frac{|N(\e)|^2}{1+y^{2s_+}}\right)^{\frac 12}\left(\int_{y\leq 2B_0}(1+y^{2(k_++L_+)+1-\frac{2}{p-1}-2L_+})^2y^{d-1}dy\right)^{\frac 12}\\
& \lesssim & b_1^{1+\frac{\nu(d,p)}{2}}\sqrt{\mathcal E_{s_+}}B_0^{d-\gamma-\frac{2}{p-1}-2\dk+2}\lesssim B_0^{d-\gamma-\frac{2}{p-1}-2\dk}\sqrt{\mathcal E_{s_+}}
\eee
and similarly using \fref{boundle}:
\bee
&&\left|(\Lt^{L_+}L(\e),J\chi_{B_0}\Phi_{0,-})\right|\\
&\lesssim &\left(\int \frac{|L(\e)|^2}{1+y^{2s_+-4}}\right)^{\frac 12}\left(\int_{y\leq 2B_0}(1+y^{2(k_++L_+)+1-2-\frac{2}{p-1}-2L_+})^2y^{d-1}dy\right)^{\frac 12}\\
&\lesssim & B_0^{d-\gamma-\frac{2}{p-1}-2\dk}\sqrt{\mathcal E_{s_+}}.
\eee
We estimate the $\Psit_b$ term from \fref{nkonenoenve}:
\bee
&&|(\Lt^{L_+}\Psit,\chi_{B_0}J\Phi_{0,-})|=|(\Psit,(\Lt^*)^{L_+}\chi_{B_0}J\Phi_{0,-})|\\
&\lesssim & \left(\int \frac{|\Psit|^2}{1+y^{4(k_++L_+)+2}}\right)^{\frac 12}\left(\int_{B_0\leq y\leq 2B_0}y^{4(k_++L_+)+2}|y^{-\frac{2}{p-1}-2L_+}|^2\right)^{\frac 12}\\
&\lesssim & b_1^{L_++2+(1-\dk)-C_{L_+}\eta}B_0^{2k_++1-\frac{2}{p-1}+\frac d2}=B_0^{d-\gamma-\frac 2{p-1}-2\dk+2} b_1^{L_++2+(1-\dk)-C_{L_+}\eta}\\
& \lesssim & B_0^{d-\gamma-\frac 2{p-1}-2\dk}b_1^{L_++1+(1-\dk)-C_{L_+}\eta}\lesssim B_0^{d-\gamma-\frac 2{p-1}-2\dk}b_1^{L_++(1-\dk)+\eta(1-\dep)}.
\eee
We now compute the leading order term from \fref{parameters}, \fref{defmodtuntilde}. We derive from \fref{degsiplus}, \fref{degsiminus} the rough bound: for $y\leq 2B_0$
\be
\label{reougboundweta}
|\zeta_{b,a}|+|y\cdot\nabla \zeta_{a,b}|\lesssim \frac{b_1(1+y^2)}{1+y^{\gamma}}+\frac{b_1^{1+\frac \alpha 2}(1+y^2)}{1+y^{\frac{2}{p-1}}}\lesssim \frac{b_1(1+y^2)}{1+y^{\gamma}}
\ee 
which together with the cancellation $\Lt^*J\Phi_{0,-}=0$ and \fref{parameters} gives:
\bee
&&\left|\lsl+b_1\right||(\Lt^{L_+}\Lambda \qbt,\chi_{B_0}J\Phi_{0,-})|+|\gamma_s-a_1|(\Lt^{L_+}J\qbt,\chi_{B_0}J\Phi_{0,-})|\\
&\lesssim&  b_1^{L_++1+(1-\dk)+\eta(1-\delta_p)},\int_{B_0\leq y\leq 2B_0}\frac{b_1(1+y^2)}{1+y^{\gamma}}\frac{y^{d-1}dy}{1+y^{2L_++\frac{2}{p-1}}}\\
& \lesssim &   b_1^{L_++1+(1-\dk)+\eta(1-\delta_p)}.
\eee
To estimate the lower order terms, we first observe the rough bound for $y\leq 2B_0$, $1\leq j\leq L_+$:
\bea
\label{estroughtermmodun}
\nonumber &&\left|\sum_{m=j+1}^{L_++2}\frac{\partial S_{m,+}}{\partial b_j}+\sum_{m=j+1}^{L_-+2}\frac{\pa S_{m,-}}{\pa b_j}\right|\\
\nonumber & \lesssim & \sum_{m=j+1}^{L_++2}b_1^{m-j}\left[y^{2(m-1)-\gamma}+b_1^{\frac\alpha 2}y^{2m-\gamma}+b_1^{\frac\alpha 2}y^{2(m-1)-\frac{2}{p-1}}+b_1^{\alpha }y^{2m-\frac{2}{p-1}}\right]\\
& \lesssim & b_1y^{2j-\gamma}
\eea
and hence for $1\leq j\leq L_+$:
\bee
&&\left|\left(\sum_{m=j+1}^{L_++2}\frac{\partial S_{m,+}}{\partial b_j}+\sum_{m=j+1}^{L_-+2}\frac{\pa S_{m,-}}{\pa b_j},(\Lt^*)^{L_+}(\chi_{B_0}J\Phi_{0,-})\right)\right|\\
& \lesssim & \int_{B_0\leq y \leq 2B_0}\frac{b_1y^{2j-\gamma}}{y^{2L_++\frac{2}{p-1}}}y^{d-1}dy\lesssim  b_1B_0^{d-\gamma-\frac2{p-1}}\lesssim b_1^{1-\dk}B_0^{d-\gamma-\frac2{p-1}-2\dk}.
\eee
We obtain, using  $$B_0^{2-\gamma-\frac2{p-1}}\lesssim (\chi_{B_0}\Phi_{0,+},J\Phi_{0,-})\lesssim B_0^{2-\gamma-\frac2{p-1}},$$ the cancellation $\Lt^{L_+}\Phi_{j,+}=0$ for $j\leq L_+-1$ and \fref{parameters}:
\bee
&&\left|\sum_{j=1}^{L_+-1}\left[(b_j)_s+(2j-\alpha)b_1b_j-b_{j+1}\right]\right.\\
&\times& \left.\left(\Phi_{j,+}+\sum_{m=j+1}^{L_++2}\frac{\partial S_{m,+}}{\partial b_j}+\sum_{m=j+1}^{L_-+2}\frac{\pa S_{m,-}}{\pa b_j},(\Lt^*)^{L_+}(\chi_{B_0}J\Phi_{0,-})\right)\right|\\
& \lesssim &  b_1^{L_++1+(1-\dk)+\eta(1-\delta_p)}b_1^{1-\dk}B_0^{d-\gamma-\frac2{p-1}-2\dk}\\
&\lesssim & b_1^{L_++1+(1-\dk)+\eta(1-\delta_p)}B_0^{d-\gamma-\frac2{p-1}-2\dk}
\eee
and using \fref{parameterspresicely} for the leading order term:
\bee
&&\left[(b_{L_+})_s+(2L_+-\alpha)b_1b_{L_+}]\right.\\
&\times& \left.\left(\Phi_{L_+,+}+\sum_{m=L_++1}^{L_++2}\frac{\partial S_{L_+,+}}{\partial b_{L_+}}+\sum_{m=L_++1}^{L_-+2}\frac{\pa S_{m,-}}{\pa b_{L_+}},(\Lt^*)^{L_+}(\chi_{B_0}J\Phi_{0,-})\right)\right]\\
& = & [(b_{L_+})_s+(2L_+-\alpha)b_1b_{L_+}]\left[(\Phi_{0,+},\chi_{B_0}J\Phi_{0,-})+O\left(b_1^{1-\dk}B_0^{d-\gamma-\frac2{p-1}-2\dk}\right)\right]\\
& = & [b_{L_+})_s+(2L_+-\alpha)b_1b_{L_+}](\Phi_{0,+},\chi_{B_0}J\Phi_{0,-})\\
&+& O\left(\left[\frac{\sqrt{\mathcal E_{s_+}}}{M^{2\dk}}+  b_1^{L_++1+(1-\dk)+\eta(1-\delta_p)}\right]b_1^{1-\dk}B_0^{d-\gamma-\frac{2}{p-1}-2\dk}\right).
\eee
We now observe the rough bound for $y\leq 2B_0$, $1\leq j\leq L_-$:
\bea
\label{estintermdeuxe}
\nonumber &&\left|\sum_{m=j+1}^{L_++2}\frac{\partial S_{m,+}}{\partial a_j}+\sum_{m=j+1}^{L_-+2}\frac{\pa S_{m,-}}{\pa a_j}\right|\\
\nonumber& \lesssim & \sum_{m=j+1}^{L_++2}b_1^{m-j}\left[y^{2(m-1)-\gamma}+y^{2m-\gamma}\right]+\sum_{m=j+1}^{L_-+2}b_1^{m-j}\left[y^{2(m-1)-\frac{2}{p-1}}+b_1^{\frac\alpha 2}y^{2m-\frac{2}{p-1}}\right]\\
& \lesssim & y^{2j-\gamma}+b_1y^{2j-\frac{2}{p-1}}
\eea
and hence:
\bee
&&\left|\left(\Phi_{j,-}+\sum_{m=j+1}^{L_++2}\frac{\partial S_{m,+}}{\partial a_j}+\sum_{m=j+1}^{L_-+2}\frac{\pa S_{m,-}}{\pa a_j},(\Lt^*)^{L_+}(\chi_{B_0}J\Phi_{0,-})\right)\right|\\
& \lesssim & \int_{y\leq B_0\leq 2B_0}\frac{y^{2j-\gamma}+b_1y^{2j-\frac{2}{p-1}}}{y^{2L_++\frac{2}{p-1}}}y^{d-1}dy\lesssim  B_0^{d-\gamma-\frac2{p-1}+2(j-L_+)}+b_1B_0^{d-\frac 4{p-1}+2(j-L_+)}\\
& \lesssim &  B_0^{d-\gamma-\frac2{p-1}-2\Delta k}++b_1B_0^{d-\frac 4{p-1}-2\Delta k}\lesssim B_1^{d-\gamma-\frac2{p-1}-2}+b_1B_1^{d-\gamma-\frac 2{p-1}-2\dk+2\dkm}\\
& \lesssim &  B_1^{d-\gamma-\frac2{p-1}-2\dk}
\eee
where we used \fref{dgammkrealtion}. Hence using \fref{parameters}, \fref{parameterspresicelya}:
\bee
&&\left|\sum_{j=1}^{L_-}\left[(a_j)_s+2jb_1a_j-a_{j+1}\right]\right.\\
&\times& \left.\left(\Phi_{j,-}+\sum_{m=j+1}^{L_++2}\frac{\partial S_{m,+}}{\partial a_j}+\sum_{m=j+1}^{L_-+2}\frac{\pa S_{m,-}}{\pa a_j},(\Lt^*)^{L_+}(\chi_{B_0}J\Phi_{0,-})\right)\right|\\
& \lesssim & \left[M^C\sqrt{\mathcal E_{s_+}}+  b_1^{L_++1+(1-\dk)+\eta(1-\delta_p)}\right]B_0^{d-\gamma-\frac2{p-1}-2\dk}.
\eee
The collection of above bounds together with the lower bound 
$$ (\Phi_{0,+},\chi_{B_0}J\Phi_{0,-})\gtrsim B_0^{d-\gamma-\frac2{p-1}}$$ yield the preliminary estimate:
\bea
\label{cnoneonoenevknoe}
\nonumber &&\left|\left[(b_{L_+})_s+(2L_+-\alpha))b_1b_{L_+}\right]+\frac{1}{(\Phi_{0,+},\chi_{B_0}J\Phi_{0,-})}\frac{d}{ds}\left\{(\Lt^{L_+}\e,\chi_{B_0}J\Phi_{0,-})\right\}\right|\\
\nonumber & \lesssim &  \frac{B_0^{d-\gamma-\frac 2{p-1}-2\dk}}{B_0^{d-\gamma-\frac 2{p-1}}}\left[C(M)\sqrt{\mathcal E_{s_+}}+b_1^{L_++(1-\dk)+\eta(1-\delta_p)}\right]\\
 & \lesssim &  \frac{1}{B_0^{2\dk}}\left[C(M)\sqrt{\mathcal E_{s_+}}+b_1^{L_++(1-\dk)+\eta(1-\delta_p)}\right].
\eea
We now observe the bound
\bea
\label{neinoeneo}
\nonumber \frac{|(\Lt^{L_+}\e,\chi_{B_0}J\Phi_{0,-})|}{(\Phi_{0,+},\chi_{B_0}J\Phi_{0,-})}& \lesssim &  \left(\int \frac{|\Lt^{L_+}\e|^2}{1+y^{2+4k_+}}\right)^{\frac 12}\frac{B_0^{1+2k_++\frac d2-\frac{2}{p-1}}}{B_0^{d-\gamma-\frac{2}{p-1}}}\\
& \lesssim & C(M)B_0^{2(1-\dk)} \sqrt{\mathcal E_{s_+}}
\eea
which implies:
\bee
&&\left|(\Lt^{L_+}\e,\chi_{B_0}J\Phi_{0,-})|\frac{d}{ds}\frac{1}{(\Phi_{0,+},\chi_{B_0}J\Phi_{0,-})}\right|\\
&\lesssim& \frac{|(\Lt^{L_+}\e,\chi_{B_0}J\Phi_{0,-})|}{(\Phi_{0,+},\chi_{B_0}J\Phi_{0,-})^2}b_1\int_{B_0\leq y\leq 2B_0} |\Lambda Q|Q\\
& \lesssim & C(M)b_1\frac{B_0^{2(1-\dk)} \sqrt{\mathcal E_{s_+}}}{B_0^{d-\gamma-\frac 2{p-1}}}B_0^{d-\gamma-\frac 2{p-1}}\lesssim \frac{C(M)\sqrt{\mathcal E_{s_+}}}{B_0^{2\dk}}.
\eee
Injecting this into \fref{cnoneonoenevknoe} yields the expected bound \fref{improvedboundbl}.\\

\noindent{\bf step 2} Proof of \fref{improvedboundal}. We commute \fref{eqepsilon} with $\Lt^{L_-}$ and take the scalar product with $\chi_{B_0}J\Phi_{0,+}$. This yields:
\bee
&&\frac{d}{ds}\left\{(\Lt^{L_-}\e,J\chi_{B_0}J\Phi_{0,+})\right\}-(\Lt^{L_-}\e,J\Phi_{0,+}\pa_s(\chi_{B_0}))\\
& = & (\Lt^{L_-+1}\e,J\chi_{B_0}\Phi_{0,+})+\lsl(\Lt^{L_-}\Lambda \e,\chi_{B_0}J\Phi_{0,+})-\gamma_s(\Lt^{L_-}J\e,J\chi_{B_0}\Phi_{0,+})\\
& +& (\Lt^{L_+}(F-\widetilde{\mbox{Mod}}),J\chi_{B_0}\Phi_{0,+}).
\eee
We recall the notation $L_++k_+=L_-+k_-$. The linear term is estimated by Cauchy-Schwarz using the estimate \fref{coerciviteuk} and $ \Lt^*(\Phi_{0,+})=0$:
\bee
&&\nonumber |(\Lt^{L_-+1}\e,J\chi_{B_0}\Phi_{0,-})|\lesssim B_0^{1+2k_-}\|\Lt^*(J\chi_{B_0}\Phi_{0,+})\|_{L^2}\left(\int\frac{|\Lt^{L_-}\e|^2}{1+y^{2+4k_-}}\right)^{\frac 12}\\
&\lesssim &C(M)B_0^{1+2k_-}B_0^{\frac d2-\gamma-2}\sqrt{\mathcal E_{s_+}}=  C(M)B_0^{d-\gamma-\frac{2}{p-1}-2\dkm}\sqrt{\mathcal E_{s_+}}.
\eee
Similarily:
\bee
\nonumber &&\left|\lsl(\Lt^{L_-}\Lambda \e,\chi_{B_0}J\Phi_{0,+})\right|+\left|\gamma_s(\Lt^{L_-}J\e,J\chi_{B_0}\Phi_{0,+})\right|\\
&\lesssim& b_1\left(\int \frac{|\Lambda \e|^2+|\e|^2}{1+y^{4(L_-+k_-)+2}}\right)^{\frac 12}\left(\int (1+y^{4(L_-+k_-)+2})|(\Lt^*)^{L_-}\chi_{B_0}J\Phi_{0,+}|^2\right)^{\frac 12}\\
& \lesssim & b_1C(M)\sqrt{\mathcal E_{s_+}}B_0^{\frac d2-\gamma+2k_-+1}\leq C(M)B_0^{d-\gamma-\frac{2}{p-1}-2\dkm}\sqrt{\mathcal E_{s_+}}\\
&\lesssim & C(M)B_0^{d-\gamma-\frac{2}{p-1}-2\dkm}\sqrt{\mathcal E_{s_+}}
\eee
\bee
&&\left|(\Lt^{L_-}\e,J\Phi_{0,-}\pa_s(\chi_{B_0}))\right)|\lesssim \left|\frac{(b_1)_s}{b_1}\right|\left(\int \frac{|\Lt^{L_-}\e|^2}{1+y^{4k_-+2}}\right)^{\frac 12}\left(\int_{B_0\leq y\leq 2B_0} (1+y^{4k_-+2})|y^{-\frac{2}{p-1}}|^2\right)^{\frac 12}\\
& \lesssim & b_1C(M)B_0^{2k_-+1+\frac d2-\gamma}\sqrt{\mathcal E_{s_+}}\leq C(M)B_0^{d-\gamma-\frac{2}{p-1}-2\dkm}\sqrt{\mathcal E_{s_+}}.
\eee
We now estimate the $F$ terms. We anticipate the bound \fref{tobeprovedfnien} to estimate:
\bee
&&\left|(\Lt^{L_-}N(\e),J\chi_{B_0}\Phi_{0,+})\right|\\
&\lesssim&\left(\int \frac{|N(\e)|^2}{1+y^{2s_+}}\right)^{\frac 12}\left(\int_{y\leq 2B_0}(1+y^{2(k_-+L_-)+1-\gamma-2L_-})^2y^{d-1}dy\right)^{\frac 12}\\
& \lesssim & b_1^{1+\frac{\nu(d,p)}{2}}\sqrt{\mathcal E_{s_+}}B_0^{d-\gamma-\frac{2}{p-1}-2\dkm+2}\lesssim B_0^{d-\gamma-\frac{2}{p-1}-2\dkm}\sqrt{\mathcal E_{s_+}}
\eee
and similarly using \fref{boundle}:
\bee
&&\left|(\Lt^{L_-}L(\e),J\chi_{B_0}\Phi_{0,+})\right|\\
&\lesssim &\left(\int \frac{|L(\e)|^2}{1+y^{2s_+-4}}\right)^{\frac 12}\left(\int_{y\leq 2B_0}(1+y^{2(k_-+L_-)+1-2-\gamma-2L_-})^2y^{d-1}dy\right)^{\frac 12}\\
&\lesssim & B_0^{d-\gamma-\frac{2}{p-1}-2\dkm}\sqrt{\mathcal E_{s_+}}.
\eee

We estimate the $\Psit_b$ term from \fref{nkonenoenve}:
\bee
&&|(\Lt^{L_-}\Psit,\chi_{B_0}J\Phi_{0,+})|=|(\Psit,(\Lt^*)^{L_-}\chi_{B_0}J\Phi_{0,+})|\\
&\lesssim & \left(\int \frac{|\Psit|^2}{1+y^{4(k_-+L_-)+2}}\right)^{\frac 12}\left(\int_{B_0\leq y\leq 2B_0}y^{4(k_-+L_-)+2}|y^{-\gamma-2L_-}|^2\right)^{\frac 12}\\
&\lesssim & b_1^{L_++2+(1-\dk)-C_{L_+}\eta}B_0^{2k_-+1-\gamma+\frac d2}=B_0^{d-\gamma-\frac 2{p-1}-2\dkm+2}b_1^{L_++2+(1-\dk)-C_{L_+}\eta}\\
& \lesssim & B_0^{d-\gamma-\frac 2{p-1}-2\dkm}b_1^{L_++(1-\dk)+\eta(1-\delta_p)}.
\eee
We now estimate using \fref{reougboundweta} , the cancellation $\Lt^*J\Phi_{0,+}=0$ and \fref{parameters}:
\bee
&&\left|\lsl+b_1\right||(\Lt^{L_-}\Lambda \qbt,\chi_{B_0}J\Phi_{0,+})|+|\gamma_s-a_1|(\Lt^{L_-}J\qbt,\chi_{B_0}J\Phi_{0,+})|\\
&\lesssim& b_1^{L_++1+(1+\eta)(1-\dk)}\left\{\int_{B_0\leq y\leq 2B_0}\frac{b_1}{1+y^{\gamma}}\frac{y^{d-1}dy}{1+y^{2L_-+\gamma}}\right\}\\
& \lesssim & b_1^{L_++1+(1-\dk)+\eta(1-\delta_p)}
\eee
Next, from \fref{estroughtermmodun} for $1\leq j\leq L_+$:
\bee
&&\left|\left(\Phi_{j,+}+\sum_{m=j+1}^{L_++2}\frac{\partial S_{m,+}}{\partial b_j}+\sum_{m=j+1}^{L_-+2}\frac{\pa S_{m,-}}{\pa b_j},(\Lt^*)^{L_-}(\chi_{B_0}J\Phi_{0,+})\right)\right|\\
& \lesssim & \int_{B_0\leq y\leq 2B_0}\frac{b_1y^{2j-\gamma}}{y^{2L_-+\gamma}}y^{d-1}dy\lesssim  b_1B_0^{2\Delta k -\alpha}B_0^{d-\gamma-\frac2{p-1}}\\
& \lesssim & B_0^{d-\gamma-\frac2{p-1}-2\dkm}b_1B_0^{2\dk} \lesssim B_0^{d-\gamma-\frac2{p-1}-2\dkm}
\eee
and hence:
\bee
&&\left|\sum_{j=1}^{L_+}\left[(b_j)_s+(2j-\alpha)b_1b_j-b_{j+1}\right]\right.\\
&\times& \left.\left(\Phi_{j,+}+\sum_{m=j+1}^{L_++2}\frac{\partial S_{m,+}}{\partial b_j}+\sum_{m=j+1}^{L_-+2}\frac{\pa S_{m,-}}{\pa b_j},(\Lt^*)^{L_-}(\chi_{B_0}J\Phi_{0,+})\right)\right|\\
& \lesssim & \left[\frac{\sqrt{\mathcal E_{s_+}}}{M^{2\dk}}+ b_1^{L_++1+(1-\dk)+\eta(1-\delta_p)}\right] B_0^{d-\gamma-\frac2{p-1}-2\dkm}.
\eee
From \fref{estintermdeuxe} and $\alpha>2$  for $1\leq j\leq L_-$:
\bee
&&\left|\left(\sum_{m=j+1}^{L_++2}\frac{\partial S_{m,+}}{\partial a_j}+\sum_{m=j+1}^{L_-+2}\frac{\pa S_{m,-}}{\pa a_j},(\Lt^*)^{L_-}(\chi_{B_0}J\Phi_{0,+})\right)\right|\\
& \lesssim & \int_{B_0\leq y\leq 2B_0}\frac{y^{2j-\gamma}+b_1y^{2j-\frac{2}{p-1}}}{y^{2L_-+\gamma}}y^{d-1}dy\lesssim  B_0^{d-\gamma-\frac2{p-1}-\alpha}+b_1B_0^{d-\frac2{p-1}-\gamma}\\
& \lesssim &  B_0^{d-\gamma-\frac2{p-1}-2\dkm},
\eee
which together with \fref{parameters} gives:
\bee
&&\left|\sum_{j=1}^{L_--1}\left[(a_j)_s+2jb_1a_j-a_{j+1}\right]\right.\\
&\times& \left.\left(\Phi_{j,-}+\sum_{m=j+1}^{L_++2}\frac{\partial S_{m,+}}{\partial a_j}+\sum_{m=j+1}^{L_-+2}\frac{\pa S_{m,-}}{\pa a_j},(\Lt^*)^{L_+}(\chi_{B_0}J\Phi_{0,-})\right)\right|\\
& \lesssim &B_0^{d-\gamma-\frac2{p-1}-2\dkm} b_1^{L_++1+(1-\dk)+\eta(1-\delta_p)}
\eee
Finally, from \fref{parameterspresicelya}:
\bee
&&\left[(a_{L_-})_s+2L_-b_1a_{L_-}]\right.\\
&\times& \left.\left(\Phi_{L_-,-}+\sum_{m=L_++1}^{L_++2}\frac{\partial S_{L_+,+}}{\partial b_{L_+}}+\sum_{m=L_++1}^{L_-+2}\frac{\pa S_{m,-}}{\pa b_{L_+}},(\Lt^*)^{L_-}(\chi_{B_0}J\Phi_{0,+})\right)\right]\\
& = & [(a_{L_-})_s+2L_-b_1a_{L_-}]\left[(\Phi_{0,-},\chi_{B_0}J\Phi_{0,+})+O(B_0^{d-\gamma-\frac2{p-1}-2\dkm})\right]\\
& = & [(a_{L_-})_s+2L_-b_1a_{L_-}](\Phi_{0,+},\chi_{B_0}J\Phi_{0,-})\\
&+& O\left(\left[M^C\sqrt{\mathcal E_{s_+}}+  b_1^{L_++1+(1-\dk)+\eta(1-\delta_p)}\right]B_0^{d-\gamma-\frac{2}{p-1}-2\dkm}\right).
\eee
The collection of above bounds yields the preliminary estimate:
\bea
\label{cnoneonoenevknoebis}
\nonumber &&\left|[(a_{L_-})_s+2L_-b_1a_{L_-}]+\frac{1}{(\Phi_{0,-},\chi_{B_0}J\Phi_{0,+})}\frac{d}{ds}\left\{(\Lt^{L_-}\e,\chi_{B_0}J\Phi_{0,+})\right\}\right|\\
\nonumber & \lesssim &  \frac{B_0^{d-\gamma-\frac 2{p-1}-2\dkm}}{B_0^{d-\gamma-\frac 2{p-1}}}\left[C(M)\sqrt{\mathcal E_{s_+}}+b_1^{L_++(1-\dk)+\eta(1-\delta_p)}\right]\\
 & \lesssim &  \frac{1}{B_0^{2\dkm}}\left[C(M)\sqrt{\mathcal E_{s_+}}+b_1^{L_++(1-\dk)+\eta(1-\delta_p)}\right].
\eea
We now observe the bound
\bea
\label{neinoeneobis}
\nonumber \frac{|(\Lt^{L_-}\e,\chi_{B_0}J\Phi_{0,+})|}{(\Phi_{0,-},\chi_{B_0}J\Phi_{0,+})}& \lesssim &  \left(\int \frac{|\Lt^{L_-}\e|^2}{1+y^{2+4k_-}}\right)^{\frac 12}\frac{B_0^{1+2k_-+\frac d2-\gamma}}{B_0^{d-\gamma-\frac{2}{p-1}}}\\
& \lesssim & C(M)B_0^{2(1-\dkm)} \sqrt{\mathcal E_{s_+}}
\eea
which implies:
\bee
&&\left|(\Lt^{L_-}\e,\chi_{B_0}J\Phi_{0,-})|\frac{d}{ds}\frac{1}{(\Phi_{0,-},\chi_{B_0}J\Phi_{0,+})}\right|\\
&\lesssim& \frac{|(\Lt^{L_-}\e,\chi_{B_0}J\Phi_{0,+})|}{(\Phi_{0,-},\chi_{B_0}J\Phi_{0,+})^2}b_1\int_{B_0\leq y\leq 2B_0} |\Lambda Q|Q\\
& \lesssim & C(M)b_1\frac{B_0^{2(1-\dkm)} \sqrt{\mathcal E_{s_+}}}{B_0^{d-\gamma-\frac 2{p-1}}}B_0^{d-\gamma-\frac 2{p-1}}\lesssim \frac{C(M)\sqrt{\mathcal E_{s_+}}}{B_0^{2\dkm}}.
\eee
Inserting this into \fref{cnoneonoenevknoebis} yields the expected bound \fref{improvedboundal}.

\end{proof}


\section{Monotonicity}
\label{sectionmonoton}

We are now in position to derive the main monotonicity tools at the heart of the control of the infinite dimensional part of the solution. We rely on two classical sets of estimates: energy estimates, at both high and low level of regularity, yet above scaling, and a Morawetz bound to control local errors on the soliton core. 
Note that neither of these two estimates is sufficient to provide decay on its own, only the combination of the two is successful. Roughly speaking, the energy bound provides the outer control in the self-similar region, while the Morawetz estimate controls radiation on the soliton core.


\subsection{Monotonicity for the high Sobolev norm}


We now turn to  the derivation of a suitable Lyapunov functional for the $\mathcal E_{s_+}$ energy.\\
Recall the decomposition of the flow \fref{decmopvemeope}. We define the derivatives of $w,\e$ adapted to the corresponding linearized Hamiltonians $\Lt_\l,\Lt$:  
$$w_k = \Lt^k_{\lambda}w,\ \ \e_k = \Lt^k\e, \ \ k\geq 0$$ and claim:

\begin{proposition}[Lyapunov monotonicity for the high Sobolev norm]
\label{AEI2}
Let 
\be
\label{defg}
g=\frac{1-\dep}{4},
\ee
then there holds:
\bea
\label{monoenoiencle}
&&\frac{d}{dt} \left\{\frac{\mathcal E_{s_+}}{\lambda^{2(s_+-s_c)}}\left[1+O(b_1^{\eta(1-\dep)})\right]\right\}\leq   \frac{ b_1}{\lambda^{2(s_+-s_c)+2}}\left\{ \frac{\mathcal E_{s_+}}{M^{c\dk}}\right.\\
\nonumber &+& \left. C(M)b_1^{2L_++2(1-\dk)+\eta(1-\dep)}+C(M)\int \frac{1}{1+y^{4g}}\left[|\nabla\e_{k_++L_+}|^2+\frac{|\e_{k_++L_+}|^2}{1+y^2}\right]\right\}
\eea
for some universal constant $c>0$ independent of $M,\eta$ and of the bootstrap constant $K$ in \fref{bootnorm}, \fref{bootsmallsigma}.
\end{proposition}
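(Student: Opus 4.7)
\noindent\emph{Proof plan.} The strategy is to work with $w$ in the original variables and exploit the algebraic identity $(J\tilde{\mathcal L})^*=J\tilde{\mathcal L}$ from \eqref{adjointltilde} so that the top-order term cancels. By the scaling relation for $w_k=\tilde{\mathcal L}_\lambda^k w$, one verifies that
\[
(J\tilde{\mathcal L}_\lambda\, w_{k_++L_+},w_{k_++L_+})_{L^2(dx)} \;=\;\frac{\mathcal E_{s_+}}{\lambda^{2(s_+-s_c)}},
\]
so it suffices to differentiate the LHS in $t$. Commuting $\tilde{\mathcal L}_\lambda^{k_++L_+}$ with \eqref{eqenwini} produces an equation of the form $\partial_t w_{k_++L_+}-\tilde{\mathcal L}_\lambda w_{k_++L_+}= \mathcal C_\lambda w_{k_++L_+}+\lambda^{-2(k_++L_++1)}(\tilde{\mathcal L}^{k_++L_+}\mathcal F)_\lambda$, where $\mathcal C_\lambda$ is the commutator $[\partial_t,\tilde{\mathcal L}_\lambda^{k_++L_+}]\tilde{\mathcal L}_\lambda^{-(k_++L_+)}$ arising from the time dependence of the potentials $W_{\pm,\lambda}$ through $\lambda(t)$. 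The leading contribution $2(J\tilde{\mathcal L}_\lambda\tilde{\mathcal L}_\lambda w_{k_++L_+},w_{k_++L_+})$ vanishes identically, since using $J^*=-J$ and self-adjointness of $\mathcal L$ one has $(J\tilde{\mathcal L}\tilde{\mathcal L}\e,\e)=-(\tilde{\mathcal L}\e,J\tilde{\mathcal L}\e)=0$ for any $\e$.

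\noindent The remaining contributions split into three families. First, the commutator $\mathcal C_\lambda$ is of size $-\lambda_s/\lambda\sim b_1$ times a scaling-type operator acting on the potential; after integration by parts and weighted Hardy bounds from Appendix~A applied to the coercivity $\mathcal E_{s_+}\gtrsim\int|\nabla^{s_+}\e|^2+\int|\e|^2/(1+y^{2s_+})$ this yields a contribution $\lesssim b_1\mathcal E_{s_+}/\lambda^{2(s_+-s_c)+2}$ plus the localized term $b_1 C(M)\int(1+y^{4g})^{-1}[|\nabla\e_{k_++L_+}|^2+|\e_{k_++L_+}|^2/(1+y^2)]$ that the proposition allows on the right. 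Second, the forcing $F=-\tilde\Psi+L(\e)-N(\e)$ is handled termwise: the error $\tilde\Psi$ is controlled by Cauchy--Schwarz together with the sharp weighted bound \eqref{controleh4erreurloc}, giving the $C(M)b_1^{2L_++2(1-\delta_{k_+})+\eta(1-\delta_p)}$ contribution; the small linear term $L(\e)$ is absorbed thanks to the decay $|f'(Q)-f'(\tilde Q_{b,a})|\lesssim 1/(1+y^{\gamma+2})$ and weighted Hardy; the nonlinear term $N(\e)$ is estimated using the $\dot H^\sigma\cap\dot H^{s_+}$ algebra structure announced in the strategy, crucially using the low-Sobolev bootstrap \eqref{bootsmallsigma}.

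\noindent The central obstacle is the third family: the modulation term $\widetilde{\mathrm{Mod}}$. All components are controlled by \eqref{parameters} at the acceptable level $b_1^{L_++1+(1-\delta_{k_+})+\eta(1-\delta_p)}$ \emph{except} $(b_{L_+})_s+(2L_+-\alpha)b_1 b_{L_+}$ and $(a_{L_-})_s+2L_-b_1 a_{L_-}$, for which \eqref{parameterspresicely}, \eqref{parameterspresicelya} only give the lossy estimates containing $\sqrt{\mathcal E_{s_+}}$, insufficient to close. The key device is Lemma~\ref{improvedbl}: modulo an exact time derivative of the small quantity $(\tilde{\mathcal L}^{L_+}\e,\chi_{B_\mu}J\Phi_{0,-})/(\Phi_{0,+},\chi_{B_\mu}J\Phi_{0,-})$, these modulation terms are much smaller, gaining a factor $1/B_\mu^{2\delta_{k_+}}=b_1^{\eta\delta_{k_+}(1-\delta_p)}$. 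Incorporating this exact derivative into the Lyapunov functional produces the modified energy $(\mathcal E_{s_+}/\lambda^{2(s_+-s_c)})[1+O(b_1^{\eta(1-\delta_p)})]$ appearing in the statement; the analogous correction for $a_{L_-}$ uses \eqref{improvedboundal}. After these correctors are absorbed, one collects the various contributions, uses $(1-\delta_{k_-})+\alpha/2-\Delta k = (1-\delta_{k_+})$ from \eqref{dgammkrealtion} to match exponents, and chooses $M$ large enough so that the residual $\sqrt{\mathcal E_{s_+}}/M^{\delta_{k_+}}$ contributions combine with the bootstrap bound \eqref{bootnorm} to produce the $\mathcal E_{s_+}/M^{c\delta_{k_+}}$ dissipative term, yielding \eqref{monoenoiencle}.
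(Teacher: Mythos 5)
Your proposal follows the paper's proof essentially step by step: energy identity for $w_{k_++L_+}$ exploiting the self-adjointness of $J\Lt_\lambda$ so the top-order term drops, commutator bound, Cauchy--Schwarz with \eqref{controleh4erreurloc} for $\Psit$, the nonlinear/linear terms via the $\dot H^\sigma$ bootstrap, and---crucially---the decomposition of $\widetilde{\Mod}$ that peels off the exact time derivatives from Lemma~\ref{improvedbl} and absorbs them into the modified Lyapunov functional, producing the factor $1+O(b_1^{\eta(1-\dep)})$. The only imprecision is your stated gain from the $B_\mu^{-2\dk}$ prefactor: the actual gain entering the energy correction \eqref{vvehuijbjbv}--\eqref{vvehuijbjbvbis} is $(B_0/B_1)^{4(1-\dk)}=b_1^{2\eta(1-\dk)}$, coming from the mismatch of localization scales in $\xi_\pm$ versus $T_{L_\pm}$, rather than $b_1^{\eta\dk(1-\dep)}$; this does not affect the conclusion.
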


\begin{proof}[Proof of Proposition \ref{AEI2}] {\bf step 1} Suitable derivatives and energy identity. Using the notation \fref{eqenwini} we compute from \fref{eqenwini}:
\be
\label{eqtwol}
\partial_{t} w_{k_++L_+} - \Lt_{\lambda} w_{k_++L_+} =  [\pa_t,\Lt_\l^{k_++L_+}]w + \Lt^{k_++L_+}_{\lambda} \left(\frac 1{\lambda^2} \mathcal F_{\lambda}\right)
\ee
We now derive the energy identity for \fref{eqtwol} using the self-adjointness \fref{adjointltilde}:
\par
\bea
\label{firstestimate}
&&\nonumber  \frac{d}{dt}\frac{\mathcal E_{s_+}}{2} =  \frac{1}{2} \frac{d}{d t}\left\{ (J\Lt_\lambda w_{k_++L_+},w_{k_++L_+})\right\}\\
\nonumber&=& \frac 12(J[\pa_t,\Lt_\lambda] w_{k_++L_+},w_{k_++L_+})+(\pa_tw_{k_++L_+},J\Lt_\lambda w_{k_++L_+})\\
\nonumber& = &  \frac 12(J[\pa_t,\Lt_\lambda] w_{k_++L_+},w_{k_++L_+})+( [\pa_t,\Lt_\l^{k_++L_+}]w,J\Lt_\lambda w_{k_++L_+})\\
& + & \left(\Lt^{k_++L_+}\left[\frac 1{\lambda^2} \mathcal F_{\lambda}\right],J\Lt_\lambda w_{k_++L_+}\right)
\eea
Our next goal is to  estimate all the terms in \fref{firstestimate}.\\

\noindent{\bf step 2} Well localized quadratic terms. By definition:
$$J\Lt_\lambda=\left(\begin{array}{ll}-\Delta+1-p\frac{1}{\l^2}Q^{p-1}\left(\frac{r}{\lambda}\right)&0\\ 0& -\Delta+1-\frac{1}{\l^2}Q^{p-1}\left(\frac{r}{\lambda}\right)\end{array}\right)$$ from which
\be
\label{defvero}
J[\pa_t,\Lt_\lambda]=\frac{1}{\l^4}\lsl\left(\begin{array}{ll}pV_0\left(\frac{r}{\lambda}\right)&0\\ 0& V_0\left(\frac{r}{\lambda}\right)\end{array}\right), \ \ V_0=(p-1)Q^{p-2}\Lambda Q.
\ee
We observe the improved decay
\be
\label{improvedecayvnot}
|\nabla^kV_0|\lesssim \frac{1}{y^{\gamma+\frac{2(p-2)}{p-1}+k}}=\frac{1}{y^{2+\alpha+k}}, \ \ k\geq 0
\ee
which yields the bound:
$$
\left|(J[\pa_t,\Lt_\lambda] w_{k_++L_+},w_{k_++L_+})\right|\lesssim \frac{b_1}{\l^{2(s_+-s_c)+2}}\int \frac{|\e_{k_++L_+}|^2}{1+y^{2+\alpha}}.
$$
We now claim the estimate
\bea
\label{commutator}
\nonumber&&\int (1+y^{2\alpha})\left|\nabla [\pa_t,\Lt_\l^{k_++L_+}]w\right|^2+\int (1+y^{2\alpha+2})\frac{\left|[\pa_t,\Lt_\l^{k_++L_+}]w\right|^2}{1+y^2}\\
& \lesssim & C(M)\frac{b_1^2}{\l^{2(s_+-s_c)+2}}\mathcal E_{s_+},
\eea
which is proved below. This implies:
\bee
&&\left|( [\pa_t,\Lt_\l^{k_++L_+}]w,J\Lt_\lambda w_{k_++L_+})\right|\\
& \leq& \frac{b_1}{\l^{2(s_+-s_c)+2}}\left\{\frac{\mathcal E_{s_+}}{M^{c\dk}}+C(M)\int\frac{1}{1+y^{2\alpha}}\left[|\nabla \e_{k_++L_+}|^2+\frac{|\e_{k_++L_+}|^2}{1+y^2}\right]\right\}
\eee
\noindent{\it Proof of \fref{commutator}}. A simple induction argument gives 
the formula: $$[\pa_t,\Lt_\l^{k_++L_+}]w=\sum_{k=0}^{k_++L_+-1}\Lt_{\l}^k[\pa_t,\Lt_\l] \Lt_{\l}^{k_++L_+-(k+1)}w.$$ We renormalize and compute explicitely from \fref{defvero}:
\bea
\label{formuacommut}
&&[\pa_t,\Lt_\l^{k_++L_+}]w\\
\nonumber &=&\frac{1}{\l^{2(k_++L_+)+2+\frac 2{p-1}}}\sum_{k=0}^{k_++L_+-1}\Lt^k\left(\begin{array}{ll} 0&\lsl V_0 \\-\lsl V_0&0\end{array}\right)\Lt^{(k_++L_+)-(k+1)}\e.
\eea
The regularity of $V_0$ at the origin and a simple application of the Leibniz rule with the improved decay \fref{improvedecayvnot} give the pointwise bound:
\bee
\left|[\pa_t,\Lt_\l^{k_++L_+}]w\right|&\lesssim &\frac{b_1}{\l^{2(k_++L_+)+2+\frac 2{p-1}}}\sum_{p=0}^{2(k_++L_+-1)}\frac{|\nabla^{2(k_++L_+-1)-p}\e|}{1+y^{2+\alpha+p}}\\
&=&\frac{b_1}{\l^{2(k_++L_+)+2+\frac 2{p-1}}}\sum_{m=0}^{2(k_++L_+-1)}\frac{|\nabla^m\e|}{1+y^{2(k_++L_+)+\alpha-m}}
\eee
$$\left|\nabla [\pa_t,\Lt_\l^{k_++L_+}]w\right|\lesssim \frac{b_1}{\l^{2(k_++L_+)+3+\frac 2{p-1}}}\sum_{m=0}^{2(k_++L_+-1)+1}\frac{|\nabla^m\e|}{1+y^{2(k_++L_+)+1+\alpha-m}}.$$
We conclude from \fref{coerciviityleybis}:
\bee
&&\int (1+y^{2\alpha})\left|\nabla [\pa_t,\Lt_\l^{k_++L_+}]w\right|^2+\int (1+y^{2\alpha})\frac{\left|[\pa_t,\Lt_\l^{k_++L_+}]w\right|^2}{1+y^2}\\
& \lesssim & \frac{b_1^2}{\l^{2(s_+-s_c)+2}}\sum_{m=0}^{2(k_++L_+-1)+1}\int |\nabla^m\e|^2\frac{1+y^{2\alpha}}{1+y^{4(k_++L_+)+2+2\alpha-2m}}\\
& \lesssim &  \frac{b_1^2}{\l^{2(s_+-s_c)+2}}\sum_{m=0}^{s_+}\int \frac{|\nabla^m\e|^2}{1+y^{2(s_+-m)}}\lesssim C(M) \frac{b_1^2}{\l^{2(s_+-s_c)+2}}\mathcal E_{s_+},
\eee
and \fref{commutator} is proved.\\

\noindent{\bf step 3} $\Psit$ terms. From \fref{controleh4erreurloc} and by the coercivity of $L_+,L_-$:
\bee
&&\left|\left(\Lt^{k_++L_+}\left[\frac 1{\lambda^2} \Psit_{\lambda}\right],J\Lt_\lambda w_{k_++L_+}\right)\right|\\
& \lesssim &\frac{1}{\l^{2(s_+-s_c)+2}} \left(\int \frac{|\e_{k_++L_+}|^2}{1+y^2}\right)^{\frac 12}\left(\int (1+y^2)|\Lt^* J\Lt^{k_++L_+}\Psit|^2\right)^{\frac 12}\\
&\lesssim& \frac{1}{\l^{2(s_+-s_c)+2}} \left(C\mathcal E_{s_+}\right)^{\frac 12}\left(b_1^{2L_++2+2(1+\eta)(1-\dk)}\right)^{\frac 12}\\
&\leq &\frac{b_1}{\l^{2(s_+-s_c)+2}}\left[\frac{\mathcal E_{s_+}}{M^{c\dk}}+C(M)b_1^{2L_++2(1-\dk)+\eta(1-\dep)}\right].
\eee

\noindent{\bf step 4} $\Modt$ terms. Recall \fref{defmodtuntilde}:
\bee
\widetilde{Mod}(t)& = & -\left(\lsl+b_1\right)\Lambda \qbt+(\gamma_s-a_1)J\qbt\\
\nonumber & + & \sum_{j=1}^{L_+}\left[(b_j)_s+(2j-\alpha)b_1b_j-b_{j+1}\right]\chi_{B_1}\left[\Phi_{j,+}+\sum_{m=j+1}^{L_++2}\frac{\partial S_{m,+}}{\partial b_j}+\sum_{m=j+1}^{L_-+2}\frac{\pa S_{m,-}}{\pa b_j}\right]\\
\nonumber& + &  \sum_{j=1}^{L_-}\left[(a_j)_s+2jb_1a_j-a_{j+1}\right]\chi_{B_1}\left[\Phi_{j,-}+\sum_{m=j+1}^{L_++2}\frac{\pa S_{m,+}}{\pa a_j}+\sum_{m=j+1}^{L_-+2}\frac{\partial S_{m,-}}{\partial a_j}\right].
\eee
We need to remove the last modulation equations for $(b_{L_+},a_{L_-})$ in order to take advantage of the improved bounds of Lemma \ref{improvedbl} since 
the pointwise bounds \fref{parameterspresicely}, \fref{parameterspresicelya} are not good enough to close. Let the directions
\be
\label{deftlplus} T_{L_+}=\chi_{B_1}\Phi_{L_+,+},\ \  T_{L_-}=\chi_{B_1}\Phi_{L_-,-}
\ee
and the vectors:
\be
\label{defradidaition}
\xi_+=\frac{(\Lt^{L_+}\e,\chi_{B_0}J\Phi_{0,-})}{(\Phi_{0,+},\chi_{B_0}J\Phi_{0,-})}T_{L_+}, \ \ \xi_-=\frac{(\Lt^{L_-}\e,\chi_{B_0}J\Phi_{0,+})}{(\Phi_{0,-},\chi_{B_0}J\Phi_{0,+})}T_{L_-}
\ee
We decompose
\be
\label{defwdoheat}
\widetilde{\Mod}=\widehat{\Mod}-\pa_s\xi_+-\pa_s\xi_-, \ \ \widehat{\Mod}=\widehat{\Mod}_{\rm rad}+\widehat{\Mod}_1+\widehat{\Mod}_2
\ee where
\bee
\nonumber \widehat{\Mod}_{\rm rad}&= & \frac{(\Lt^{L_+}\e,\chi_{B_0}J\Phi_{0,-})}{(\Phi_{0,+},\chi_{B_0}J\Phi_{0,-})}\pa_sT_{L_+}+\frac{(\Lt^{L_-}\e,\chi_{B_0}J\Phi_{0,+})}{(\Phi_{0,-},\chi_{B_0}J\Phi_{0,+})}\pa_sT_{L_-}\\
& + & T_{L_+}\left[O\left( \frac{1}{B_0^{2\dk}}\left[C(M)\sqrt{\mathcal E_{s_+}}+b_1^{L_++(1-\dk)+\eta(1-\dep)}\right]\right)\right]\\
&+&T_{L_-}\left[O\left( \frac{1}{B_0^{2\dkm}}\left[C(M)\sqrt{\mathcal E_{s_+}}+b_1^{L_++(1-\dk)+\eta(1-\dep)}\right]\right)\right],
\eee
 according to \fref{improvedboundbl}, \fref{improvedboundal} applied with $\mu=1$,
\bee
&&\widehat{\Mod}_1 =   -\left(\lsl+b_1\right)\Lambda \qbt+(\gamma_s-a_1)J\qbt+ \widehat{\Mod}_++\widehat{\Mod}_-,\\
&&\widehat{\Mod}_+=\sum_{j=1}^{L_+-1}\left[(b_j)_s+(2j-\alpha)b_1b_j-b_{j+1}\right]\chi_{B_1}\left[\Phi_{j,+}+\sum_{m=j+1}^{L_++2}\frac{\partial S_{m,+}}{\partial b_j}+\sum_{m=j+1}^{L_-+2}\frac{\pa S_{m,-}}{\pa b_j}\right],\\
&&\widehat{\Mod}_-= \sum_{j=1}^{L_--1}\left[(a_j)_s+2jb_1a_j-a_{j+1}\right]\left[\Phi_{j,-}+\sum_{m=j+1}^{L_++2}\frac{\pa S_{m,+}}{\pa a_j}+\sum_{m=j+1}^{L_-+2}\frac{\partial S_{m,-}}{\partial a_j}\right],
\eee
and the remaining term:
\bea
\label{defmodhat2}
\nonumber \widehat{\Mod}_2&=&\left[(b_{L_+})_s+(2L_+-\alpha)b_1b_{L_+}\right]\left[\sum_{m=L_++1}^{L_++2}\frac{\partial S_{m,+}}{\partial b_{L_+}}+\sum_{m=L_++1}^{L_-+2}\frac{\pa S_{m,-}}{\pa b_{L_+}}\right]\\
& + &  \left[(a_{L_-})_s+2jb_1a_{L_-}\right]\left[\sum_{m=L_-+1}^{L_++2}\frac{\pa S_{m,+}}{\pa a_{L_-}}+\sum_{m=L_-+1}^{L_-+2}\frac{\partial S_{m,-}}{\partial a_{L_-}}\right]
\eea
The bounds:
\bea
\label{tobeprovedmoeperad}
&&\int (1+y^2)|\Lt^*J\Lt^{k_++L_+}\widehat{\Mod}_{\rm rad}|^2\\
\nonumber &\lesssim &b_1^2\left[b_1^{(1-\dep)\eta}\mathcal E_{s_+}+b_1^{2L_++2(1-\dk)+2\eta(1-\dep)}\right],
\eea
\bea
\label{tobeprovedmoepe}
&&\int (1+y^2)|\Lt^*J\Lt^{k_++L_+}\widehat{\Mod}_1|^2\\
\nonumber &\lesssim &b_1^2\left[b_1^{(1-\dep)\eta}\mathcal E_{s_+}+b_1^{2L_++2(1-\dk)+2\eta(1-\dep)}\right]
\eea
\be
\label{tobeprovedmoepebis}
\int (1+y^{2+4g})|\Lt^*J\Lt^{k_++L_+}\widehat{\Mod}_2|^2\lesssim b_1^2\left[C(M)\mathcal E_{s_+}+b_1^{2L_++2(1-\dk)+2\eta(1-\dep)}\right]
\ee
with $g$ given by \fref{defg} follow by direct inspection. We then estimate the corresponding term in \fref{firstestimate}:
\bee
&&\left|\left(\Lt^{k_++L_+}\left[\frac 1{\lambda^2} (\widehat{\Mod}_1+\widehat{\Mod}_{\rm rad})_{\lambda}\right],J\Lt_\lambda w_{k_++L_+}\right)\right|\\
&\lesssim & \frac{1}{\l^{2(s_+-s_c)+2}}\left(\int (1+y^2)|\Lt^*J\Lt^{k_++L_+}(\widehat{\Mod}_1+\widehat{\Mod}_{\rm rad})|^2\right)^{\frac 12}\left(\int \frac{|\e_{k_++L_+}|^2}{1+y^2}\right)^{\frac 12}\\
&\lesssim & \frac{b_1}{\l^{2(s_+-s_c)+2}}\left[b_1^{(1-\dep)\eta}\mathcal E_{s_+} +b_1^{2L_++2(1-\dk)+2\eta(1-\dep)}\right]
\eee
and 
\bee
&&\left|\left(\Lt^{k_++L_+}\left[\frac 1{\lambda^2} (\widehat{\Mod}_2)_{\lambda}\right],J\Lt_\lambda w_{k_++L_+}\right)\right|\\
&\lesssim & \frac{1}{\l^{2(s_+-s_c)+2}}\left(\int (1+y^{2+4g})|\Lt^*J\Lt^{k_++L_+}\widehat{\Mod}_1|^2\right)^{\frac 12}\left(\int \frac{|\e_{k_++L_+}|^2}{1+y^{2+4g}}\right)^{\frac 12}\\
&\leq & b_1\left[C(M)\int \frac{|\e_{k_++L_+}|^2}{1+y^{2+4g}}+b_1^{2L_++2(1-\dk)+2\eta(1-\dep)}+\frac{\matchal E_{s_+}}{M^{c\dk}}\right].
\eee
\noindent{\it Proof of \fref{tobeprovedmoeperad}}: Using the cancellation $\Lt^* J\Lt^{L_++k_+}\Phi_{L_+,+}=0$ we first estimate:
\bee
&&\int (1+y^2)\left|\Lt^* J\Lt^{L_++k_+}(\chi_{B_1}\Phi_{L_+,+})\right|^2\lesssim  B_1^{d-2\gamma-4-4k_++2}=\frac{1}{B_1^{4(1-\dk)}}\\
& \lesssim & B_0^{4\dk}\frac{b^2_1B_0^4}{B_0^{4\dk}}\frac{1}{B_1^{4(1-\dk)}}\lesssim b^2_1B_0^{4\dk} \left(\frac{B_0}{B_1}\right)^{4(1-\dk)}.
\eee
This implies:
\be
\label{esttlplus}
\int(1+y^2)\left|\Lt^*J\Lt^{k_++L_+}T_{L_+}\right|^2
\lesssim b^2_1B_0^{4\dk} \left(\frac{B_0}{B_1}\right)^{4(1-\dk)}
\ee
and hence:
\bee
&&\frac{1}{B_0^{4\dk}}\left[C(M)\mathcal E_{s_+}+b_1^{2L_++2(1-\dk)+2\eta(1-\dep)}\right]\int(1+y^2)\left|\Lt^*J\Lt^{k_++L_+}T_{L_+}\right|^2\\
& \lesssim & \frac{1}{B_0^{4\dk}}\left[C(M)\mathcal E_{s_+}+b_1^{2L_++2(1-\dk)+2\eta(1-\dep)}\right]b^2_1B_0^{4\dk} \left(\frac{B_0}{B_1}\right)^{4(1-\dk)}\\
& \lesssim & b_1^2\left[b_1^{(1-\dk)\eta}\mathcal E_{s_+}+b_1^{2L_++2(1-\dk)+2\eta(1-\dep)}\right]
\eee
Similarly,
\bea
\label{eiovniennevo}
\nonumber &&\int (1+y^2)|\Lt^* J\Lt^{L_-+k_-}(\chi_{B_1}\Phi_{L_-,-})|^2\lesssim B_1^{d-\frac{4}{p-1}-4-4k_-+2}=\frac{1}{B_1^{4(1-\dkm)}}\\
& \lesssim &  B_0^{4\dkm} b^2_1\left(\frac{B_0}{B_1}\right)^{4(1-\dkm)}.
\eea
This implies:
\be
\label{esttlominus}
\int(1+y^2)\left|\Lt^*J\Lt^{k_++L_+}T_{L_-}\right|^2\lesssim b^2_1B_0^{4\dkm} \left(\frac{B_0}{B_1}\right)^{4(1-\dk)}
\ee
and hence:
\bee
&&\frac{1}{B_0^{4\dkm}}\left[C(M)\mathcal E_{s_+}+b_1^{2L_++2(1-\dk)+2\eta(1-\dep)}\right]\int(1+y^2)\left|\Lt^*J\Lt^{k_++L_+}T_{L_-}\right|^2\\
& \lesssim & \frac{1}{B_0^{4\dkm}}\left[C(M)\mathcal E_{s_+}+b_1^{2L_++2(1-\dk)+2\eta(1-\dep)}\right]b^2_1B_0^{4\dkm} \left(\frac{B_0}{B_1}\right)^{4(1-\dk)}\\
& \lesssim & b_1^2\left[b_1^{(1-\dkm)\eta}\mathcal E_{s_+}+b_1^{2L_++2(1-\dk)+2\eta(1-\dep)}\right].
\eee
Similarily,
\bea
\label{timedrivaotve}
&&\int (1+y^2)\left|\Lt^*J\Lt^{L_++k_+}\Phi_{L,+}\pa_s\chi_{B_1}\right|^2\lesssim b_1^2B_1^{2-4k_++d-2\gamma-4}=\frac{b_1^2}{B_1^{4(1-\dk)}}\\
&&\int (1+y^2)\left|\Lt^*J\Lt^{L_-+k_-}\Phi_{L,-}\pa_s\chi_{B_1}\right|^2\lesssim b_1^2B_1^{2-4k_-+d-\frac{4}{p-1}-4}=\frac{b_1^2}{B_1^{4(1-\dkm)}}
\eea
 Therefore, using \fref{neinoeneo}:
\bee
\int (1+y^2)\left|\frac{(\Lt^{L_+}\e,\chi_{B_0}J\Phi_{0,-})}{(\Phi_{0,+},\chi_{B_0}J\Phi_{0,-})}\Lt^*J\Lt^{L_-+k_-}\pa_sT_{L_+}\right|^2&\lesssim  &C(M)B_0^{4(1-\dk)} \mathcal E_{s_+}\left[\frac{b_1^2}{B_1^{4(1-\dk)}}\right]\\
&\lesssim &b_1^2b_1^{\eta(1-\dep)}\mathcal E_{s_+},
\eee
and with \fref{neinoeneobis}:
\bee
\int (1+y^2)\left|\frac{(\Lt^{L_-}\e,\chi_{B_0}J\Phi_{0,+})}{(\Phi_{0,-},\chi_{B_0}J\Phi_{0,+})}\Lt^*J\Lt^{L_-+k_-}\pa_sT_{L_-}\right|^2&\lesssim  &C(M)B_0^{4(1-\dkm)} \mathcal E_{s_+}\left[\frac{b_1^2}{B_1^{4(1-\dkm)}}\right]\\
&\lesssim &b_1^2b_1^{\eta(1-\dep)}\mathcal E_{s_+}.
\eee
This concludes the proof of \fref{tobeprovedmoeperad}.\\
\noindent{\it Proof of \fref{tobeprovedmoepe}, \fref{tobeprovedmoepebis}}:\\
\noindent\underline{ $\widehat{\Mod}_+$ terms}. From \fref{degsiplus}  by a brute force estimate for $1\leq j\leq L_+$:
\bee
&&\int (1+y^{2+4g})\left|\sum_{m=j+1}^{L_++2}\Lt^*J\Lt^{k_++L_+}\chi_{B_1}\frac{\partial S_{m,+}}{\partial b_j}\right|^2\\
&\lesssim &\int_{y\leq 2B_1} (1+y^{2+4g})\sum_{m=j+1}^{L_++2}b_1^{2(m-j)}\left[1+|y^{2(m-1)-\gamma-2(k_++L_++1)}|^2\right]\\
& + & \int_{y\leq 2B_1} (1+y^2)\sum_{m=j+1}^{L_++2}b_1^{2(m-j)+\alpha}\left[1+|y^{2m-\gamma-2(k_++L_++1)}|^2\right]\\
&\lesssim & b_1^2\sum_{m=j}^{L_++1} b_1^{2(m-j)}\int_{y\leq 2B_1}\frac{dy}{1+y^{1+4(L_+-m)+4(1-\dk)-4g}}\\
& + & \sum_{m=j+1}^{L_++2}b_1^{2(m-j)+\alpha}\int_{y\leq 2B_1}\frac{dy}{1+y^{1+4(L_+-m)+4(1-\dk)-4g}}\\
& \lesssim & b_1^2\left[1+b_1^{2(L_++1-j)}B_1^{4\dk+4g}\right]+b_1^{\alpha}\left[b_1^{2(L_++1-j)}B_1^{4\dk+4g}+b_1^{2(L_++2-j)}B_1^{4+4\dk+4g}\right]\\
& \lesssim & b_1^2\left[1+b_1^{2-2(\dk+g)-C\eta}\right]+b_1^{\alpha}b_1^{2-2(\dk+g)-C\eta}\lesssim  b_1^2
\eee
for $0<\eta\ll 1$ small enough, thanks to $\alpha>2$ and \fref{defg}. Similarily, from \fref{degsiminus}, \fref{dgammkrealtion}:
\bea
\label{estoneoneone}
\nonumber &&\int (1+y^{2+4g})\left|\sum_{m=j+1}^{L_++2}\Lt^*J\Lt^{k_++L_+}\chi_{B_1}\frac{\partial S_{m,-}}{\partial b_j}\right|^2\\
\nonumber &\lesssim &\int_{y\leq 2B_1} (1+y^{2+2g})\sum_{m=j+1}^{L_++2}b_1^{2(m-j)+\alpha}\left[1+|y^{2(m-1)-\frac{2}{p-1}-2(k_-+L_-+1)}|^2\right]\\
\nonumber & + & \int_{y\leq 2B_1} (1+y^{2+4g})\sum_{m=j+1}^{L_++2}b_1^{2(m-j)+2\alpha}\left[1+|y^{2m-\frac{2}{p-1}-2(k_-+L_-+1)}|^2\right]\\
\nonumber &\lesssim & b_1^2\sum_{m=j}^{L_++1} b_1^{2(m-j)+\alpha}\int_{y\leq 2B_1}\frac{dy}{1+y^{1+4(L_--m)+4(1-\dkm)-4g}}\\
\nonumber & + & \sum_{m=j+1}^{L_++2}b_1^{2(m-j)+2\alpha}\int_{y\leq 2B_1}\frac{dy}{1+y^{1+4(L_--m)+4(1-\dkm)-4g}}\\
\nonumber & \lesssim &b_1^{2}\left[1+\sum_{m=L_-+1}^{L_++2} b_1^{2(m-j)+\alpha}B_1^{4(m-L_-)-4(1-\dkm)+4g}\right]\\
\nonumber &\lesssim &  \lesssim  b_1^{2}\left[1+\sum_{m=L_-+1}^{L_++2} (b_1B_1^2)^{2(m-L_-)}b_1^{\alpha+2(L_--j)+2(1-\dkm)-2g-C_{L_+}\eta}\right]\\
\nonumber & \lesssim & b_1^{2}\left[1+b_1^{\alpha-2\Delta k+2(1-\dkm)-C_{L_+}\eta}\right]=b_1^{2}\left[1+b_1^{2(1-\dk)-C_{L_+}\eta}\right]\\
& \lesssim & b_1^2
\eea
Hence, using \fref{parameters}:
\bee
&&\int(1+y^2)\left|\sum_{j=1}^{L_+-1}\left[(b_j)_s+(2j-\alpha)b_1b_j-b_{j+1}\right]\Lt^*J\Lt^{k_++L_+}\right.\\
&&\left.\left[\chi_{B_1}\left(\Phi_{j,+}+\sum_{m=j+1}^{L_++2}\frac{\partial S_{m,+}}{\partial b_j}+\sum_{m=j+1}^{L_-+2}\frac{\pa S_{m,-}}{\pa b_j}\right)\right]\right|^2\\
& \lesssim & b_1^2b_1^{2L_++2+2(1-\dk)+2\eta(1-\dep)}
\eee
and \fref{tobeprovedmoepe} follows for $\widehat{\Mod}_+$. Moreover from \fref{parameterspresicely}:
\bee
&&\int(1+y^{2+4g})\left|\left[(b_{L_+})_s+2L_+b_1b_{L_+}\right]\Lt^*J\Lt^{k_++L_+}\right.\\
&&\left.\left[\chi_{B_1}\sum_{m=L_++1}^{L_++2}\frac{\partial S_{m,+}}{\partial b_{L_+}}+\sum_{m={L_+}+1}^{L_-+2}\frac{\pa S_{m,-}}{\pa b_{L_+}}\right]\right|^2\\
&\lesssim & b_1^2\left[C(M)\mathcal E_{s_+}+b_1^{2L_++2(1-\dk)+2\eta(1-\dep)}\right].
\eee

\noindent\underline{\it $\widehat{\Mod}_-$ terms}: From \fref{degsiplus} for $1\leq j\leq L_-$:
\bee
&&\int (1+y^{2+4g})\left|\sum_{m=j+1}^{L_++2}\Lt^*J\Lt^{k_++L_+}\chi_{B_1}\frac{\partial S_{m,+}}{\partial a_j}\right|^2\\
&\lesssim &\int_{y\leq 2B_1} (1+y^{2+4g})\sum_{m=j+1}^{L_++2}b_1^{2(m-j)}\left[1+|y^{2(m-1)-\gamma-2(k_++L_++1)}|^2\right]\\
& + & \int_{y\leq 2B_1} (1+y^2)\sum_{m=j+1}^{L_++2}b_1^{2(m-j)}\left[1+|y^{2m-\gamma-2(k_++L_++1)}|^2\right]\\
&\lesssim & b_1^2+\sum_{m=j+1}^{L_++2}b_1^{2(m-j)}\int_{y\leq 2B_1}\frac{dy}{1+y^{1+4(L_+-m)+4(1-\dk)-4g}}\\
&\lesssim & b_1^2+\sum_{m=L_++1}^{L_++2}b_1^{2(m-j)}B_1^{4(m-L_+)-4(1-\dk)+4g}\\
& \lesssim & b_1^2+b_1^{2(L_+-j)+2(1-\dk)-4g-C_{L_+}\eta}\lesssim b_1^2+b_1^{2\Delta k}\lesssim b_1^2,
\eee
since $\Delta k\geq 1$, and from \fref{degsiminus}:
\bee
&&\int (1+y^{2+4g})\left|\sum_{m=j+1}^{L_++2}\Lt^*J\Lt^{k_++L_+}\chi_{B_1}\frac{\partial S_{m,-}}{\partial a_j}\right|^2\\
&\lesssim &\int_{y\leq 2B_1} (1+y^2)\sum_{m=j+1}^{L_++2}b_1^{2(m-j)}\left[1+|y^{2(m-1)-\frac{2}{p-1}-2(k_-+L_-+1)}|^2\right]\\
& + & \int_{y\leq 2B_1} (1+y^{2+4g})\sum_{m=j+1}^{L_++2}b_1^{2(m-j)+\alpha}\left[1+|y^{2m-\frac{2}{p-1}-2(k_-+L_-+1)}|^2\right]\\
&\lesssim & b_1^2\sum_{m=j}^{L_++1} b_1^{2(m-j)}\int_{y\leq 2B_1}\frac{dy}{1+y^{1+4(L_--m)+4(1-\dkm)-4g}}\\
& + & \sum_{m=j+1}^{L_++2}b_1^{2(m-j)+\alpha}\int_{y\leq 2B_1}\frac{dy}{1+y^{1+4(L_--m)+4(1-\dkm)-4g}}\\
& \lesssim &b_1^2+\sum_{m=L_-}^{L_++1} b_1^{2(m-j)}B_1^{4(m-L_-)-4(1-\dkm)+4g}+\sum_{m=L_-+1}^{L_++2}b_1^{2(m-j)+\alpha}B_1^{4(m-L_-)-4(1-\dkm)+4g}\\
& \lesssim & b_1^2+b_1^{\alpha-C_{L_+}\eta+2(1-\dkm)-2g}\lesssim b_1^2.
\eee
since $\alpha>2$. Hence, using \fref{parameters}:
\bee
&&\int(1+y^{2})\left|\sum_{j=1}^{L_+-1}\left[(a_j)_s+2jb_1a_j-a_{j+1}\right]\Lt^*J\Lt^{k_++L_+}\right.\\
&&\left.\left[\chi_{B_1}\left(\Phi_{j,-}+\sum_{m=j+1}^{L_++2}\frac{\partial S_{m,+}}{\partial a_j}+\sum_{m=j+1}^{L_-+2}\frac{\pa S_{m,-}}{\pa a_j}\right)\right]\right|^2\\
& \lesssim & b_1^2b_1^{2L_++2+2(1-\dk)+2\eta(1-\dep)}
\eee
and \fref{tobeprovedmoepe} is proved for $\widehat{\Mod}_-$. Moreover from \fref{parameterspresicelya}:
\bee
&&\int(1+y^{2+4g})\left|\left[(a_{L_-})_s+2L_-b_1a_{L_-}\right]\Lt^*J\Lt^{k_++L_+}\right.\\
&&\left.\left[\chi_{B_1}\sum_{m=L_-+1}^{L_++2}\frac{\partial S_{m,+}}{\partial a_{L_-}}+\sum_{m={L_-}+1}^{L_-+2}\frac{\pa S_{m,-}}{\pa a_{L_-}}\right]\right|^2\\
&\lesssim & b_1^2\left[C(M)\mathcal E_{s_+}+b_1^{2L_++2(1-\dk)+2\eta(1-\dep)}\right].
\eee
This concludes the proof of \fref{tobeprovedmoepebis}.\\
\noindent\underline{\it Lower order modulation parameters}. We use $\Lt \Lambda Q=0$, $\Lt JQ=0$ and \fref{dgammkrealtion} to estimate:
\bee
&&\left|\int (1+y^2)|\Lt^*J\Lt^{k_++L_+}\Lambda \qbt\right|^2+\left|\int (1+y^2)|\Lt^*J\Lt^{k_++L_+}J \qbt\right|^2\\
&\lesssim & \int_{y\leq 2B_1}\sum_{j=1}^{L_+}b_1^{2j}(1+y^2|y^{2j-\gamma-2(k_++L_++1)}|^2)\\
& + & \int_{y\leq 2B_1}\sum_{j=2}^{L_++2}b_1^{2j}(1+y^2|y^{2(j-1)-\gamma-2(k_++L_++1)}|^2)+b_1^{2j+\alpha}(1+y^2|y^{2j-\gamma-2(k_++L_++1)}|^2)\\
&+& \int_{y\leq 2B_1}\sum_{j=1}^{L_-}b_1^{2j+\alpha}(1+y^2|y^{2j-\frac 2{p-1}-2(k_-+L_-+1)}|^2)\\
& + & \int_{y\leq 2B_1}\sum_{j=2}^{L_-+2}b_1^{2j+\alpha}(1+y^2|y^{2(j-1)-\frac{2}{p-1}-2(k_-+L_-+1)}|^2)+b_1^{2j+2\alpha}(1+y^2|y^{2j-\frac{2}{p-1}-2(k_-+L_-+1)}|^2)\\
& \lesssim & \sum_{j=1}^{L_+}b_1^{2j}\int_{y\leq 2B_1}\frac{dy}{1+y^{1+4(1-\dk)+4(L_+-j)}}+  \sum_{j=2}^{L_++2}b_1^{2j}\int_{y\leq 2B_1}\frac{dy}{1+y^{1+4(1-\dk)+4(L_++1-j)}}\\
&+& \sum_{j=2}^{L_++2}b_1^{2j+\alpha}\int_{y\leq 2B_1}\frac{dy}{1+y^{1+4(1-\dk)+4(L_+-j)}}+  \sum_{j=1}^{L_-}b_1^{2j+\alpha}\int_{y\leq 2B_1}\frac{dy}{1+y^{1+4(1-\delta_{k_-})+4(L_--j)}}\\
&+&  \sum_{j=2}^{L_-+2}b_1^{2j+\alpha}\int_{y\leq 2B_1}\frac{dy}{1+y^{1+4(1-\delta_{k_-})+4(L_-+1-j)}}\\
&+& \sum_{j=2}^{L_-+2}b_1^{2j+2\alpha}\int_{y\leq 2B_1}\frac{dy}{1+y^{1+4(1-\delta_{k_-})+4(L_--j)}}\\
& \lesssim & b_1^2
\eee
and hence from \fref{modulationequations}:
\bee
&&\int (1+y^2)\left|-\left(\lsl+b_1\right)\Lt^*J\Lt^{k_++L_+}\Lambda \qbt+(\gamma_s-a_1)\Lt^*J\Lt^{k_++L_+}J\qbt\right|^2\\
&\lesssim & b_1^2 b_1^{2L_++2+2(1-\dk)+2\eta(1-\dep)},
\eee
which concludes the proof of \fref{tobeprovedmoepe}.\\

\noindent{\bf step 7} Nonlinear term $N(\e)$. We now turn to the control of the nonlinear term. We expand using $p=2q+1$:
$$N(\e)=\sum N_{k_1,k_2}(\e), \ \ N_{k_1,k_2}(\e)=\e^{k_1}\overline{\e^{k_2}}\qbt^{q+1-k_1}\overline{\qbt^{q-k_2}}, \ \ \left\{\begin{array}{lll}0\leq k_1\leq q+1\\ 0\leq k_2\leq q\\ k_1+k_2\geq 2.\end{array}\right..$$ We claim the bound:
\be
\label{tobeprovedfnien}
\int |\nabla J\Lt^{k_++L_+}N_{k_1,k_2}(\e)|^2+\int \frac{|N_{k_1,k_2}(\e)|^2}{1+y^{2s_+}}\lesssim b_1^{2+O\left(\frac 1{L_+}\right)}\left(\frac{\|\e\|_{\dot{H^\sigma}}^2}{b_1^{\sigma-s_c}}\right)^{d_{k_1,k_2}}\mathcal E_{s_+}
\ee
for some $d_{k_1,k_2}=d(k_1,k_2,d,p)>0$ which, together with \fref{bootsmallsigma} and Hardy, yields
\bea
\label{controlnonlineaire}
\nonumber && \int |\nabla \Lt^{k_++L_+}N_{k_1,k_2}(\e)|^2+\int \frac{|J\Lt^{k_++L_+}N_{k_1,k_2}(\e)|^2}{1+y^2}+\int \frac{|N_{k_1,k_2}(\e)|^2}{1+y^{2s_+}}\\
&\lesssim &b_1^{2+\frac{(\sigma-s_c)\nu(d,p)}{2}}\mathcal E_{s_+}
 \eea
 thanks to  $$(\sigma-s_c)\nu(d,p)\gg \frac{1}{L_+}$$  from \fref{choicesigma}. This gives the control of the corresponding term in \fref{firstestimate}:
\bee
&&\left|\left(\Lt^{k_++L_+}\left[\frac 1{\lambda^2}(N(\e))_{\lambda}\right],J\Lt_\lambda w_{k_++L_+}\right)\right|\\
& \lesssim & \frac{1}{\l^{2(s_+-s_c)+2}}\left( b_1^{2+\frac{(\sigma-s_c)\nu(d,p)}{2}}\mathcal E_{s_+}\right)^{\frac 12}\left(\int |\nabla \e_{k_++L_+}|^2+\int \frac{|\e_{k_++L_+}|^2}{1+y^2}\right)^{\frac 12}\\
& \leq & \frac{b_1}{\l^{2(s_+-s_c)+2}}\frac{\mathcal E_{s_+}}{M}.
\eee
\noindent{\it Proof of \fref{tobeprovedfnien}}. We first derive from the $\qbt$ construction the bound:
\be
\label{ebibvebibebve}
|\pa_y^k\qbt|\lesssim \frac{1}{1+y^{\frac{2}{p-1}+k}}, \ \ k\geq 0.
\ee
Using \fref{hardywehghtgeenral} we estimate:
\bee
&&\int |\nabla \Lt^{k_++L_+}N_{k_1,k_2}(\e)|^2+\int \frac{|N_{k_1,k_2}(\e)|^2}{1+y^{2s_+}}\\
& \lesssim & \sum_{j=0}^{s_+}\frac{|D^jN_{k_1,k_2}(\e)|^2}{1+y^{2(s_+-j)}}\lesssim \sum_{j=0}^{s_+}\sum_{l=0}^j\frac{|D^l(\e^{k_1}\overline{\e^{k_2}})|^2}{1+y^{2(s_+-j)+\frac{4(p-k_1-k_2)}{p-1}+2(j-l)}}\\
& \lesssim & \int \frac{|D^{s_+}(\e^{k_1}\overline{\e^{k_2}})|^2}{1+y^{\frac{4(p-k_1-k_2)}{p-1}}}+\int \frac{|\e^{k_1}\overline{\e^{k_2}}|^2}{1+y^{2s_++\frac{4(p-k_1-k_2)}{p-1}}}.
\eee
Near the origin, $H^{s_+}(y\leq 1)$ is an algebra and therefore:
\bee
\int_{y\leq 1} \frac{|D^{s_+}(\e^{k_1}\overline{\e^{k_2}})|^2}{1+y^{\frac{4(p-k_1-k_2)}{p-1}}}+\int_{y\leq 1} \frac{|\e^{k_1}\overline{\e^{k_2}}|^2}{1+y^{2s_++\frac{4(p-k_1-k_2)}{p-1}}}\lesssim \|\e\|_{H^{s_+}(y\leq 1)}^{2(k_1+k_2)}\lesssim \mathcal E_{s_+}^2\lesssim b_1^3 \mathcal E_{s_+}.
\eee
We now claim the bounds:
\be
\label{cneobneoveooeivh}
 \int_{y\geq 1} \frac{|D^{s_+}(\e^{k_1}\overline{\e^{k_2}})|^2}{1+y^{\frac{4(p-k_1-k_2)}{p-1}}}\lesssim K^Cb_1^{2+O\left(\frac1{L_+}\right)}b_1^{2L_++2(1-\dk)+2\eta(1-\dep)}\left(\frac{\|\e\|_{\dot{H^\sigma}}^2}{b_1^{\sigma-s_c}}\right)^{d_k}
 \ee
 \be
 \label{nceonekoneiioehe}
 \int_{y\ge 1}\frac{|\e|^{2(k_1+k_2)}}{1+y^{2s_++\frac{4}{p-1}(p-k_1-k_2)}}\lesssim  K^Cb_1^{2+O\left(\frac 1{L_+}\right)}b_1^{2L_++2(1-\dk)+2\eta(1-\dep)}\left(\frac{\|\e\|_{\dot{H^\sigma}}^2}{b_1^{\sigma-s_c}}\right)^{d_k}
 \ee
  which yield \fref{tobeprovedfnien}.\\
 \noindent{\em Proof of \fref{cneobneoveooeivh}}: We let $$k=k_1+k_2, \ \ 2\leq k\leq p.$$ We split the integral in two.\\
 \noindent\underline{Term $y\geq B_0$}: We estimate:
$$\int_{y\geq B_0} \frac{\left|\nabla^{s_+}(\e^{k_1}\overline{\e^{k_2}})\right|^2}{1+y^{\frac{4}{p-1}(p-k_1-k_2)}}\lesssim b_1^{\frac{2(p-k)}{p-1}}\|\nabla^{s_+}(\e^{k_1}\overline{\e^{k_2}})\|_{L^2}^2.$$
We claim the nonlinear estimate: 
\be
\label{nonlinearest}
\forall m\in \Bbb N, \ \ m>\frac d2, \ \ \|\nabla^m(\e^{k_1}\overline{\e^{k_2}})\|_{L^2}\lesssim (\|\e\|_{L^{\infty}}^{k-1}+\|\nabla^{\frac d2}\e\|_{L^{2}}^{k-1})\|\nabla^m\e\|_{L^2}
\ee
which is proved below. Using \fref{bebebebeo}:
\bee
&&\int_{y\geq B_0} \frac{\left|\nabla^{s_+}(\e^{k_1}\overline{\e^{k_2}})\right|^2}{1+y^{\frac{4}{p-1}(p-k_1-k_2)}}\lesssim b_1^{\frac{2(p-k)}{p-1}}\left[\|\nabla^{\sigma}\e\|^{1+O\left(\frac1{L_+}\right)}_{L^2}b_1^{\frac 12\left(\frac d2-\sigma\right)+O\left(\frac 1{L_+}\right)}\right]^{2(k-1)}\|\nabla^{s_+}\e\|_{L^2}^2\\
&\lesssim & C(M)\mathcal E_{s_+} b_1^{\frac{2(p-k)}{p-1}+(k-1)(\sigma-s_c)+(k-1)(\frac d2-\sigma)+O\left(\frac1{L_+}\right)}\left(\frac{\|\nabla^{\sigma}\e\|_{L^2}^2}{b_1^{\sigma-s_c}}\right)^{k-1}\\
& \lesssim & b_1^{2+O\left(\frac 1L\right)}\mathcal E_{s_+}\left(\frac{\|\nabla^{\sigma}\e\|_{L^2}^2}{b_1^{\sigma-s_c}}\right)^{(k-1)\left[1+O\left(\frac1{L_+}\right)\right]}
\eee
\noindent{\it Proof of \fref{nonlinearest}}: By Leibniz:
$$|\nabla^m(\e^{k_1}\overline{\e^{k_2}})|\lesssim \Pi_{l_1+\dots l_k=m}|\nabla^{l_i}\e|.$$ Let $p_i=\frac{2m}{l_i}\in [2,+\infty],$ then from H\"older: $$\|\nabla^m\e^k\|_{L^2}\lesssim \| \Pi_{l_1+\dots l_k=m}\nabla^{l_i}\e\|_{L^2}\lesssim \Pi\|\nabla^{l_i}\e\|_{L^{p_i}}.$$ Let $$-l_i+\frac d{p_i}=-m_i+\frac d2$$ then from Sobolev: $$\|\nabla^{l_i}\e\|_{L^{p_i}}\lesssim \|\nabla^{m_i}\e\|_{L^2}\ \ \mbox{for}\ \ p_i<+\infty\ \ \mbox{i.e.}\ \ l_i\neq 0.$$ Observe that 
\be
\label{weheremi}
m_i=\frac d2+l_i\left(1-\frac{d}{2m}\right)>\frac d2
\ee
and we can interpolate: $$\|\nabla^{m_i}\e\|_{L^2}\lesssim \|\nabla^{\frac d2}\e\|_{L^2}^{1-z_i}\|\nabla^m\e\|_{L^2}^{z_i}$$ with $$ m_i=\frac d2(1-z_i)+mz_i\ \ \mbox{i.e.}\ \ z_i=\frac{l_i}{m}\in [0,1].$$
This yields
\bee
\|\nabla^m\e^k\|_{L^2}&\lesssim &\Pi_{l_1+\dots l_k=m}\|\nabla^{l_i}\e\|_{L^{p_i}}\lesssim \sum_{j=1}^k\|\e\|_{L^{\infty}}^{k-j}\Pi_{l_1+\dots l_j=m, l_i>0}\|\nabla^{m_i}\e\|_{L^{2}}\\
& \lesssim & \sum_{j=1}^k\|\e\|_{L^{\infty}}^{k-j}\Pi_{l_1+\dots l_j=m, l_i>0}\|\nabla^{\frac d2}\e\|_{L^2}^{1-\frac{l_i}{m}}\|\nabla^{m}\e\|^{\frac{l_i}{m}}_{L^{2}}\\
& \lesssim & \sum_{j=1}^k \|\e\|_{L^{\infty}}^{k-j}\|\|\nabla^{\frac d2}\e\|^{j-1}_{L^2}\|\nabla^m\e\|_{L^2}\\
&\lesssim & (\|\e\|_{L^{\infty}}^{k-1}+\|\nabla^{\frac d2}\e\|_{L^{2}}^{k-1})\|\nabla^m\e\|_{L^2}
\eee
by H\"older. This is \fref{nonlinearest}.\\
\noindent\underline{Term $y\leq B_0$}: We now control the inner integral. Note that for $p=k$, the nonlinear estimate \fref{nonlinearest} treats the inner integral as well 
and we may therefore assume $k\leq p-1$. We expand using the Leibniz rule: $$|\nabla^{s_+}(\e^{k_1}\overline{\e^{k_2}})|\lesssim \sum_{l_1+\dots l_k=s_+}|\Pi_{i=1}^k\nabla^{l_i}\e|$$ and distinguish three cases.\\
\underline{case $l_i=s_+$}: In this case, using the $L^{\infty}$ bound \fref{Linfttybound} with $\delta=\frac{2(p-k)}{(p-1)(k-1)}$, we have:
\bee
\int_{y\geq 1} \frac{\left|\e^{k-1}\nabla^{s_+}\e\right|^2}{1+y^{\frac{4}{p-1}(p-k)}}&\lesssim & \left\|\frac{\e}{1+y^{\frac{2(p-k)}{(p-1)(k-1)}}}\right\|_{L^{\infty}}^{2(k-1)}C(M)\mathcal E_{s_+}\\
& \lesssim & C(M)\mathcal E_{s_+}b_1^{\frac{2(p-k)}{p-1}+(k-1)(\frac d2-\sigma)+(k-1)(\sigma-s_c)+O(\frac 1{L_+})}\left(\frac{\|\nabla^{\sigma}\e\|_{L^2}^2}{b_1^{\sigma-s_c}}\right)^{k-1}\\
& \lesssim & b_1^{2+O(\frac 1{L_+})}\left(\frac{\|\nabla^{\sigma}\e\|_{L^2}^2}{b_1^{\sigma-s_c}}\right)^{(k-1)\left[1+O(\frac{1}{L_+})\right]}\mathcal E_{s_+}.
\eee
\underline{case $l_i=s_+-1$}: In this case,
$$\int_{y\geq 1} \frac{\left|\Pi \nabla^{l_i}\e\right|^2}{1+y^{\frac{4}{p-1}(p-k)}}\lesssim \left\|\frac{\e}{1+y^{\alpha_k}}\right\|_{L^{\infty}}^{2(k-2)}\|\nabla\e\|_{L^{\infty}}^2\int |\nabla^{s_+-1}\e|^2, \ \ \alpha_k=\frac{2(p-k)}{(p-1)(k-2)}.$$ We interpolate:
$$\|\nabla^{s_+-1}\e\|_{L^2}\lesssim \|\nabla^{s_+}\e\|_{L^2}^{\alpha_+}\|\nabla^{\sigma}\e\|_{L^2}^{1-\alpha_+}$$ with 
\be
\label{delalphal}
\alpha_+=\frac{s_+-1-\sigma}{s_+-\sigma}=1-\frac{1}{s_+-\sigma}=1-\frac{1}{s_+}+O\left(\frac{1}{L_+^2}\right)
\ee
We now invoke the $L^{\infty}$ bound \fref{Linfttybound} with $\delta=\alpha_k\ll1$, \fref{bebebebeogradient} and the bootstrap bound \fref{bootnorm} to estimate:
\bee
 &&\left\|\frac{\e}{1+y^{\alpha_k}}\right\|_{L^{\infty}}^{2(k-2)}\|\nabla \e\|_{L^{\infty}}^2\int |\nabla^{s_+-1}\e|^2\\
\nonumber & \lesssim & \|\nabla^{\sigma}\e\|_{L^2}^{2(k-1)\left[1+O(\frac{1}{L_+})\right]}b_1^{(k-2)\alpha_k+(k-2)\left(\frac d2-\sigma\right)+\left(\frac d2+1-\sigma\right)+O\left(\frac 1{L_+}\right)}\|\nabla^{s_+}\e\|^{2-\frac{2}{s_+}+O\left(\frac 1{L_+^2}\right)}_{L^2}\\
\nonumber& \lesssim & K^C\left(\frac{\|\nabla^{\sigma}\e\|^2_{L^2}}{b_1^{\sigma-s_c}}\right)^{(k-1)\left[1+O(\frac{1}{L_+})\right]}b_1^{2\frac{(p-k)}{p-1}+(k-1)\left(\frac d2-s_c\right)+1+O\left(\frac 1{L_+}\right)}\\
\nonumber &\times & b_1^{\left(1-\frac{1}{2L_+}+O(\frac{1}{L_+})\right)\left(2L_++2(1-\dk)+2\eta(1-\dep)+O\left(\frac 1{L_+}\right)\right)}\\
& \lesssim & K^Cb_1^{2+O(\frac{1}{L_+})}\left(\frac{\|\nabla^{\sigma}\e\|^2_{L^2}}{b_1^{\sigma-s_c}}\right)^{(k-1)\left[1+O(\frac{1}{L_+})\right]}b_1^{2L_++2(1-\dk)+2\eta(1-\dep)}.
\eee
\underline{case $l_i\leq s_+-2$}: Up to reordering, we have $$l_1+\dots l_j=s_+, \  l_{j+1}=\dots=l_k=0, \ \ l_i>0\ \ \mbox{for} \ \ 1\leq i\leq j.$$By H\"older:
$$\int_{y\leq B_0}\frac{|\Pi\nabla^{l_i}\e|^2}{1+y^{\frac{4(p-k)}{p-1}}}\lesssim \|\e\|_{L^{\infty}}^{2(k-j)}|\log b_1|^C\|\Pi_{1\leq i\leq j} \nabla^{l_i}\e\|_{L^q}^2, \ \ \mbox{with}\ \  1-\frac2{q}=\frac{4(p-k)}{d(p-1)}.$$ Using H\"older again:
$$\|\Pi_{1\leq i\leq j} \nabla^{l_i}\e\|_{L^q}\lesssim \Pi_{1\leq i\leq j}\|\nabla^{l_i}\e\|_{L^{q_i}}, \ \ q_i=\frac{qs_+}{l_i}\in (2,+\infty].$$ From Sobolev and $l_i>0$: $$\|\nabla^{l_i}\e\|_{L^{q_i}}\lesssim \|\nabla^{m_i}\e\|_{L^2},  \ \ m_i=\frac d2-\frac d{q_i}+l_i.$$ We interpolate: $$m_i=\frac d2(1-z_i)+z_is_+\ \ \mbox{ie}\ \  z_i=\frac{l_i}{s_+}\frac{1-\frac{d}{qs_+}}{1-\frac{d}{2s_+}}.$$ Observe that $z_i\geq 0$ for $L_+$ large enough, and from $l_i\leq s_+-2$:
\bee z_i&\leq &\frac{s_+-2}{s_+}\frac{1-\frac{d}{qs_+}}{1-\frac{d}{2s_+}}\\
 & = & \left[1-\frac 2{2L_+}+O\left(\frac 1{L_+^2}\right)\right]\left[1-\frac{d}{2qL_+}+O\left(\frac 1{L_+^2}\right)\right]\left[1+\frac{d}{4L_+}++O\left(\frac 1{L_+^2}\right)\right]\\
 & = & 1+\frac 1{2L_+}\left[\frac{d}{2}-\frac dq-2\right]+O\left(\frac 1{L_+^2}\right).
\eee
Now $$\frac d2-\frac dq=\frac d2\left[1-\frac 2q\right]=\frac d2\frac{4(p-k)}{d(p-1)}=\frac{2(p-k)}{p-1}$$
 and thus:
 $$0\leq z_i\leq 1+\frac 1{2L_+}\left[\frac{2(p-k)}{p-1}-2\right]+O\left(\frac 1{L_+^2}\right)=1-\frac{k-1}{(p-1)L_+}+O\left(\frac 1{L_+^2}\right)<1$$ for $L_+$ large enough since 
 $k\geq 2$. Moreover,
 $$\sum_{i=1}^jz_i=\frac{1-\frac{d}{qs_+}}{1-\frac{d}{2s_+}}=1+\frac 1{2L_+}\left[\frac{d}{2}-\frac dq\right]+O\left(\frac 1{L_+^2}\right)=1+\frac{p-k}{(p-1)L_+}+O\left(\frac 1{L_+^2}\right).$$
 We therefore obtain the bound:
 \bee
 \|\Pi_{1\leq i\leq j} \nabla^{l_i}\e\|_{L^q}&\lesssim& \Pi_{1\leq i\leq j}\|\nabla^{m_i}\e\|_{L^2}\lesssim \Pi_{1\leq i\leq j}\|\nabla^{s_+}\e\|_{L^2}^{z_i}\nabla^{\frac d2}\e\|_{L^2}^{1-z_i}\\
 & \lesssim & \|\nabla^{s_+}\e\|^{1+\frac{p-k}{(p-1)L_+}+O\left(\frac 1{L_+^2}\right)}\|\nabla^{\frac d2}\e\|_{L^2}^{j-1+O\left(\frac 1{L_+}\right)}
\eee
and therefore using \fref{bebebebeo}:
\bee
&&\int_{y\leq B_0}\frac{|\Pi\nabla^{l_i}\e|^2}{1+y^{\frac{4(p-k)}{p-1}}}\lesssim   \|\nabla^{s_+}\e\|^{2+\frac {2(p-k)}{(p-1)L_+}+O\left(\frac 1{L_+^2}\right)}\|\nabla^{\frac d2}\e\|_{L^2}^{2j-2+O\left(\frac 1{L_+}\right)} \|\e\|_{L^{\infty}}^{2(k-j)}|\log b_1|^C\\
& \lesssim & b_1^{\left[2+\frac {2(p-k)}{(p-1)L_+}+O\left(\frac 1{L_+^2}\right)\right]\left[L_++(1-\dk)+\eta(1-\dep)\right]}\|\nabla^{\sigma}\e\|_{L^2}^{2(k-1)\left[1+O(\frac{1}{L_+})\right]}b_1^{(k-1)\left[\frac d2-\sigma\right]+O(\frac1{L_+})}\\
& \lesssim & K^3b_1^{O\left(\frac 1{L_+}\right)} b_1^{2L_++2(1-\dk)+2\eta(1-\dep)+\frac{2(p-k)}{p-1}+(k-1)\left[\frac d2-s_c\right]}\left(\frac{\|\nabla^{\sigma}\e\|_{L^2}^2}{b_1^{\sigma-s_c}}\right)^{k-1\left[1+O(\frac{1}{L_+})\right]}\\
& \lesssim & K^Cb_1^{2+O\left(\frac 1L_+\right)}  b_1^{2L+2(1-\dk)+2\eta(1-\dep)}\left(\frac{\|\nabla^{\sigma}\e\|_{L^2}^2}{b_1^{\sigma-s_c}}\right)^{(k-1)\left[1+O(\frac{1}{L_+})\right]},
\eee
which concludes the proof of \fref{cneobneoveooeivh}.\\
\noindent{\it Proof of \fref{nceonekoneiioehe}}. We estimate from \fref{Linfttybound} with $\delta=\alpha_k=\frac{2(p-k)}{(p-1)(k-1)}$:
\bee
&& \int_{y\ge 1}\frac{|\e|^{2(k_1+k_2)}}{1+y^{2s_++\frac{4}{p-1}(p-k_1-k_2)}}\lesssim \left\|\frac{\e}{1+y^{\alpha_k}}\right\|_{L^{\infty}}^{2(k-1)}\int \frac{|\e|^2}{1+y^{2s_+}}\\
& \lesssim & C(M)\mathcal E_{s_+}\|\nabla^\sigma\e\|_{L^2}^{2(k-1)\left[1+O(\frac 1{L_+})\right]}b_1^{(k-1)\alpha_k+(k-1)(\frac d2-\sigma)+O(\frac 1{L_+})}\\
& \lesssim & b_1^{2+O(\frac{1}{L_+})}\mathcal E_{s_+}\left(\frac{\|\nabla^{\sigma}\e\|_{L^2}^2}{b_1^{\sigma-s_c}}\right)^{(k-1)\left[1+O(\frac{1}{L_+})\right]}\\
& \lesssim & K^Cb_1^{2+O\left(\frac 1L_+\right)}  b_1^{2L+2(1-\dk)+2\eta(1-\dep)}\left(\frac{\|\nabla^{\sigma}\e\|_{L^2}^2}{b_1^{\sigma-s_c}}\right)^{(k-1)\left[1+O(\frac{1}{L_+})\right]},
 \eee
 this is \fref{nceonekoneiioehe}.\\
 
\noindent{\bf step 8} Small linear term $L(\e)$. We claim the bound:
\bea
\label{boundle}
\nonumber &&\int_{y\geq 1}(1+y^4)\left[|\nabla \Lt^{k_++L_+}L(\e)|^2+\frac{|\Lt^{k_++L_+}L(\e)|^2}{1+y^2}+\frac{|L(\e)|^2}{1+y^{2s_+}}\right]\\
\
&\lesssim& b_1^2C(M)\mathcal E_{s_+}.
\eea
Assume \fref{boundle}, we then estimate the corresponding term in \fref{firstestimate}:
\bee
&&\left|\left(\Lt^{k_++L_+}\left[\frac 1{\lambda^2}(L(\e))_{\lambda}\right],J\Lt_\lambda w_{k_++L_+}\right)\right|\\
& \lesssim & \frac{1}{\l^{2(s_+-s_c)+2}}\left( b_1^{2}C(M)\mathcal E_{s_+}\right)^{\frac 12}\left(\int \frac{|\nabla \e_{k_++L_+}|^2}{1+y^4}+\int \frac{|\e_{k_++L_+}|^2}{1+y^6}\right)^{\frac 12}\\
& \leq & \frac{b_1}{\l^{2(s_+-s_c)+2}}\frac{\mathcal E_{s_+}}{M^{c\dk}}+ C(M)\left[b_1\int \frac{|\nabla \e_{k_++L_+}|^2}{1+y^4}+\int \frac{|\e_{k_++L_+}|^2}{1+y^6}\right]
\eee
\noindent{\it Proof of \fref{boundle}}: We compute explicitly from \fref{decompqbt}:
$$f'(u)\e=\frac{p+1}{2}(u\overline{u})^{2q}\e+\frac{p-1}{2}(u\overline{u})^{2(q-1)}\overline{\e}, \ \ p=2q+1.$$ 
We estimate the first contribution $$L_1(\e)=\frac{p+1}{2}\left[(Q\overline{Q})^{2q}-(\qbt\overline{\qbt})^{2q}\right]\e,$$ the second contribution is estimated similarily. We expand:
$$L_1(\e)=\left[\sum_{1\leq k_1+k_2\leq 2q}c_{k_1,k_2}\zetat^{k_1}\overline{\zetat^{k_2}}Q^{2q-k_1-k_2}\right]\e.$$ We first observe from the $Q_b$ construction: for $y\leq 2B_1$, $$|\zetat|\lesssim \sum_{j=1}^{L_+}b_1^jy^{2j-\gamma}+\sum_{j=1}^{L_-}b_1^{j+\frac\alpha 2}y^{2j-\frac{2}{p-1}}.$$ For the second term $$b_1^{j+\frac\alpha 2}y^{2j-\frac{2}{p-1}}\lesssim \frac{b_1}{1+y^{\frac{2}{p-1}}}(b_1y^2)^{j}b_1^{\frac\alpha 2-1}\lesssim \frac{b_1}{1+y^{\frac{2}{p-1}}}$$ from $\alpha>2$ and for $\eta<\eta(L_+)$ small enough. For the first term, if $\alpha-2j>0$, then $$b_1^jy^{2j-\gamma}=b_1b_1^{j-1}y^{2j-\alpha}y^{-\frac 2{p-1}}\lesssim b_1y^{-\frac 2{p-1}}$$ and if $\alpha-2j<0$: $$b_1^jy^{2j-\gamma}<b_1y^{-\frac{2}{p-1}}\ \ \mbox{iff}\ \ y\leq \frac{1}{b_1^{\frac{j-1}{2j-\alpha}}}=B_0^{1+\frac{\frac\alpha2-1}{j-\frac\alpha2}}$$ which holds for $\eta$ small enough. Therefore,
$$|\zetat|\lesssim \frac{b_1}{1+y^{\frac 2{p-1}}}$$ and similarily for higher derivatives: 
\be
\label{estradiiation}
|\pa_y^j\zetat|\lesssim \frac{b_1}{1+y^{\frac{2}{p-1}+j}}
\ee from which:
\be
\label{cneorgoeoho}
\left|\pa_y^j\left[\sum_{1\leq k_1+k_2\leq 2q}c_{k_1,k_2}\zetat^{k_1}\overline{\zetat^{k_2}}Q^{2q-k_1-k_2}\right]\right|\lesssim \frac{b_1}{1+y^{2+j}.}
\ee
The function $f'(Q)-f'(\qbt)$ is radially symmetric. Therefore, a simple application of the Leibniz rule and Sobolev gives near the origin:
$$\int_{y\leq 1}|\nabla \Lt^{k_++L_+}L_1(\e)|^2+\int |L_1(\e)|^2\lesssim b_1^2 C(M)\matchal E_{s_+}\lesssim b_1 b_1^{2L+2(1-\dk)+2\eta(1-\dep)}.$$ For $y\geq 1$, we estimate from \fref{cneorgoeoho}:
\bee
&&\int_{y\geq 1}(1+y^4)\left[|\nabla \Lt^{k_++L_+}L_1(\e)|^2+\frac{|L_1(\e)|^2}{1+y^{2s_+}}\right]\\
&\lesssim& b_1^2\sum_{j=0}^{s_+}\int\left|\frac{\nabla^j\e}{1+y^{2+(s_+-j)}}\right|^2(1+y^4)= b_1^2\sum_{j=0}^{s_+}\int\frac{|\nabla^j\e|^2}{1+y^{2(s_+-j)}}\\
& \lesssim & b_1^2C(M)\mathcal E_{s_+}.
\eee
The second term, $L_2(\e)$ is estimated similarly and \fref{boundle} follows.\\

\noindent{\bf step 9} Time oscillations. Injecting the collections of above bounds into \fref{firstestimate} and recalling the definition \fref{defwdoheat} yields the first estimate:
\bea
\label{enebviebibvbve}
&&  \frac{d}{ds} \frac{\mathcal E_{s_+}}2\leq \frac{ b_1}{\lambda^{2(s_+-s_c)}}\left\{ \frac{\mathcal E_{s_+}}{M^{c\dk}}+C(M)b_1^{2L_++2(1-\dk)+2\eta(1-\dep)}\right.\\
& +& \left. C(M)\int \frac{1}{1+y^{4g}}\left[|\nabla\e_{k_++L_+}|^2+\frac{|\e_{k_++L_+}|^2}{1+y^2}\right]\right\}\\
\nonumber& + & \frac{1}{\l^{2(s_+-s_c)}}\left(\Lt^{k_++L_+}(\pa_s\xi_++\pa_s\xi_-),J\Lt \e_{k_++L_+}\right).
\eea
We now extract the full time derivative from the last term above:
\bee
&&\frac{1}{\l^{2(s_+-s_c)}}\left(\Lt^{k_++L_+}(\pa_s\xi_++\pa_s\xi_-),J\Lt \e_{k_++L_+}\right) = \frac{d}{ds}\left\{\frac{\left(\Lt^{k_++L_+}(\xi_++\xi_-),J\Lt \e_{k_++L_+}\right)}{\l^{2(s_+-s_c)}}\right\}\\
& + & \frac{1}{\l^{2(s_+-s_c)}}\left[2(s_+-s_c)\lsl \left(\Lt^{k_++L_+}(\xi_++\xi_-),J\Lt \e_{k_++L_+}\right)-\left(\Lt^{k_++L_+}(\xi_++\xi_-),J\Lt \pa_s\e_{k_++L_+}\right)\right].
\eee
We estimate from \fref{neinoeneo}, \fref{esttlplus}, 
\bea
\label{vvehuijbjbv}
\nonumber \int (1+y^2)|\Lt^*J\Lt^{k_++L_+}\xi_+|^2&\lesssim& C(M)B_0^{4(1-\dk)}\matchal E_{s_+}\left[b^2_1B_0^{4\dk} \left(\frac{B_0}{B_1}\right)^{4(1-\dk)}\right]\\
& \leq & C(M)b_1^{2\eta(1-\dk)}\mathcal E_{s_+},
\eea
and from \fref{neinoeneobis}, \fref{esttlominus}:
\bea
\label{vvehuijbjbvbis}
\nonumber \int (1+y^2)|\Lt^*J\Lt^{k_++L_+}\xi_-|^2&\lesssim& C(M)B_0^{4(1-\dkm)}\matchal E_{s_-}\left[b^2_1B_0^{4\dkm} \left(\frac{B_0}{B_1}\right)^{4(1-\dkm)}\right]\\
& \leq & C(M)b_1^{2\eta(1-\dkm)}\mathcal E_{s_+},
\eea
which gives the bound
\bee
\left|\left(\Lt^{k_++L_+}(\xi_++\xi_-),J\Lt \e_{k_++L_+}\right)\right|\lesssim C(M)b_1^{\eta(1-\dep)}\mathcal E_{s_+}
\eee
and the control of the first error term:  
$$
\left|\lsl \left(\Lt^{k_++L_+}(\xi_++\xi_-),J\Lt \e_{k_++L_+}\right)\right|\lesssim C(M)b_1^{\eta(1-\dep)}\mathcal E_{s_+}.
$$
We now rewrite \fref{eqepsilon} with \fref{defwdoheat}:
 $$\partial_s \varepsilon  - \frac{\lambda_s}{\lambda} \Lambda \varepsilon - \Lt \varepsilon = F - \widehat{\Mod}-\gamma_sJ\e +\pa_s\xi_++\pa_s\xi_-
$$
from which:
\be
\label{eqioneonenevo}
\pa_s\e_{2(k_++L_+)}=\Lt^{k_++L_++1}\e+\Lt^{k_++L_+}\left[\lsl\Lambda \e-\gamma_sJ\e+ F - \widehat{\Mod}-\pa_s\xi_+-\pa_s\xi_-\right]
\ee
and hence:
\bee
&&\left(\Lt^{k_++L_+}(\xi_++\xi_-),J\Lt \pa_s\e_{k_++L_+}\right)=(\Lt^{k_++L_+}(\xi_++\xi_-),J\Lt\Lt^{k_++L_++1}\e)\\
& + & \left(\Lt^{k_++L_+}(\xi_++\xi_-),J\Lt\Lt^{k_++L_+}(\lsl\Lambda \e-\gamma_sJ\e+ F - \widehat{\Mod})\right)\\
& + & \frac 12\frac{d}{ds}\left\{(\Lt^{k_++L_+}(\xi_++\xi_-),J\Lt\Lt^{k_++L_+}(\xi_++\xi_-))\right\}.
\eee
We estimate from \fref{vvehuijbjbv}, \fref{vvehuijbjbvbis}:
\bee
&&\left| \left(\Lt^{k_++L_+}(\xi_++\xi_-),J\Lt\Lt^{k_++L_+}\left[\lsl\Lambda \e-\gamma_sJ\e\right]\right)\right|\lesssim C(M)b_1b_1^{\eta(1-\dep)}\mathcal E_{s_+}.
\eee
As in the proof of \fref{vvehuijbjbv}, \fref{vvehuijbjbvbis}:
\bee
&&\int (1+y^2)|(\Lt^*)^2J\Lt^{k_++L_+}\xi_+|^2\lesssim C(M)b_1^2b_1^{2\eta(1-\dk)}\mathcal E_{s_+}\\
&& \int (1+y^2)|\Lt^*J\Lt^{k_++L_+}\xi_-|^2\lesssim C(M)b_1^2b_1^{2\eta(1-\dkm)}\mathcal E_{s_+}
\eee
from which
$$
\left|(\Lt^{k_++L_+}(\xi_++\xi_-),J\Lt\Lt^{k_++L_++1}\e)\right|\lesssim b_1C(M)b_1^{\eta(1-\dep)}\mathcal E_{s_+}.$$
By the coercivity of $L_+,L_-$ we have that for any $v\in \dot{H}^1$:
$$ \int |\nabla v|^2+\int \frac{|v|^2}{1+y^2}\lesssim (J\Lt v,v)\gtrsim\left(\int (1+y^2)|\Lt v|^2\right)^{\frac 12}\left(\int \frac{|v|^2}{1+y^2}\right)^{\frac 12}$$ and hence
\be
\label{boundleltj}
\int \frac{|v|^2}{1+y^2}\lesssim \int (1+y^2)|\Lt v|^2
\ee
from which using the relation $J\Lt=-\Lt^*J$ from \fref{adjointltilde} and \fref{controleh4erreurloc}:
\be
\label{estpistitit}
\int \frac{|\Lt^{k_++L_+}\Psit|^2}{1+y^2}\lesssim \int (1+y^2)|\Lt^*J\Lt^{k_++L_+}\Psit|^2\lesssim b_1^{2L_++2+2(1-\dk)+2\eta(1-\dep)}
\ee
and hence using \fref{vvehuijbjbv}, \fref{vvehuijbjbvbis}:
\bee
&&\left|\left(\Lt^{k_++L_+}(\xi_++\xi_-),J\Lt\Lt^{k_++L_+}\Psit\right)\right|\\
&\lesssim& \left(\int \frac{|\Lt^{k_++L_+}\Psit |^2}{1+y^2}\right)^{\frac 12}\left(\int (1+y^2)\left[|\Lt^*J\Lt^{k_++L_+}\xi_+|^2+|\Lt^*J\Lt^{k_++L_+}\xi_-|^2\right]\right)^{\frac 12}\\
& \lesssim & b_1b_1^{L_++(1-\dk)+\eta(1-\dep)}b_1^{\eta(1-\dep)}\sqrt{\matchal E_{s_+}}.
\eee
We now estimate from \fref{controlnonlineaire} using again $J\Lt=-\Lt^*J$ and \fref{vvehuijbjbv}, \fref{vvehuijbjbvbis}:
\bee
&& \left|\left(\Lt^{k_++L_+}(\xi_++\xi_-),J\Lt\Lt^{k_++L_+}N(\e)\right)\right|\\
 & \lesssim & \left(\int(1+y^2)\left[ |\Lt^{k_++L_++1}\xi_+|^2+ |\Lt^{k_++L_++1}\xi_-|^2\right]\right)^{\frac 12}\left(\int \frac{|J\Lt^{k_++L_+}N(\e)|^2}{1+y^2}\right)^{\frac 12}\\
 & \lesssim & \left(b_1^{2+\nu(d,p)}\mathcal E_{s_+}\right)^{\frac 12}\left(b_1^{2\eta(1-\dep)}\matchal E_{s_+}\right)^{\frac 12}\lesssim b_1b_1^{\eta(1-\dep)}\matchal E_{s_+}.
 \eee
Finally, using \fref{boundle}:
\bee
&& \left|\left(\Lt^{k_++L_+}(\xi_++\xi_-),J\Lt\Lt^{k_++L_+}L(\e)\right)\right|\\
 & \lesssim & \left(\int\frac{|\Lt^{k_++L_++1}\xi_+|^2+ |\Lt^{k_++L_++1}\xi_-|^2}{1+y^2}\right)^{\frac 12}\left(\int (1+y^2)|J\Lt^{k_++L_+}L(\e)|^2\right)^{\frac 12}\\
 & \lesssim & \left(b_1^{2}C(M)\mathcal E_{s_+}\right)^{\frac 12}\left(b_1^{2\eta(1-\dep)}\matchal E_{s_+}\right)^{\frac 12} \lesssim b_1b_1^{\eta(1-\dep)}\matchal E_{s_+}.
 \eee
 Injecting the collection of above bounds into \fref{enebviebibvbve} we obtain
 \bea
 \label{bebvebeibevebie}
\nonumber &&\frac12 \frac{d}{ds}\left\{\mathcal E_{s_+}+\frac{-2(\Lt^{k_++L_+}(\xi_++\xi_-),J\Lt \e_{k_++L_+})+(\Lt^{k_++L_+}(\xi_++\xi_-),J\Lt\Lt^{k_++L_+}(\xi_++\xi_-)}{\l^{2(s_+-s_c)}}\right\}\\
\nonumber & \lesssim & \frac{ b_1}{\lambda^{2(s_+-s_c)+2}}\left\{ \frac{\mathcal E_{s_+}}{M^{c\dk}}+Cb_1^{2L_++2(1+\eta)(1-\dk)}+C(M)\int \frac{1}{1+y^{4g}}\left[|\nabla\e_{k_++L_+}|^2+\frac{|\e_{k_++L_+}|^2}{1+y^2}\right]\right.\\
& + & \left.\left|(\Lt^{k_++L_+}(\xi_++\xi_-),J\Lt\Lt^{k_++L_+}(\xi_++\xi_-)\right|\right\}.
\eea
To control the corrections to the energy ${\mathcal E}_{s_+}$ we argue as follows.
First, the linear in $\e$ term is estimated using \fref{vvehuijbjbv}, \fref{esttlominus}:
\bee
&&\left|(\Lt^{k_++L_+}(\xi_++\xi_-),J\Lt \e_{k_++L_+})\right|\\
&\lesssim & \left(\int(1+y^2)\left[ |\Lt^{k_++L_++1}\xi_+|^2+ |\Lt^{k_++L_++1}\xi_-|^2\right]\right)^{\frac 12}\left(\int \frac{|\e_{k_++L_+}|^2}{1+y^2}\right)^{\frac 12}\\
& \lesssim & b_1^{\eta(1-\dep)}\matchal E_{s_+}
\eee
We then estimate by brute force, using $\Lt^{k_++L_++1}\Phi_{L_+,\pm}=0$:
\bee
&&\left|(\Lt^{k_++L_+}(\chi_{B_1}\Phi_{L_+,+}),J\Lt\Lt^{k_++L_+})(\chi_{B_1}\Phi_{L_+,+})\right|\lesssim B_1^{d-2\gamma-4k_+-2}\lesssim \frac{1}{B_1^{4(1-\dk)}}\\
&&\left|(\Lt^{k_-+L_-}(\chi_{B_1}\Phi_{L_-,-}),J\Lt\Lt^{k_-+L_-})(\chi_{B_1}\Phi_{L_-,-})\right|\lesssim B_1^{d-\frac{4}{p-1}-4k_--2}\lesssim \frac{1}{B_1^{4(1-\dkm)}}\\
&&\left|(\Lt^{k_-+L_-}(\chi_{B_1}\Phi_{L_-,-}),J\Lt\Lt^{k_++L_+})(\chi_{B_1}\Phi_{L_+,+})\right|\lesssim \frac{1}{B_1^{4(1-\frac{\dk+\dkm}{2})}},
\eee
which with the help of  \fref{neinoeneo}, \fref{neinoeneobis} produces the bounds:
\bee
&&\left|(\Lt^{k_++L_+}\xi_+,J\Lt\Lt^{k_++L_+}\xi_+)\right|\lesssim \frac{1}{B_1^{4(1-\dk)}}C(M)B_0^{4(1-\dk)} \sqrt{\mathcal E_{s_+}}\lesssim b_1^{2\eta(1-\dk)}\mathcal E_{s_+}\\
&&\left|(\Lt^{k_++L_+}\xi_-,J\Lt\Lt^{k_++L_+}\xi_-)\right|\lesssim \frac{1}{B_1^{4(1-\dkm)}}C(M)B_0^{4(1-\dkm)} \sqrt{\mathcal E_{s_+}}\lesssim b_1^{2\eta(1-\dkm)}\mathcal E_{s_+}\\
&&\left|(\Lt^{k_++L_+}\xi_-,J\Lt\Lt^{k_++L_+}\xi_+)\right|\lesssim \frac{C(M)B_0^{4(1-\frac{\dk+\dkm}{2})}}{B_1^{4(1-\frac{\dk+\dkm}{2})}} \lesssim b_1^{2\eta(1-\dep)}\mathcal E_{s_+}.
\eee
Inserting these final bounds into \fref{bebvebeibevebie} concludes the proof of \fref{monoenoiencle} and of Proposition \ref{AEI2}.
\end{proof}


\subsection{Local Morawetz control}
\label{appendenergy}


We now establish a Morawetz type identity. This identity will be used in particular to control the remaining quadratic term on the rhs of \fref{monoenoiencle} which is better localized on the soliton core. This estimate is a replacement for the dissipative bounds available in the parabolic setting\footnote{see \cite{RSc2}.} and relies on the coercivity of the virial quadratic form. This in turn is a direct consequence of the fact that the linearized operator is pointwise strictly lower bounded by the sharp Hardy potential\footnote{a fundamental structural property of the super critical problem $p>p_{JL}$.}. Moreover, we may afford to use a lossy Morawetz multiplier at infinity since in the setting of the energy estimate \fref{monoenoiencle}, the far away zone $y\gg 1$ is already under control with a stronger norm than the one provided by the Morawetz bound. 
This feature reenforces the analogy with the inner/outer control in a parabolic flow.

\begin{lemma}[Local Morawetz control]
\label{boundlemmamorawetz}
Let $0<\delta\ll1 $ denote a small enough universal constant and let 
\be
\label{choicecut}
\psi'_A(y)=\chi_A(y) y^{1-\delta}, \ \ \chi_A(y)=\chi\left(\frac yA\right),\ \ A\gg1,
\ee
then there holds the bound:
\bea
\label{morawetzbound}
\nonumber \frac{d}{ds}\left\{\frac{\mathcal M}{\l^{2(s_+-s_c)}}\right\}&\geq &\frac{b_1}{\l^{2(s_+-s_c)}}\left[ \delta\int \frac{1}{1+y^{\delta}}\left(|\nabla \e_{2(k_++L_+)}|^2+\frac{|\e_{2(k_++L_+)}|^2}{y^2}\right)\right.\\
 &-& \left. b_1^{2L_++2(1-\dk)+2\eta(1-\dep)}-\frac{C}{A^{\delta}}\matchal E_{s_+}\right]
\eea
with 
\bea
\label{defmahtcam}
\nonumber \mathcal M&=&b_1\Im\left(\int\nabla \psi_A\cdot\nabla \e_{2(k_++L_+)}\overline{\e_{2(k_++L_+)}}\right)+O\left(\sqrt{b_1}\matchal E_{s_+}\right)\\
& = & O\left(\sqrt{b_1}\matchal E_{s_+}\right).
\eea
\end{lemma}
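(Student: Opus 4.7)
The plan is a classical localized Morawetz identity applied to $f := \Lt^{2(k_++L_+)}\e$, whose coercivity draws directly on the strict positivity \fref{positivityh} $L_\pm > -\Delta + c_p/y^2$. First I would commute \fref{eqepsilon} with $\Lt^{2(k_++L_+)}$ to get a Schr\"odinger-type evolution
\begin{equation*}
\pa_s f - \Lt f = \lsl\Lambda f + \lsl[\Lt^{2(k_++L_+)},\Lambda]\e - \gamma_s Jf + \Lt^{2(k_++L_+)}(F-\widetilde{\Mod}),
\end{equation*}
in which the commutator is controlled by \fref{commutator}, and then differentiate $\mathcal M_0 := \Im\int \nabla\psi_A\cdot\nabla f\,\bar f$ along this flow.

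The bulk contribution from $-\Lt f$, after expansion via \fref{defltilde}, \fref{adjointltilde} and integration by parts, produces the standard virial quadratic form involving $\psi_A''$, $\Delta^2\psi_A$, and $\nabla\psi_A\cdot\nabla W_\pm$. With the choice \fref{choicecut} one computes $\psi_A'' = (1-\delta)y^{-\delta} + O(\mathbf{1}_{y\sim A})$ and $\Delta\psi_A = (d-\delta)y^{-\delta} + O(\mathbf{1}_{y\sim A})$, so in the region $y\le A$ the Schr\"odinger piece reads, to leading order, $(1-\delta)\int y^{-\delta}\bigl(|\pa_r f|^2 + (d-1)|f|^2/y^2\bigr)$. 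The potential contribution $\nabla\psi_A\cdot\nabla W_\pm$ is of size $y^{-3-\delta}$ by \fref{behaviorvzeroW} and is absorbed by the strictly positive Hardy surplus $(c_p - (d-2)^2/4)\int y^{-2-\delta}|f|^2$ furnished by \fref{positivityh} --- this is the one place where the assumption $p>p_{JL}$ enters decisively. The cutoff region $y\sim A$ contributes a $CA^{-\delta}\mathcal E_{s_+}$ error via \fref{defnorme}, yielding the coercive lower bound
\begin{equation*}
\frac{d\mathcal M_0}{ds}\bigg|_{\text{main}} \geq 2\delta\int \frac{1}{1+y^\delta}\left(|\nabla f|^2 + \frac{|f|^2}{y^2}\right) - \frac{C}{A^\delta}\mathcal E_{s_+}.
\end{equation*}

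The remaining errors are handled as in Proposition \ref{AEI2}: $\widetilde\Psi$ via \fref{controleh4erreurloc}, $N(\e)$ via \fref{controlnonlineaire}, $L(\e)$ via \fref{boundle}, and $\widetilde{\Mod}$ by extracting full $s$-derivatives through \fref{improvedboundbl}, \fref{improvedboundal} for the unstable $b_{L_+},a_{L_-}$ directions; the scaling-phase terms $\lsl\Lambda \e,\gamma_s J\e$ are integrated by parts against $\psi_A$, producing lower-order bulk contributions. All errors come out as $O\bigl(b_1^{2L_++2(1-\dk)+2\eta(1-\dep)} + A^{-\delta}\mathcal E_{s_+}\bigr)$ after Cauchy-Schwarz with the weight $\psi_A' = \chi_A y^{1-\delta}$ and Hardy. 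Multiplying by $b_1$ and passing to the rescaled variable $\mathcal M/\l^{2(s_+-s_c)}$ absorbs $|\dot b_1|\lesssim b_1^2$ via \fref{rougboundpope} and $\lsl = -b_1 + O(b_1^{1+\nu})$ into the right-hand side, producing the $b_1/\l^{2(s_+-s_c)}$ prefactor of \fref{morawetzbound}. The size bound $|\mathcal M|\lesssim \sqrt{b_1}\mathcal E_{s_+}$ follows by Cauchy-Schwarz using $|\nabla\psi_A|\lesssim y^{1-\delta}\mathbf{1}_{y\le 2A}$ and Hardy, for $A$ chosen so that $b_1 A^{2-\delta}\ll \sqrt{b_1}$. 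The main obstacle will be calibrating $A$ and $\delta$ against $M$ so that the $A^{-\delta}\mathcal E_{s_+}$ loss is strictly smaller than the $\delta\mathcal E_{s_+}$ coercive gain --- this is what allows the Morawetz bound to be combined with \fref{monoenoiencle} into a true Lyapunov functional of the form $\mathcal E_{s_+} + b_1\mathcal M_0$.
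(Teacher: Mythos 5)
You follow the paper's route in broad outline: the localized Morawetz identity applied to $\e_{2(k_++L_+)}$, the weighted Hardy inequality induced by the multiplier $\psi_A'=\chi_A y^{1-\delta}$, the sharp virial coercivity tied to $p>p_{JL}$, and error control recycling the bounds of Proposition \ref{AEI2}. That skeleton is correct. However, there are three concrete slips in the coercivity step that would derail a detailed write-up.

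First, you misquote \fref{positivityh}: it states $L_\pm > -\Delta + \frac{1}{|y|^2}\bigl[c_p - \frac{(d-2)^2}{4}\bigr]$, not $L_\pm > -\Delta + c_p/y^2$. Since $c_p$ is a small positive constant, the coefficient $c_p - \frac{(d-2)^2}{4}$ is \emph{negative}: the potential only barely undershoots the sharp Hardy barrier. Second, and as a consequence, the quantity you call a "strictly positive Hardy surplus $(c_p - (d-2)^2/4)\int y^{-2-\delta}|f|^2$" is actually negative. The $c_p>0$ gain only appears after assembling all three virial contributions: the constant $(1-\delta)^2\bigl(\frac{d-2-\delta}{2}\bigr)^2$ extracted from $\int\psi_A''|\nabla f|^2$ via weighted Hardy, the bilaplacian piece $-\frac14\Delta^2\psi_A\propto\delta\,y^{-2-\delta}$, and the potential term $\tfrac12 V'_\pm\psi_A'\geq -\bigl[\frac{(d-2)^2}{4}-c_p\bigr]y^{-2-\delta}$; the first and third cancel to leading order and leave $c_p$. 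Third, you estimate $\nabla\psi_A\cdot\nabla W_\pm$ by $y^{-3-\delta}$, omitting the weight: with $\psi_A'=y^{1-\delta}$ and $\pa_y W_\pm\sim y^{-3}$ one gets $y^{-2-\delta}$, the \emph{same} scale as the Hardy surplus. There is no cheap absorption here; the sharp cancellation of the preceding point is precisely what the mechanism rests on.

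On the treatment of $\widetilde{\Mod}$, the paper takes a route simpler than your proposed extraction of $s$-derivatives through \fref{improvedboundbl}, \fref{improvedboundal}. It instead observes that on $\mathrm{Supp\,}\psi_A'\subset\{y\leq 2A\}$ one has $\chi_{B_1}\equiv 1$ and $\Lt^{k_++L_+}\Phi_{L_\pm,\pm}=\Lt^{k_\pm}\Phi_{0,\pm}=0$ since $k_\pm\geq 1$; consequently the radiative and $\pa_s\xi_\pm$ corrections in the decomposition \fref{defwdoheat} vanish identically in the Morawetz multiplier zone, and only $\widehat{\Mod}_1,\widehat{\Mod}_2$ survive. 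Your time-derivative extraction would also close, but it reintroduces oscillatory corrections into $\mathcal M$ that are simply not needed once this local vanishing is noticed.
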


\begin{proof}[Proof of Lemma \ref{boundlemmamorawetz}]

\noindent{\bf step 1} The Morawetz identity. Let $v$ be a solution of
\be
\label{ceceee}
\pa_sv=\Lt v+G
\ee
For a compactly supported smooth function $\psi$ the Morawetz type identity takes the form 
\bee
&&\frac 12\frac{d}{ds}\left\{\Im\left(\int \nabla \psi\cdot\nabla v\overline{v}\right)\right\}=-\Im\left(\int\pa_sv\left[\overline{\frac{\Delta \psi}{2}v+\nabla \psi\cdot\nabla v}\right]\right)\\
& = & -\Im\left(\int\left[\Lt v+G\right]\left[\overline{\frac{\Delta \psi}{2}v+\nabla \psi\cdot\nabla v}\right]\right)\\
& = & \int L_+\Re v\left[\frac{\Delta \psi}{2}\Re v+\nabla \psi\cdot\nabla \Re v\right]+\int L_-\Im v\left[\frac{\Delta \psi}{2}\Im v+\nabla \psi\cdot\nabla \Im v\right]\\
&-& \Im\left(\int G\left[\overline{\frac{\Delta \psi}{2}v+\nabla \psi\cdot\nabla v}\right]\right)\\
\eee
For any potential $V$ and real valued radially symmetric function $u$:
$$
\int (-\Delta -V)u\left[\frac{\Delta \psi}{2} u+\nabla \psi\cdot\nabla u\right]=\int \psi''|\nabla u|^2-\frac 14\int \Delta^2\psi u^2+\frac12\int\nabla V\cdot\nabla \psi u^2.
$$
Using \fref{envnoevne} we observe that for $V=V_+=pQ^{p-1}$ we have the {\it lower bound} :
\bee
\nonumber \frac 12 y\pa_yV&=&\frac{p(p-1)}{2}y\pa_y QQ^{p-2}=\frac{p(p-1)}{2}Q^{p-2}\left[\frac{2}{p-1}Q+y\pa_y Q\right]-pQ^{p-1}\\
& = & \frac{p(p-1)}{2}Q^{p-2}\Lambda Q-pQ^{p-1}\geq -pQ^{p-1}\geq -\frac{\left[\frac{(d-2)^2}{4}-c_p\right]}{y^2},
\eee
for some universal constant $c_p>0$, where the last inequality follows from the positivity of the operator $L_+$, \eqref{positivityh}. The same argument also applies to $V=V_-=Q^{p-1}$.
This gives the lower bound on the virial quadratic form:
\bea
\label{lowerboundpoetnei}
\nonumber && \int L_+\Re v\left[\frac{\Delta \psi}{2}\Re v+\nabla \psi\cdot\nabla \Re v\right]+\int L_-\Im v\left[\frac{\Delta \psi}{2}\Im v+\nabla \psi\cdot\nabla \Im v\right]\\
 & \geq & \int \psi''|\nabla v|^2 -\left[\frac{(d-2)^2}{4}-c_p\right]\int \frac{|\pa_y\psi|}{y}\frac{|v|^2}{y^2}-\frac 14\int \Delta^2\psi|v|^2
 \eea
 Let now $u$ be spherically symmetric, real valued. We have the following  weighted Hardy bound for $0<\delta\ll1$:
 \bee
 &&\int \frac{\chi}{y^{\delta}}\left(\pa_yu+\frac{\beta}{y}u\right)^2y^{d-1}dy= \int \frac{\chi}{y^{\delta}}\left[(\pa_yu)^2+\frac{\beta^2}{y^2}u^2+2\frac{\beta}{y}u\pa_y u\right]y^{d-1}dy\\
 & = & \int \frac{\chi}{y^{\delta}}(\pa_y u)^2+\int \frac{u^2}{y^{2+\delta}}\left[(\beta^2-\beta(d-\delta-2))\chi-\beta y\chi'\right]
\eee 
For the optimal choice $\beta=\frac{d-2-\delta}2$,
\bee
 \int \frac{\chi}{y^{\delta}}(\pa_yu)^2\geq \left(\frac{d-2-\delta}{2}\right)^2\int \chi\frac{u^2}{y^{2+\delta}}-C\int \frac{|y\chi'|}{y^{2+\delta}}u^2
 \eee
 with $C$ independent of $\chi,\delta$ in the range $0<\delta\ll1$. With the choices of $\psi$ in  \fref{choicecut} and $\chi$ in \fref{sahepchi}:
 \bee
 \int \psi_A''|\nabla v|^2&=&\int \left[\chi'_Ay^{1-\delta}+\frac{\chi_A(1-\delta)}{y^{\delta}}\right]|\nabla v|^2\\
 &\geq & \delta \int \frac{\chi_A}{y^{\delta}}|\nabla v|^2+(1-\delta)^2\left(\frac{d-2-\delta}{2}\right)^2\int \chi_A\frac{u^2}{y^{2+\delta}}-\frac{C}{A^{\delta}}\int_{y\geq A}\left[ |\nabla u|^2+\frac{u^2}{1+y^2}\right].
 \eee
 Moreover, by a  direct computation:
 $$-\Delta^2\psi_A=\frac{\delta(d-\delta)(d-\delta-2)}{4}\frac{\chi_A}{y^{2+\delta}}+O\left(\frac{1}{A^{\delta}y^2}{\bf 1}_{A\leq y\leq 2A}\right)$$ and hence using \fref{lowerboundpoetnei}:
\bee
&& \int L_+\Re v\left[\frac{\Delta \psi_A}{2}\Re v+\nabla \psi_A\cdot\nabla \Re v\right]+\int L_-\Im v\left[\frac{\Delta \psi_A}{2}\Im v+\nabla \psi_A\cdot\nabla \Im v\right]\\
& \geq & \delta \int \frac{\chi_A}{y^{\delta}}|\nabla v|^2+\left[c_p-\frac{(d-2)^2}{4}+(1-\delta)^2\left(\frac{d-2-\delta}{2}\right)^2\right]\int  \chi_A\frac{u^2}{y^{2+\delta}}\\
& - & \frac 14\int \Delta^2\psi_A |v|^2-\frac{C}{A^{\delta}}\int_{y\geq A}\left[ |\nabla v|^2+\frac{|v|^2}{1+y^2}\right]\\
& \geq & \delta\int \frac{\chi_A}{y^{\delta}}\left[|\nabla v|^2+\frac{|v|^2}{y^2}\right]-\frac{C}{A^{\delta}}\int_{y\geq A}\left[ |\nabla v|^2+\frac{|v|^2}{1+y^2}\right]
\eee
for $0<\delta<\delta(p)$ small enough. We have therefore obtained the monotonicity formula for solutions to \fref{ceceee}:
\bee
&&\frac 12\frac{d}{ds}\left\{\Im\left(\int \nabla \psi_A\cdot\nabla v\overline{v}\right)\right\}\geq \delta\int \frac{1}{1+y^{\delta}}\left[|\nabla v|^2+\frac{|v|^2}{y^2}\right]\\
\nonumber &-& \Im\left(\int G\left[\overline{\frac{\Delta \psi_A}{2}v+\nabla \psi\cdot\nabla v_A}\right]\right) -\frac{C}{A^{\delta}}\int_{y\geq A}\left[ |\nabla u|^2+\frac{u^2}{1+y^2}\right]
\eee
with $C>0$ independent of $A,\delta$. We now fix, once and for all, a small $\delta$ with $$0<\delta \ll g$$ where $g$ is given by \fref{defg}, and apply this identity to \fref{eqioneonenevo} to obtain:
\bee
&&\frac 12\frac{d}{ds}\left\{\Im\left(\int \nabla \psi_A\cdot\nabla \e_{2(k_++L_+)}\overline{\e_{2(k_++L_+)}}\right)\right\}\\
\nonumber &\geq&  \delta\int \frac{1}{1+y^{\delta}}\left[|\nabla \e_{2(k_++L_+)}|^2+\frac{|\e_{2(k_++L_+)}|^2}{y^2}\right]- \frac{C}{A^{\delta}}\matchal E_{s_+}\\
\nonumber &-& \Im\left(\int \Lt^{k_++L_+}\left[\lsl\Lambda \e-\gamma_sJ\e+ F - \widetilde{\Mod}\right]\right.\\
\nonumber & \times& \left.\left[\overline{\frac{\Delta \psi_A}{2}\e_{2(k_++L_+)}+\nabla \psi_A\cdot\nabla \e_{2(k_++L_+)}}\right]\right).
\eee
The space localization of $\chi_A$ gives the rough bound: 
$$\left|\Im\left(\int \nabla \psi_A\cdot\nabla \e_{2(k_++L_+)}\overline{\e_{2(k_++L_+)}}\right)\right|\lesssim A^CC(M)\matchal E_{s_+}.$$ 
Combining it with $$\left|\lsl\right|\lesssim b_1, \ \ |(b_1)_s|\lesssim b_1^2,$$ we obtain:
\bea
\label{morawetzfinal}
&&\frac{\l^{2(s_+-s_c)}}2\frac{d}{ds}\left\{\frac{b_1}{\l^{2(s_+-s_c)}}\Im\left(\int \nabla \psi_A\cdot\nabla \e_{2(k_++L_+)}\overline{\e_{2(k_++L_+)}}\right)\right\}\\
\nonumber & \geq & \delta b_1 \int \frac{1}{1+y^{\delta}}\left[|\nabla \e_{2(k_++L_+)}|^2+\frac{|\e_{2(k_++L_+)}|^2}{y^2}\right]- \left[\frac{C}{A^{\delta}}+A^Cb_1\right]b_1\matchal E_{s_+}\\
\nonumber &-& b_1\Im\left(\int \Lt^{k_++L_+}\left[\lsl\Lambda \e-\gamma_sJ\e+ F - \widetilde{\Mod}\right]\right.\\\nonumber &\times& \left.\left[\overline{\frac{\Delta \psi_A}{2}\e_{2(k_++L_+)}+\nabla \psi_A\cdot\nabla \e_{2(k_++L_+)}}\right]\right).
\eea
We now estimate the last term on the rhs of \fref{morawetzfinal}.\\

\noindent{\bf step 2} Quadratic terms. Using the space localization of $\chi_A$,
\bee
\left|b_1\Im\left(\int \Lt^{k_++L_+}\left[\lsl\Lambda \e-\gamma_sJ\e\right]\left[\overline{\frac{\Delta \psi_A}{2}\e_{2(k_++L_+)}+\nabla \psi_A\cdot\nabla \e_{2(k_++L_+)}}\right]\right)\right|\lesssim b_1^2C(M)A^C\matchal E_{s_+}.
\eee

\noindent{\bf step 3} Nonlinear terms. We estimate from \fref{estpistitit}:
\bee
&&\left|b_1\Im\left(\int \Lt^{k_++L_+}\Psit\left[\overline{\frac{\Delta \psi_A}{2}\e_{2(k_++L_+)}+\nabla \psi_A\cdot\nabla \e_{2(k_++L_+)}}\right]\right)\right|\\
&\lesssim& b_1C(M)A^Cb_1^{L_++1+(1-\dk)+\eta(1-\dep)}\sqrt{\mathcal E_{s_+}}\lesssim b_1\left[b_1^{2L_++2(1-\dk)+2\eta(1-\dep)}+C(M)A^Cb_1\mathcal E_{s_+}\right].
\eee
Similarily, from \fref{controlnonlineaire}:
\bee
&&\left|b_1\Im\left(\int \Lt^{k_++L_+}N(\e)\left[\overline{\frac{\Delta \psi_A}{2}\e_{2(k_++L_+)}+\nabla \psi_A\cdot\nabla \e_{2(k_++L_+)}}\right]\right)\right|\\
& \lesssim & b_1b_1\sqrt{\mathcal E_{s_+}}C(M)A^C\sqrt{\matchal E_{s_+}}\leq b_1\sqrt{b_1}\matchal E_{s_+}.
\eee
Next from \fref{boundle}:
\bee
&&\left|b_1\Im\left(\int \Lt^{k_++L_+}L(\e)\left[\overline{\frac{\Delta \psi_A}{2}\e_{2(k_++L_+)}+\nabla \psi_A\cdot\nabla \e_{2(k_++L_+)}}\right]\right)\right|\\
& \lesssim & b_1b_1\sqrt{\mathcal E_{s_+}}C(M)A^C\sqrt{\matchal E_{s_+}}\leq b_1\sqrt{b_1}\matchal E_{s_+}.
\eee

\noindent{\bf step 4} Modulation equation terms. We recall the explicit expression \fref{defmodtuntilde}:
\bee
\widetilde{Mod}(t)& = & -\left(\lsl+b_1\right)\Lambda \qbt+(\gamma_s-a_1)J\qbt\\
\nonumber & + & \sum_{j=1}^{L_+}\left[(b_j)_s+(2j-\alpha)b_1b_j-b_{j+1}\right]\chi_{B_1}\left[\Phi_{j,+}+\sum_{m=j+1}^{L_++2}\frac{\partial S_{m,+}}{\partial b_j}+\sum_{m=j+1}^{L_-+2}\frac{\pa S_{m,-}}{\pa b_j}\right]\\
\nonumber& + &  \sum_{j=1}^{L_-}\left[(a_j)_s+2jb_1a_j-a_{j+1}\right]\chi_{B_1}\left[\Phi_{j,-}+\sum_{m=j+1}^{L_++2}\frac{\pa S_{m,+}}{\pa a_j}+\sum_{m=j+1}^{L_-+2}\frac{\partial S_{m,-}}{\partial a_j}\right].
\eee
Observe that since $k_+\geq 1$ 
$$\Lt^{k_++L_+}(\chi_{B_1}\Phi_{L_+,+})=\Lt^{k_+}\Lambda Q=0, \ \ \Lt^{k_++L_+}(\chi_{B_1}\Phi_{L_-,-})=\Lt^{k_++\Delta k}JQ=0\ \ \mbox{on}\ \ {\rm Supp}\,\psi_A'$$ and thus with the decomposition \fref{defwdoheat}: 
\be
\label{cnjeijbbeibje}
\Lt^{k_++L_+}\widetilde{\Mod}=\Lt^{k_++L_+}\left[\widehat{\Mod}_1+\widehat{\Mod}_2\right]\ \ \mbox{on}\ \  {\rm Supp}\,\psi_A'.
\ee
We estimate from \fref{tobeprovedmoepe}, \fref{tobeprovedmoepebis}, \fref{boundleltj}: for $j=1,2$,
$$\int\frac{|\Lt^{k_++L_+}\widehat{\Mod}_j|^2}{1+y^2}\lesssim b_1^2\left[C(M)\mathcal E_{s_+}+b_1^{2L_++2(1-\dk)+2\eta(1-\dep)}\right]$$
and therefore 
\bee
&&\left|b_1\Im\left(\int \Lt^{k_++L_+}\widehat{\Mod}_j\left[\overline{\frac{\Delta \psi_A}{2}\e_{2(k_++L_+)}+\nabla \psi_A\cdot\nabla \e_{2(k_++L_+)}}\right]\right)\right|\\
& \lesssim & b_1b_1\left[\sqrt{\mathcal E_{s_+}}+b_1^{L_++(1-\dk)+\eta(1-\dep)}\right]C(M)A^C\sqrt{\matchal E_{s_+}}\\
&\leq & b_1\left[b^{\frac 12}_1\matchal E_{s_+}+b_1^{2L_++2(1-\dk)+2\eta(1-\dep)}\right].
\eee
This concludes the proof of \fref{morawetzbound}, \fref{defmahtcam}.
\end{proof}


\subsection{Monotonicity for the low Sobolev norm}
\label{monotlowsob}


We claim a similar monotonicity formula for the low Sobolev energy.

\begin{lemma}[Monotonicity for the low Sobolev energy]
\label{lemmapouetue}
For $0<b_1<b_1^*(L_+,d,p,M)$ small enough:
\be
\label{monotnoesigma}
\frac{d}{dt}\left\{\frac{\|\nabla^{\sigma}\e\|_{L^2}^2}{\l^{2(\sigma-s_c)}}\right\}\leq \frac{b_1}{\l^{2(\sigma-s_c)+2}}\left[b_1^{\frac{c}{L_+}}\|\nabla^{\sigma}\e\|_{L^2}^2+b_1^{\sigma-s_c+\nu_0}\right]
\ee
with some universal constants $c(d,p,\ell),\nu_0(d,p)>0$
independent of $\sigma$ in the range \fref{choicesigma}.

\end{lemma}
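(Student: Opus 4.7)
The plan is to adapt the strategy of Proposition~\ref{AEI2} to the low regularity level $\sigma\in(s_c,d/2)$, exploiting the algebra structure of $\dot H^\sigma\cap\dot H^{s_+}$. Working in renormalized variables and using that, via integration by parts together with the commutator relation $[\nabla^\sigma,y\cdot\nabla]=\sigma\nabla^\sigma$, one has $(\nabla^\sigma\Lambda\e,\nabla^\sigma\e)=(\sigma-s_c)\|\nabla^\sigma\e\|_{L^2}^2$, the scaling contribution $2(\lambda_s/\lambda)(\sigma-s_c)\|\nabla^\sigma\e\|_{L^2}^2$ cancels exactly against the $t$-derivative of $\lambda^{-2(\sigma-s_c)}$. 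This leaves
\[
\frac{d}{dt}\!\left\{\frac{\|\nabla^\sigma\e\|_{L^2}^2}{\lambda^{2(\sigma-s_c)}}\right\}=\frac{2}{\lambda^{2(\sigma-s_c)+2}}\Big[(\nabla^\sigma\Lt\e,\nabla^\sigma\e)+(\nabla^\sigma\mathcal F,\nabla^\sigma\e)\Big].
\]

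For the linear term I write $\Lt=-i\mathcal L$ with $\mathcal L$ self-adjoint; since $[\nabla^\sigma,\Delta]=0$, the contribution reduces to a Kato-Ponce type commutator with the potential $Q^{p-1}$ and its complex-conjugate partner. The decay $|\nabla^k Q^{p-1}|\lesssim\langle y\rangle^{-2-k}$ combined with a fractional Leibniz estimate bounds this by $C\|\nabla^{\sigma-1}\e\|_{L^2}\|\nabla^\sigma\e\|_{L^2}$ plus manageable lower-order terms. I then interpolate $\|\nabla^{\sigma-1}\e\|_{L^2}\lesssim\|\nabla^\sigma\e\|_{L^2}^{1-\theta}\|\nabla^{s_+}\e\|_{L^2}^{\theta}$ with $\theta=1/(s_+-\sigma+1)=O(1/L_+)$ and invoke the bootstrap \fref{bootnorm} on $\mathcal E_{s_+}$, producing a factor $b_1^{c/L_+}$ upon rescaling to the renormalized variable.

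The forcing $\mathcal F=-\Psit+L(\e)-N(\e)-\Modt-\gamma_sJ\e$ is treated term by term. The $\Psit$ contribution is bounded directly by \fref{smallsobolevbound}, yielding the advertised $b_1^{\sigma-s_c+\nu_0}$ term with $\nu_0=\nu_1$. The small linear term $L(\e)=J[f'(Q)-f'(\qbt)]\e$ uses \fref{estradiiation} to obtain $|\nabla^k(f'(Q)-f'(\qbt))|\lesssim b_1\langle y\rangle^{-2-k}$, giving $\|\nabla^\sigma L(\e)\|_{L^2}\lesssim b_1\|\nabla^\sigma\e\|_{L^2}$. The nonlinear term $N(\e)$ is controlled via the algebra property of $\dot H^\sigma\cap\dot H^{s_+}$ (valid since $\sigma>s_c$ is close to $d/2$ and $s_+\gg d/2$) together with the bootstrap bounds on $\mathcal E_{s_+}$ and $\|\nabla^\sigma\e\|_{L^2}$, following the paraproduct/interpolation tree of step~7 of Proposition~\ref{AEI2} adapted to regularity $\sigma$. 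Finally, $\Modt$ is controlled using the modulation estimates \fref{parameters}--\fref{parameterspresicelya} and the pointwise bounds on $\Phi_{j,\pm}$ and $S_{j,\pm}$ in $y\leq 2B_1$; at the low regularity $\sigma\ll s_+$ there is no loss analogous to the high-Sobolev case, and the time-oscillation trick of Lemma~\ref{improvedbl} is not needed here.

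The main technical obstacle is extracting the $b_1^{c/L_+}$ smallness from the linear commutator: unlike the high Sobolev case where coercivity of $\Lt^{k_++L_+}$ is available, at fractional regularity this is unavailable and one must rely on Kato-Ponce commutator estimates interpolated against the higher Sobolev control provided by $\mathcal E_{s_+}$. A secondary subtlety is ensuring that $\nu_0>0$ remains universal and independent of $\sigma$ in the range \fref{choicesigma}, which requires careful tracking of the interpolation exponents, using $s_+\gg d/2>\sigma>s_c$ to keep all of them bounded away from critical thresholds.
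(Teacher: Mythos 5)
Your outline runs roughly parallel to the paper's argument for the forcing terms ($\Psit$ via \fref{smallsobolevbound}, $L(\e)$ via \fref{cneorgoeoho}, $N(\e)$ via the algebra, and $\Modt$ without the $\xi_\pm$ trick), and your algebraic identity $(\nabla^\sigma\Lambda\e,\nabla^\sigma\e)=(\sigma-s_c)\lVert\nabla^\sigma\e\rVert_{L^2}^2$ is a correct alternative way to see the cancellation of the scaling term (the paper instead works with $w$ in original variables and uses $\lVert\nabla^\sigma w\rVert_{L^2}^2=\lambda^{-2(\sigma-s_c)}\lVert\nabla^\sigma\e\rVert_{L^2}^2$, equivalent bookkeeping). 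However your treatment of the linear potential term has a genuine gap.

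You reduce the commutator contribution to $\lVert\nabla^{\sigma-1}\e\rVert_{L^2}\lVert\nabla^\sigma\e\rVert_{L^2}$ and then claim the interpolation $\lVert\nabla^{\sigma-1}\e\rVert_{L^2}\lesssim\lVert\nabla^\sigma\e\rVert_{L^2}^{1-\theta}\lVert\nabla^{s_+}\e\rVert_{L^2}^{\theta}$ with $\theta>0$. This is not a valid interpolation: since $\sigma-1<\sigma<s_+$, no convex combination of $\sigma$ and $s_+$ produces the exponent $\sigma-1$, and formally you would need $\theta<0$. Worse, because $\sigma$ is chosen only slightly above $s_c$ in the range \fref{choicesigma}, the exponent $\sigma-1$ lies strictly \emph{below} $s_c$, and the bootstrap provides no control whatsoever on $\lVert\nabla^{\sigma-1}\e\rVert_{L^2}$ — norms below scaling blow up. Even if you invoke the weight $\lvert\nabla W_\pm\rvert\lesssim\langle y\rangle^{-3}$ to compensate the lost derivative via Hardy, you recover at best $\lesssim\lVert\nabla^\sigma\e\rVert_{L^2}^2$ with an $O(1)$ constant, so the crucial gain $b_1^{c/L_+}$ never appears; the argument cannot close.

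The fix is to move derivatives in the \emph{other} direction: integrate by parts asymmetrically and place $\sigma+2$ derivatives on one copy of $\e$ and $\sigma-2$ on $W_\pm\e$. Since $W_\pm\sim\langle y\rangle^{-2}$ and $\sigma<d/2$, the fractional Hardy inequality of Lemma \ref{fracthardy} gives $\lVert\nabla^{\sigma-2}(W_\pm\e)\rVert_{L^2}\lesssim\lVert\nabla^\sigma\e\rVert_{L^2}$, with the weight exactly compensating the two derivatives sacrificed. Meanwhile $\sigma+2>\sigma$ \emph{is} in the interpolation range $[\sigma,s_+]$, so
$$\lVert\nabla^{\sigma+2}\e\rVert_{L^2}\lesssim\lVert\nabla^\sigma\e\rVert_{L^2}^{z_{L_+}}\lVert\nabla^{s_+}\e\rVert_{L^2}^{1-z_{L_+}},\qquad 1-z_{L_+}=\frac{2}{s_+-\sigma}=O\!\left(\frac{1}{L_+}\right),$$
which, combined with \fref{bootnorm}, is what produces the factor $b_1^{c/L_+}$. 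This asymmetric integration by parts, rather than a symmetric Kato--Ponce split, is the key device.
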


\begin{proof}[Proof of Lemma \ref{lemmapouetue}]

\noindent{\bf step 1} Energy identity. Recall \fref{defF}, \fref{eqenwini}, we compute using \fref{wlpusmoinus}:
\bea
\label{nceoneobehe}
 &&\frac 12\frac d{dt}\int|\nabla^{\sigma}w|^2=\Re\left(\int\pa_tw\overline{\nabla^{2\sigma}w}\right)=\Re\left(\int\left[\Lt_\l w+\frac 1{\l^2}\mathcal F_\lambda\right]\overline{\nabla^{2\sigma}w}\right)\\
\nonumber & = & \frac{1}{\l^{2+2(\sigma-s_c)}}\Re\left(\int \left[\left(\begin{array}{ll} 0&-W_-\\ W_+&0\end{array}\right)\e-\Psit_b+\widetilde{Mod}+L(\e)-N(\e)\right]\overline{\nabla^{2\sigma}\e}\right).
\eea
We now estimate all the terms on the rhs of \fref{nceoneobehe}.

\noindent {\bf step 2} Potential term. The potentials $W_\pm$ satisfy \fref{estf} with $\mu=2$. Using Lemma \ref{fracthardy} with $\nu=\sigma-2$ so that $\nu+\mu=\sigma<\frac d2$:
\bee
\left|\int\left(\begin{array}{ll} 0&-W_-\\ W_+&0\end{array}\right)\e\overline{\nabla^{2\sigma}\e}\right|&\lesssim& \left[\|\nabla^{\sigma-2}(W_+\e)\|_{L^2}+\|\nabla^{\sigma-2}(W_-\e)\|_{L^2}\right]\|\nabla ^{\sigma+2}\e\|_{L^2}\\
&\lesssim & \|\nabla^{\sigma}\e\|_{L^2} \|\nabla ^{\sigma+2}\e\|_{L^2}\lesssim \|\nabla^{\sigma}\e\|_{L^2}^{1+z_{L_+}}\|\nabla^{s_+}\e\|_{L^2}^{1-z_{L_+}}\\
& \lesssim & (b^{\frac{1+\nu}2}_1\|\nabla^{\sigma}\e\|_{L^2})^{1+z_{L_+}}(b_1^{-\frac{(1+\nu)(1+z_{L_+})}{2(1-z_{L_+})}}\|\nabla^{s_+}\e\|_{L^2})^{1-z_{L_+}}\\
& \lesssim & b^{1+\nu}_1\|\nabla^{\sigma}\e\|_{L^2}^2+M^{C_{L_+}}b_1^{-\frac{(1+\nu)(1+z_{L_+})}{1-z_{L_+}}}\|\nabla^{s_+}\e\|^2_{L^2}
\eee
with $$\nu=\frac{1}{2L_+}, \ \ \sigma+2=z_{L_+}\sigma+(1-z_{L_+})s_+.$$
We now compute $$-\frac{1+z_{L_+}}{1-z_{L_+}}=1-\frac{2}{1-z_{L_+}}=1-(s_+-\sigma)$$ and hence using \fref{calculimportant}, \fref{bootnorm}, \fref{dgammkrealtion}:
\bee
b_1^{-\frac{(1+\nu)(1+z_{L_+})}{1-z_{L_+}}}\|\nabla^{s_+}\e\|_{L^2}^2&\lesssim& Kb_1^{2L_++2(1-\dk)+2\eta(1-\dep)-(1+\nu)(s_+-\sigma-1)}\\
&\lesssim&  Kb_1^{\sigma-s_c+\alpha+1+2\eta(1-\dep)-\nu(2L_++O(1))}\leq b_1b_1^{\sigma-s_c+\frac\alpha 2}
\eee
for $b_1<b_1^*(M)$ small enough. We have therefore obtained the expected bound:
$$\left|\int\left(\begin{array}{ll} 0&-W_-\\ W_+&0\end{array}\right)\e\overline{\nabla^{2\sigma}\e}\right|\leq b_1\left[b_1^{\frac{c}{L_+}}\|\nabla^{\sigma}\e\|_{L^2}^2+b_1^{\sigma-s_c+\frac\alpha 2}\right].$$

\noindent{\bf step 3} $\Psit_b$ term. We recall the Sobolev bound \fref{smallsobolevbound}:
$$\|\nabla^{\sigma}\Psit\|^2_{L^2}\le b_1^{2+\sigma-s_c+\nu_1}, \ \ \nu_1=\nu(d,p)>0$$
which implies
\bee|(\Psit_b,\nabla^{2\sigma}\e)|&\lesssim &\|\nabla^\sigma \e\|_{L^2}\|\nabla^{\sigma}\Psit_b\|_{L^2}\lesssim b_1\|\nabla^{\sigma}\e\|_{L^2}\left( b_1^{\sigma-s_c+\nu_1}\right)^{\frac 12}\\
& \lesssim & b_1\left[b_1^{\frac{\nu_1} 2}\|\nabla^\sigma\e\|_{L^2}^2+b_1^{\sigma-s_c+\frac{\nu_1} 2}\right].
\eee

\noindent{\bf step 4} $\widetilde{Mod}$ term. Let $$\widetilde{\widetilde{\Mod}}=\widetilde{\Mod}-(\gamma_s-a_1)JQ,$$ we claim the bound:
\be
\label{estmodsobolevinterm}
\|\nabla^{2k_++1}\widetilde{\widetilde{\Mod}}\|_{L^2}^2\lesssim b_1^{2(1-\dk)}.
\ee
Assume \fref{estmodsobolevinterm}, we then observe $$2\sigma-(2k_++1)=\sigma+\sigma-s_c+\alpha-2+2\dk>\sigma$$ and interpolate:
$$|(\widetilde{\widetilde{\Mod}},\nabla^{2\sigma}\e)|\lesssim \|\nabla^{2\sigma-(2k_++1)}\e\|_{L^2}\|\nabla^{2k_++1}\widetilde{Mod}\|_{L^2}\lesssim \|\nabla^{\sigma}\e\|_{L^2}^{z_+}\|\nabla^{s_+}\e\|_{L^2}^{1-z_+}b_1^{2(1-\dk)}$$
with $$2\sigma-(2k_++1)=z_+\sigma+(1-z_+)s_+, \ \ 1-z_+=\frac{\sigma-(2k_++1)}{s_+-\sigma}=\frac{\sigma-s_c+\alpha-2(1-\dk)}{2L_+}+O\left(\frac 1{L_+^2}\right).$$
Therefore, 
\bee
|(\widetilde{\widetilde{\Mod}},\nabla^{2\sigma}\e)|&\lesssim& \|\nabla^{\sigma}\e\|_{L^2}b_1^{1-\dk}b_1^{\frac{\sigma-s_c+\alpha-2(1-\dk)}{2}+O\left(\frac 1{L_+}\right)}\\
& \lesssim & b_1^{\frac{\sigma-s_c+\alpha}{2}}\|\nabla^{\sigma}\e\|_{L^2}\lesssim b_1\left[b_1^{\frac {\nu_0}{2}}\|\nabla^{\sigma}\e\|^2_{L^2}+b_1^{\sigma-s_c+\frac{\nu_0}{2}}\right]
\eee
for some $\nu_0(d,p)>0$, thanks to $\alpha>2$.
The second term is estimated from \fref{parameters}:
\bee
|((\gamma_s-a_1)JQ,\nabla^{2\sigma}\e)|\lesssim  b_1^{L_++1+(1-\dk)+\eta(1-\dep)}\|\nabla^{\sigma}Q\|_{L^2}\|\nabla^\sigma\e\|_{L^2}\lesssim b_1\left[b_1\|\nabla^{\sigma}\e\|_{L^2}^2+b_1^{2L_+-1}\right].
\eee
This gives the desired control of the $\widetilde{\Mod}$ terms.\\
\noindent{\it Proof of \fref{estmodsobolevinterm}}: By the $\qbt$ construction:
\bee
&&\int |\nabla^{2k_++1}\Lambda \qbt|^2+\left|\int |\nabla^{2k_++1}(\qbt-Q)\right|^2 \\
& \lesssim & \int_{y\leq 2B_1}\sum_{j=0}^{L_+}b_1^{2j}(1+|y^{2j-\gamma-(2k_++1)}|^2)+ \int_{y\leq 2B_1}\sum_{j=1}^{L_-}b_1^{2j+\alpha}(1+|y^{2j-\frac 2{p-1}-(2k_++1)}|^2)\\
& + & \int_{y\leq 2B_1}\sum_{j=2}^{L_++2}b_1^{2j}(1+|y^{2(j-1)-\gamma-(2k_++1)}|^2)+b_1^{2j+\alpha}(1+|y^{2j-\gamma-(2k_++1)}|^2)\\
& + & \int_{y\leq 2B_1}\sum_{j=2}^{L_-+2}b_1^{2j+\alpha}(1+|y^{2(j-1)-\frac{2}{p-1}-(2k_++1)}|^2)+b_1^{2j+2\alpha}(1+|y^{2j-\frac{2}{p-1}-(2k_++1)}|^2)\\
& \lesssim & \sum_{j=0}^{L_++2}b_1^{2j}\int_{y\leq 2B_1}\frac{1+y^{4j}}{1+y^{1+4(1-\dk)}}dy+\sum_{j=1}^{L_-+2}\int_{y\leq 2B_1}b_1^{2j+\alpha}\frac{1+y^{4(j+\Delta k)}}{1+y^{1+4(1-\dkm)}}dy\\
&\lesssim & 1.
\eee
We now estimate for $1\leq j\leq L_+$:
\bee
&&\int _{y\leq 2B_1}\left|\nabla^{2k_++1}\left[\Phi_{j,+}+\sum_{m=j+1}^{L_++2}\frac{\pa S_{m,+}}{\pa b_j}+\sum_{m=j+1}^{L_-2+2}\frac{\pa S_{m,-}}{\pa b_j}\right]\right|^2\\
& \lesssim & \int_{y\leq 2B_1} |y^{2j-\gamma-(2k_++1)}|^2+b_1^{2(m-j)}|y^{2(m-1)-\gamma-(2k_++1)}|^2+b_1^{2(m-j)+\alpha}|y^{2m-\frac{2}{p-1}-(2k_++1)}|^2\\
& \lesssim & B_1^{4j-4(1-\dk)}+\sum_{m=j+1}^{L_++2}b_1^{2(m-j)}B_1^{4(m-1)-4(1-\dk)}+\sum_{m=j+1}^{L_-2+2}b_1^{2(m-j)+\alpha}B_1^{4m-4(1-\dkm)+4\Delta k}\\
& \lesssim & B_1^{4j-4(1-\dk)}\left[1+\sum_{m=j+1}^{L_++2}\frac{(b_1B_1^2)^{2(m-j)}}{B_1^{4}}+\sum_{m=j+1}^{L_-2+2}(b_1B_1^2)^{2(m-j)+\alpha}\right]\\
& \lesssim & B_1^{4j-4(1-\dk)-C_{L_+}\eta}.
\eee
Similarily for $1\leq j\leq L_-$:
\bee
&&\int _{y\leq 2B_1}\left|\nabla^{2k_++1}\left[\Phi_{j,-}+\sum_{m=j+1}^{L_++2}\frac{\pa S_{m,+}}{\pa a_j}+\sum_{m=j+1}^{L_-+2}\frac{\pa S_{m,-}}{\pa a_j}\right]\right|^2\\
& \lesssim & \int_{y\leq 2B_1} |y^{2j-\frac{2}{p-1}-(2k_++1)}|^2+\sum_{m=j+1}^{L_++2}b_1^{2(m-j)}|y^{2m-\frac{2}{p-1}-(2k_++1)}|^2\\
& \lesssim & B_1^{4j-4(1-\dkm)+4\Delta k}
\eee
The collection of above bounds together with \fref{parameters}, \fref{parameterspresicely}, \fref{parameterspresicelya} imply:
\bee
&&\|\nabla^{2k_++1}\widetilde{\widetilde{\Mod}}\|_{L^2}^2\lesssim K^2b_1^{2L_++2(1-\dk)+2\eta(1-\dep)}\left[B_1^{4L_+-4(1-\dk)-C_{L_+}\eta}+B_1^{4L_--4(1-\dkm)+4\Delta k}\right]\\
& \leq & b_1^{2(1-\dk)+2(1-\dep)-C_{L_+}\eta}\lesssim b_1^{2(1-\dk)}.
\eee

\noindent {\bf step 5} Nonlinear term $N(\e)$. We claim:
$$\|\nabla ^{\sigma}N(\e)\|_{L^2}^2\lesssim b_1^{2+O\left(\frac 1{L_+}\right)}\|\nabla^{\sigma}\e\|_{L^2}^2\left(\frac{\|\nabla^{\sigma}\e\|_{L^2}^2}{b_1^{\sigma-s_c}}\right)^{(k-1)\left[1+O(\frac1{L_+})\right]}$$ which, upon expanding the nonlinearity 
$$N(\e)=\sum N_{k_1,k_2}(\e), \ \ N_{k_1,k_2}(\e)=\e^{k_1}\overline{\e^{k_2}}\qbt^{q+1-k_1}\overline{\qbt^{q-k_2}}, \ \ \left\{\begin{array}{lll}0\leq k_1\leq q+1\\ 0\leq k_2\leq q\\ k_1+k_2\geq 2.\end{array}\right..$$ 
follows from: $\forall 2\leq k=k_1+k_2\leq p$,
\be
\label{tiovboebeo}
 \|\nabla^{\sigma}N_{k_1,k_2}(\e)\|_{L^2}\lesssim  b_1^{2+O\left(\frac 1{L_+}\right)}\|\nabla^{\sigma}\e\|_{L^2}^2\left(\frac{\|\nabla^{\sigma}\e\|_{L^2}^2}{b_1^{\sigma-s_c}}\right)^{(k-1)\left[1+O(\frac1{L_+})\right]}.
\ee
This implies from the bootstrap bound \fref{bootsmallsigma} and \fref{choicesigma}:
\bee
|(N(\e),\nabla^{2\sigma}\e)|&\lesssim &b_1^{1+O\left(\frac 1{L_+}\right)}\sum_{k=2}^p\|\nabla^{\sigma}\e\|_{L^2}^2\left(\frac{\|\nabla^{\sigma}\e\|_{L^2}^2}{b_1^{\sigma-s_c}}\right)^{\frac{k-1}{2}}\leq b_1b_1^{c(\sigma-s_c)}\|\nabla^\sigma\e\|_{L^2}^2\\
& \lesssim & b_1b_1^{\frac{c}{L_+}}\|\nabla^\sigma\e\|_{L^2}^2
\eee
for some universal constant $c>0$, where we used \fref{choicesigma} in the last step.\\
\noindent{\it Proof of \fref{tiovboebeo}}: We observe from \fref{ebibvebibebve} the bound:
\be\label{nekneoenone}
\left|\pa_y^j\left(\qbt^{q+1-k_1}\overline{\qbt^{q-k_2}}\right)\right|\lesssim \frac{1}{1+y^{\frac{2(p-k)}{p-1}+j}}, \ \ j\geq0.
\ee 
We decompose $$\sigma=s+\delta_\sigma, \ \ s\in \Bbb N^*, \ \ 0\leq \delta_{\sigma}<1$$ and develop with the help of the Leibniz rule:
$$\|\nabla^{\sigma}\e^{k_1}\overline{\e^{k_2}}\qbt^{q+1-k_1}\overline{\qbt^{q-k_2}}\|_{L^2}\lesssim \sum_{i=0}^s\|\nabla^{\delta_{\sigma}}\left[\nabla^i(\e^{k_1}\overline{\e}^{k_2})\nabla^{s-i}(\qbt^{q+1-k_1}\overline{\qbt^{q-k_2}})\right]\|_{L^2}.$$ We now consider separate cases, depending on the value of $i$:\\
\noindent\underline{Case $2\leq i\leq s$}: In this case, from $2\leq k\leq p$,
$$0< \delta_\sigma+\frac{2(p-k)}{p-1}+s-i=\sigma+\frac{2(p-k)}{p-1}-i\leq \sigma+\frac{2(p-2)}{p-1}-2<\sigma<\frac d2$$ and we therefore estimate using \fref{nekneoenone} and the fractional Hardy estimate \fref{fracthardy}:
\bee
\|\nabla^{\delta_{\sigma}}\left[\nabla^i(\e^{k_1}\overline{\e}^{k_2})\nabla^{s-i}(\qbt^{q+1-k_1}\overline{\qbt^{q-k_2}})\right]\|_{L^2}&\lesssim& \|\nabla^{\delta_\sigma+\frac{2(p-k)}{p-1}+s-i}\nabla^i(\e^{k_1}\overline{\e}^{k_2})\|_{L^2}\\
&\lesssim& \|\nabla^{\sigma+\frac{2(p-k)}{p-1}}(\e^{k_1}\overline{\e}^{k_2})\|_{L^2}.
\eee
We now claim the nonlinear estimate: $\forall 2\leq k=k_1+k_2\leq p$, $\forall \sigma\leq \beta \ll s_+$,
\be
\label{estnonlineseigma}
 \|\nabla ^{\beta}(\e^{k_1}\overline{\e}^{k_2})\|_{L^2}^2\lesssim b_1^{2-\frac{2(p-k)}{p-1}+\beta-\sigma+O\left(\frac 1{L_+}\right)}\|\nabla^{\sigma}\e\|_{L^2}^2\left(\frac{\|\nabla^{\sigma}\e\|_{L^2}^2}{b_1^{\sigma-s_c}}\right)^{(k-1)\left[1+O(\frac1{L_+})\right]}
\ee
which is proved below. This yields the expected bound for $i\geq2$:
\bee
&&\|\nabla^{\delta_{\sigma}}\left[\nabla^i(\e^{k_1}\overline{\e}^{k_2})\nabla^{s-i}(\qbt^{q+1-k_1}\overline{\qbt^{q-k_2}})\right]\|_{L^2}\lesssim \|\nabla^{\sigma+\frac{2(p-k)}{p-1}}(\e^{k_1}\overline{\e}^{k_2})\|_{L^2}^2\\
&\lesssim & b_1^{2-\frac{2(p-k)}{p-1}+\sigma+\frac{2(p-k)}{p-1}-\sigma+O\left(\frac 1{L_+}\right)}\|\nabla^{\sigma}\e\|_{L^2}^2\left(\frac{\|\nabla^{\sigma}\e\|_{L^2}^2}{b_1^{\sigma-s_c}}\right)^{(k-1)\left[1+O(\frac1{L_+})\right]}\\
&\lesssim & b_1^{2+O\left(\frac 1{L_+}\right)}\|\nabla^{\sigma}\e\|_{L^2}^2\left(\frac{\|\nabla^{\sigma}\e\|_{L^2}^2}{b_1^{\sigma-s_c}}\right)^{(k-1)\left[1+O(\frac1{L_+})\right]}.
\eee 
\noindent{\it Proof of \fref{estnonlineseigma}}. If $\beta\in \Bbb N$, since $\beta\geq \sigma>\frac d2$ we estimate from \fref{nonlinearest}:
$$\|\nabla^\beta(\e^{k_1}\overline{\e}^{k_2})\|_{L^2}\lesssim (\|\e\|_{L^{\infty}}^{k-1}+\|\nabla^{\frac d2}\e\|_{L^{2}}^{k-1})\|\nabla^\beta\e\|_{L^2}$$ and thus from \fref{bebebebeo}, \fref{interpolationboundbeta}:
\bee
&&\|\nabla^\beta(\e^{k_1}\overline{\e}^{k_2})\|_{L^2}^2\lesssim  \|\nabla^{\sigma}\e\|^{2(k-1)+O\left(\frac 1{L_+}\right)}_{L^2}b_1^{(k-1)(\frac d2-\sigma)+O\left(\frac 1{L_+}\right)}\|\nabla^\sigma\e\|_{L^2}^{2+O(\frac1{L_+})}b_1^{\beta-\sigma+O(\frac1{L_+})}\\
& \lesssim & \|\nabla^{\sigma}\e\|_{L^2}^2\left(\frac{\|\nabla^{\sigma}\e\|_{L^2}^2}{b_1^{\sigma-s_c}}\right)^{(k-1)\left[1+O(\frac1{L_+})\right]}b_1^{(k-1)(\frac d2-s_c)+\beta-\sigma+O(\frac1{L_+})}\\
&= & b_1^{2-\frac{2(p-k)}{p-1}+\beta-\sigma+O\left(\frac 1{L_+}\right)}\|\nabla^{\sigma}\e\|_{L^2}^2\left(\frac{\|\nabla^{\sigma}\e\|_{L^2}^2}{b_1^{\sigma-s_c}}\right)^{(k-1)\left[1+O(\frac1{L_+})\right]}.
\eee
If $\beta\notin \Bbb N$, we split $$\beta=s_\beta+\delta_\beta, \ \ s_\beta\in \Bbb N^*, \ \ \delta_\beta\in (0,1).$$ We recall the standard commutator estimate: let $$0<\nu<1,\ \ 1<p,p_1,p_3<+\infty, \ \ 1\leq p_2,p_4\leq +\infty \ \ \mbox{with}\ \ \frac 1p=\frac 1{p_1}+\frac 1{p_2}=\frac1{p_3}+\frac 1{p_4},$$
then
\be
\label{comutatoreestimate}
\|\nabla^{\nu}(uv)\|_{L^p}\lesssim \|\nabla^{\nu}u\|_{L^{p_1}}\|v\|_{L^{p_2}}+\|\nabla^{\nu}v\|_{L^{p_3}}\|u\|_{L^{p_4}}.
\ee
We therefore expand: 
\bee
\|\nabla^{\delta_{\beta}}\nabla^{s_\beta}(\e^{k_1}\overline{\e}^{k_2})\|_{L^2}&\lesssim& \sum_{l_1+\dots +l_k=s_\beta}\left\|\nabla^{\delta_{\alpha}}\left(\Pi_{1\leq i\leq k}\nabla^{l_i}\e\right)\right\|_{L^2}\\
& \lesssim & \sum_{l_1+\dots +l_k=s_\beta}\Pi_{1\leq i\leq k}\|\nabla^{\tilde{l}_i}\e\|_{L^{p_i}}
\eee
where $$\tilde{l_i}=l_i+\delta_{i=j}\delta_\beta, \  \ 1\leq j\leq k,\ \ p_i=\frac{2\beta}{\tilde{l}_i}, \ \ \sum_{i=1}^k\tilde{l}_i=\beta.$$
We then estimate by Sobolev for $\tilde{l_i}>0$, i.e., $2\leq p_i<+\infty$: $$\|\nabla^{\tilde{l}_i}\e\|_{L^{p_i}}\lesssim \|\nabla^{m_i}\e\|_{L^2}\ \ \mbox{with}\ \ -m_i+\frac d2=-\tilde{l}_i+\frac d{p_i}.$$ We compute 
$$
m_i=\left\{\begin{array}{ll}\left(\frac d2-\beta\right)\left(1-\frac{\tilde{l_i}}{\beta}\right)+\beta\geq \alpha\geq \sigma\ \ \mbox{for}\ \ \beta\leq \frac d2\\
\tilde{l_i}\left(1-\frac d{2\beta}\right)+\frac d2\geq \frac d2\geq \sigma  \ \ \mbox{for}\ \ \beta\geq \frac d2
\end{array}\right.
$$
and thus $\sigma\leq m_i\leq s_+$. We interpolate: $$m_i=z_i\sigma+(1-z_i)s_+\ \ \mbox{with}\ \ z_i=\frac{s_+-m_i}{s_+-\sigma}=1-\frac{m_i-\sigma}{2L_+}+O\left(\frac 1{L_+^2}\right).$$
and count the $j\in [1,k]$ terms $\tilde{l}_j\neq 0$. We compute:
\bee
&&\sum_{i=1}^j m_i=j\frac d2-\frac d2+\beta=(j-1)\frac d2+\beta\\
&& \sum_{i=1}^j z_i=j-\frac1{2L_+}\left[(j-1)\frac d2+\beta-j\sigma\right]+O\left(\frac 1{L_+^2}\right)=j-\frac{j-1}{2L_+}\left(\frac d2-\sigma\right)-\frac{\beta-\sigma}{2L_+}+O\left(\frac 1{L_+^2}\right)
\eee
and estimate using \fref{bebebebeo}: 
\bee
&&\sum_{l_1+\dots +l_k=s}\Pi_{1\leq i\leq k}\|\nabla^{\tilde{l}_i}\e\|_{L^{p_i}}\lesssim \sum_{1\leq j\leq k}\|\e\|_{L^{\infty}}^{k-j}\Pi \left(\|\nabla^{\sigma}\e\|_{L^2}^{z_i}\|\nabla^{s_+}\e\|_{L^2}^{1-z_i}\right)\\
& \lesssim &\sum_{1\leq j\leq k} \left(\|\nabla^{\sigma}\e\|^{1+O(\frac 1{L_+})}_{L^2}b_1^{\frac 12\left(\frac d2-\sigma\right)+O\left(\frac 1{L_+}\right)}\right)^{k-j}\|\nabla^{\sigma}\e\|_{L^2}^{j+O\left(\frac 1{L_+}\right)}\|\nabla^{s_+}\e\|_{L^2}^{\frac{j-1}{2L_+}\left(\frac d2-\sigma\right)+\frac{\beta-\sigma}{2L_+}+O\left(\frac 1{L_+^2}\right)}\\
& \lesssim & \|\nabla^{\sigma}\e\|_{L^2}^{k+O(\frac 1{L_+})}b_1^{\frac{k-1}{2}\left(\frac d2-\sigma\right)+\frac{\beta-\sigma}{2}+O\left(\frac 1L_+\right)}
\eee 
and thus:
\bee
\|\nabla ^{\beta}(\e^{k_1}\overline{\e}^{k_2})\|_{L^2}^2&\lesssim &\|\nabla^{\sigma}\e\|_{L^2}^{2k+O(\frac 1{L_+})}b_1^{(k-1)\left(\frac d2-\sigma\right)+\beta-\sigma+O\left(\frac 1L_+\right)}\\
 &\lesssim&  \|\nabla^{\sigma}\e\|_{L^2}^{2}\|\nabla^{\sigma}\e\|_{L^2}^{2(k-1)+O(\frac 1{L_+})}b_1^{\beta-\sigma+(k-1)\left(\frac d2-\sigma\right)+O\left(\frac 1L_+\right)}\\
& \lesssim & b_1^{2-\frac{2(p-k)}{p-1}+\beta-\sigma+O\left(\frac 1{L_+}\right)}\|\nabla^{\sigma}\e\|_{L^2}^2\left(\frac{\|\nabla^{\sigma}\e\|_{L^2}^2}{b_1^{\sigma-s_c}}\right)^{(k-1)\left[1+O(\frac 1{L_+})\right]}
\eee
and \fref{estnonlineseigma} is proved.\\
\noindent\underline{Case $i=0,1$}: For $i=0$, we estimate from \fref{stnadradharyd}, \fref{nekneoenone}, \fref{Linfttybound}:
\bee
&&\|\nabla^{\delta_{\sigma}}\left[\e^{k_1}\overline{\e}^{k_2}\nabla^{s}(\qbt^{q+1-k_1}\overline{\qbt^{q-k_2}})\right]\|_{L^2}\lesssim \|\nabla\left(\e^{k_2}\overline{\e}^{k_2}(1+y^{1-\delta_\sigma})\nabla^{s}(\qbt^{q+1-k_1}\overline{\qbt^{q-k_2}})\right)\|^2_{L^2}\\
& \lesssim & \left\|\frac{\e^k}{1+y^{\sigma+\frac{2(p-k)}{p-1}}}\right\|^2_{L^2}+\left\|\frac{\nabla (\e^{k_1}\overline{\e}^{k_2})}{1+y^{\sigma-1+\frac{2(p-k)}{p-1}}}\right\|^2_{L^2}\\
&\lesssim &  \left\|\frac{\e}{1+y^{\frac{2(p-k)}{(k-1)(p-1)}}}\right\|^{2(k-1)}_{L^{\infty}}\left[\left\|\frac{\e}{1+y^{\sigma}}\right\|^2_{L^2}+\left\|\frac{\nabla\e}{1+y^{\sigma-1}}\right\|_{L^2}^2\right]\\
& \lesssim & \left(\|\nabla^{\sigma}\e\|_{L^2}b_1^{\frac{2(p-k)}{(k-1)(p-1)} +\left(\frac d2-\sigma\right)+O\left(\frac 1{L_+}\right)}\right)^{k-1}\|\nabla^{\sigma}\e\|_{L^2}^{2+O(\frac1{L_+})}\\
& = &b_1^{2+O(\frac 1{L_+})}\|\nabla^{\sigma}\e\|_{L^2}^2\left(\frac{\|\nabla^{\sigma}\e\|_{L^2}^2}{b_1^{\sigma-s_c}}\right)^{(k-1)\left[1+O(\frac1{L_+})\right]}.
\eee
The case $i=1$ follows similarily and is left to the reader. 
This concludes the proof of \fref{tiovboebeo}.\\

\noindent{\bf step 6} Small linear term $L(\e)$. We use \fref{cneorgoeoho}, \fref{hardyfract} to estimate:
\bee
|(L(\e),\nabla^{2\sigma}\e)|\lesssim\|\nabla^{\sigma-2}L(\e)\|_{L^2}\|\nabla^{\sigma+2}\e\|_{L^2}\lesssim b_1\|\nabla^{\sigma}\e\|_{L^2}\|\nabla^{\sigma+2}\e\|_{L^2}
\eee
and follow the chain of estimates of step 2.\\

\noindent{\bf step 7} Conclusion. The collection of above bounds yields \fref{monotnoesigma}.
\end{proof}

  
  \section{Closing the bootstrap and proof of Theorem \ref{thmmain}}
  \label{sectionfour}

We are now in position to close the bootstrap bounds of Proposition \ref{bootstrap}, and then conclude the proof of Theorem \ref{thmmain}.


\subsection{Proof of Proposition \ref{bootstrap}}
\label{sectionbootstrappouet}

Our aim is to show first that for $s<s^*$, the a priori bounds \fref{controlssstable}, \fref{controlsaksstable}, \fref{bootnorm}, \fref{bootsmallsigma} can be improved under the sole a priori control \fref{controlunstable}, and then control the unstable modes $(V_k)_{1\le k\le \ell}$, $(\tilde{A}_k))_{1\leq k\leq k_\ell}$. \\

\noindent{\bf step 1} Integration of the law for the scaling parameter. 
First observe from \fref{defukbis} and the a priori bound \fref{controlunstable} on $U_k$ on $s_0\leq s<s^*$ that 
\be
\label{cneocneneo}
|b_k(s)|\lesssim  |b_k(s_0)|.
\ee
We compute explicitly the scaling parameter for $s<s^*$. From \fref{defuk}, \fref{approzimatesolution}, \fref{controlunstable}, \fref{parameters}, we have  the bound: 
$$-\lsl=\frac{\ell}{(2\ell-\alpha)s}+O\left(\frac{1}{s^{1+c\eta}}\right)$$ which we rewrite 
\be
\label{eneoeoe}
\left|\frac{d}{ds}\left\{\log\left(s^{\frac{\ell}{(2\ell-\alpha)}}\l(s)\right)\right\}\right|\lesssim \frac{1}{s^{1+c\eta}}.
\ee
We integrate this using the initial value $\l(s_0)=1$ and conclude using $s_0\gg 1$ from \fref{defukbis}:
\be
\label{valuel}
\l(s)=\left(\frac{s_0}{s}\right)^{\frac{\ell}{2\ell-\alpha}}\left[1+O\left(\frac{1}{s_0^{c\eta}}\right)\right].
\ee
Together with the law for $b_1$ given by \fref{controlunstable}, \fref{defuk}, \fref{approzimatesolution}, this implies:
\be
\label{keyestimated}
b_1(s_0)^{\frac{\ell}{2\ell-\alpha}}\lesssim \frac{b_1^{\frac{\ell}{2\ell-\alpha}}(s)}{\l(s)}\lesssim b_1(s_0)^{\frac{\ell}{2\ell-\alpha}}.
\ee

\noindent{\bf step 2} Improved control of $\mathcal E_{s_+}$. We now improve the control \fref{bootnorm} of the high order energy $\mathcal E_{s_+}$ by reintegrating the Lyapunov monotonicity \fref{monoenoiencle} coupled with the local Morawetz \fref{morawetzbound} formulas in the regime governed by \fref{keyestimated}, \fref{defuk}: for a large enough universal constant $D=D(M)\gg 1$, $A=A(M)\gg D$, $0<b_1<b_1^*(A)$,
\bee
 &&\frac{d}{ds}\left\{\frac{\mathcal E_{s_+}}{\lambda^{2(s_+-s_c)}}\left[1+O(b_1^{\eta(1-\dep)})\right]-D\mathcal M\right\}\\
\nonumber &\leq &  \frac{ b_1}{\lambda^{2(s_+-s_c)+2}}\left[ C\frac{\mathcal E_{s_+}}{M^{c\dk}}+Cb_1^{2L_++2(1-\dk)+2\eta(1-\dep)}\right.\\
&+& \left. C(M)\int \frac{1}{1+y^{4g}}\left(|\nabla\e_{k_++L_+}|^2+\frac{|\e_{k_++L_+}|^2}{1+y^2}\right)\right]\\
& - & \frac{b_1}{\l^{2(s_+-s_c)}}\left[ D\delta\int \frac{1}{1+y^{\delta}}\left(|\nabla \e_{2(k_++L_+)}|^2+\frac{|\e_{2(k_++L_+)}|^2}{y^2}\right)\right.\\
 &+& \left. Db_1^{2L_++2(1-\dk)+2\eta(1-\dep)}+\frac{CD}{A^{\delta}}\matchal E_{s_+}\right]\leq   \frac{ b_1}{\lambda^{2(s_+-s_c)+2}}\left[\frac{\mathcal E_{s_+}}{M^{c\dk}}+C(M)b_1^{2L_++2(1-\dk)+2\eta(1-\dep)}\right]\\
 &\leq & \left[\frac{K}{M^{c\dk}}+C(M)\right]\frac{ b_1b_1^{2L_++2(1-\dk)+2\eta(1-\dep)}}{\lambda^{2(s_+-s_c)}}
\eee
where we injected the bootstrap bound \fref{bootnorm} in the last step, and where we stress that $C(M)$ is independent of $K(M)$. We integrate in time using $\l(s_0)=1$ and the bound \fref{defmahtcam}:
\bea
\label{keyestimatedbis}
 \mathcal E_{s_+}&\leq& C\l(s)^{2(s_+-s_c)}\mathcal E_{s_+}(s_0)\\
\nonumber & + & C\left[\frac{K}{M^{c\dk}}+C(M)\right]\l(s)^{2(s_+-s_c)}\int_{s_0}^s\frac{ b_1(\tau)^{1+2L_++2(1-\dk)+2\eta(-\dep)}}{\lambda(\tau)^{2(s_+-s_c)}}d\tau.
\eea
We now estimate from \fref{keyestimated}:
\bee
 &&\l(s)^{2(s_+-s_c)}\int_{s_0}^s\frac{ b_1(\tau)^{1+2L_++2(1-\dk)+2\eta(1-\dep)}}{\lambda(\tau)^{2(s_+-s_c)}}d\tau\\
 & \lesssim &\frac{1}{s^{\frac{2\ell(s_+-s_c)}{2\ell-\alpha}}}\int_{s_0}^s \tau^{\frac{2\ell(s_+-s_c)}{2\ell-\alpha}-[1+2L_++2(1-\dk)+2\eta(1-\dep)]}d\tau.
\eee
The above integral is divergent since
\bea
\label{sononoeoe}
&&\frac{2\ell(s_+-s_c)}{2\ell-\alpha}-[1+2L_++2(1-\dk)+2\eta(1-\dep)]\\
\nonumber & = & \frac{2\alpha}{2\ell-\alpha}L_++O_{L_+\to _\infty}(1)\gg -1
\eea
and thus leads to the upper bound:
\bee
 &&\l(s)^{2(s_+-s_c)}\int_{s_0}^s\frac{ b_1(\tau)^{1+2L_++2(1-\dk)+2\eta(1-\dep)}}{\lambda(\tau)^{2(s_+-s_c)}}d\tau\\
 &\lesssim & \frac{1}{s^{\frac{2\ell(s_+-s_c)}{2\ell-\alpha}}}s^{\frac{2\ell(s_+-s_c)}{2\ell-\alpha}-[2L_++2(1-\dk)+2\eta(1-\dep)]}\lesssim  b_1^{2L_++2(1-\dk)+2\eta(1-\dep)}.
\eee
We now estimate the contribution of the initial data in \fref{keyestimatedbis} using \fref{keyestimated}, the initial bounds \fref{intilzero}, \fref{init2energy} and the comparison \fref{sononoeoe}:
\bee
&&\lambda(s)^{2(s_+-s_c)}\mathcal E_{s_+}(0)\lesssim \left(\frac{b_1(s)}{b_1(0)}\right)^{\frac{2\ell(s_+-s_c)}{2\ell-\alpha}}b_1(0)^{\frac{10\ell}{2\ell-\alpha}L_+}\leq b_1(s)^{2L_++2(1-\dk)+2\eta(1-\dep)}
\eee
for $L_+$ large enough. We have therefore obtained
\bea
\label{init3hbbis}
\nonumber \mathcal E_{s_+}(s)&\leq&  \left[C(M)+\frac{K(M)}{M^{c}}\right]b_1(s)^{2L_++2(1-\dk)+2\eta(1-\dep)}\\
&\leq & \frac{K(M)}{2}b_1(s)^{2L_++2(1-\dk)+2\eta(1-\dep)}
\eea
 for $K=K(M)$ large enough.\\

\noindent{\bf step 4} Improved control of $\|\nabla^{\sigma}\e\|^2_{L^2}$. We now turn to the improved control of the low Sobolev norms. We inject the bootstrap bound \fref{bootsmallsigma} into the monotonicity formula \fref{monotnoesigma} and obtain:
$$ \frac{d}{ds}\left\{\frac{\|\nabla^{\sigma}\e\|_{L^2}^2}{\l^{2(\sigma-s_c)}}\right\}\leq \frac{b_1}{\l^{2(\sigma-s_c)}}\left[Kb_1^{\frac{2}{L_+}+\frac{2\ell}{2\ell-\alpha}(\sigma-s_c)}+b_1^{\sigma-s_c+\nu_0}\right]\leq \frac{b_1}{\l^{2(\sigma-s_c)}}b_1^{\frac{1}{L_+}+\frac{2\ell}{2\ell-\alpha}(\sigma-s_c)}$$ for $\sigma-s_c$ small enough and $b_1<b_1^*(L_+)$ small enough. We now integrate in time $s$ and obtain from \fref{init2energy}:
\bee
\|\nabla^{\sigma}\e\|_{L^2}^2\leq \l(s)^{2(\sigma-s_c)}b_1(s_0)^{\frac{10\ell}{2\ell-\alpha}L_+}+\l(s)^{2(\sigma-s_c)}\int_{s_0}^{s_c}\frac{b_1(\tau)^{1+\frac{1}{L_+}+\frac{2\ell}{2\ell-\alpha}(\sigma-s_c)}}{\l(\tau)^{2(\sigma-s_c)}}d\tau.
\eee
The time integral is estimated using \fref{keyestimated}:
\bee
&&\l(s)^{2(\sigma-s_c)}\int_{s_0}^{s}\frac{b_1(\tau)^{1+(\sigma-s_c)(1+\nu)}}{\l(\tau)^{2(\sigma-s_c)}}d\tau\lesssim  \frac{1}{s^{\frac{2\ell(\sigma-s_c)}{2\ell-\alpha}}}\int_{s_0}^{s}\frac{d\tau}{\tau^{1+\frac{1}{L_+}}}\lesssim \frac{1}{s^{\frac{2\ell(\sigma-s_c)}{2\ell-\alpha}}}\\
& \lesssim & b_1(s)^{\frac{2\ell}{2\ell-\alpha}}
\eee
and similarly for the boundary term from \fref{keyestimated}:
\bee
\l(s)^{2(\sigma-s_c)}b_1(s_0)^{\frac{10\ell}{2\ell-\alpha}L_+}\lesssim b_1(s)^{\frac{2\ell(\sigma-s_c)}{2\ell-\alpha}}b_1(s_0)^{\frac{\ell}{2\ell-\alpha}(10L_+-2(\sigma-s_c))}\leq b_1(s)^{\frac{2\ell(\sigma-s_c)}{2\ell-\alpha}}
\eee
and we have therefore obtained the improved bound:
\be
\label{improvedlsifma}
\|\nabla^{\sigma}\e\|_{L^2}^2\lesssim b_1(s)^{\frac{2\ell(\sigma-s_c)}{2\ell-\alpha}}\leq \frac K2b_1(s)^{\frac{2\ell(\sigma-s_c)}{2\ell-\alpha}}
\ee 
for $K$ large enough as expected.\\
\noindent{\bf step 5} Control of the stable $b_k$ modes. We now close the control of the stable modes $(b_{\ell+1},\dots,b_{L_+})$ and claim the bound:\
\be
\label{improvebounderrors}
|b_k|\lesssim \frac{1}{s^{k+\eta(1-\dep)}}, \ \ \ell+1\leq k\leq L_+.
\ee

\noindent\underline{case $k=L_+$}: Let $$\bt_{L_+}=b_{L_+}+\frac{(\Lt^{L_+}\e,\chi_{B_0}J\Phi_{0,-})}{(\Phi_{0,+},\chi_{B_0}J\Phi_{0,-})}$$ then from \fref{neinoeneo}, \fref{init3hbbis}:
\be
\label{poitwidediff}
|\bt_{L_+}-b_L|\lesssim B_0^{2(1-\dk)}\sqrt{\matchal E_{s_+}}\lesssim b_1^{-(1-\dk)+L_++(1-\dk)+\eta(1-\dep)}\lesssim b_1^{L_++\eta(1-\dep)}
\ee
and hence from the improved modulation equation \fref{improvedboundbl}:
\bee
&&\left|(\bt_{L_+})_s+(2L_+-\alpha)b_1\bt_{L_+}\right|\lesssim b_1|b_{L_+}-\bt_{L_+}|+ \frac{1}{B_0^{2\dk}}\left[C(M)\sqrt{\mathcal E_{s_+}}+b_1^{L_++(1-\dk)+\eta(1-\dep)}\right]\\
& \lesssim & b_1^{L_++1+\eta(1-\dep)}+b_1^{\dk}\left[b_1^{L_++(1-\dk)+\eta(1-\dep)}\right]\lesssim b_1^{L_++1+\eta(1-\dep)}.
\eee
Equivalently:
$$\left|\frac{d}{ds}\left\{\frac{\bt_{L_+}}{\l^{2L_+-\alpha}}\right\}\right|\lesssim \frac{b_1^{L_++1+(1-\dep)\eta}}{\l^{2L_+-\alpha}}.$$ We integrate this identity in time from $s_0$. The time integral is estimated from \fref{keyestimated}:
\bee
&&\l(s)^{2L_+-\alpha}\int_{s_0}^s\frac{b_1(\tau)^{L_++1+(1-\dep)\eta}}{\l(\tau)^{2L_+-\alpha}}d\tau\lesssim \frac{1}{s^{\frac{(2L_+-\alpha)\ell}{2\ell-\alpha}}}\int_{s_0}^s\tau^{\frac{(2L_+-\alpha)\ell}{2\ell-\alpha}-L_+-1-(1-\dep)\eta}d\tau\\
& \lesssim & \frac{1}{s^{\frac{(2L_+-\alpha)\ell}{2\ell-\alpha}}}\int_{s_0}^s\tau^{-1+\frac{\alpha(L_+-\ell)}{2\ell-\alpha}-(1-\dep)\eta}d\tau\leq\frac{s^{\frac{\alpha(L_+-\ell)}{2\ell-\alpha}}}{s^{\frac{(2L_+-\alpha)\ell}{2\ell-\alpha}}}\frac{1}{s^{(1-\dep)\eta}}= \frac{1}{s^{L_++(1-\dep)\eta}}\\
& \lesssim & b_1^{L_++(1-\dep)\eta}.
\eee
The boundary term is estimated using \fref{estintialbis}, \fref{init2energy}, \fref{poitwidediff}: 
$$
|\tilde{b}_{L_+}(s_0)|\lesssim b_1(s_0)^{5\frac{2L_+-\alpha}{2\ell-\alpha}}+B_0^{2(1-\dk)}\sqrt{\mathcal E_{s_+}(s_0)}\lesssim b_1(s_0)^{5\frac{2L_+-\alpha}{2\ell-\alpha}}$$
and hence using \fref{keyestimated}:
$$\left(\frac{\l(s)}{\l(s_0)}\right)^{2L_+-\alpha}|\tilde{b}_{L_+}(s_0)|\lesssim \frac{b_1(s_0)^{\frac{5(2L_+-\alpha)\ell}{2\ell-\alpha}}}{b_1(s_0)^{\frac{2L_+-\alpha}{2\ell-\alpha}}}\frac{1}{s^{\frac{(2L_+-\alpha)\ell}{2\ell-\alpha}}}\leq \frac{1}{s^{L_++(1-\dep)\eta}}.$$
 The collection of above bounds yields the bound $$|\bt_{L_+}|\lesssim \frac{1}{s^{L_++(1-\dep)\eta}}$$ which together with \fref{poitwidediff} yields: $$|b_{L_+}|\lesssim \frac{1}{s^{L_++(1-\dep)\eta}}$$ and \fref{improvebounderrors} is proved for $k=L_+$.\\
\noindent\underline{case $\ell+1\leq k\leq L_+-1$}: We now prove \fref{improvebounderrors} by a descending induction: we assume the claim for $k+1$ and proved it for  $k$, $\ell+1\leq k\leq L_+-1$.
 From Lemma \ref{modulationequations} and the induction claim:
$$
\left|(b_k)_s-(2k-\alpha)\lsl b_k\right|\lesssim b_1^{L_++1}+|b_{k+1}|\lesssim b_1^{k+1+\eta(1-\dep)}
$$ from which
$$\left|\frac{d}{ds}\left\{\frac{b_k}{\l^{2k-\alpha}}\right\}\right|\lesssim \frac{b_1^{k+1+\eta(1-\dep)}}{\l^{2k-\alpha}}.$$
We integrate this identity in time from $s_0$. The time integral is estimated from \fref{keyestimated} using $\ell+1\leq k\leq L_+-1$:
\bee
&&\l(s)^{2k-\alpha}\int_{s_0}^s\frac{b_1(\tau)^{k+1+\eta(1-\dep)}}{\l(\tau)^{2k-\alpha}}d\tau\lesssim \frac{1}{s^{\frac{(2k-\alpha)\ell}{2\ell-\alpha}}}\int_{s_0}^s\tau^{\frac{(2k-\alpha)\ell}{2\ell-\alpha}-k-1-\eta(1-\dep)}d\tau\\
& \lesssim& \frac{1}{s^{\frac{(2k-\alpha)\ell}{2\ell-\alpha}}}\int_{s_0}^s\tau^{-1-\eta(1-\dep)+\frac{\alpha(k-\ell)}{2\ell-\alpha}}d\tau\leq \frac{s^{\frac{\alpha(k-\ell)-\eta(1-\dep)}{2\ell-\alpha}}}{s^{\frac{(2k-\alpha)\ell}{2\ell-\alpha}}}=\frac{1}{s^{k+\eta(1-\dep)}}\\
&\lesssim& b_1^{k+\eta(1-\dep)}.
\eee
The boundary term in time is controlled using \fref{keyestimated}, \fref{estintialbis}: 
$$\left(\frac{\l(s)}{\l(s_0)}\right)^{2k-\alpha}|b_k(s_0)|\lesssim \frac{b_1(s_0)^{k+\frac{5(2k-\alpha)}{2\ell-\alpha}}}{b_1(0)^{\frac{(2k-\alpha)\ell}{2\ell-\alpha}}}\frac{1}{s^{\frac{(2k-\alpha)\ell}{2\ell-\alpha}}}\leq \frac{1}{s^{k+\frac{\alpha(k-\ell)}{2\ell-\alpha}}}\lesssim b_1^{k+\eta(1-\dep)}$$ thanks to $k\geq \ell+1$. 
 The collection of above bounds yields the expected bound \fref{improvebounderrors}.\\

\noindent{\bf step 6} Control of the stable $a_k$ modes. Recall \fref{defkell}. We claim a bound:
\be
\label{improvebounderrorsak}
|a_k|\lesssim \frac{1}{s^{k+\frac\alpha 2+\eta(1-\dep)}},  \ \ k_{\ell}+1\leq k\leq L_-.
\ee
\noindent\underline{case $k=L_-$}:  let $$\at_{L_-}=a_{L_-}+\frac{(\Lt^{L_-}\e,\chi_{B_0}J\Phi_{0,+})}{(\Phi_{0,-},\chi_{B_0}J\Phi_{0,+})}$$ then from \fref{neinoeneobis}, \fref{init3hbbis}, \fref{dgammkrealtion}:
\bea
\label{diffattal}
\nonumber |\at_{L_-}-a_{L_-}|&\lesssim &B_0^{2(1-\dkm)}\sqrt{\mathcal E_{s_+}}\lesssim b_1^{-(1-\dkm)+L_++(1-\dk)+\eta(1-\dep)}=b_1^{L_-+\Delta k+\dkm-\dk+\eta(1-\dep)}\\
& = & b_1^{L_-+\frac \alpha 2+\eta(1-\dep)}.
\eea
From the improved modulation equation \fref{improvedboundal},
\bee
&&\left|(\at_{L_-})_s+2L_-b_1\at_{L_-}\right|\lesssim b_1|a_{L_-}-\at_{L_-}|+ \frac{1}{B_0^{2\dkm}}\left[C(M)\sqrt{\mathcal E_{s_+}}+b_1^{L_++(1-\dk)(1+\eta)}\right]\\
& \lesssim & b_1^{L_-+1+\frac \alpha 2+\eta(1-\dep)}.
\eee
Equivalently:
$$\left|\frac{d}{ds}\left\{\frac{\at_{L_-}}{\l^{2L_-}}\right\}\right|\lesssim \frac{b_1^{L_-+1+\frac \alpha 2+\eta(1-\dep)}}{\l^{2L_-}}.$$ We integrate this identity in time from $s_0$. The time integral is estimated from \fref{keyestimated} for $L_-$ large enough:
\bee
&&\l(s)^{2L_-}\int_{s_0}^s\frac{b_1(\tau)^{L_-+1+\frac \alpha 2+(1-\dep)\eta}}{\l(\tau)^{2L_-}}d\tau\lesssim \frac{1}{s^{\frac{(2L_-)\ell}{2\ell-\alpha}}}\int_{s_0}^s\tau^{\frac{(2L_-)\ell}{2\ell-\alpha}-L_--1-\frac \alpha 2-(1-\dep)\eta}d\tau\\
& \lesssim & \frac{1}{s^{\frac{(2L_-)\ell}{2\ell-\alpha}}}\int_{s_0}^s\tau^{-1+\frac{\alpha L_-}{2\ell-\alpha}-\frac\alpha 2-(1-\dep)\eta}d\tau\leq\frac{s^{\frac{\alpha L_-}{2\ell-\alpha}}}{s^{\frac{(2L_-)\ell}{2\ell-\alpha}}}\frac{1}{s^{\frac\alpha2+(1-\dep)\eta}}=\frac{1}{s^{L_-+\frac \alpha 2+(1-\dep)\eta}}.
\eee
The boundary term is estimated using \fref{estintialbisa}, \fref{init2energy}, \fref{diffattal}: $$|\tilde{a}_{L_-}(s_0)|\lesssim b_1(s_0)^{\frac\alpha 2+5\frac{2L_-}{2\ell-\alpha}}+B_0^{2(1-\dk)}\sqrt{\mathcal E_{s_+}(s_0)}\lesssim b_1(s_0)^{\frac\alpha 2+5\frac{2L_-}{2\ell-\alpha}}$$ and hence:
$$\left(\frac{\l(s)}{\l(s_0)}\right)^{2L_-}|\tilde{a}_{L_-}(s_0)|\lesssim \frac{b_1(s_0)^{\frac\alpha 2+5\frac{2L_-}{2\ell-\alpha}}}{b_1(s_0)^{\frac{2L_-}{2\ell-\alpha}}}\frac{1}{s^{\frac{(2L_-)\ell}{2\ell-\alpha}}}\leq \frac{1}{s^{L_-+\frac \alpha 2+\eta(1-\dep)}}.$$
The collection of above bounds yields the bound $$|\at_{L_-}|\lesssim \frac{1}{s^{L_-+\frac \alpha 2+\eta(1-\dep)}}$$ which together with \fref{diffattal} yields: $$|a_{L_-}|\lesssim \frac{1}{s^{L_-+\frac{\alpha}{2}+\eta(1-\dep)}}$$ and \fref{improvebounderrors} is proved for $k=L_-$.\\
\noindent\underline{case $k_\ell+1\leq k\leq L_--1$}: We now prove \fref{improvebounderrors} by a descending induction: we assume the claim for $k+1$ and prove it for  $k$, $k_\ell+1\leq k\leq L_--1$. From Lemma \ref{modulationequations} and the induction claim:
$$
\left|(a_k)_s-2k\lsl a_k\right|\lesssim b_1^{L_++1+(1-\dk)+\eta(1-\dep)}+|a_{k+1}|\lesssim b_1^{k+1+\frac\alpha 2+\eta(1-\dep)}
$$ from which
$$\left|\frac{d}{ds}\left\{\frac{a_k}{\l^{2k}}\right\}\right|\lesssim \frac{b_1^{k+1+\frac\alpha 2+\eta(1-\dep)}}{\l^{2k}}.$$
We integrate this identity in time from $s_0$. The time integral is estimated from \fref{keyestimated} using $k_\ell+1\leq k\leq L_+-1$ and \fref{defkell}:
\bee
&&\l(s)^{2k}\int_{s_0}^s\frac{b_1(\tau)^{k+1+\frac\alpha 2+\eta(1-\dep)}}{\l(\tau)^{2k}}d\tau\lesssim \frac{1}{s^{\frac{(2k)\ell}{2\ell-\alpha}}}\int_{s_0}^s\tau^{\frac{(2k)\ell}{2\ell-\alpha}-k-1-\frac{\alpha}2-\eta(1-\dep)}d\tau\\
& \lesssim& \frac{1}{s^{\frac{(2k)\ell}{2\ell-\alpha}}}\int_{s_0}^s\tau^{-1+\frac{\alpha}{2\ell-\alpha}\left[k-(k_\ell+\delta_{\ell})\right]-\eta(1-\dep)}d\tau\leq \frac{s^{\frac{\alpha}{2\ell-\alpha}\left[k-(k_\ell+\delta_{\ell})\right]-\eta(1-\dep)}}{s^{\frac{(2k-\alpha)\ell}{2\ell-\alpha}}}\\
&=& \frac{1}{s^{k+\frac{\alpha}{2}+\eta(1-\dep)}}.
\eee
The boundary term in time is controlled using \fref{keyestimated}, \fref{estintialbisa}: 
$$\left(\frac{\l(s)}{\l(s_0)}\right)^{2k}|a_k(s_0)|\lesssim \frac{b_1(s_0)^{k+\frac\alpha 2+\frac{5(2k)\ell}{2\ell-\alpha}}}{b_1(s_0)^{\frac{(2k)\ell}{2\ell-\alpha}}}\frac{1}{s^{\frac{(2k)\ell}{2\ell-\alpha}}}\leq \frac{1}{s^{k+\frac{\alpha}{2}+\frac{\alpha}{2\ell-\alpha}\left[k-(k_{\ell}+\delta_{\ell})\right]}}\lesssim \frac{1}{s^{k+\frac \alpha 2+\eta(1-\dep)}}$$ thanks to $k\geq k_\ell+1$.  The collection of above bounds yields the expected bound \fref{improvebounderrorsak}.\\
This concludes the proof of Proposition \ref{bootstrap}, modulo the bound for the stable $b$-mode $V_1$. 
We now turn to the remaining step in the proof of Proposition \ref{bootstrap'} and the proof of the improved bound \eqref{controlunstable} for $V_1$.\\

\noindent{\bf step 6} Contradiction through a topological argument. Recall the decompositions \fref{defukbis}, \fref{defatildegib}
\bee
&&b_k=b_k^e+\frac{U_k}{s^{k}},\ \ 1\leq k\leq \ell, \ \ V=P_{\ell}U\\
&&A_k=s^{k+\frac \alpha 2}a_k,\ \ A=(A_k)_{1\leq k\leq k_\ell}, \ \ \tilde{A}=Q_\ell A,
\eee
where $P_\ell,Q_\ell$  diagonalize the matrices $M_\ell, \matchal M_{k_\ell}$ with spectra \fref{specta}, \fref{Pdiegmle} respectively . We argue by contradiction and assume 
\fref{hypcontr}:
$$
\forall \left(V_k(s_0)s_0^{\frac{\eta}2(1-\dep)}\right)_{2\leq k\leq \ell}\times \left(\tilde{A}_k(s_0)s_0^{\frac{\eta}2(1-\dep)}\right)_{1\leq k\leq k_\ell}\in \mathcal B_{\ell+k_\ell-1}\left(1\right) 
$$
the exit time  \fref{defexitsstar} $s^*<\infty$. We claim that if  $s_0$ is large enough  this contradicts the Brouwer fixed point theorem.\\
Indeed, we first estimate from \fref{reformulaion}, \fref{parameters}: for $1\leq k\leq \ell-1$,
$$
\left|s(U_k)_s-(M_\ell U)_k\right|\lesssim s^{k+1}\left|(b_k)_s+\left(2k-\alpha\right)b_1b_k-b_{k+1}\right|+|U|^2\lesssim  \frac{1}{s^{L_+-k}}+|U|^2,
$$
and for $k=\ell$ using \fref{reformulaionl}, \fref{parameters} and the improved bound \fref{improvebounderrors}:
\bee
\left|s(U_\ell)_s-(M_\ell U)_\ell\right|\lesssim s^{\ell+1}\left[\left|(b_\ell)_s+\left(2\ell-\alpha\right)b_1b_\ell-b_{\ell+1}\right|+|b_{\ell+1}|\right]+|U|^2\lesssim  \frac{1}{s^{\eta(1-\dep)}}+|U|^2.
\eee
This yields using the diagonalization \fref{specta}:
\be
\label{eqvtilde}
sV_s=D_LV_s+O\left(\frac{1}{s^{\eta(1-\dep)}}\right).
\ee
This first implies the control of the stable mode $V_1$ from \fref{specta}:
$$|(sV_1)_s|\lesssim \frac{1}{s^{\eta(1-\dep)}}$$ and thus from \fref{estintial}:
$$
|s^{\eta(1-\dep)}V_1(s)|\leq \left(\frac{s_0}{s}\right)^{1-\eta(1-\dep)}s_0^{\eta(1-\dep)}V_1(0)+1\lesssim s_0^{\eta(1-\dep)}
$$
and thus 
\be
\label{controlstable}
|s^{\frac\eta 2(1-\dep)}V_1(s)|\leq 1
\ee
for $s_0\geq s_0^*(\eta)$ large enough. 

From \fref{init3hbbis}, \fref{improvedlsifma}, \fref{improvebounderrors}, \fref{improvebounderrorsak}, \fref{controlstable}, \fref{defexitsstar} and a standard continuity argument 
we conclude that \fref{hypcontr} implies: 
\be
\label{cneoneno}
\sum_{k=2}^{\ell}\left|(s^*)^{\frac \eta 2(1-\dep)}V_k(s^*)\right|^2+\sum_{k=1}^{k_{\ell}}\left|(s^*)^{\frac \eta 2(1-\dep)}\tilde{A}_k(s^*)\right|^2=1.
\ee 
We then compute from \fref{eqvtilde}, \fref{specta} at the exit time:
\bee
&&\frac 12\frac{d}{ds}\left\{\sum_{k=2}^{\ell}\left|(s^*)^{\frac \eta 2(1-\dep)}V_k(s^*)\right|^2\right\}=(s^*)^{\eta(1-\dep)-1}\sum_{k=2}^{\ell}\left[\frac \eta 2(1-\dep)V_k^2+sV_k(V_k)_s\right](s^*)\\
&=&(s^*)^{\eta(1-\dep)-1}\left[\sum_{k=2}^{\ell}\left(\frac{k\alpha}{2k-\alpha}+\frac \eta 2(1-\dep)\right)V_k^2+O\left(\frac 1{(s^*)^{\frac{3}2\eta(1-\dep)}}\right)\right](s^*)\\
& \geq & \frac{1}{s^*}\left[c(d,p,\ell)\sum_{k=2}^{\ell}\left|(s^*)^{\frac \eta 2(1-\dep)}V_k(s^*)\right|^2+O\left(\frac 1{(s^*)^{\frac{\eta}{2}(1-\dep)}}\right)\right]
\eee
for some universal constant $c(d,p,\ell)>0$. We now estimate from \fref{parameters}, \fref{improvebounderrorsak}: 
\bee
&&|(a_k)_s+2kb_1a_k-a_{k+1}|\lesssim \frac{1}{s^{k+1+\frac\alpha 2+\eta(1-\dep)}}, \ \ 1\leq k\leq k_{\ell}-1,\\
&& |(a_k)_s+2kb_1a_k|\lesssim |a_{k+1}|+\frac{1}{s^{k+1+\frac\alpha 2+\eta(1-\dep)}}\lesssim \frac{1}{s^{k+1+\frac\alpha 2+\eta(1-\dep)}}\ \ \mbox{for}\ \ k=k_\ell.
\eee
Using $$\left|b_1-\frac{c_1}{s}\right|\lesssim \frac1{s^{1+\frac \eta 2(1-\dep)}},$$ Lemma \ref{lemmaak} and \fref{controlunstable} this implies:
$$
\left|s(A_k)_s-(\mathcal M_{k\ell}\matchal A)_k\right|\lesssim \frac{1}{s^{\eta (1-\dep)}}$$ 
or, equivalently, in the diagonal basis:
$$\left|s(\tilde{A}_k)_s+\frac{\alpha}{2\ell-\alpha}\left[k-(k_\ell+\delta_{\ell})\right]\tilde{A}_k\right|\lesssim \frac{1}{s^{\eta (1-\dep)}}.$$
We compute for $k\leq k_\ell$ that at the exit time
\bee
&&\frac 12\frac{d}{ds}\left\{\sum_{i=1}^{k_{\ell}}\left|(s^*)^{\frac \eta 2(1-\dep)}\tilde{A}_k(s^*)\right|^2\right\}=(s^*)^{\eta(1-\dep)-1}\sum_{k=2}^{\ell}\left[\frac \eta 2(1-\dep)\tilde{A}_k^2+s\tilde{A}_k(\tilde{A}_k)_s\right](s^*)\\
&=&(s^*)^{\eta(1-\dep)-1}\left[\sum_{k=2}^{\ell}\left(\frac{(k_\ell+\delta_\ell-k)\alpha}{2k-\alpha}+\frac \eta 2(1-\dep)\right)\tilde{A}_k^2+O\left(\frac 1{(s^*)^{\frac{3}2\eta(1-\dep)}}\right)\right](s^*)\\
& \geq & \frac{1}{s^*}\left[c(d,p,\ell)\sum_{k=2}^{\ell}\left|(s^*)^{\frac \eta 2(1-\dep)}\tilde{A}_k(s^*)\right|^2+O\left(\frac 1{(s^*)^{\frac{\eta}{2}(1-\dep)}}\right)\right]\eee
for some universal constant $c(d,p,\ell)>0$. We therefore obtain the fundamental outgoing behavior at exit time:
\bee
&&\frac12\frac{d}{ds}\left\{\sum_{i=2}^{\ell}\left|(s^*)^{\frac \eta 2(1-\dep)}V_i(s^*)\right|^2+\sum_{i=1}^{k_{\ell}}\left|(s^*)^{\frac \eta 2(1-\dep)}\tilde{A}_k(s^*)\right|^2\right\}\\
&\geq &\frac{c(d,p,\ell)}{s^*}\left [\sum_{i=2}^{\ell}\left|(s^*)^{\frac \eta 2(1-\dep)}V_i(s^*)\right|^2+\sum_{i=1}^{k_{\ell}}\left|(s^*)^{\frac \eta 2(1-\dep)}\tilde{A}_k(s^*)\right|^2+O\left( \frac{1}{(s^*)^{\frac \eta 2(1-\dep)}}\right)\right]\\
& \geq & \frac{c(d,p,\ell)}{s^*}\left [1+O\left( \frac{1}{(s^*)^{\frac \eta 2(1-\dep)}}\right)\right]>0
\eee
for $s_0\geq s_0^*$ large enough.
This strict outgoing behavior at exit time implies the continuity of the map $\Phi:\mathcal B_{\ell+k_\ell-1}(1)\to \mathcal S_{\ell+k_\ell-1}(1)$:
\bee
&&\left(s_0^{\frac\eta2(1-\dk)}{V}_k(s_0)\right)_{2\leq k\leq \ell}\times\left(s_0^{\frac\eta2(1-\dep)}\tilde{A}_k(s_0)\right)_{1\leq k\leq k_\ell}
\mapsto\\ \quad && \left((s^*)^{\frac\eta 2(1-\dk)}{V}_k(s^*)\right)_{2\leq k \leq k_\ell}\times \left((s^*)^{\frac\eta 2(1-\dk)}\tilde{A}_k(s^*)\right)_{1\leq k\leq k_{\ell}}.
\eee
Since $\Phi$ is the identity map on the boundary sphere $\Bbb S_{\ell+k_\ell-1}(1)$, this is a contradiction of the standard fact that a boundary sphere can not be a
continuous retract of the ball.
This concludes the proof of Proposition \ref{bootstrap'}.


\subsection{Proof of Theorem \ref{thmmain}}


We are now in position to conclude the proof of Theorem \ref{thmmain}.\\

\noindent{\bf step 1} Finite time blow up. We pick initial data satisfying the conclusions of Proposition \ref{bootstrap'}.  In particular, integrating \fref{eneoeoe} from $s$ to $+\infty$ implies the existence of $c(u_0)>0$ such that $$\lambda(s)=\frac{c(u_0)}{s^\frac{\ell}{2\ell-\alpha}}\left[1+O\left(\frac{1}{s^{c\eta}}\right)\right].$$ Then from \fref{parameters}, \fref{defuk}:
$$-\l\l_t=-\lsl=b_1+O\left(\frac{1}{s^L}\right)=\frac{c_1}{s}\left[1+O\left(\frac{1}{s^{\tilde{\eta}}}\right)\right]=c(u_0)\l^{\frac{2\ell-\alpha}{\ell}}\left[1+O\left(\frac{1}{s^{\tilde{\eta}}}\right)\right]$$ and hence the ODE: $$-\l^{1-\frac{2\ell-\alpha}{\ell}}\l_t=c(u_0)(1+o(1)).$$ We easily conclude that $\l$ vanishes at some finite time $T=T(u_0)<+\infty$, 
with near blow up time behavior: 
\be
\label{vbibvbeieb}
\l(t)=c(u_0)(1+o(1))(T-t)^{\frac{\ell}{\alpha}}.
\ee 
The phase parameter is estimated from \fref{parameters}
$$|\gamma_s|\lesssim \frac{1}{s^{1+\frac\alpha2}} \ \ \mbox{and hence} \ \ \int_{s_0}^{+\infty}|\gamma_s|ds<+\infty$$ which implies \fref{convergncephase}.
\begin{remark} Note that this closes the construction of a type II blow up solution in $\dot{H}^{\sigma}\cap \dot{H}^{s_+}$ and no additional regularity is needed on the data. In particular, whether the data has finite energy or mass is irrelevant.
\end{remark}

\noindent{\bf step 2} Control of Sobolev norms. First observe by interpolation between \fref{bootnorm} and \fref{bootsmallsigma} that $$\forall \sigma\leq s\leq s_+, \ \ \lim_{t\uparrow T}\|\nabla^s\e(t)\|_{L^2}=0.$$ We now further assume that $u_0\in L^2$ and aim at controlling low Sobolev norms. By mass conservation:
\be
\label{Ltwoconservation}\|u(t)\|_{L^2}=\|u_0\|_{L^2}.
\ee
We split $$Q+\e=\chi_{\frac 1\lambda}Q+\et \ \ \mbox{i.e.}\ \ \tilde{\e}=(1-\chi_{\frac 1\lambda})Q+\e,$$ then from \fref{Ltwoconservation}:
\be
\label{nvbibviebveib}
\|\et\|_{L^2}\lesssim \|(1-\chi_{\frac 1\lambda})Q\|_{L^2}+\|Q+\e\|_{L^2}\lesssim \frac{C(u_0)}{\l^{s_c}}.
\ee 
Moreover from \fref{bootsmallsigma}, \fref{keyestimated}: $$\|\nabla^\sigma\e\|_{L^2}\lesssim C(u_0)\l^{\sigma-s_c}$$ and hence by direct computation: 
\be
\label{vnovnoneneo}
\|\nabla^\sigma \et\|_{L^2}\lesssim \|\nabla^{\sigma}(1-\chi_{\frac 1\lambda})Q\|_{L^2}+\|\nabla^{\sigma}\e\|_{L^2}\lesssim \l^{\sigma-s_c}.
\ee
We interpolate \fref{nvbibviebveib} and \fref{vnovnoneneo} and conclude: $$\forall 0\leq s\leq \sigma, \ \ \|\nabla^s\et\|_{L^2}\lesssim C(u_0)\l^{s-s_c}.$$
Therefore for $2\leq s<s_c$: $$\|u(t)\|_{\dot{H}^s}\lesssim \l^{s_c-s}\left[\|\nabla^{s}(1-\chi_{\frac 1\lambda})Q\|_{L^2}+\|\nabla^{s}\et\|_{L^2}\right]\lesssim C(u_0)$$ and for the critical norm, using \fref{extensionrq}, \fref{vbibvbeieb}:
\bee
\|u(t)\|_{\dot{H}^{s_c}}&=&\|\nabla^{s_c}(1-\chi_{\frac 1\lambda})Q+\nabla^{s_c}\et\|_{L^2}=\|\nabla^{s_c}(1-\chi_{\frac 1\lambda})Q\|_{L^2}+O(1)\\
&=& \left(c^2_{\infty}|\log \l(t)|\right)^{\frac 12}+O(1)= \left[c_\infty\sqrt{\frac{\ell}{\alpha}}+o(1)\right]\sqrt{|\log(T-t)|}
\eee
as $t\to T$. This concludes the proof of Theorem \ref{thmmain}.


\begin{appendix}



\section{Super critical numerology}
\label{numero}
We collect in this Appendix some algebraic facts induced by the condition $p>p_{JL}$. Recall that the exponent $p_{JL}$ is defined in \eqref{exponentpjl},
the critical Sobolev exponent $s_c=\frac d2-\frac 2{p-1}$ and the parameter $\gamma$ is in \eqref{defgamma}.

\begin{lemma}[Range of parameters]
\label{conditionsc}
Let $d\geq 11$. The condition $$p_{JL}<p<+\infty$$ is equivalent to 
\be
\label{condsc}
2+\sqrt{d-1}<s_c<\frac d2.
\ee
Moreover, there holds the bound:
\be
\label{boundgamma}
2<\alpha=\gamma-\frac{2}{p-1}<\frac d2-1
\ee
\be
\label{estkwplus}
k_+={\rm E}\left[\frac 12+\frac 12\left(\frac d2-\gamma\right)\right]\geq 1.
\ee
\end{lemma}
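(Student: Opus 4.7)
The strategy is to reduce every assertion to explicit algebra on the polynomial
\[
P(x)=x^2-(d-2)x+pc_\infty^{p-1},
\]
whose roots are $\gamma$ (smaller) and $\gamma_2=d-2-\gamma$. The key preliminary computation is to rewrite the constant term: setting $m:=\frac{2}{p-1}$ we have $p=\frac{m+2}{m}$ and $c_\infty^{p-1}=m(d-2-m)$, hence
\[
pc_\infty^{p-1}=(m+2)(d-2-m).
\]
With this in hand all the inequalities become elementary to verify.

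For the equivalence $p>p_{JL}\Leftrightarrow s_c>2+\sqrt{d-1}$, I will simply translate the two conditions in terms of $m$. The inequality $s_c=\frac d2-m>2+\sqrt{d-1}$ becomes $m<\frac{d-4-2\sqrt{d-1}}{2}$, which (noting that $d-4-2\sqrt{d-1}>0$ for $d\ge11$) is the same as $p-1>\frac{4}{d-4-2\sqrt{d-1}}$, i.e.\ $p>p_{JL}$. The bound $s_c<\frac d2$ is immediate since $m>0$.

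For the key lower bound $\alpha>2$, I will plug $x=m+2$ into $P$ and use the factorization above to obtain the cancellation
\[
P(m+2)=(m+2)\bigl[(m+2)-(d-2)+(d-2-m)\bigr]=2(m+2)>0.
\]
Since $P$ is an upward parabola, $P(m+2)>0$ places $m+2$ either left of $\gamma$ or right of $\gamma_2$. The bootstrap condition $p>p_{JL}$ gives $m<\frac{d-4-2\sqrt{d-1}}{2}<\frac{d-6}{2}$ (the latter because $2\sqrt{d-1}>2$ for $d\ge11$), hence $m+2<\frac{d-2}{2}=\frac{\gamma+\gamma_2}{2}$, so $m+2$ lies to the left of $\gamma$. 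Therefore $\gamma>m+2$, i.e.\ $\alpha=\gamma-m>2$, as desired.

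The upper bound $\alpha<\frac d2-1$ and the bound $k_+\ge1$ both follow from the same fact that $\gamma<\frac{d-2}{2}$ strictly, which holds because $\mathrm{Discr}>0$ for $p>p_{JL}$ forces $\gamma<\gamma_2$ and $\gamma+\gamma_2=d-2$. Indeed, $\gamma<\frac{d-2}{2}$ gives $\alpha=\gamma-m<\frac{d-2}{2}<\frac d2-1+m$, and since $m>0$ one concludes $\alpha<\frac d2-1$ (a slightly sharper bound actually holds). For $k_+$, the strict inequality $\gamma<\frac{d-2}{2}$ translates to $\frac12+\frac12(\frac d2-\gamma)>1$, whence $k_+=E\bigl[\frac12+\frac12(\frac d2-\gamma)\bigr]\ge1$. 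The only mildly delicate step is the verification that $m+2<\frac{d-2}{2}$ under $p>p_{JL}$, $d\ge11$, which as shown above is a one-line consequence of $d-4-2\sqrt{d-1}<d-6$.
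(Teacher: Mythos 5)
Your proof is correct and, for the central inequality $\alpha>2$, takes a genuinely different and arguably cleaner route than the paper's. The paper expresses $\alpha+1=f(s_c)$ with $f(s)=s-\sqrt{(s-2)^2-(d-1)}$, proves $f$ is decreasing by computing $f'$, and then evaluates $f(d/2)>3$ explicitly; this is a two-step calculus argument. You instead set $m=\tfrac{2}{p-1}$, recognize the algebraic identity $pc_\infty^{p-1}=(m+2)(d-2-m)$, and observe the one-line cancellation $P(m+2)=2(m+2)>0$ for the characteristic polynomial $P(x)=x^2-(d-2)x+pc_\infty^{p-1}$; combined with the easy check $m+2<\tfrac{d-2}{2}$ under $p>p_{JL}$, $d\ge11$, this locates $m+2$ to the left of the smaller root $\gamma$ and gives $\alpha=\gamma-m>2$ with no calculus at all. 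Your treatment of the equivalence $p>p_{JL}\Leftrightarrow s_c>2+\sqrt{d-1}$ (rewriting in terms of $m$) is the same algebra the paper does somewhat more implicitly via the formula $\mathrm{Discr}=4[(s_c-2)^2-(d-1)]$, and your argument for $k_+\ge1$ from $\tfrac d2-\gamma>1$ is equivalent to the paper's $\tfrac12+\tfrac12(\tfrac d2-\gamma)=1+\tfrac{\sqrt{\mathrm{Discr}}}{4}>1$.

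One small wrinkle: in the passage for the upper bound you write ``$\alpha=\gamma-m<\tfrac{d-2}{2}<\tfrac d2-1+m$, and since $m>0$ one concludes $\alpha<\tfrac d2-1$,'' which as stated is a non sequitur (from $\alpha<A+m$ with $m>0$ one cannot deduce $\alpha<A$). Fortunately it is also unnecessary: since $\tfrac{d-2}{2}=\tfrac d2-1$ exactly, the inequality $\alpha<\gamma<\tfrac{d-2}{2}=\tfrac d2-1$, which you already have from $m>0$ and $\mathrm{Discr}>0$, gives the upper bound directly. You should simply delete the detour.
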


\begin{proof}[Proof of Lemma \ref{conditionsc}]
Recall the definitions \fref{selfsimilarsolutions}, \fref{defgamma}. We compute the discriminant in terms of $s_c$:
\bee
{\rm Discr}&=&(d-2)^2-4pc_{\infty}^{p-1}=(d-2)^2-4(p-1+1)c_{\infty}^{p-1}\\
& = & (d-2)^2-4\left(\frac{4}{d-2s_c}+1\right)\left(\frac d2-s_c\right)\left(\frac d2-2+s_c\right)\\
& = & (d-2)^2-(4+d-2s_c)(d-4+2s_c)=(d-2)^2-(d+2(2-s_c))(d-2(2-s_c))\\
& = & (d-2)^2-d^2+4(2-s_c)^2=4\left[(s_c-2)^2-(d-1)\right].
\eee
In particular $$s_c(p_{JL})=2+\sqrt{d-1}$$ and hence \footnote{Observe that $ 2+\sqrt{d-1}<\frac d2$ iff $ d^2-120d+20=(d-10)(d-2)>0$ ie $d\geq 11$.}
$$p_{JL}<p<+\infty\ \ \mbox{iff} \ \ 2+\sqrt{d-1}<s_c<\frac d2.
$$
Define $f(s)=s-\sqrt{(s-2)^2-(d-1)}$. From \fref{defgamma}: $$\gamma-\frac2{p-1}=s_c-1-\frac{\sqrt{{\rm Discr}}}{2}=s_c-1-\sqrt{(s_c-2)^2-(d-1)}=f(s_c)-1.$$ We compute
$$f'(s_c)=1-\frac{s_c-2}{\sqrt{(s_c-2)^2-(d-1)}}<0$$ and thus $$f(s_c)>f(\frac{d}{2}).$$ We now compute:
\bee
f(\frac d2)&=&\frac d2-\sqrt{(\frac d2-2)^2-(d-1)}=\frac 12\left[d-\sqrt{d^2-12d+20}\right]=\frac{6d-10}{d+\sqrt{(d-10)(d-2)}}\\
& >3 &
\eee
by a direct check, and \fref{boundgamma} is proved.\\
Finally, from \fref{defgamma}: $$\frac 12+\frac 12\left(\frac d2-\gamma\right)=\frac 12+\frac12\left[1+\frac{\sqrt{\rm Discr}}{2}\right]\geq 1$$ and \fref{estkwplus} follows.
\end{proof}


\section{Hardy inequalities}
\label{appendixhardy}

In this section we recall the standard Hardy type inequalities in dimension $d\geq 3$.  We define the space of radially symmetric test functions $$\mathcal D_{\rm rad}=\{u\in \mathcal C^{\infty}_c(\Bbb R^d)\ \ \mbox{with radial symmetry}\}$$. Note that the notation $\int f$ stands for the integral over ${\Bbb R}^d$ with respect to the standard volume form:
$$
\int f:=\int_0^\infty f(y) y^{d-1} dy
$$ 
We also recall the notation
 $$
D^k= \left\{\begin{array}{llll}\Delta^m,\ \ k=2m,\\
 \partial_y \Delta^m,\ \
k=2m+1\end{array}\right.
$$ 

\begin{lemma}[Hardy with the best constant]
\label{lemmahardy}

(i) {\em Hardy near the origin}: let $u\in \matchal \mathcal D_{\rm rad}$, then:
\be
\label{esthardythree}
\int_{y\leq 1} |\pa_yu|^2y^{d-1}dy\geq \frac{(d-2)^2}{4}\int_{y\leq 1}\frac{u^2}{y^{2}}y^{d-1}dy-C_{d}u^2(1).
\ee
(ii) {\em Hardy away from the origin, non-critical exponent}: let $q>0$, $q\neq \frac{d-2}{2}$ and $u\in \mathcal D_{\rm rad}$, then:
\bea
\label{esthardythreebis}
&&\int_{y\geq 1}\frac{|\pa_yu|^2}{y^{2q}}y^{d-1}dy\geq \frac {|d-(2q+2)|}2\left\|\frac{u}{y^{q+1-\frac d2}}\right\|_{L^{\infty}(y\geq 1)}^2-C_{q,d}u^2(1)\\
\nonumber &&\int_{y\geq 1}\frac{|\pa_yu|^2}{y^{2q}}y^{d-1}\geq  \left(\frac{d-(2q+2)}{2}\right)^2\int_{y\geq 1}\frac{u^2}{y^{2+2q}}y^{d-1}dy-C_{q,d}u^2(1)
\eea
(iii) {\em Hardy away from the origin, critical exponent}: let $q=\frac{d-2}{2}$ and $u\in\mathcal D_{\rm rad}$, then:
\be
\label{esthardytwo}
 \int_{y\geq 1}\frac{|\pa_yu|^2}{y^{2q}}y^{d-1}dy\geq  \frac 14\int_{y\geq 1}\frac{u^2}{y^{2q+2}(1+\log y)^2}y^{d-1}dy-C_{d}u^2(1).
\ee
(iv) {\em General weighted Hardy}: let  $u\in \mathcal D_{\rm rad}$ then for any $\delta>0$, $k\in \Bbb N^*$ with $k\geq 2$ and $1\leq j\leq k-1$,
\be
\label{hardywehghtgeenral}
\int \frac{|D^ju|^2}{1+y^{\delta+2(k-j)}}\lesssim_{j,\delta}\int \frac{|D^ku|^2}{1+y^\delta}+\int \frac{u^2}{1+y^{\delta+2k}}.
\ee
\end{lemma}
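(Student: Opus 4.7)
The plan is to derive all four inequalities from the basic positivity identity
$$\int_0^\infty\!\!\Bigl(\partial_y u+\beta(y)u\Bigr)^2 \,\rho(y)\,y^{d-1}\,dy\ \ge\ 0,$$
expanded by one integration by parts, combined with careful handling of boundary terms at $y=1$. First I would choose $\rho(y)=\mathbf 1_{y\le 1}$ and $\beta(y)=\frac{d-2}{2y}$: expanding the square, integrating $2\beta u\partial_y u=\partial_y(\beta u^2)-\beta' u^2$ by parts produces the sharp constant $(d-2)^2/4$, together with a boundary contribution of the form $C_du^2(1)$ coming from evaluation at $y=1$; this is (i). For (ii) with $q\ne(d-2)/2$, I take $\rho(y)=y^{-2q}\mathbf 1_{y\ge 1}$ and $\beta(y)=\frac{\beta_0}{y}$ with $\beta_0=\frac{d-(2q+2)}{2}$, which is the minimizer making the cross-term into a perfect weight; again the boundary term at $y=1$ gives $C_{q,d}u^2(1)$, while the term at infinity vanishes for $u\in\mathcal D_{\rm rad}$. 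To get the $L^\infty$ part of \eqref{esthardythreebis} I would write $u^2(y)y^{d-2q-2}=u^2(1)+\int_1^y\partial_x\bigl(u^2 x^{d-2q-2}\bigr)dx$, bound the derivative by Cauchy–Schwarz against the weighted gradient norm, and use the sign of $d-2q-2$.

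The critical case (iii) is the most delicate point. Here $d-2q-2=0$, so the previous choice of $\beta_0$ degenerates to zero, and one must replace the weight $1/y$ by a logarithmic one. I would use $\beta(y)=\frac{1}{2y(1+\log y)}$ on $\{y\ge 1\}$, noticing that $\partial_y\bigl(\log(1+\log y)\bigr)=\frac{1}{y(1+\log y)}$, which makes $-\beta'(y)=\frac{1}{2y^2(1+\log y)^2}+\frac{1}{2y^2(1+\log y)}$ and produces exactly the $\frac14(1+\log y)^{-2}$ weight after the integration by parts cancels the leading $y^{-2}$ contribution (which would be absent precisely because $d-2q-2=0$). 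Equivalently, the change of variables $r=\log y$ reduces the inequality to the one-dimensional Hardy inequality on $r\ge 0$ with weight $(1+r)^{-2}$, which is classical. The boundary term at $y=1$ again yields $C_du^2(1)$.

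For (iv), the weighted higher-order inequality, I would proceed by a finite induction on $k-j$, starting from $j=k-1$. The base case reduces, away from the origin, to \eqref{esthardythreebis} applied to $D^{k-1}u$ with $2q=\delta$, which controls $\int\frac{|D^{k-1}u|^2}{1+y^{\delta+2}}$ by $\int\frac{|\partial_y D^{k-1}u|^2}{1+y^\delta}\lesssim \int\frac{|D^ku|^2}{1+y^\delta}$; near the origin the weight $(1+y^{\delta+2})^{-1}$ is bounded, so one uses the local Hardy \eqref{esthardythree} after absorbing the boundary $u^2(1)$ into $\int\frac{u^2}{1+y^{\delta+2k}}$ via Sobolev embedding in the radial sector. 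The inductive step $j\mapsto j-1$ follows identically by applying the same weighted Hardy with exponent $2q=\delta+2(k-j)$, which is never critical provided we choose the exponents generically (if an exceptional value of $\delta$ makes some intermediate $2q$ critical, one may perturb $\delta$ and use monotonicity of the weights, or invoke \eqref{esthardytwo} which loses only a log, harmless in \eqref{hardywehghtgeenral}).

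The main obstacle is the critical case (iii): one has to introduce the logarithmic correction and verify that the sharp constant $\tfrac14$ emerges without losing it in the boundary contribution. Everything else is a routine optimization of $\beta$ and a density argument. I will carry out the computations at the level of smooth compactly supported radial functions and then extend by density.
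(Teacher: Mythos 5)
Your approach to parts (i)--(iii) is essentially the paper's argument in disguise: expanding the perfect square $\int(\partial_y u+\beta u)^2\rho\,y^{d-1}\geq 0$, integrating the cross term by parts, and optimizing over $\beta$ is exactly equivalent to the Cauchy--Schwarz with a free parameter $\sigma$ that the paper uses. One small slip: with $\rho=y^{-(d-2)}\mathbf 1_{y\geq 1}$, taking $\beta=+\tfrac{1}{2y(1+\log y)}$ produces the weight $-\tfrac34\tfrac{1}{y(1+\log y)^2}$ after integration by parts, which is the wrong sign; you need $\beta=-\tfrac{1}{2y(1+\log y)}$ (or equivalently $(\partial_y u-\beta u)^2$), and then the weight comes out as $+\tfrac14\tfrac{1}{y(1+\log y)^2}$ with boundary term $-\tfrac12 u^2(1)$. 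The reduction to one dimension via $r=\log y$ is also valid and perhaps cleaner.

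Part (iv) is where your plan breaks. The statement \eqref{hardywehghtgeenral} has \emph{no} boundary term on the right-hand side, yet applying \eqref{esthardythree} on $\{y\leq 1\}$ and \eqref{esthardythreebis} on $\{y\geq 1\}$ to $v=D^{k-1}u$ necessarily produces a term $(D^{k-1}u)^2(1)$. This is not $u^2(1)$ as you wrote, and it cannot be absorbed into $\int\frac{u^2}{1+y^{\delta+2k}}$ by any Sobolev embedding: the natural trace estimate gives $(D^{k-1}u)^2(1)\lesssim\|D^{k-1}u\|_{L^2(y\sim 1)}^2+\|D^k u\|_{L^2(y\sim 1)}^2$, and since the weight $(1+y^{\delta+2})^{-1}$ is comparable to a constant near $y=1$, the first term is comparable to (a piece of) the very quantity $\int\frac{|D^{k-1}u|^2}{1+y^{\delta+2}}$ you are trying to control, so the absorption is circular with an $O(1)$ constant rather than a small one. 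The paper sidesteps this entirely in its proof of \eqref{hardywehghtgeenral}: it never splits at $y=1$, and instead performs a single global integration by parts against the non-homogeneous weight $(1+y^{2+\delta})^{-1}$, whose gradient $\nabla\big((1+y^{2+\delta})^{-1}\big)$ is smooth and bounded for all $y$, so no boundary term ever appears. The argument is then
$$\int\frac{|\nabla u|^2}{1+y^{2+\delta}}=-\int\frac{u\Delta u}{1+y^{2+\delta}}+\int u\,\nabla u\cdot\nabla\!\left(\frac{1}{1+y^{2+\delta}}\right)\lesssim \int\frac{|\Delta u|^2}{1+y^\delta}+\int\frac{u^2}{1+y^{4+\delta}}+\tfrac12\int\frac{|\nabla u|^2}{1+y^{2+\delta}},$$
and induction on $k$ (with the parameter $\delta$ promoted to $\delta+4$ at each step) does the rest. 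You should adopt this global, boundary-free integration-by-parts scheme for (iv) rather than patching together the origin and far-field Hardys; as a side benefit it also removes your need to worry about accidental critical exponents, since $1+y^\sigma$ is never homogeneous.
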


\begin{proof}
{\em Proof of (i)}: We integrate by parts:
\bee
\int_\e^1\frac{u^2}{y^2}y^{d-1}dy&=&\frac{1}{d-2}\int_\e^1u^2\pa_y(y^{d-2})dy=\frac{1}{d-2}\left[u^2y^{d-2}\right]_\e^1-\frac{2}{d-2}\int_\e^1\frac{\pa_y uu}{y}y^{d-1}dy\\
& \leq & C_{d}u^2(1)+\frac{2}{d-2}\left(\int_\e^1\frac{u^2}{y^2}y^{d-1}dy\right)^{\frac 12}\left(\int_\e^1|\pa_yu|^2y^{d-1}dy\right)^{\frac 12}\\
& \leq & C_{d}u^2(1)+\frac{1}{d-2}\left[\sigma\int_{\e}^1\frac{u^2}{y^{2}}y^{d-1}dy+\frac{1}{\sigma}\int_\e^1 |\pa_yu|^2y^{d-1}dy\right]
\eee
and hence letting $\e\to 0$:
$$
\left[1-\frac{\sigma}{d-2}\right]\int_{y\leq 1}\frac{u^2}{y^2}y^{d-1}dy\leq C_{d}u^2(1)+\frac{1}{\sigma(d-2)}\int_{y\leq 1} |\pa_yu|^2y^{d-1}dy$$
and the optimal choice $\sigma=\frac{d-2}{2}$ yields \fref{esthardythree}.\\
\noindent{\em Proof of (ii)}: For $0<q<\frac{d-2}{2}$ i.e. $2q+2<d$,  
\be
\label{calculdivergence}
\frac{1}{y^{d-1}}\pa_y\left(\frac{y^{d-1}}{y^{2q+1}}\right)=\frac{d-(2p+2)}{y^{2q+2}}
\ee 
and hence integrating by parts:
\bee
&&\int_{1}^R\frac{u^2}{y^{2q+2}}y^{d-1}dy=\frac{1}{d-(2q+2)}\int_{1}^Ru^2\pa_y\left(\frac{y^{d-1}}{y^{2q+1}}\right)dy\\
& = & \frac{1}{d-(2q+2)}\left[y^{d-(2q+2)}u^2\right]_{1}^R-\frac{2}{d-(2q+2)}\int_{1}^R\frac{u\pa_yu}{y^{2q+1}}y^{d-1}dy\\
& \leq& \frac{R^{d-(2q+2)}}{d-(2q+2)}u^2(R)+ \frac{2}{d-(2q+2)}\left(\int_{1}^{R}\frac{u^2}{y^{2q+2}}y^{d-1}dy\right)^{\frac 12}\left(\int_1^R\frac{|\pa_yu|^2}{y^{2q}}y^{d-1}dy\right)^{\frac 12}.
\eee
We let $R\to +\infty$ and hence $$\int_{1}^{+\infty}\frac{u^2}{y^{2q+2}}y^{d-1}dy\leq  \frac{2}{d-(2q+2)}\left(\int_{1}^{+\infty}\frac{u^2}{y^{2q+2}}y^{d-1}dy\right)^{\frac 12}\left(\int_1^{+\infty}\frac{|\pa_yu|^2}{y^{2q}}y^{d-1}dy\right)^{\frac 12}$$ which implies:
\be
\label{bebeibebevi}
\int_{y\geq 1}\frac{|\pa_yu|^2}{y^{2q}}y^{d-1}dy\geq  \left(\frac{d-(2q+2)}{2}\right)^2\int_{y\geq 1}\frac{u^2}{y^{2+2q}}y^{d-1}dy.
\ee We now estimate from $2q+2<d$ and \fref{bebeibebevi}:
\bee
u^2(R)&=&-2\int_R^{+\infty}\pa_yu udy\lesssim \int_{y\geq R}\frac{(y^{\frac{d-1-2q}{2}}\pa_yu)(y^{\frac{d-3-2q}{2}u)}}{y^{d-2-2q}}dy\\
& \leq &\frac{1}{R^{d-2-2q}}\left(\int_{y\geq 1}\frac{|\pa_yu|^2}{y^{2q}}y^{d-1}dy\right)^{\frac 12}\left(\int_{y\geq 1}\frac{|u|^2}{y^{2q+2}}y^{d-1}dy\right)^{\frac 12}\\
& \leq &\frac{1}{R^{d-2-2q}} \frac 2{d-(2q+2)}\int_{y\geq 1}\frac{|\pa_yu|^2}{y^{2q}}y^{d-1}dy
\eee
and \fref{esthardythreebis} is proved.\\
For $q> \frac{d-2}{2}$, we compute similarly from \fref{calculdivergence}:
\bee
&&\int_{1}^R\frac{u^2}{y^{2q+2}}y^{d-1}dy=-\frac{1}{2q+2-d}\int_{1}^Ru^2\pa_y\left(\frac{y^{d-1}}{y^{2q+1}}\right)dy\\
& = & \frac{-1}{2q+2-d}\left[y^{d-(2q+2)}u^2\right]_{1}^R+\frac{2}{2q+2-d}\int_{1}^R\frac{u\pa_yu}{y^{2q+1}}y^{d-1}dy\\
& \leq& C_{q,d}u^2(1)-\frac{1}{2q+2-d}\frac{u^2(R)}{R^{2q+2-d}}+\frac{2}{|2q+2-d|}\left(\int_{1}^R\frac{u^2}{y^{2q+2}}y^{d-1}dy\right)^{\frac 12}\left(\int_1^R \frac{|\pa_yu|^2}{y^{2q}}y^{d-1}dy\right)^{\frac 12}\\
& \leq & C_{q,d}u^2(1)-\frac{1}{2p+2-d}\frac{u^2(R)}{R^{2q+2-d}}+\frac{1}{|2q+2-d|}\left[\sigma\int_{1}^R\frac{u^2}{y^{2q+2}}y^{d-1}dy+\frac{1}{\sigma}\int_1^R\frac{|\pa_yu|^2}{y^{2q}}y^{d-1}dy\right]
\eee
and hence:
\bee
&&\left[1-\frac{\sigma}{|2q+2-d|}\right]\int_1^R\frac{u^2}{y^{2q+2}}y^{d-1}dy+\frac{1}{2q+2-d}\frac{u^2(R)}{R^{2q+2-d}}\\
& \leq & C_{q,d}u^2(1)+\frac{1}{\sigma(2q+2-d)}\int_1^R\frac{|\pa_yu|^2}{y^{2q}}y^{d-1}dy.
\eee
Passing to the limit $R\to +\infty$ and picking the optimal  $\sigma=\frac{2q+2-d}{2}$ yields \fref{esthardythreebis}.\\
\noindent{\em Proof of (iii)}: In the critical case $p=\frac{d-2}{2}$, we compute:
\bee
&&\int_{1}^R\frac{u^2}{y^{2q+2}(1+\log y)^2}y^{d-1}dy=\int_{1}^R\frac{u^2}{y(1+\log y)^2}dy=-\int_1^Ru^2\pa_y\left(\frac{1}{1+\log y}\right)\\
& = & -\left[\frac{u^2}{1+\log y}\right]_1^R+2\int_1^R\frac{u\pa_y u}{1+\log y}dy\leq  u^2(1)+2\left(\int_{1}^R\frac{u^2}{y(1+\log y)^2}dy\right)^{\frac 12}\left(\int_{1}^R|\pa_yu|^2ydy\right)^{\frac 12}\\
& \leq & u^2(1) +\frac 12\int_{1}^R\frac{u^2}{y^{2q+2}(1+\log y)^2}y^{d-1}dy+2\int_{1}^R\frac{|\pa_yu|^2}{y^{2q}}y^{d-1}dy
\eee
and letting $R\to +\infty$ yields \fref{esthardytwo}.\\
\noindent{\it Proof of (iv)}. We argue by induction on $k$ with the induction claim: \fref{hardywehghtgeenral} holds for all $\delta>0$. For $k=2$, we
integrate by parts and use Cauchy Schwarz to estimate:
\bee
\int \frac{|\nabla u|^2}{1+y^{2+\delta}}&=&-\int u\nabla \cdot\left[\frac{\nabla u}{1+y^{2+\delta}}\right]=-\int \frac{u\Delta u}{1+y^{2+\delta}}+\int u\nabla u\cdot\nabla \left(\frac{1}{1+y^{2+\delta}}\right)\\
& \leq & C\left[\int \frac{|\Delta u|^2}{1+y^{\delta}}+\int \frac{|u|^2}{1+y^{4+\delta}}\right]+\frac12\int \frac{|\nabla u|^2}{1+y^{2+\delta}}
\eee
and \fref{hardywehghtgeenral} is proved. Assume the claim for $k$ and prove it for $k+1$. For $1\leq j\leq k-1$ we have from the induction claim at the level $k$ applied to 
$\tilde\delta=\delta+2$:
\be
\label{cebcbbeiebve}
\int \frac{|D^ju|^2}{1+y^{\delta+2(k+1-j)}}=\int \frac{|D^ju|^2}{1+y^{\delta+2+2(k-j)}}\lesssim \int \frac{|D^ku|^2}{1+y^{2+\delta}}+\int \frac{u^2}{1+y^{\delta+2+2k}}
\ee
For $j=k$,
\bee
&&\int \frac{|D^ku|^2}{1+y^{2+\delta}}=\int D^{k-1}uD\left(\frac{D^ku}{1+y^{2+\delta}}\right)\\
& = & \int D^{k-1}uD^kuD\left(\frac{1}{1+y^{2+\delta}}\right)+O\left[\left(\int \frac{|D^{k-1}u|^2}{1+y^{4+\delta}}\right)^{\frac 12}\left(\int \frac{|D^{k+1}u|^2}{1+y^{\delta}}\right)^{\frac 12}\right]\\
& = & C\int \frac{|D^{k+1}u|^2}{1+y^{\delta}}+C \int \frac{|D^{k-1}u|^2}{1+y^{4+\delta}} +\frac 12 \int \frac{|D^ku|^2}{1+y^{2+\delta}}
\eee
and \fref{hardywehghtgeenral} is proved.
\end{proof}

We now state a refined fractional global Hardy bound:

\begin{lemma}[Fractional Hardy]
\label{fracthardy}
Let $u\in{\mathcal D}_{rad}$ and 
$$ 0<\nu<1 \ \ \mbox{and}\ \  \mu>0\ \ \mbox{with} \ \ \mu+\nu<\frac d2,$$ and a smooth radially symmetric function $f$ with 
\be
\label{estf}
|\pa_y^kf(y)|\lesssim \frac{1}{1+|y|^{\mu+k}}, \ \ k=0,1,
\ee
then
\be
\label{hardyfract}
\left\|\nabla^{\nu}\left(u f\right)\right\|_{L^2}\lesssim \|\nabla^{\mu+\nu}u\|_{L^2}.
\ee
\end{lemma}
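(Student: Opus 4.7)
The plan is to use the Slobodeckij (Gagliardo) double-integral characterization
\[
\|\nabla^\nu g\|_{L^2}^2 \sim c_{d,\nu}\iint_{\mathbb R^d\times \mathbb R^d} \frac{|g(x)-g(y)|^2}{|x-y|^{d+2\nu}}\,dx\,dy,\qquad 0<\nu<1,
\]
applied to $g=fu$, and to exploit the pointwise decay \eqref{estf} of $f$ and $\nabla f$ to reduce every resulting piece to a weighted $L^2$-norm of $u$, which is then absorbed by the scalar fractional Hardy inequality $\|v/|x|^{\mu+\nu}\|_{L^2}\lesssim \|\nabla^{\mu+\nu}v\|_{L^2}$, valid precisely when $\mu+\nu<d/2$. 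Splitting
\[
fu(x)-fu(y) = f(x)\bigl(u(x)-u(y)\bigr)+u(y)\bigl(f(x)-f(y)\bigr),
\]
I would reduce to $\|fu\|_{\dot H^\nu}^2 \lesssim I_1+I_2$, where
\[
I_1 = \iint \frac{|f(x)|^2 |u(x)-u(y)|^2}{|x-y|^{d+2\nu}}\,dx\,dy,\qquad I_2 = \iint \frac{|u(y)|^2 |f(x)-f(y)|^2}{|x-y|^{d+2\nu}}\,dx\,dy.
\]

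First I would dispatch $I_2$ by an explicit evaluation of the inner $x$-integral. Fixing $y$ with $\rho=|y|$ and splitting $\mathbb R^d_x$ into $\{|x|<\rho/2\}$, $\{\rho/2\le |x|\le 2\rho\}$, $\{|x|>2\rho\}$, the hypothesis \eqref{estf} provides the crude bound $|f(x)-f(y)|\lesssim (1+\rho)^{-\mu}+(1+|x|)^{-\mu}$ in the two far-field regions, where moreover $|x-y|\gtrsim\max(|x|,|y|)$, and the mean-value bound $|f(x)-f(y)|\lesssim |x-y|\,(1+\rho)^{-\mu-1}$ in the middle annulus (any $\xi$ on the segment $[x,y]$ satisfies $|\xi|\sim \rho$). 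A direct polar-coordinate computation in each region, which converges thanks to $\mu>0$ in the far field and to $\nu<1$ in the diagonal piece, yields
\[
\int \frac{|f(x)-f(y)|^2}{|x-y|^{d+2\nu}}\,dx \lesssim (1+|y|)^{-2(\mu+\nu)}.
\]
Consequently $I_2\lesssim \|u/(1+|y|)^{\mu+\nu}\|_{L^2}^2 \lesssim \|\nabla^{\mu+\nu}u\|_{L^2}^2$ by the scalar fractional Hardy inequality, using $\mu+\nu<d/2$.

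The main obstacle is $I_1$, which after the pointwise bound $|f(x)|^2\lesssim (1+|x|)^{-2\mu}$ amounts to the weighted fractional Sobolev estimate
\[
\iint \frac{|u(x)-u(y)|^2}{(1+|x|)^{2\mu}\,|x-y|^{d+2\nu}}\,dx\,dy \lesssim \|\nabla^{\mu+\nu}u\|_{L^2}^2.
\]
I plan to symmetrize in $(x,y)$ so that the weight becomes $\tfrac12\bigl((1+|x|)^{-2\mu}+(1+|y|)^{-2\mu}\bigr)$, and then split into the two natural regions $|x|\sim|y|$ and $|x|\not\sim |y|$. In the diagonal regime the weight is comparable to the single power $(1+\min(|x|,|y|))^{-2\mu}$ and the estimate reduces, via the equivalence of the Slobodeckij double integral with $\|\nabla^\nu\cdot\|_{L^2}$ adapted to the $A_2$-Muckenhoupt weight $|x|^{-2\mu}$ (which is $A_2$ precisely for $2\mu<d$, weaker than what is assumed), to the Stein--Weiss inequality
\[
\int \frac{|\nabla^\nu u(x)|^2}{|x|^{2\mu}}\,dx \lesssim \|\nabla^{\mu+\nu}u\|_{L^2}^2.
\]
In the off-diagonal regime $|x-y|\gtrsim \max(|x|,|y|)$, so that the singular kernel becomes integrable in one of the two variables, and the remaining expression collapses to $\lesssim \int |u(y)|^2/(1+|y|)^{2(\mu+\nu)}\,dy$, again handled by scalar fractional Hardy. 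The step I expect to demand the most care is the passage from the symmetrized weighted double integral in the near-diagonal region to the single-integral Stein--Weiss form, which is what ultimately forces the scaling condition $\mu+\nu<d/2$; all other estimates above are merely borderline in the individual parameters $\mu$ and $\nu$ separately.
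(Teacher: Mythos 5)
Your treatment of $I_2$ and the paper's proof share the same starting point (the Gagliardo double integral, the factorization of the difference, and the split between a ``near'' and ``far'' region), but the paper decomposes by $|x-y|\lessgtr |x|/2$, which guarantees that the straight segment $[x,y]$ stays inside a ball where $|\xi|\sim|x|$, so the mean-value estimate holds for any smooth $f$. Your split is by $|x|$ against $\rho=|y|$, and your parenthetical claim that ``any $\xi$ on the segment $[x,y]$ satisfies $|\xi|\sim\rho$'' is false as stated for the middle annulus: two points on the sphere $|x|=|y|=\rho$ can be antipodal, and the chord then crosses the origin. Your conclusion is still correct only because $f$ is radial, so one can invoke the one-dimensional mean value theorem on $g(r)=f(r)$ together with $\bigl||x|-|y|\bigr|\le|x-y|$, at which point the justification must explicitly appeal to radial symmetry rather than to the straight segment. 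You should spell this out, or adopt the paper's $|x-y|$-based split, which is cleaner.

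For $I_1$ the two arguments genuinely diverge. The paper's route is elementary and self-contained: change variables to $z=y-x$, set $v_z=u(\cdot+z)-u$, apply the \emph{standard} scalar fractional Hardy inequality to $v_z$ at fixed $z$, then use Plancherel in $\xi$ and Fubini in $z$, at which point the $z$-integral $\int |1-e^{-i\xi\cdot z}|^2/|z|^{d+2\nu}\,dz = c_d|\xi|^{2\nu}$ collapses everything to $\|\,|\xi|^{\mu+\nu}\hat u\,\|_{L^2}^2$. No weighted Littlewood--Paley or Muckenhoupt machinery is touched. Your route replaces this with the chain ``symmetrized weighted Gagliardo seminorm $\lesssim$ weighted Besov norm $\sim$ weighted Riesz-potential norm $\lesssim$ unweighted $\dot H^{\mu+\nu}$ via Stein--Weiss.'' The final Stein--Weiss step is indeed elementary (it is the scalar fractional Hardy applied to $|\nabla|^\nu u$ and needs only $\mu<d/2$), but the middle equivalence --- that the weighted Slobodeckij/Gagliardo seminorm with an $A_2$ weight coincides with the weighted potential space norm --- is a substantial theorem resting on weighted square-function estimates, not something that follows ``via the equivalence'' without proof or citation. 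It is true at $p=2$ for $A_2$ weights, so your argument can be made rigorous, but it imports considerably heavier technology to obtain a statement the paper derives in four lines from Fubini and Plancherel. If you keep your approach, you must at minimum quote a precise reference for the weighted Besov/potential equivalence; otherwise the step you yourself flag as ``demanding the most care'' is a genuine gap.
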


\begin{proof} We recall the standard fractional Hardy inequality:
\be
\label{stnadradharyd}
\forall 0<s<\frac d2, \ \ \int\frac{|u|^2}{|x|^{2s}}\lesssim \|\nabla^{s}u\|^2_{L^2}.
\ee
From $0<\nu<1$, $$\left\|\nabla^{\nu}\left(u f\right)\right\|_{L^2}^2=\int\frac{|f(x)u(x)-f(y)u(y)|^2}{|x-y|^{d+2\nu}}dxdy.$$ We split the integral in various zones. First:
\bee
&&\int_{|x-y|\leq \frac{|x|}{2}}\frac{|f(x)u(x)-f(y)u(y)|^2}{|x-y|^{d+2\nu}}dxdy\\
&\lesssim &\int_{|x-y|\leq \frac{|x|}{2}}\frac{|f(x)|^2|u(x)-u(y)|^2}{|x-y|^{d+2\nu}}dxdy+\int_{|x-y|\leq\frac{|x|}{2}}\frac{|f(x)-f(y)|^2|u(y)|^2}{|x-y|^{d+2\nu}}dxdy.
\eee
The first term is the most delicate one:
\bee
&&\int_{|x-y|\leq \frac{|x|}{2}}\frac{|f(x)|^2|u(x)-u(y)|^2}{|x-y|^{d+2\nu}}dxdy\lesssim \int_{|x-y|\leq \frac{|x|}{2}}\frac{|u(x)-u (y)|^2}{|x|^{2\mu}|x-y|^{d+2\nu}}dxdy\\
& \lesssim & \int_x\int_{z}\frac{|u(x+z)-u(x)|^2}{|x|^{2\mu}|z|^{d+2\nu}}dxdz.
\eee
Let $v_z(x)=u(x+z)-u(x)$, then from \fref{stnadradharyd}, Fubini and Plancherel:
\bee
&&\int_x\int_{z}\frac{|u(x+z)-u(x)|^2}{|x|^{2\mu}|z|^{d+2\nu}}dxdz=\int_{z}\frac{dz}{|z|^{d+2\nu}}\int\frac{|v_z(x)|^2}{|x|^{2\mu}}dx\lesssim \int \frac{dz}{|z|^{d+2\nu}}\int|\nabla^\mu v_z(x)|^2dx\\
& \lesssim & \int\frac{dz}{|z|^{d+2\nu}}\int |\xi|^{2\mu}|\hat{v}_z(\xi)|^2d\xi=\int |\xi|^{2\mu}|\hat{u}(\xi)|^2d\xi\int\frac{|1-e^{-i\xi\cdot z}|^2}{|z|^{d+2\nu}}dz\\
& \lesssim & \int|\xi|^{2\mu+2\nu}|\hat{u}(\xi)|^2d\xi=\|\nabla^{\mu+\nu}u\|_{L^2}^2
\eee
where we used from $0<\nu<1$ and a simple homogeneity argument: $$\int\frac{|1-e^{-i\xi\cdot z}|^2}{|z|^{d+2\nu}}dz=c_d|\xi|^{2\nu}.$$
For the second term, we estimate using $|x-y|\leq \frac{|x|}{2}$: $$|f(x)-f(y)|\lesssim \int_{0}^1|x-y||f'(x+t(x-y))|dt\lesssim \int_0^1\frac{|x-y|dt}{1+|x+t(x-y)|^{\mu+1}}\lesssim \frac{|x-y|}{|x|^{\mu+1}}$$ and hence using $|x|\sim |y|$:
\bee
&&\int_{|x-y|\leq \frac{|x|}{2}}\frac{|f(x)-f(y)|^2|u(y)|^2}{|x-y|^{d+2\nu}}dxdy\lesssim \int_{|x-y|\leq\frac{|x|}{2}}\frac{|u(y)|^2}{|x|^{2\mu+2}|x-y|^{d+2\nu-2}}\\
&\lesssim &  \int \frac{|u(y)|^2}{|y|^{2\mu+2}}dy\int_{|x-y|\lesssim |y|}\frac{dx}{|x-y|^{d+2\nu-2}}\lesssim \int \frac{|u(y)|^2}{|y|^{2\mu+2\nu}}dy\lesssim  \|\nabla^{\nu+\mu}u\|_{L^2}^2
\eee
from \fref{stnadradharyd}. By symmetry, we estimate similarly $|x-y|\leq \frac{|y|}{2}.$  For $|x-y|\gtrsim \max\{|x|,|y|\}$, we estimate:
\bee
&&\int_{|x-y|\gtrsim |x|,|y|}\frac{|f(x)u(x)-f(y)u(y)|^2}{|x-y|^{d+2\nu}}dxdy\\
& \lesssim & \int_{|x-y|\gtrsim |x|,|y|}\frac{|u(x)|^2}{|x|^{2\mu}|x-y|^{d+2\nu}}dxdy+\int_{|x-y|\gtrsim |x|,|y|}\frac{|u(y)|^2}{|y|^{2\mu}|x-y|^{d+2\nu}}dxdy\\
& \lesssim & \int\frac{|u(x)|^2dx}{|x|^{2\mu}}\int_{|x-y|\gtrsim |x|}\frac{dy}{|x-y|^{d+2\nu}}+\int\frac{|u(y)|^2dy}{|y|^{2\mu}}\int_{|x-y|\gtrsim |y|}\frac{dx}{|x-y|^{d+2\nu}}\\
& \lesssim & \int\frac{|u(x)|^2dx}{|x|^{2\mu+2\nu}}+\int\frac{|u(y)|^2dy}{|y|^{2\mu+2\nu}}\lesssim  \|\nabla^{\nu+\mu}u\|_{L^2}^2
\eee
and \fref{hardyfract} is proved.
\end{proof}


\section{Linear weighted coercitivity bounds}
\label{coercseciot}

Given $M\geq 1$, we let $\Xi_{M,\pm}$ be given by \fref{defdirectionplus}. We claim suitable weighted coercivity bounds for the linearized operator $\Lt$
$$
\Lt^*=\left(\begin{array}{ll} 0&L_-\\-L_+&0\end{array}\right)
$$
with
$$L_+=-\Delta-pQ^{p-1}, \ \ L_-=-\Delta-Q^{p-1}.$$  
We will use in an essential way the factorization of $L_\pm=A^*_\pm A_\pm$, 
 $$A_\pm u=-\pa_yu+V_\pm u, \ \ A_\pm^*u=\frac{1}{y^{d-1}}\pa_y(y^{d-1}u)+V_\pm u,$$
with
$$
V_+=\pa_y(\log (\Lambda Q)), \ \ V_-=\pa_y(\log Q),
$$ 
and deal first with $A_\pm$ and $A^*_\pm$ separately. 

\subsection{Coercivity of $A^*_\pm$} We start with the weighted coercivity of $A^*_\pm$

\begin{lemma}[Weighted coercitivity for $A^*_\pm$]
 \label{boundweightbis}
 Let $k\in \Bbb R^+ $, then there exists $c_k>0$ such that for all $u\in\mathcal D_{\rm rad}$:
 \be
 \label{coerciveboundbis}
 \int\frac{|A_\pm^*u|^2}{1+y^{4k}}\geq c_k \left[\int\frac{|u|^2}{y^2(1+y^{4k})}+\int\frac{|\pa_yu|^2}{1+y^{4k}}\right].
  \ee
 \end{lemma}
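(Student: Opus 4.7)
The plan is to reduce \eqref{coerciveboundbis} to a one-dimensional weighted Hardy inequality via the factorization of $A^*_\pm$ from Lemma \ref{facotrh}. I treat $A^*_+$; the $A^*_-$ case is verbatim with $\Lambda Q$ replaced by $Q$.

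First I would apply the identity $A^*_+u = (y^{d-1}\Lambda Q)^{-1}\partial_y(y^{d-1}\Lambda Q\,u)$ from \eqref{rewriteau} and substitute $w(y):=y^{d-1}\Lambda Q(y)\,u(y)$. Unfolding the Lebesgue factor $y^{d-1}dy$ in $\int$, both the left-hand side and the $u^2/y^2$ piece of the right-hand side of \eqref{coerciveboundbis} become one-dimensional integrals with the common weight $\rho(y):=[y^{d-1}(\Lambda Q)^2(1+y^{4k})]^{-1}$:
\begin{equation*}
\int\frac{|A^*_+u|^2}{1+y^{4k}} = \int_0^\infty (\partial_y w)^2\rho\,dy,\qquad \int\frac{u^2}{y^2(1+y^{4k})} = \int_0^\infty \frac{w^2}{y^2}\rho\,dy.
\end{equation*}
Because $u\in\mathcal{D}_{\rm rad}$, $w$ vanishes like $y^{d-1}$ at the origin and has compact support, so the boundary contributions in all the integrations by parts below are admissible.

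Next I would establish the weighted Hardy inequality $\int_0^\infty (\partial_y w)^2\rho\,dy\geq c_k\int_0^\infty w^2\rho/y^2\,dy$. From Lemma \ref{lemmasoliton}, $\rho$ is pure-power at both endpoints: $\rho\sim y^{-(d-1)}$ as $y\to 0^+$ and $\rho\sim y^{2\gamma-(d-1)-4k}$ as $y\to\infty$, both exponents lying strictly below the critical Hardy value $1$ thanks to $d\geq 11$ and $d-2\gamma>2$ from Lemma \ref{conditionsc}. The Hardy bound then follows in the usual way from the identity
\begin{equation*}
\int_0^\infty \frac{w^2}{y^2}\rho\,dy = \int_0^\infty \frac{2ww'}{y}\rho\,dy + \int_0^\infty \frac{w^2}{y}\rho'\,dy,
\end{equation*}
Cauchy--Schwarz, and the fact that the logarithmic derivative $y\partial_y\log\rho$ tends to $-(d-1)$ at $0$ and to $2\gamma-(d-1)-4k$ at $\infty$, both strictly below $1$. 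The two endpoint estimates are patched via a smooth cutoff at some large radius $R$ together with a standard compactness argument on the intermediate shell $\{R^{-1}\leq y\leq R\}$, any putative minimizing sequence having a weak limit lying in $\ker A^*_+\cap \mathcal{D}_{\rm rad}=\{0\}$ by \eqref{efinitei} (the explicit kernel element $1/(y^{d-1}\Lambda Q)$ being singular at $y=0$).

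The gradient piece then follows by the triangle inequality: writing $A^*_+u=\partial_y u+W_+u$ with $W_+(y)=(d-1)/y+V_+(y)$ satisfying $|W_+|\lesssim 1/y$ globally by \eqref{behavoir}, one has $\int(\partial_yu)^2/(1+y^{4k})\leq 2\int|A^*_+u|^2/(1+y^{4k})+C\int u^2/(y^2(1+y^{4k}))$, and the last integral is absorbed using the bound just proved. The main obstacle will be the compactness step needed to produce a uniformly positive Hardy constant in the intermediate region--- a relatively standard argument once the endpoint exponents have been correctly identified and the triviality of the kernel on $\mathcal{D}_{\rm rad}$ has been noted.
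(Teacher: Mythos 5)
Your plan is correct and tracks the paper's own argument: the substitution $w = y^{d-1}\Lambda Q\,u$ (respectively $y^{d-1}Q\,u$) is precisely the factorization identity \eqref{rewriteau}/\eqref{rewriteaumoins} that underlies the paper's computation, and after this change of variables your endpoint exponent analysis for $\rho$ reproduces the same Hardy-inequality thresholds (the paper's condition $2q-(d-2)=4k+d-2\gamma>0$ at infinity and $d\ge 3$ at the origin), with the same compactness step to absorb the intermediate boundary term using the fact that the explicit kernel element $(y^{d-1}\Lambda Q)^{-1}$ (resp.\ $(y^{d-1}Q)^{-1}$) is too singular at $y=0$ to have finite $\int |u|^2/(y^2(1+y^{4k}))$. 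The only cosmetic difference is that the paper expands the potential $\tilde V_\pm$ asymptotically and applies Lemma~\ref{lemmahardy} piecewise, rather than passing globally to the primitive variable $w$; the two are algebraically identical and your phrasing is slightly cleaner.
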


\begin{proof}
 {\bf step 1} Subcoercive bound for $A^*_+$. Let $u\in \mathcal D_{\rm rad}$, we claim the following lower bound:
 \bea
 \label{subcoercabis}
\nonumber   \int\frac{|A_+^*u|^2}{1+y^{4k}}&\geq& c\left[\int \frac{u^2}{y^2(1+y^{4k})}+\int\frac{|\pa_yu|^2}{1+y^{4k}}\right]\\
& -&  \frac{1}{c}\left[u^2(1)+\int \frac{u^2}{1+y^{4k+4}}\right]
 \eea
 for some universal constant $c=c_{p,d,k}>0$. Indeed, recall the definition of $A^*_+$: 
 $$
 A^*_+=\pa_y+\Vt_+, \ \ \Vt_+=\frac{d-1}{y}+V_+
 $$
 where $V_+$ satisfies \fref{behavoir}. Near the origin,
 \bea
 \label{jobrpjhpjhtjohtp}
 \nonumber  &&\int_{y\leq 1}\frac{|A_+^*u|^2}{1+y^{4k}}\gtrsim \int_{y\leq 1}|\pa_yu+\Vt_+u|^2=\int_{y\leq 1}\left[|\pa_yu|^2+\Vt_+^2u^2+2\Vt_+u\pa_y u\right]\\
 \nonumber& = & \int_{y\leq 1}|\pa_yu|^2+\int_{y\leq 1}u^2\left[\Vt_+^2-\frac{1}{y^{d-1}}\pa_y(y^{d-1}\Vt_+)\right]\\
 \nonumber& = & \int_{y\leq 1}|\pa_yu|^2+\int_{y\leq 1}\frac{u^2}{y^2}[(d-1)^2-(d-1)(d-2)]+O\left(\int_{y\leq 1}\frac{u^2}{y}\right)\\
 & \gtrsim & \int_{y\leq 1}|\pa_yu|^2+\int_{y\leq 1}\frac{u^2}{y^2}+O\left(\int_{y\leq 1}u^2\right).
 \eea
 Away from the origin, we estimate from \fref{behavoir}:
 \bee
&& \int_{y\geq 1}\frac{(\pa_yu+\Vt_+u)^2}{y^{4k}}= \int_{y\geq 1}\frac{1}{y^{4k}}\left[\pa_yu+\frac{d-1-\gamma}{y}u+O\left(\frac{u}{y^2}\right)\right]^2\\
& \gtrsim & \int_{y\geq 1}\frac{1}{y^{4k}}\left[\pa_yu+\frac{d-1-\gamma}{y}u\right]^2+O\left(\int_{y\geq 1}\frac{u^2}{y^{4k+4}}\right)\\
& = & \int_{y\geq 1}\frac{1}{y^{4k+2(d-1-\gamma)}}\left|\pa_y(y^{d-1-\gamma}u)\right|^2+O\left(\int_{y\geq 1}\frac{u^2}{y^{4k+4}}\right).
 \eee
Let then $v=y^{d-1-\gamma}u$, $p=2k+(d-1-\gamma)$, then from \fref{boundgamma}: $$2p-(d-2)=4k+2(d-1-\gamma)-(d-2)=4k+d-2\gamma>0,$$ and we may therefore apply Lemma \ref{lemmahardy} in the non-degenerate case to conclude:
\bee
\int_{y\geq 1}\frac{1}{y^{4k+2(d-1-\gamma)}}\left|\pa_y(y^{d-1-\gamma}u)\right|^2&=&\int_{y\geq 1}\frac{1}{y^{2p}}\left|\pa_yv\right|^2\gtrsim \int_{y\geq 1}\frac{v^2}{y^{2p+2}}-cv^2(1)\\
& \gtrsim & \int_{y\geq 1}\frac{u^2}{y^{4k+2}}-cu^2(1).
\eee
This gives the lower bound: $$\int \frac{|A_+^*u|^2}{1+y^{4k}}\geq c\int\frac{u^2}{y^2(1+y^{4k})}-\frac{1}{c}\left[\int\frac{u^2}{1+y^{4k+4}}+u^2(1)\right].$$ On the other hand, there holds the trivial bound from \fref{behavoir}: $$\int \frac{|\pa_yu|^2}{1+y^{4k}}- \int\frac{u^2}{y^2(1+y^{4k})}\lesssim \int \frac{|A_+^*u|^2}{1+y^{4k}}$$ 
and \fref{subcoercabis} follows.\\

 \noindent{\bf step 2} Subcoercive  bound for $A^*_-$. We claim the following lower bound:
 \bea
 \label{subcoercabismoins}
\nonumber   \int\frac{|A_-^*u|^2}{1+y^{4k}}&\geq& c\left[\int \frac{u^2}{y^2(1+y^{4k})}+\int\frac{|\pa_yu|^2}{1+y^{4k}}\right]\\
& -&  \frac{1}{c}\left[u^2(1)+\int \frac{u^2}{1+y^{4k+4}}\right]
 \eea
 for some universal constant $c=c_{p,d,k}>0$. Indeed, recall the definition of $A^*_-$: 
 $$
 A^*_-=\pa_y+\Vt_-, \ \ \Vt_+=\frac{d-1}{y}+V_-
 $$
 where $V_-$ satisfies \fref{behavoirvminus}. Near the origin, we estimate verbatim as in the proof of \fref{jobrpjhpjhtjohtp}:
 $$\int_{y\leq 1}\frac{|A_+^-u|^2}{1+y^{4k}}\gtrsim \int_{y\leq 1}|\pa_yu|^2+\int_{y\leq 1}\frac{u^2}{y^2}+O\left(\int_{y\leq 1}u^2\right).
 $$
 Away from the origin, we estimate from \fref{behavoirvminus}:
 \bee
&& \int_{y\geq 1}\frac{(\pa_yu+\Vt_-u)^2}{y^{4k}}= \int_{y\geq 1}\frac{1}{y^{4k}}\left[\pa_yu+\frac{d-1-\frac2{p-1}}{y}u+O\left(\frac{u}{y^2}\right)\right]^2\\
& \gtrsim & \int_{y\geq 1}\frac{1}{y^{4k}}\left[\pa_yu+\frac{d-1-\frac{2}{p-1}}{y}u\right]^2+O\left(\int_{y\geq 1}\frac{u^2}{y^{4k+4}}\right)\\
& = & \int_{y\geq 1}\frac{1}{y^{4k+2(d-1-\frac{2}{p-1})}}\left|\pa_y(y^{d-1-\frac{2}{p-1}}u)\right|^2+O\left(\int_{y\geq 1}\frac{u^2}{y^{4k+4}}\right).
 \eee
Let $v=y^{d-1-\frac2{p-1}}u$, $q=2k+(d-1-\frac2{p-1})$, then from \fref{boundgamma}: $$2q-(d-2)=4k+2(d-1-\frac2{p-1})-(d-2)>4k+d-2\gamma>0,$$ and we may therefore apply Lemma \ref{lemmahardy} in the non-degenerate case to conclude:
\bee
\int_{y\geq 1}\frac{1}{y^{4k+2(d-1-\gamma)}}\left|\pa_y(y^{d-1-\gamma}u)\right|^2&=&\int_{y\geq 1}\frac{1}{y^{2p}}\left|\pa_yv\right|^2\gtrsim \int_{y\geq 1}\frac{v^2}{y^{2p+2}}-cv^2(1)\\
& \gtrsim & \int_{y\geq 1}\frac{u^2}{y^{4k+2}}-cu^2(1).
\eee
This yields the lower bound: $$\int \frac{|A_-^*u|^2}{1+y^{4k}}\geq c\int\frac{u^2}{y^2(1+y^{4k})}-\frac{1}{c}\left[\int\frac{u^2}{1+y^{4k+4}}+u^2(1)\right].$$ On the other hand, there holds the trivial bound from \fref{behavoirvminus}: $$\int \frac{|\pa_yu|^2}{1+y^{4k}}- \int\frac{u^2}{y^2(1+y^{4k})}\lesssim \int \frac{|A_-^*u|^2}{1+y^{4k}}$$ 
and \fref{subcoercabismoins} follows.\\

\noindent{\bf step 3} Coercivity. We argue by contradiction. Let $M=M(j)>0$ fixed and consider a normalized sequence $u_n\in\matchal D_{\rm rad}$ with
\be
\label{normlaizationbis}
\int\frac{|u_n|^2}{y^2(1+y^{4k})}+\int \frac{|\pa_yu_n|^2}{1+y^{4k}}=1,\ \ \int\frac{|A_\pm^*u_n|^2}{1+y^{4k}}\leq \frac1n.
  \ee
  This implies from the subcoercivity estimates \fref{subcoercabis}, \fref{subcoercabismoins}:
  \be
  \label{noenoenoebis}
  u_n^2(1)+\int \frac{u_n^2}{1+y^{4k+4}}\gtrsim 1.
  \ee
From \fref{normlaizationbis}, the sequence $u_n$ is bounded in $H^1(\e<y< R)$ for all $R,\e>0$. Hence from a standard diagonal extraction argument, there exists $u\in \cap_{R,\e>0}H^1(\e<y<R)$  such that up to a subsequence, 
\be
\label{beibebeiebis}
\forall \e,R>0, \ \ u_n\rightharpoonup u\ \ \mbox{in}\ \ H^1(\e<y<R)
\ee and from the local compactness of one dimensional Sobolev embeddings: $$u_n\to u\ \  \mbox{in}\ \ L^2(\e<y< R), \ \ u_n(1)\to u(1).$$ This implies from  \fref{noenoenoebis}, \fref{normlaizationbis} and the lower semi continuity of norms:
\be
\label{nondgeegubis}
 u^2(1)+\int \frac{u^2}{1+y^{4k+4}}\gtrsim 1, \ \ \int\frac{|u|^2}{y^2(1+y^{4k})}\lesssim 1.
 \ee 
 and thus in particular $u\neq 0$. On the other hand, from \fref{normlaizationbis}, \fref{beibebeiebis}: $$A^*_\pm u=0\ \ \mbox{in}\ \ \Bbb R^*_+$$ and thus from \fref{efinitei}, \fref{efiniteimoins}: $$u=\left\{\begin{array}{ll} \frac{c}{y^{d-1}{\Lambda Q}}\ \ \mbox{for}\ \ A^*_+\\ \frac{c}{y^{d-1}{Q}}\ \ \mbox{for}\ \ A^*_-\end{array}\right.$$ The constant $c$ is non zero from $u\neq 0$, but then since $Q,\Lambda Q$ are smooth at the origin: $$\int_{y\leq 1}\frac{u^2}{y^2}\gtrsim \int_{y\leq 1}\frac{y^{d-1}}{y^{2(d-2)+2}}dy=\int_{y\leq 1}\frac{dy}{y^{d-1}}=+\infty
 $$
 which contradicts the a priori regularity \fref{nondgeegubis} of $u$.
 \end{proof}
 
\subsection{Weighted coercivity of $\Lt $} We now turn to the coercivity of $\Lt$ which we consider in the generic case $\delta_{k_\pm}\neq 0$,
with $k_\pm$ and $\delta_{k_\pm}$ defined in \eqref{defkzero}, \eqref{defkone}.
 We let $\Xi_{M,\pm}$ be given by \fref{defdirectionplus}, \fref{defdirectionminus}.
 
 \begin{lemma}[Weighted coercitivity for $\Lt$, case $\delta_{k_\pm}\neq 0$]
 \label{boundweight}
 Assume $\delta_{k_\pm}\neq 0$.  Let $k\in \Bbb N$. Then:\\
 \noindent  {\em (i) Case $k$ small}: assume $k_+\geq 2$ and let $1\leq k\leq k_+-1$, then there exists $c_k>0$ such that for all $u\in \matchal D_{\rm rad}$, there holds:
  \be
   \label{coercafsd}
   \int \frac{|\Lt u|^2}{1+y^{4k-2}}\geq c_k\left[\int\frac{|\Delta u|^2}{1+y^{4k-2}}+\int\frac{|\pa_yu|^2}{1+y^{4k}}+\frac{|u|^2}{y^2(1+y^{4k})}\right].
   \ee
 \noindent{\em (ii) Case $k$ intermediate}: let $k_+\leq k\leq k_--1$, let $M\geq M(k)$ large enough, then there exists $c_{M,k}>0$ such that for all $u\in \matchal D_{\rm rad}$ satisfying the orthogonality $$(u,\Xi_{M,+})=0,$$ there holds:
 \be
 \label{coercivebounddeux}
   \int \frac{|\Lt u|^2}{1+y^{4k-2}}\geq c_{M,k}\left[\int\frac{|\Delta u|^2}{1+y^{4k-2}}+\int\frac{|\pa_yu|^2}{1+y^{4k}}+\frac{|u|^2}{y^2(1+y^{4k})}\right].
  \ee
   \noindent{\em (ii) Case $k$ large}: let $k\geq k_-$, let $M\geq M(k)$ large enough, then there exists $c_{M,k}>0$ such that for all $u\in \matchal D_{\rm rad}$ satisfying the orthogonality $$(u,\Xi_{M,+})=(u,\Xi_{M,-})=0,$$ there holds:
 \be
 \label{coerciveboundtrois}
   \int \frac{|\Lt u|^2}{1+y^{4k-2}}\geq c_{M,k}\left[\int\frac{|\Delta u|^2}{1+y^{4k-2}}+\int\frac{|\pa_yu|^2}{1+y^{4k}}+\frac{|u|^2}{y^2(1+y^{4k})}\right].
  \ee

 \end{lemma}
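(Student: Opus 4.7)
The plan is to reduce the matrix coercivity to two scalar coercivity estimates for $L_+$ and $L_-$ separately, then exploit the factorization $L_\pm = A_\pm^* A_\pm$ to iterate Lemma \ref{boundweightbis}, handling the kernel degeneracies of $A_\pm$ via the orthogonality hypotheses on $u$.

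\textbf{Step 1 (scalar reduction).} Since $\Lt u = (L_-\Im u,-L_+\Re u)^T$,
\begin{equation*}
\int \frac{|\Lt u|^2}{1+y^{4k-2}} = \int \frac{|L_+\Re u|^2}{1+y^{4k-2}} + \int \frac{|L_-\Im u|^2}{1+y^{4k-2}},
\end{equation*}
and the same splitting holds on the desired right-hand side. It therefore suffices to prove, for real radial $v$,
\begin{equation*}
\int \frac{|L_\pm v|^2}{1+y^{4k-2}} \;\gtrsim\; \int \frac{|\Delta v|^2}{1+y^{4k-2}} + \int \frac{|\pa_y v|^2}{1+y^{4k}} + \int \frac{|v|^2}{y^2(1+y^{4k})} \qquad (\star)
\end{equation*}
under the orthogonality appropriate to each of (i)--(iii).

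\textbf{Step 2 (two-tier coercivity).} Writing $L_\pm v = A_\pm^* w$ with $w = A_\pm v$ and choosing $4m=4k-2$ in Lemma \ref{boundweightbis},
\begin{equation*}
\int \frac{|L_\pm v|^2}{1+y^{4k-2}} \;\gtrsim\; \int \frac{|w|^2}{y^2(1+y^{4k-2})} + \int \frac{|\pa_y w|^2}{1+y^{4k-2}} \;\gtrsim\; \int \frac{|w|^2}{1+y^{4k}} + \int \frac{|\pa_y w|^2}{1+y^{4k-2}}.
\end{equation*}
Since $\pa_y w = -\pa_y^2 v + V_\pm \pa_y v + (\pa_y V_\pm) v$, the asymptotics \eqref{behavoir}--\eqref{behavoirvminus} allow us to recover $\Delta v = \pa_y^2 v + \tfrac{d-1}{y}\pa_y v$ modulo lower-order terms in $v$ and $\pa_y v$ that will be absorbed. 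The remaining ingredient is a second-tier coercivity
\begin{equation*}
\int \frac{|A_\pm v|^2}{1+y^{4k}} \;\gtrsim\; \int \frac{|\pa_y v|^2}{1+y^{4k}} + \int \frac{|v|^2}{y^2(1+y^{4k})} \qquad (\star\star)
\end{equation*}
under suitable orthogonality. This is established by the same subcoercivity-plus-compactness scheme as in the proof of Lemma \ref{boundweightbis}: expanding $|A_\pm v|^2$ and applying the non-critical weighted Hardy \eqref{esthardythreebis} yields a subcoercive bound modulo an $L^2$ remainder of $v$. Here the hypothesis $\delta_{k_\pm}\neq 0$ is precisely what keeps us away from the critical Hardy exponent $d-(2q+2)=0$, so that \eqref{esthardythreebis} applies rather than the lossy logarithmic \eqref{esthardytwo}.

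\textbf{Step 3 (kernel obstructions and case distinction).} A normalized contradicting sequence for $(\star\star)$ converges locally weakly to some $v$ with $A_\pm v = 0$, hence $v\in \mbox{Span}(\Lambda Q)$ or $\mbox{Span}(Q)$ by \eqref{efinitei}, \eqref{efiniteimoins}. Using $\Lambda Q\sim c y^{-\gamma}$ and $Q\sim c_\infty y^{-2/(p-1)}$ together with the arithmetic identities \eqref{dgammkrealtion}, a direct power count gives
\begin{equation*}
\int \frac{|\Lambda Q|^2\,y^{d-1}}{y^2(1+y^{4k})}\,dy < \infty \;\Longleftrightarrow\; k\geq k_+, \qquad \int \frac{|Q|^2\,y^{d-1}}{y^2(1+y^{4k})}\,dy < \infty \;\Longleftrightarrow\; k\geq k_-.
\end{equation*}
In case (i), $k\leq k_+-1$: both kernel modes have infinite weighted norm and are automatically excluded by the a priori bound on the weak limit. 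In case (ii), $k_+\leq k\leq k_--1$: the $Q$-mode affecting $\Im u$ is still ruled out by infinite norm, while the $\Lambda Q$-mode affecting $\Re u$ must be killed by explicit orthogonality; the Gram-Schmidt construction \eqref{defdirectionplus} together with \eqref{keyrealtions} ensures the non-degenerate pairing $|(\Phi_{0,+},\Xi_{M,+})|\gtrsim M^{d-\gamma-\frac{2}{p-1}}$ alongside the cancellation $(\Phi_{0,-},\Xi_{M,+})=0$, so that $(u,\Xi_{M,+})=0$ forces the $\Lambda Q$-coefficient of the weak limit of $\Re u$ to vanish. Case (iii), $k\geq k_-$, proceeds analogously using the symmetric non-degeneracies \eqref{nondegeenrtrois}--\eqref{nondegeenrquatre} for $\Xi_{M,-}$.

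\textbf{Main obstacle.} The delicate point lies in Step 3: the hypothesis $(u,\Xi_{M,\pm})=0$ couples $\Re u$ and $\Im u$, yet in the contradiction argument it must decouple cleanly so as to eliminate exactly the forbidden kernel mode in each scalar component. This decoupling rests entirely on the built-in cancellations $(\Phi_{0,\mp},\Xi_{M,\pm})=0$ from \eqref{defdirectionplus}--\eqref{defdirectionminus} and on taking $M$ sufficiently large that the non-degeneracy constants are uniformly bounded below; without these one would only conclude that some linear combination of the two kernel modes vanishes, not each individually.
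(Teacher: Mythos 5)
Your strategy matches the paper's essentially step for step: reduce $\Lt$-coercivity via $L_\pm = A_\pm^* A_\pm$ and Lemma~\ref{boundweightbis} to an $A_\pm$-coercivity at the shifted weight, establish a subcoercive bound for $A_\pm$ from the non-critical weighted Hardy inequality \eqref{esthardythreebis} (this is where $\delta_{k_\pm}\neq 0$ enters, exactly as you note), then run a compactness/contradiction argument in which the weak limit is forced into the kernel and eliminated by power-counting or by the orthogonality together with the non-degeneracies \eqref{nondegeenr}--\eqref{nondegeenrquatre}. The case distinction and the role of each arithmetic fact are all correctly identified. One point of slight advantage in your phrasing: reducing to $A_\pm u=0$ (rather than $\Lt u=0$) in the limit gives the cleaner one-dimensional kernels $\mathrm{Span}(\Lambda Q)$, $\mathrm{Span}(Q)$ directly and avoids having to invoke the a priori regularity at the origin to discard the singular solutions $\Gamma_\pm$.

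The one place where the write-up, taken literally, has a gap is the packaging of Step 2 as two independent scalar coercivity statements $(\star\star)$ for $\Re u$ and $\Im u$ under separate scalar orthogonality conditions. The hypothesis $(u,\Xi_{M,+})=0$ is a \emph{single} constraint
\begin{equation*}
\int \Re u \cdot \Re(\Xi_{M,+}) + \int \Im u\cdot \Im(\Xi_{M,+}) = 0
\end{equation*}
which does not split into two scalar constraints on the approximating sequence, so you cannot invoke $(\star\star)$ for $\Re u_n$ as a black box: the needed scalar orthogonality is simply not available until you pass to the limit. The correct repair, which your ``Main obstacle'' paragraph gestures at without nailing down, is to run the compactness argument jointly on the vector $u_n$, as in the paper. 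Once the weak limit is shown to have the rigid form $u = c_+\Phi_{0,+}+c_-\Phi_{0,-}$, passing the single coupled constraint to the limit and using the cancellation $(\Phi_{0,-},\Xi_{M,+})=0$ together with the non-degeneracy $|(\Phi_{0,+},\Xi_{M,+})|\gtrsim M^{d-\gamma-\frac{2}{p-1}}$ forces $c_+=0$; the decoupling occurs only because the limit is constrained to the two-dimensional kernel, not because the scalar components decouple in general. If you state the argument at the vector level from the start, the rest of your proof goes through without change.
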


 \begin{proof}[Proof of Lemma \ref{boundweight}] 
 \noindent  {\bf step 1} Subcoercive bound for $A_\pm$. Let $k\geq 0$ and $u\in \matchal D_{\rm rad}$. We claim the following lower bound:
 \bea
 \label{subcoercajojoj}
 \nonumber \int\frac{|A_\pm u|^2}{1+y^{4k}}&\geq &c\left[\int \frac{|\pa_yu|^2}{1+y^{4k}}+\int \frac{u^2}{y^2(1+y^{4k})}\right]\\
 &-& \frac{1}{c}\left[u^2(1)+\int \frac{u^2}{1+y^{4k+4}}\right]
 \eea
 for some universal constant $c=c_{p,d,k}>0$. Recall the definition of $A_\pm$: 
 $$ A_\pm=-\pa_y+V_\pm
$$ with $V_\pm$ satisfying \fref{behavoir}, \fref{behavoirvminus}. We estimate near the origin from \fref{esthardythree}:
 \bee
 \int_{y\leq 1}\frac{|A_\pm u|^2}{1+y^{4k}}\gtrsim \int_{y\leq 1}\left[c|\pa_yu|^2-\frac{1}{c}u^2\right]\gtrsim c \int_{y\leq 1}\left[|\pa_yu|^2+\frac{u^2}{y^2}\right]-\frac1c\left[\int_{y\leq 1}u^2+u^2(1)\right].
 \eee
Away from the origin, we estimate from \fref{behavoir}:
 \bee
 && \int_{y\leq 1}\frac{|A_+u|^2}{1+y^{4k}}\gtrsim\int_{y\geq 1}\frac{1}{y^{4k}}\left[\pa_yu+\frac{\gamma}{y}u+O\left(\frac{u}{y^2}\right)\right]^2\\
 &\gtrsim & \int_{y\geq 1}\frac{1}{y^{4k}}\left[\pa_yu+\frac{\gamma}{y}u\right]^2+O\left(\int_{y\geq 1}\frac{u^2}{y^{4k+4}}\right) 
 \eee
  We let $v=y^{\gamma}u$, $2q=4k+2\gamma$. We observe that $$2q-(d-2)=4k+2\gamma-(d-2)=4(k+1-k_+-\dk)\neq 0$$ from $\dkp\neq 0$ and $k\in \Bbb N$, and we may therefore apply Lemma \ref{lemmahardy} in the non-generate case to conclude:
\bee
 \int_{y\geq 1}\frac{|\pa_y(y^\gamma u)|^2}{y^{4k+2\gamma}}&=& \int_{y\geq 1}\frac{|\pa_yv|^2}{y^{2q}}\geq c \int_{y\geq 1}\frac{v^2}{y^{2q+2}}-\frac{1}{c}v^2(1)\\
 & \geq & c\int_{y\geq 1}\frac{u^2}{y^2(1+y^{4k+2})}-\frac{1}{c}u^2(1).
  \eee
  The collection of the above bounds yields the lower bound:
  $$\int\frac{|A_+u|^2}{1+y^{4k}}\geq c\int \frac{u^2}{y^2(1+y^{4k})}- \frac{1}{c}\left[u^2(1)+\int \frac{u^2}{1+y^{4k+4}}\right]$$
 which together with the trivial estimate $$ \int \frac{|\pa_yu|^2}{1+y^{4k}}-\int \frac{u^2}{y^2(1+y^{4k})}\lesssim
 \int\frac{|A_+u|^2}{1+y^{4k}}$$ implies \fref{subcoercajojoj} for $A_+$.\\
Similarily, we estimate away from the origin from \fref{behavoirvminus}:
 \bee
 && \int_{y\geq 1}\frac{|A_-u|^2}{1+y^{4k}}\gtrsim\int_{y\geq 1}\frac{1}{y^{4k}}\left[\pa_yu+\frac{2}{(p-1)}\frac uy+O\left(\frac{u}{y^2}\right)\right]^2\\
 &\gtrsim & \int_{y\geq 1}\frac{1}{y^{4k}}\left[\pa_yu+\frac{2}{p-1}\frac uy\right]^2+O\left(\int_{y\geq 1}\frac{u^2}{y^{4k+4}}\right) 
 \eee
  We let $v=y^{\frac 2{p-1}}u$, $2q=4k+\frac{4}{p-1}$. We observe that $$2q-(d-2)=4k+\frac{4}{p-1}-(d-2)=4(k+1-k_--\dkm)\neq 0$$ from $\dkm\neq 0$ and $k\in \Bbb N$, and we therefore apply Lemma \ref{lemmahardy} in the non generate case to conclude:
\bee
 \int_{y\geq 1}\frac{|\pa_y(y^{\frac{2}{p-1}} u)|^2}{y^{4k+\frac{4}{p-1}}}&=& \int_{y\geq 1}\frac{|\pa_yv|^2}{y^{2q}}\geq c \int_{y\geq 1}\frac{v^2}{y^{2q+2}}-\frac{1}{c}v^2(1)\\
 & \geq & c\int_{y\geq 1}\frac{u^2}{y^2(1+y^{4k+2})}-\frac{1}{c}u^2(1).
  \eee
  The collection of above bounds yields the lower bound:
  $$\int\frac{|A_-u|^2}{1+y^{4k}}\geq c\int \frac{u^2}{y^2(1+y^{4k})}- \frac{1}{c}\left[u^2(1)+\int \frac{u^2}{1+y^{4k+4}}\right]$$
 which together with the trivial estimate $$\int \frac{|\pa_yu|^2}{1+y^{4k}}-\int \frac{u^2}{y^2(1+y^{4k})}\lesssim \int\frac{|A_-u|^2}{1+y^{4k}}$$ implies \fref{subcoercajojoj} for $A_-$.\\
 
\noindent {\bf step 2} Coercivity. We argue by contradiction and let a normalized sequence $u_n\in\mathcal \matchal D_{\rm rad}$ be such that
\be
\label{normlaizationtqoone}
\int \frac{|\pa_yu_n|^2}{1+y^{4k}}+\int \frac{|u_n|^2}{y^2(1+y^{4k})}=1,\ \ 
\int\frac{|\Lt u|^2}{1+y^{4k-2}}\leq \frac1n
  \ee
  and 
  \be
  \label{orthogohg}
  \left|\begin{array}{ll} (u_n,\Xi_{M,+})=0\ \ \mbox{for}\ \ \max\{k_+,1\}\leq k\leq k_--1\\(u_n,\Xi_{M,+})=(u_n,\Xi_{M,-})=0\ \ \mbox{for}\ \ k\geq k_-.\end{array}\right.
  \ee
From Lemma \fref{coerciveboundbis} 
\bee
\int\frac{|\Lt u_n|^2}{1+y^{4k-2}}& =& \int\frac{|A^*_-A_-\Im u_n|^2+|A^*_+A_+\Re u_n|^2}{1+y^{4k-2}}\gtrsim  \int\frac{|A_-\Im u_n|^2+|A_+\Re u_n|^2}{1+y^{4k}}
\eee
and hence the subcoercivity estimate \fref{subcoercajojoj} and \fref{normlaizationtqoone} imply:
  \be
  \label{noenoenoennbr}
  |u_n|^2(1)+\int \frac{|u_n|^2}{1+y^{4k+4}}\gtrsim 1.
  \ee
From \fref{normlaizationtqoone}, the sequence $u_n$ is bounded in $H^1(\e<y< R)$ for all $R,\e>0$. Hence from a standard diagonal extraction argument, there exists $u\in \cap_{R,\e>0}H^1(\e<y< R)$  such that up to a subsequence, 
\be
\label{beibebeiebrrb}
\forall R>0, \ \ u_n\rightharpoonup u\ \ \mbox{in}\ \ H^1(\e<y<R)
\ee and from the local compactness of Sobolev embeddings $$u_n\to u\ \  \mbox{in}\ \ L^2(\e<y<R), \ \ u_n(1)\to u(1).$$ This implies from \fref{noenoenoennbr}, \fref{normlaizationtqoone}:
\be
\label{nondgeegupouet}
 |u|^2(1)+\int \frac{|u|^2}{1+y^{4k+4}}\gtrsim 1,\ \ \int \frac{|u|^2}{y^2(1+y^{4k})}\lesssim 1.
 \ee 
The compact support and regularity of $\Xi_{M\pm}$ allows us to pass to the limit in \fref{orthogohg} and conclude:
\be
\label{prhtnveineo}
 \left|\begin{array}{ll} (u,\Xi_{M,+})=0\ \ \mbox{for}\ \ \max\{k_+,1\}\leq k\leq k_--1\\(u,\Xi_{M,+})=(u,\Xi_{M,-})=0\ \ \mbox{for}\ \ k\geq k_-.\end{array}\right.
 \ee
On the other hand, from \fref{normlaizationtqoone}, \fref{beibebeiebrrb}: $$\Lt u=0\ \ \mbox{on}\ \ \Bbb R^*_+$$ and hence from \fref{kernelh}, \fref{behviorgmmaorigin} and \fref{kernelhmoins}, \fref{behviorgmmaoriginmoins} and the a priori regularity at the origin \fref{nondgeegupouet}: 
 \be
 \label{nceoeonecnoene}
 u=c_+\left|\begin{array}{ll} \Lambda Q\\0\end{array}\right.+c_-\left|\begin{array}{ll} 0\\ Q\end{array}\right.=c_+\Phi_{0,+}+c_-\Phi_{0,-}.
 \ee 
 We now distinguish cases.\\
 \noindent\underline{case $1\leq k\leq k_+-1$.} In this case: 
 $$\int_{y\geq 1}\frac{|\Lambda Q|^2}{y^2(1+y^{4k})}\gtrsim \int_{y\geq 1}\frac{y^{d-1}}{y^2(1+y^{4k})y^{2\gamma}}dy=+\infty$$ from $$1+2\gamma+4k+2-d=1+4(k+1)-4(k_++\dk)\leq 1-4\dk<1.$$ Similarily: $$\int_{y\geq 1}\frac{|Q|^2}{y^2(1+y^{4k})}\gtrsim \int_{y\geq 1}\frac{y^{d-1}}{y^2(1+y^{4k})y^{\frac{4}{p-1}}}dy\gtrsim \int_{y\geq 1}\frac{y^{d-1}}{y^2(1+y^{4k})y^{2\gamma}}dy=+\infty.$$ We conclude from \fref{nceoeonecnoene} and the established regularity \fref{nondgeegupouet} that $u\equiv 0$ which contradicts the non degeneracy \fref{nondgeegupouet}.\\
\noindent\underline{case $\max\{k_+,1\}\leq k\leq k_--1$.} In this case: 
$$\int_{y\geq 1}\frac{|Q|^2}{y^2(1+y^{4k})}\gtrsim \int_{y\geq 1}\frac{y^{d-1}}{y^2(1+y^{4k})y^{\frac{4}{p-1}}}dy=+\infty$$ from 
  $$1+\frac{4}{p-1}+4k+2-d=1+4(k+1)-4(k_-+\dkm)\leq 1-4\dkm<1.$$ Hence from \fref{nceoeonecnoene}, \fref{nondgeegupouet}, $c_-=0$. But then the orthogonality condition \fref{prhtnveineo} and the non degeneracy \fref{nondegeenr} imply $c_+=0$, hence $u\equiv 0$ which contradicts the non degeneracy \fref{nondgeegupouet}.\\
 \noindent\underline{case $k\geq k_-$.} In this case, \fref{nceoeonecnoene}, the orthogonality condition \fref{prhtnveineo} and the relations \fref{nondegeenr}, \fref{nondegeenrbis}, \fref{nondegeenrtrois}, \fref{nondegeenrquatre} imply $c_+=c_-=0$. Hence $u\equiv 0$ which contradicts the non degeneracy \fref{nondgeegupouet}.
 \end{proof}

\subsection{Coercivity of $\Lt^k$}

We are now position to prove the coercivity of $\Lt^k$ under suitable orthogonality conditions. We recall from \fref{estkwplus} that $k_+\geq 1$.

\begin{lemma}[Coercivity of $\Lt^k$, non degenerate case]
\label{propcorc}
Assume $\delta_{k_\pm}\neq 0$.\\
{\em (i) Case $k$ small}: let $0\leq k\leq k_+-1$, then there exists $\delta_k>0$ such that for all $u\in \mathcal D_{\rm rad}$, there holds:
\be
\label{coerciviteuk}
(J\Lt\Lt^ku ,\Lt^ku)\geq c_k\sum_{n=0}^{2k+1}\int\frac{|D^n u|^2}{1+y^{4k+2-2n}}.
\ee
{\em (ii) Case $k$ intermediate}: let $k=k_++j_+\leq k_--1$, $j_+\in \Bbb N$, let $M=M(j_+)$ large enough, there there exists $c_{k,M}>0$ such that for all $u\in \mathcal D_{\rm rad}$ satisfying the orthogonality conditions:
\be
\label{orthoplus}
 (u,(\Lt^*)^{n}\Xi_{M,+})=0, \ \ 0\leq n\leq j_+,
\ee
 there holds:
 \be
 \label{coerciviityley}
(J\Lt\Lt^ku ,\Lt^ku)\geq c_{k,M}\sum_{n=0}^{2k+1}\int\frac{|D^nu|^2}{1+y^{4k+2-2n}}.
 \ee
 {\em (iii) Case $k$ large}: let $k=k_++j_+=k_-+j_-$, $(j_+, j_-)\in \Bbb N^2$, let $M=M(j_+)$ large enough, there there exists $c_{k,M}>0$ such that for all $u\in\mathcal D_{\rm rad}$ satisfying the orthogonality conditions:
\be
\label{orthoappendixminus}
 \left\{\begin{array}{ll} (u,(\Lt^*)^{n}\Xi_{M,+})=0, \ \ 0\leq n\leq j_+\\ (u,(\Lt^*)^{n}\Xi_{M,-})=0, \ \ 0\leq n\leq j_-
 \end{array}\right.
\ee
 there holds:
 \be
 \label{coerciviityleybis}
(J\Lt\Lt^ku ,\Lt^ku)\geq c_{k,M}\sum_{n=0}^{2k+1}\int\frac{|D^nu|^2}{1+y^{4k+2-2n}}.
 \ee
\end{lemma}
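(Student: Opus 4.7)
The plan is to prove \fref{coerciviteuk}, \fref{coerciviityley}, and \fref{coerciviityleybis} by a descending induction on the number of factors of $\Lt$, combining the energy identity coming from the factorization $L_\pm = A_\pm^* A_\pm$ with the weighted coercivity bounds of Lemmas \ref{boundweightbis} and \ref{boundweight}. As in those lemmas, the passage from subcoercive to coercive bounds at each level of the induction will be handled by a compactness and contradiction argument.

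First I would write the energy identity. Since $J\Lt = -J^2\L = \L = \mathrm{diag}(L_+,L_-)$ by \fref{defltilde} and Lemma \ref{facotrh},
$$
(J\Lt\, \Lt^k u, \Lt^k u) = \int |A_+ \Re \Lt^k u|^2 + \int |A_- \Im \Lt^k u|^2.
$$
The weighted subcoercivity \fref{subcoercajojoj} applied with $k=0$ to the real and imaginary parts of $\Lt^k u$ then provides, modulo a local error of the form $C(w(1)^2 + \int w^2/(1+y^4))$, control of $\int |\partial_y \Lt^k u|^2 + \int |\Lt^k u|^2/y^2$. Writing $\Lt^k u = \Lt(\Lt^{k-1}u)$, I then apply Lemma \ref{boundweight} (at the appropriate weight $1+y^{4K-2}$) to $\Lt^{k-1}u$, which yields control of $\Lt^{k-1}u$ with two additional derivatives at a suitably adjusted weight. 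Iterating $k$ times with $K$ increasing by one at each step produces the full sum $\sum_{n=0}^{2k+1}\int|D^n u|^2/(1+y^{4k+2-2n})$. The orthogonality conditions propagate transparently since $(u,(\Lt^*)^n\Xi_{M,\pm})=0$ is equivalent to $(\Lt^n u,\Xi_{M,\pm})=0$, so the sub-case of Lemma \ref{boundweight} used at each descent step -- small, intermediate, or large -- is precisely the one for which the corresponding orthogonality is provided by \fref{orthoplus}, \fref{orthoappendixminus}.

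The hardest part will be the compactness argument that converts the subcoercive bounds into the coercive bounds at each level. Following Step 3 of the proof of Lemma \ref{boundweight}, I would normalize $\sum_{n=0}^{2k+1}\int |D^n u_n|^2/(1+y^{4k+2-2n})$ to $1$ and assume $(J\Lt \Lt^k u_n, \Lt^k u_n) \to 0$. A standard diagonal extraction produces a weak limit $u_\infty$ satisfying $\Lt^{k+1}u_\infty=0$ on $\RR^*_+$, which, by Lemma \ref{lemmaradiation} combined with the a priori regularity obtained from the subcoercive estimates, lies in a finite-dimensional space spanned by suitable $\Phi_{i,\pm}$; the orthogonality conditions \fref{orthoplus}, \fref{orthoappendixminus} together with the non-degeneracies \fref{keyrealtions}, \fref{keyrealtionsbis} then force $u_\infty=0$, contradicting the non-vanishing coming from the boundary term. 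The case $0\leq k\leq k_+-1$ requires no orthogonality at all, since the growth of the $\Phi_{i,\pm}$ at infinity already rules out non-trivial elements of the relevant weighted space. A secondary technical point is that $\Lt^k u$ alternates its real/imaginary structure with the parity of $k$, so extracting the bounds on both components of $u$ requires careful bookkeeping via the explicit form of $J$ and $\Lt$.
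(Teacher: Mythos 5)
Your proposal is on the right track structurally (energy identity from the factorization $L_\pm=A_\pm^*A_\pm$, iterative use of the single--$\Lt$ weighted coercivity of Lemma \ref{boundweight}, propagation of orthogonality via $(u,(\Lt^*)^n\Xi_{M,\pm})=(\Lt^nu,\Xi_{M,\pm})$), but there is a genuine gap at the most delicate step: the initial layer, i.e.\ the passage from $(J\Lt\Lt^ku,\Lt^ku)$ to $\dot H^1$--control of $\Lt^ku$. You propose to use the subcoercivity \fref{subcoercajojoj} of $A_\pm$ (at $k=0$) which necessarily leaves local error terms $|w|^2(1)+\int|w|^2/(1+y^4)$, and then to absorb them later by a single compactness argument for the full operator $\Lt^{k+1}$. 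That argument would have to identify the kernel of $\Lt^{k+1}$ in the weighted space $\sum_{n\le 2k+1}\int|D^nu|^2/(1+y^{4k+2-2n})$, exclude both the $\Gamma_\pm$-type solutions (singular at the origin) by the elliptic regularity coming from the normalization, and then count how many of the $\Phi_{i,\pm}$ lie in the space for a given $k$ and verify that \emph{exactly} that many orthogonality conditions are imposed; none of this bookkeeping is present in your plan, and the matching is substantially more delicate than what the paper needs.

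The paper avoids this entirely. For the base level it uses the strict positivity $L_->L_+>0$ on $\dot H^1$ from \fref{positivityh} directly, so $(J\Lt u,u)\gtrsim \int|\partial_yu|^2+\int|u|^2/y^2$ with \emph{no} local error at all, and no compactness argument. Then, rather than a descending unwinding with an accumulated error corrected at the end, the proof runs an ascending induction $k\to k+1$: apply the induction hypothesis to $v=\Lt u$, invoke Lemma \ref{boundweight} (with the appropriate case selected by the position of $k$ relative to $k_\pm$, and the right single orthogonality condition on $u$, not on $\Lt^ju$) to recover weighted $L^2$ control of $u$ from $\Lt u=v$, and finally invoke a purely elliptic Hardy interpolation bound (Step 1 of the paper's proof, the inequality \fref{cneocnenceone}) to fill in all the intermediate derivatives $D^nu$, $1\le n\le 2k+1$. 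In this scheme the only compactness argument lives inside Lemma \ref{boundweight}, at the single--$\Lt$ level where the kernel is just $\mathrm{Span}(\Phi_{0,\pm},\Gamma_\pm)$ and immediately manageable. Your phrase ``iterating $k$ times $\dots$ produces the full sum'' also silently assumes the Hardy interpolation step; without something like \fref{cneocnenceone} you only control the endpoint derivatives at each stage, not the whole graded sum appearing in \fref{coerciviteuk}--\fref{coerciviityleybis}.

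In short: replace the subcoercivity plus post--hoc compactness at the $\Lt^{k+1}$ level by (a) the direct positivity \fref{positivityh} for the $k=0$ base, (b) an ascending induction reusing Lemma \ref{boundweight} at each step, and (c) a separately proved Hardy interpolation inequality to recover the intermediate terms. That closes the gap and is exactly what the paper does.
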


\begin{proof}[Proof of Lemma \ref{propcorc}] {\bf step 1} Hardy bound. We first claim: $\forall \delta\geq 0$, 
\bea
\label{cneocnenceone}
\nonumber &&\sum_{n=0}^{2k+1}\int\frac{|D^n\Lt u|^2}{1+y^{4k+2-2n+\delta}}+\int \frac{|u|^2}{1+y^{4k+6+\delta}}\\
&\gtrsim_{\delta,k}&  \sum_{n=0}^{2k+3}\int\frac{|D^nu|^2}{1+y^{4(k+1)+2-2n+\delta}}.
\eea
We argue by induction on k.\\
\noindent\underline{$k=0$}: We infer from the definition of $\Lt$ and the decay $$|D^jW_\pm|\lesssim \frac{1}{1+y^{2+j}}, \ \ j\geq 0$$ the bound:
$$\int \frac{|\Delta u|^2}{1+y^{2+\delta}}\lesssim \int \frac{|\Lt u|^2}{1+y^{2+\delta}}+\int \frac{|u|^2}{1+y^{6+\delta}}.$$ Hence from \fref{hardywehghtgeenral}:
$$\int \frac{|\nabla u|^2}{1+y^{4+\delta}}\lesssim \int \frac{|\Delta u|^2}{1+y^{2+\delta}}+\int \frac{|u|^2}{1+y^{6+\delta}}\lesssim \int \frac{|\Lt u|^2}{1+y^{2+\delta}}+\int \frac{|u|^2}{1+y^{6+\delta}}.$$ This implies:
\bee
\int \frac{|\nabla\Delta u|^2}{1+y^\delta}&\lesssim& \int \frac{|\nabla(-\Delta u-W_{\pm} u)|^2}{1+y^\delta}+ \int \frac{|\Lt u|^2}{1+y^{2+\delta}}+\int \frac{|u|^2}{1+y^{6+\delta}}\\
&\lesssim&  \int \frac{|D\Lt u|^2}{1+y^\delta}+\int \frac{|\Lt u|^2}{1+y^{2+\delta}}+\int \frac{|u|^2}{1+y^{6+\delta}}
\eee and \fref{cneocnenceone} is proved for $k=0$.\\
\noindent\underline{$(\delta+4,k)\to (\delta,k+1)$}: From the induction claim for $(k,\delta+4)$:
\bee
\sum_{n=0}^{2k+1}\int\frac{|D^n\Lt u|^2}{1+y^{4(k+1)+2-2n+\delta}}+\int \frac{|u|^2}{1+y^{4(k+1)+6+\delta}}\gtrsim_{\delta,k} \sum_{n=0}^{2k+3}\int\frac{|D^nu|^2}{1+y^{4(k+2)+2-2n+\delta}}.
\eee
We now estimate from Leibniz:
\bee
&&\int \frac{|D^{2k+4}u|^2}{1+y^{2+\delta}}\lesssim \int \frac{|D^{2k+2}\Lt u|^2}{1+y^{2+\delta}}+\sum_{n=0}^{2k+2}\int\frac{|D^nu|^2}{1+y^{4(k+2)+2-2n+\delta}}\\
&&\int \frac{|D^{2k+5}u|^2}{1+y^{\delta}}\lesssim \int \frac{|D^{2k+3}\Lt u|^2}{1+y^{\delta}}+\sum_{n=0}^{2k+3}\int\frac{|D^nu|^2}{1+y^{4(k+2)+2-2n+\delta}}
\eee
and the conclusion follows.\\

\noindent{\bf step 2} Conclusion. We now prove the claim by induction on $k$.\\
\noindent\underline{\em Initialization $k=0,1$.} For $k=0$, we recall from \fref{positivityh}: $$L_->L_+>0\ \ \mbox{on}\ \ \dot{H}^1$$ and hence from the standard Hardy inequality: 
\be
\label{neionionov}
(J\Lt u,u)=(L_+\Re u,\Re u)+(L_-\Im u,\Im u)\gtrsim \int |\pa_y u|^2+\int \frac{|u|^2}{y^2}.
\ee
Assume that $k_+\geq 2$ and let us prove \fref{coerciviteuk} for $k=1$.  We estimate from \fref{neionionov} and Lemma \ref{boundweight}:
$$
(J\Lt \Lt u,\Lt u)\gtrsim  \int |\pa_y\Lt u|^2+\int \frac{|\Lt u|^2}{1+y^2}\gtrsim \int |\pa_y\Lt u|^2+\int \frac{|\Delta u|^2}{1+y^2}+\int\frac{|\pa_yu|^2}{1+y^{4}}+\int\frac{|u|^2}{1+y^{6}}
$$ 
and hence using the expression for $\Lt$:
$$(J\Lt \Lt u,\Lt u)\gtrsim \int |D^3u|^2+\int \frac{|D^2u|^2}{1+y^2}+\int\frac{|Du|^2}{1+y^{4}}+\frac{|u|^2}{1+y^{6}}.$$

\noindent\underline{\em Induction $k\to k+1\leq k_+-1$.} We assume the claim for $k\geq 0$ and prove it for $k+1\leq k_+-1$. Let $v=\Lt u$, then by induction: $$(J\Lt \Lt^{k+1}u,\Lt^{k+1}u)=(J\Lt \Lt^k v,\Lt^kv)\gtrsim \sum_{n=0}^{2k+1}\int\frac{|D^nv|^2}{1+y^{4k+2-2n}}.$$ Now from Lemma \ref{boundweight}, case $k+1\leq k_+-1$, there holds: 
$$\int\frac{|v|^2}{1+y^{4k+2}}=\int\frac{|\Lt u|^2}{1+y^{4k+2}}\gtrsim \int \frac{|u|^2}{1+y^{4k+6}}.$$ and hence the expected lower bound follows from  \fref{cneocnenceone} with $\delta=0$: $$(J\Lt \Lt^{k+1}u,\Lt^{k+1}u)\gtrsim \sum_{p=0}^{2k+3}\int\frac{|D^pu|^2}{1+y^{4(k+1)+2-2p}}.$$
\noindent\underline{\em Initialization $k=k_+$}. Recall that $k_+<k_-$. Let $u$ satisfy $(u,\Xi_{M,+})=0$, $v=\Lt u$, then from the previous step: $$(J\Lt \Lt^{k}u,\Lt^{k}u)=(J\Lt \Lt^{k-1} v,\Lt^{k-1}v)\gtrsim \sum_{n=0}^{2k-1}\int\frac{|D^nv|^2}{1+y^{4k-2-2n}}.$$ Now from Lemma \ref{boundweight}, case $k_+\leq k\leq k_--1$, $$\int \frac{|v|^2}{1+y^{4k-2}}=\int \frac{|\Lt u|^2}{1+y^{4k-2}}\gtrsim \int \frac{|u|^2}{1+y^{4k+2}}$$ and the conclusion follows from \fref{cneocnenceone} again written for $k-1$.\\ 
\noindent\underline{\em Initialization $k\to k+1\leq k_--1$}. Let $k+1=k_++j_++1$ and $u$ satisfy $$(u,(\Lt^*)^{p}\Xi_{M,+})=0, \ \ 0\leq p\leq j_++1,$$ then $v=\Lt u$ satisfies $$(v,(\Lt^*)^{p}\Xi_{M,+})=0, \ \ 0\leq p\leq j_+,$$ and hence by induction:
$$(J\Lt \Lt^{k+1}u,\Lt^{k+1}u)=(J\Lt \Lt^k v,\Lt^kv)\gtrsim \sum_{n=0}^{2k+1}\int\frac{|D^nv|^2}{1+y^{4k+2-2n}}.$$ Now from Lemma \ref{boundweight}, case $k_+\leq k+1\leq k_--1$, and using $(u,\Xi_{M,+})=0$, there holds: $$\int \frac{|v|^2}{1+y^{4k+2}}=\int \frac{|\Lt u|^2}{1+y^{4k+2}}\gtrsim \int \frac{|u|^2}{1+y^{4k+6}}$$ and the conclusion follows from \fref{cneocnenceone} again.\\  
\noindent\underline{\em Initialization $k=k_-$}. Let $k=k_-=k_++j_+$, let $u$ satisfy $$(u,(\Lt^*)^n\Xi_{M,+})=0,\ \ 0\leq n\leq j_+, \ \ \mbox{and}\ \ (u,\Xi_{M,-})=0.$$ Then $v=\Lt u$ satisfies $$(v,(\Lt^*)^n\Xi_{M,+})=0,\ \ 0\leq p\leq j_+-1$$ and hence from the previous step:
$$(J\Lt \Lt^{k}u,\Lt^{k}u)=(J\Lt \Lt^{k-1} v,\Lt^{k-1}v)\gtrsim \sum_{n=0}^{2k-1}\int\frac{|D^nv|^2}{1+y^{4k-2-2n}}.$$ From Lemma \ref{boundweight}, case $k\geq k_-$, and using $(u,\Xi_{M,\pm})=0$, we have: $$\int \frac{|v|^2}{1+y^{4k-2}}=\int \frac{|\Lt u|^2}{1+y^{4k-2}}\gtrsim \int \frac{|u|^2}{1+y^{4k+2}}$$ and the conclusion follows from \fref{cneocnenceone} again written for $k-1$.\\
\noindent\underline{\em Induction $k\to k+1$}. Let $k+1=k_++j_++1=k_-+j_-+1$ and $u$ satisfy $$(u,(\Lt^*)^{n}\Xi_{M,\pm})=0, \ \ 0\leq n\leq j_\pm+1,$$then $v=\Lt u$ satisfies $$(v,(\Lt^*)^{n}\Xi_{M,\pm})=0, \ \ 0\leq n\leq j_\pm,$$ and hence by induction:
$$(J\Lt \Lt^{k+1}u,\Lt^{k+1}u)=(J\Lt \Lt^k v,\Lt^kv)\gtrsim \sum_{n=0}^{2k+1}\int\frac{|D^nv|^2}{1+y^{4k+2-2n}}.$$ Now from Lemma \ref{boundweight}, case $k\leq k_-$, and using $(u,\Xi_{M,\pm})=0$, there holds: $$\int \frac{|v|^2}{1+y^{4k+2}}=\int \frac{|\Lt u|^2}{1+y^{4k+2}}\gtrsim \int \frac{|u|^2}{1+y^{4k+6}}$$ and the conclusion follows from \fref{cneocnenceone} again.
\end{proof}


\section{Interpolation bounds}


In this appendix we derive some weighted $L^{\infty}$ bounds which are used to control the  lower order terms ($N(\e),L(\e)$) in section \ref{sectionmonoton}. 
They will follow from simple interpolation arguments.
 
\begin{lemma}[$L^{\infty}$ bounds]
\label{lemmainterpolation}
{\em (i) $L^{\infty}$ bound}: 
\be
\label{bebebebeo}
\|\e\|_{L^{\infty}}+\|\nabla^{\frac d2}\e\|_{L^2}\lesssim \|\nabla^{\sigma}\e\|^{1+O\left(\frac 1{L_+}\right)}_{L^2}b_1^{\frac 12\left(\frac d2-\sigma\right)+O\left(\frac 1{L_+}\right)},
\ee
\be
\label{bebebebeogradient}
\|\nabla\e\|_{L^{\infty}}\lesssim \|\nabla^{\sigma}\e\|^{1+O\left(\frac 1{L_+}\right)}_{L^2}b_1^{\frac 12\left(\frac d2+1-\sigma\right)+O\left(\frac 1{L_+}\right)},
\ee
{\em (ii) Weighted $L^{\infty}$ bound}: let $0\leq \delta\ll L_+$, then:
\be
\label{Linfttybound}
\left\|\frac{\e}{1+y^{\delta}}\right\|_{L^{\infty}}\lesssim  \|\nabla^{\sigma}\e\|^{1+O\left(\frac 1{L_+}\right)}_{L^2}b_1^{\frac\delta 2+\frac 12\left(\frac d2-\sigma\right)+O\left(\frac 1{L_+}\right)}.
\ee
{\em (iii) Sobolev interpolation:} Let $\sigma\leq \beta\ll L_+$, then 
\be
\label{interpolationboundbeta}
\|\nabla^\beta\e\|^2_{L^2}\lesssim \|\nabla^\sigma\e\|_{L^2}^{2+O(\frac1{L_+})}b_1^{\beta-\sigma+O(\frac1{L_+})}.
\ee
\end{lemma}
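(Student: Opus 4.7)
All three estimates follow by interpolation between the two bootstrap controls: the low-Sobolev bound \fref{bootsmallsigma} on $\|\nabla^\sigma\e\|_{L^2}$ and the high-Sobolev bound \fref{bootnorm}, which, via the coercivity Lemma \ref{propcorc}, yields
$$\|\nabla^{s_+}\e\|_{L^2}^2\lesssim\mathcal E_{s_+}\lesssim b_1^{2L_++2(1-\dk)+2\eta(1-\dep)}.$$
I would first establish the Sobolev interpolation \fref{interpolationboundbeta}. For $\sigma\leq\beta\leq s_+$, standard Littlewood-Paley interpolation gives $\|\nabla^\beta\e\|_{L^2}\leq\|\nabla^\sigma\e\|_{L^2}^{z}\|\nabla^{s_+}\e\|_{L^2}^{1-z}$ with $\beta=z\sigma+(1-z)s_+$, hence $1-z=(\beta-\sigma)/(s_+-\sigma)=O(1/L_+)$ since $s_+=2L_++O(1)$. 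Inserting the high-Sobolev bound above and expanding the exponent yields $\|\nabla^{s_+}\e\|_{L^2}^{2(1-z)}\lesssim b_1^{\beta-\sigma+O(1/L_+)}$, which is \fref{interpolationboundbeta}.

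For the unweighted bound \fref{bebebebeo}, the $\dot H^{d/2}$ control is \fref{interpolationboundbeta} taken at $\beta=d/2$; for the $L^\infty$ control I would use the standard Besov interpolation $\|u\|_{L^\infty}\lesssim\|\nabla^{d/2-\kappa}u\|_{L^2}^{1/2}\|\nabla^{d/2+\kappa}u\|_{L^2}^{1/2}$, valid for any $0<\kappa\leq 1$, taking $\kappa=1/L_+$ and applying \fref{interpolationboundbeta} to each factor. The gradient bound \fref{bebebebeogradient} is obtained in the same way, replacing $\e$ by $\nabla\e$ and shifting all Sobolev exponents by $1$.

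For the weighted $L^\infty$ bound \fref{Linfttybound}, I would invoke the Gagliardo-Nirenberg-type inequality
$$\left\|\frac{\e}{1+y^\delta}\right\|_{L^\infty}\lesssim\|\nabla^{s_1}\e\|_{L^2}^{\alpha_1}\|\nabla^{s_2}\e\|_{L^2}^{\alpha_2},\qquad \alpha_1+\alpha_2=1,$$
whose admissible parameters are dictated by the scaling relation $\alpha_1(d/2-s_1)+\alpha_2(d/2-s_2)=-\delta$. Choosing $s_1=d/2-1/L_+$ and $s_2=d/2+\delta+1/L_+$ straddles the critical Sobolev exponent, yields positive $\alpha_1,\alpha_2$, and produces the identity $\alpha_1 s_1+\alpha_2 s_2=d/2+\delta$. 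Inserting \fref{interpolationboundbeta} into both factors then recombines the exponent of $b_1$ as $(\alpha_1 s_1+\alpha_2 s_2-\sigma)/2+O(1/L_+)=\delta/2+(d/2-\sigma)/2+O(1/L_+)$ and the exponent of $\|\nabla^\sigma\e\|_{L^2}$ as $(\alpha_1+\alpha_2)(1+O(1/L_+))=1+O(1/L_+)$, yielding the claim.

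The estimates are all routine; the only technical point is bookkeeping, namely ensuring that the $O(1/L_+)$ losses accumulated through the interpolation steps are harmless when combined with the powers of $b_1$ produced by the low-Sobolev bootstrap \fref{bootsmallsigma}. This is guaranteed by the choice of $\sigma$ in \fref{choicesigma}, since $\sigma-s_c\gg 1/L_+$ ensures that the leading Sobolev exponent strictly dominates all interpolation defects.
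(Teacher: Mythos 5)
Your parts (i) and (iii) are correct and essentially equivalent to the paper's argument: (iii) is the same $[\sigma,s_+]$ interpolation combined with the high-Sobolev bootstrap \fref{bootnorm}, and (i) follows from (iii) by the standard Besov bound $\|u\|_{L^\infty}\lesssim\|\nabla^{\frac d2-\kappa}u\|_{L^2}^{1/2}\|\nabla^{\frac d2+\kappa}u\|_{L^2}^{1/2}$ with $\kappa=1/L_+$ (the constraint $\beta\geq\sigma$ in (iii) holds since $\frac d2-\sigma=\frac{2}{p-1}-(\sigma-s_c)$ is of order one while $\kappa=1/L_+$ is small). This is a mild repackaging of the paper's direct $L^\infty$--Sobolev interpolation, not a genuinely different route.

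For (ii), however, there is a real gap. The inequality you invoke,
\[
\left\|\frac{\e}{1+y^\delta}\right\|_{L^\infty}\lesssim\|\nabla^{s_1}\e\|_{L^2}^{\alpha_1}\|\nabla^{s_2}\e\|_{L^2}^{\alpha_2}, \qquad \alpha_1+\alpha_2=1,\ \ \alpha_1 s_1+\alpha_2 s_2=\tfrac d2+\delta,
\]
is false for any $\delta>0$. A scaling test makes this concrete: fix a smooth radial bump $\phi$ with $\phi(0)=1$ and set $\e_\lambda(y)=\phi(\lambda y)$. Then $\left\|\frac{\e_\lambda}{1+y^\delta}\right\|_{L^\infty}\geq \phi(0)/(1+0)=1$ for every $\lambda>0$, while $\|\nabla^{s}\e_\lambda\|_{L^2}=\lambda^{s-\frac d2}\|\nabla^s\phi\|_{L^2}$ forces the right-hand side to equal $\lambda^{\alpha_1 s_1+\alpha_2 s_2-\frac d2}C_\phi=\lambda^\delta C_\phi\to 0$ as $\lambda\to 0$. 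The reason the scaling dictation breaks down is that the weight $1+y^\delta$ has two distinct homogeneities: near $y=0$ it is $\approx 1$, so the near-origin bound can only exploit the $\delta=0$ scaling $\alpha_1 s_1+\alpha_2 s_2=\frac d2$, whereas for $y\gg1$ the weight is $\approx y^\delta$ and the $\delta$-scaling applies. No single Gagliardo--Nirenberg interpolation with fixed exponents can capture both scaling regimes, and replacing $1+y^\delta$ by the homogeneous $y^\delta$ does not help because then the left-hand side blows up at the origin for a generic radial $\e$ with $\e(0)\neq0$.

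The paper avoids this by interpolating in the spatial radius rather than in the derivative order. It combines (a) the unweighted $L^\infty$ bound $\|\e\|_{L^\infty}$ of part (i), (b) the Sobolev bound $\|\e\|_{L^\infty(y\leq1)}\lesssim\sqrt{\mathcal E_{s_+}}$, and (c) the pointwise weighted Hardy estimate \fref{esthardythreebis} with $q=s_+-1$, which controls $\|\e/(1+y^{s_+-\frac d2})\|_{L^\infty(y\geq1)}$ directly from $\mathcal E_{s_+}$. One then splits at a radius $A$,
\[
\left\|\frac{\e}{1+y^{\delta}}\right\|_{L^{\infty}}\lesssim A^{s_+-\frac d2-\delta}\left\|\frac{\e}{1+y^{s_+-\frac d2}}\right\|_{L^{\infty}(y\leq A)}+\frac{\|\e\|_{L^{\infty}(y\geq A)}}{A^{\delta}},
\]
and optimizes over $A$. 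This region-wise decomposition is the missing ingredient in your argument; without it the claimed Caffarelli--Kohn--Nirenberg-type bound simply does not hold.
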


\begin{remark} Interpolation constants in \fref{bebebebeo}, \fref{bebebebeogradient}, \fref{Linfttybound} depend on the bootstrap constant $K(M)$.
\end{remark}
\begin{proof}
{\em Proof of (i)}:  From Sobolev, $$\|\e\|_{L^{\infty}}+\|\nabla^{\frac d2}\e\|_{L^2}\lesssim \|\nabla^{s_+}\e\|^{1-z}_{L^2}\|\nabla^{\sigma}\e\|^{z}_{L^2}$$ with $$z=\frac{s_+-\frac d2}{s_+-\sigma}=1-\frac{1}{2L_+}\left(\frac d2-\sigma\right)+O\left(\frac 1{L_+^2}\right)$$ and thus using \fref{bootnorm}:
$$
\|\e\|_{L^{\infty}}+\|\nabla^{\frac d2}\e\|_{L^2}\lesssim \|\nabla^{\sigma}\e\|^{1+O\left(\frac 1{L_+}\right)}_{L^2}b_1^{\frac 12\left(\frac d2-\sigma\right)+O\left(\frac 1{L_+}\right)}.
$$
Similarily:
$$
\|\nabla \e\|_{L^{\infty}}\lesssim \|\nabla^{s_+}\e\|^{1-z}_{L^2}\|\nabla^{\sigma}\e\|^{z}_{L^2}$$ with $$z=\frac{s_+-\frac d2-1}{s_+-\sigma}=1-\frac{1}{2L_+}\left(\frac d2+1-\sigma\right)+O\left(\frac 1{L_+^2}\right)$$
and thus
$$
\|\nabla\e\|_{L^{\infty}}\lesssim \|\nabla^{\sigma}\e\|^{1+O\left(\frac 1{L_+}\right)}_{L^2}b_1^{\frac 12\left(\frac d2+1-\sigma\right)+O\left(\frac 1{L_+}\right)}.
$$
\noindent {\em Proof of (ii)}: For $y\leq 1$, we have from Sobolev $$\|\e\|_{L^{\infty}(y\leq 1)}\lesssim \|\e\|_{H^{s_+}(y\leq 1)}\lesssim b_1^{L_+}.$$ We estimate from \fref{esthardytwo} with $p=s_+-1$ and \fref{bootnorm}: $$\left\|\frac{\e}{1+y^{s_+-\frac d2}}\right\|_{L^{\infty}(y\geq 1)}^2\lesssim b_1^{2L_+}.$$ We therefore interpolate for $0<\delta\ll L_+$ using \fref{bebebebeo}:
\bee
\left\|\frac{\e}{1+y^{\delta}}\right\|_{L^{\infty}}&\lesssim& A^{s_+-\frac d2-\delta}\left\|\frac{\e}{1+y^{s_+-\frac d2}}\right\|_{L^{\infty}(y\leq A)}+\frac{\|\e\|_{L^{\infty}(y\geq A)}}{A^{\delta}}\\
&\lesssim &(b_1^{L_+})^{\frac{\delta}{s_+-\frac d2-2\delta}}\|\e\|_{L^{\infty}}^{1-\frac{\delta}{s_+-\frac d2-2\delta}}\\
& \lesssim & \|\nabla^{\sigma}\e\|^{1+O\left(\frac 1{L_+}\right)}_{L^2}b_1^{\frac\delta 2+\frac 12\left(\frac d2-\sigma\right)+O\left(\frac 1{L_+}\right)}.
\eee
\noindent{\it Proof of (iii)}. We interpolate $$\|\nabla^\beta\e\|_{L^2}\lesssim \|\nabla^\sigma\e\|_{L^2}^{z_+}\|\nabla^{s_+}\e\|_{L^2}^{1-z_+}$$ with $$1-z_+=\frac{\beta-\sigma}{s_+-\sigma}=\frac{\beta-\sigma}{2L_+}+O\left(\frac1{L_+^2}\right)$$ and hence using \fref{bootnorm}:
$$
\|\nabla^\beta\e\|^2_{L^2}\lesssim \|\nabla^\sigma\e\|_{L^2}^{2+O(\frac1{L_+})}b_1^{\beta-\sigma+O(\frac1{L_+})}.
$$
\end{proof}



\section{Eigenvalues of the linearized operator in self similar variables}
\label{sectionapprnedix}

We briefly revisit in this section the standard computation of the eigenvalues and eigenvectors of the linearized operator close to the self similar solution: $$H\Phi=H_0\Phi -\left[\frac{1}{p-1}\Phi+\frac 12 r\Phi'\right], \ \ H_0=\left(\begin{array}{ll} 0&H_-\\-H_+&0\end{array}\right)$$ with $$H_+=-\Delta -\frac{pc_{\infty}^{p-1}}{r^2}, \ \ H_-=-\Delta -\frac{c_{\infty}^{p-1}}{r^2}.$$

\noindent {\bf step 1} First set of eigenvalues.\\ 

\noindent{\bf case $\ell=0$}. We let $$\Phi_{0,+}(r)=\left|\begin{array}{ll}\frac{1}{r^{\gamma}}\\0\end{array}\right., \ \ \lambda_0=\frac{1}{p-1}-\frac \gamma 2$$ and compute: 
$$H\Phi_{0,+}=\left|\begin{array}{ll} -\left[\frac{1}{p-1}+\frac 12 r\pa_r\right] r^{-\gamma}\\-H_+r^{-\gamma}\end{array}\right.+=-\l_0\Phi_{0,+}$$
where we used the $\gamma$ equation: $$H_+(r^{-\gamma})=-\frac{\gamma^2-(d-2)\gamma+pc_{\infty^{p-1}}}{r^{\gamma+2}}=0.$$

\noindent{\bf case $\ell\geq 1$}. We let $$\Phi_{\ell,+}=\sum_{k=0}^{\ell}c_kJ^k\left|\begin{array}{ll} r^{2k-\gamma}\\0\end{array}\right., \ \ \lambda_{\ell,+}=\frac{1}{p-1}-\frac \gamma 2+\ell, \ \ c_0=1$$ and compute:
\bee
H\Phi_{\ell,+}+\l_{\ell,+}\Phi_{\ell,+}&=&\sum_{k=0}^\ell c_k H_0J^k\left|\begin{array}{ll} r^{2k-\gamma}\\0\end{array}\right.+\sum_{k=0}^\ell \left\{\l_{\ell,+}-\left[\frac 1{p-1}+\frac 12r\pa_r\right]\right\}c_kJ^k\left|\begin{array}{ll} r^{2k-\gamma}\\0\end{array}\right.\\
& = & \sum_{k=1}^\ell c_k H_0J^k\left|\begin{array}{ll} r^{2k-\gamma}\\0\end{array}\right.+\sum_{k=0}^{\ell-1} \left\{\l_{\ell,+}-\left[\frac 1{p-1}+\frac 12r\pa_r\right]\right\}c_kJ^k\left|\begin{array}{ll} r^{2k-\gamma}\\0\end{array}\right.\\
& = & \sum_{k=0}^{\ell-1}c_{k+1} H_0J^{k+1}\left|\begin{array}{ll} r^{2k+2-\gamma}\\0\end{array}\right.+c_k\left\{\l_{\ell,+}-\left[\frac 1{p-1}+\frac 12r\pa_r\right]\right\}J^k\left|\begin{array}{ll} r^{2k-\gamma}\\0\end{array}\right.
\eee
thanks to the $\gamma$ equation for $k=0$ and the choice of $\l_{\ell,+}$ for $k=\ell$. We now compute for $k=2p$:
\bee
&&c_{k+1} H_0J^{k+1}\left|\begin{array}{ll} r^{2k+2-\gamma}\\0\end{array}\right.+c_k\left\{\l_{\ell,+}-\left[\frac 1{p-1}+\frac 12r\pa_r\right]\right\}J^k\left|\begin{array}{ll} r^{2k-\gamma}\\0\end{array}\right.\\
& = &\left|\begin{array}{ll}(-1)^pc_{k+1}H_-(r^{2k+2-\gamma})+c_k\left\{\l_{\ell,+}-\left[\frac 1{p-1}+\frac 12r\pa_r\right]\right\}r^{2k-\gamma}\\0\end{array}\right.=\left|\begin{array}{ll}c_{k+1}d_k-c_ke_k\\0\end{array}\right.
\eee
and for $k=2p+1$:
\bee
&&c_{k+1} H_0J^{k+1}\left|\begin{array}{ll} r^{2k+2-\gamma}\\0\end{array}\right.+c_k\left\{\l_{\ell,+}-\left[\frac 1{p-1}+\frac 12r\pa_r\right]\right\}J^k\left|\begin{array}{ll} r^{2k-\gamma}\\0\end{array}\right.\\
& = &\left|\begin{array}{ll}0\\(-1)^{p}c_{k+1}H_+(r^{2k+2-\gamma})+(-1)^{p+1}c_k\left\{\l_{\ell,+}-\left[\frac 1{p-1}+\frac 12r\pa_r\right]\right\}r^{2k-\gamma}\end{array}\right.=\left|\begin{array}{ll}0\\c_{k+1}d_k-c_ke_k\end{array}\right.
\eee
and hence the recurrence relation ($d_k\neq 0$)$$c_{k+1}=\frac{e_k}{d_k}c_k, \ \ c_1=0$$ yields an eigenvector.\\

\noindent {\bf step 2} Second set of eigenvalues.\\ 

\noindent{\bf case $\ell=0$}. We let $$\Phi_{0,-}(r)=\left|\begin{array}{ll}0\\\frac{1}{r^{\frac 2{p-1}}}\end{array}\right., \ \ \lambda_{0,-}=0$$ and compute: 
$$H\Phi_{0,-}=\left|\begin{array}{ll} H_-(r^{-\frac{2}{p-1}})\\-\left[\frac{1}{p-1}+\frac 12 r\pa_r\right] r^{-\frac{2}{p-1}}\\\end{array}\right.+=0$$
where we used the definition \fref{selfsimilarsolutions} of $c_{\infty}$:
\bee
H_-(r^{-\frac{2}{p-1}})&=&\frac{1}{r^{\frac 2{p-1}+2}}\left\{-\frac{2}{p-1}\left(\frac{2}{p-1}+1\right)+\frac{2(d-1)}{p-1}-c_\infty^{p-1}\right\}\\
&=&\frac{1}{r^{\frac 2{p-1}+2}}\left\{\frac{2}{p-1}\left(d-2-\frac2{p-1}\right)-c_{\infty}^{p-1}\right\}=0.
\eee

\noindent{\bf case $\ell\geq 1$}. We let $$\Phi_{\ell,-}=\sum_{k=0}^{\ell}c_kJ^k\left|\begin{array}{ll} 0\\r^{2k-\frac{2}{p-1}}\end{array}\right., \ \ \lambda_{\ell,-}=\ell, \ \ c_0=1$$ and compute:
\bee
&&H\Phi_{\ell,-}+\l_{\ell,-}\Phi_{\ell,-}\\
&=&\sum_{k=0}^\ell c_k H_0J^k\left|\begin{array}{ll} 0\\r^{2k-\frac{2}{p-1}}\end{array}\right.+\sum_{k=0}^\ell \left\{\l_{\ell,-}-\left[\frac 1{p-1}+\frac 12r\pa_r\right]\right\}c_kJ^k\left|\begin{array}{ll} 0\\r^{2k-\frac2{p-1}}\end{array}\right.\\
& = & \sum_{k=1}^\ell c_k H_0J^k\left|\begin{array}{ll} 0\\r^{2k-\frac{2}{p-1}}\end{array}\right.+\sum_{k=0}^{\ell-1} \left\{\l_{\ell,-}-\left[\frac 1{p-1}+\frac 12r\pa_r\right]\right\}c_kJ^{k}\left|\begin{array}{ll} 0\\r^{2k-\frac 2{p-1}}\end{array}\right.\\
& = & \sum_{k=0}^{\ell-1}c_{k+1} H_0J^{k+1}\left|\begin{array}{ll} 0\\r^{2k+2-\frac 2{p-1}}\end{array}\right.+c_k\left\{\l_{\ell,-}-\left[\frac 1{p-1}+\frac 12r\pa_r\right]\right\}J^k\left|\begin{array}{ll} 0\\r^{2k-\frac{2}{p-1}}\end{array}\right.
\eee
thanks to the $c_{\infty}$ equation for $k=0$ and the choice of $\l_{\ell,-}$ for $k=\ell$. This as above yields a suitable induction relation on the $c_k$ to create an eigenvector.

\end{appendix}

\frenchspacing
\bibliographystyle{plain}

\begin{thebibliography}{99}

\bibitem{BT}  Bejenaru, I.; Tataru, D., Near soliton evolution for equivariant Schr\"odinger Maps in two spatial dimensions, arXiv:1009.1608

 \bibitem{heatflow} Van den Berg, G.J.B.; Hulshof, J.; King, J., Formal asymptotics of bubbling in the harmonic map heat flow, SIAM J. Appl. Math. vol 63, o5. pp 1682-1717.
 
 \bibitem{BK}  Bricmont, J.; Kupiainen, A., Renormalization group and nonlinear PDEs, Quantum and non-commutative analysis (Kyoto, 1992), 113--118, Math. Phys. Stud., 16, Kluwer Acad. Publ., Dordrecht, 1993.
 
 \bibitem{Planchonetal}  Burq, N.; Planchon, F.; Stalker, J.G.; Tahvildar-Zadeh, A. S., Strichartz estimates for the wave and Schrödinger equations with potentials of critical decay, Indiana Univ. Math. J. 53 (2004), no. 6, 1665–1680. 
 
\bibitem{collot} Collot, C.,  Type II blow up for the energy supercritical wave equation, preprint.
 
 \bibitem{CMM}  Côte, R.; Martel, Y.; Merle, F., Construction of multi-soliton solutions for the $L^2$-supercritical gKdV and NLS equations, Rev. Mat. Iberoam. 27 (2011), no. 1, 273–302.
 
\bibitem{Donn} Donninger, R.; Schörkhuber, B., Stable blow up dynamics for energy supercritical wave equations, preprint 2012, arXiv:1207.7046.

\bibitem{SzefKrieger} Donninger, R.; Krieger, J.; Szeftel, J;  Wong, W., Codimension one stability of the catenoid under the vanishing mean curvature flow in Minkowski space,  arXiv:1310.5606.
 
 \bibitem{FHV} Filippas, S.; Herrero, M.A.; Velázquez, Juan J. L., Fast blow-up mechanisms for sign-changing solutions of a semilinear parabolic equation with critical nonlinearity, R. Soc. Lond. Proc. Ser. A Math. Phys. Eng. Sci. 456 (2000), no. 2004, 2957–2982. 
 
 \bibitem{GK1}  Giga, Y.; Kohn, R.V., Asymptotically self-similar blow-up of semilinear heat equations, Comm. Pure Appl. Math. 38 (1985), no. 3, 297--319. 
 
 \bibitem{GK2} Giga, Y.; Kohn, R.V., Characterizing blowup using similarity variables, Indiana Univ. Math. J. 36 (1987), no. 1, 1--40.
 
 \bibitem{GK3}  Giga, Y.; Kohn, R.V., Nondegeneracy of blowup for semilinear heat equations, Comm. Pure Appl. Math. 42 (1989), no. 6, 845--884.
 
 \bibitem{chinese} Gui, C.; Ni, W-M.; Wang, X., On the stability and instability of positive steady states of a semilinear heat equation in Rn., Comm. Pure Appl. Math. 45 (1992), no. 9, 1153–1181.
 
\bibitem{NT2} Gustafson, S.; Nakanishi, K.; Tsai, T-P.; Asymptotic stability, concentration and oscillations in harmonic map heat flow, Landau Lifschitz and Schr\"odinger maps on $\Bbb R^2$, Comm. Math. Phys. (2010), 300, no 1, 205-242.

\bibitem{Velas} Herrero, M.A.; Vel\'azquez, J.J.L., Explosion de solutions des équations paraboliques semilin\'eaires
supercritiques, C. R. Acad. Sci. Paris 319, 141–145 (1994).

\bibitem{KaS}  Karageorgis, P.; Strauss, W. A., Instability of steady states for nonlinear wave and heat equations, J. Differential Equations 241 (2007), no. 1, 184–205.

\bibitem{KSc} Krieger, J.; Schlag, W.,  Large global solutions for energy supercritical nonlinear wave equations on $\Bbb R^{3+1}$, arXiv:1403.2913.

\bibitem{KST} Krieger, J.; Schlag, W.; Tataru, D. Renormalization and blow up for charge one equivariant critical wave maps. Invent. Math. 171 (2008), no. 3, 543--615.

\bibitem{Lepin} Lepin,L.A., Self-similar solutions of a semilinear heat equation, Mat. Model. 2 (1990) 63–74.

\bibitem{MarM1} Martel, Y.; Merle, F., Stability of blow-up profile and lower bounds for blow-up rate for the critical generalized KdV equation. Ann. of Math. (2) 155 (2002), no. 1, 235–280.

\bibitem{MarM2} Martel, Y.; Merle, F., Blow up in finite time and dynamics of blow up solutions for the L2-critical generalized KdV equation, J. Amer. Math. Soc. 15 (2002), no. 3, 617–664.

\bibitem{MMaR1} Martel, Y.; Merle, F.; Rapha\"el, P.; Blow up for the critical gKdV equation I: dynamics near the solitary wave, to appear in Acta. Math.

\bibitem{MMaR2} Martel, Y.; Merle, F.; Rapha\"el, P.; Blow up for the critical gKdV equation II: minimal mass blow up, submitted.

\bibitem{MMaR3} Martel, Y.; Merle, F.; Rapha\"el, P., Blow up for the critical gKdV equation III: exotic regimes, to appear in Ann. Scuola. Norm.

\bibitem {MaM1}  Matano, H.; Merle, F., Classification of type I and type II behaviors for a supercritical nonlinear heat equation, J. Funct. Anal. 256 (2009), no. 4, 992–1064.

\bibitem{MaM2}  Matano, H.; Merle, F., On nonexistence of type II blowup for a supercritical nonlinear heat equation, Comm. Pure Appl. Math. 57 (2004), no. 11, 1494–1541.

\bibitem{M1}  Merle, F., Existence of blow-up solutions in the energy space for the critical generalized KdV equation. J. Amer. Math. Soc. 14 (2001), no. 3, 555–578.

\bibitem{MR1} Merle, F.; Rapha\"el, P., Blow up dynamic and upper bound on the blow up rate for critical nonlinear Schr\"odinger equation, Ann. Math. 161 (2005), no. 1, 157--222.

\bibitem{MR3}  Merle, F.; Raphael, P., On universality of blow-up profile for L2 critical nonlinear Schrödinger equation. Invent. Math. 156 (2004), no. 3, 565–672.

\bibitem{MR4} Merle, F.; Rapha\"el, P., Sharp lower bound on the blow up rate for critical nonlinear Schr\"odinger equation, J. Amer. Math. Soc. 19 (2006), no. 1, 37--90.


\bibitem{MRR} Merle, F.; Rapha\"el, P.; Rodnianski, I., Blow up dynamics for smooth solutions to the energy critical Schr\"odinger map,  Invent. Math. 193 (2013), no. 2, 249--365.

\bibitem{MRS2010} Merle, F.; Rapha\"el, P.; Szeftel, J., Instability of Bourgain Wang solutions for the $L^2$ critical NLS,  Amer. J. Math. 135 (2013), no. 4, 967--1017.

\bibitem{MZDuke}  Merle, F.; Zaag, H., Stability of the blow-up profile for equations of the type ut=?u+|u|p?1u, Duke Math. J. 86 (1997), no. 1, 143–195.

\bibitem{Mizo1} Mizoguchi, N., Type-II blowup for a semilinear heat equation, Adv. Differential Equations 9 (2004), no. 11-12, 1279–1316. 

\bibitem{Mizo2}  Mizoguchi, N., Rate of type II blowup for a semilinear heat equation, Math. Ann. 339 (2007), no. 4, 839–877. 

\bibitem{Raphduke} Rapha\"el, P., Existence and stability of a solution blowing up on a sphere for an $L^2$-supercritical nonlinear Schrödinger equation, Duke Math. J. 134 (2006), no. 2, 199--258.

\bibitem{Raphring}  Rapha\"el, P.; Szeftel, J.,  Standing ring blow up solutions to the $N$-dimensional quintic nonlinear Schrödinger equation, Comm. Math. Phys. 290 (2009), no. 3, 973--996.

\bibitem{RaphRod} Rapha\"el, P.; Rodnianski, I., Stable blow up dynamics for the critical corotational wave maps and equivariant Yang Mills problems,  Publ. Math. Inst. Hautes Etudes Sci. 115 (2012), 1--122.
    
  \bibitem{RSc1} Rapha\"el, P.; Schweyer, R., Stable blow up dynamics for the 1-corotational energy critical harmonic heat flow,  Comm. Pure Appl. Math. 66 (2013), no. 3, 414--480.
  
  \bibitem{RSc2} Rapha\"el, P.; Schweyer, R., Quantized slow blow up dynamics for the corotational energy critical harmonic heat flow, submitted.
    
  \bibitem{RSc3} Rapha\"el, P.; Schweyer, R., On the stability of critical chemotaxis aggregation, to appear in Math. Annalen.
  
   \bibitem{RSz} Rapha\"el, P.; Szeftel, J., Existence and uniqueness of minimal blow-up solutions to an inhomogeneous mass critical NLS, J. Amer. Math. Soc. 24 (2011), no. 2, 471–546.
  
  \bibitem{ReedSimon} Reed, M.; Simon, B., Methods of modern mathematical physics III, Scattering theory,  Academic Press, New York-London, 1979. 
  
  \bibitem{RS} Rodnianski, I.; Sterbenz, J., On the formation of singularities in the critical O(3) $\sigma$-model, Ann. of Math. (2) 172 (2010), no. 1, 187–242. 
  
  \bibitem{Schweyer} Schweyer, R., Type II blow up for the four dimensional energy critical semi linear heat equation, J. Funct. Anal., 263 (2012), pp. 3922-3983.
  
  
  
\end{thebibliography}

\end{document}